\newtheorem{theoa}{Théorème}
\newtheorem*{theo*}{Théorème}
\newtheorem{corol}{Corollaire}
\newenvironment{nota}{\begin{enonce}[definition]{Notation}}{\end{enonce}}
\theoremstyle{remark}
\numberwithin{equation}{chapter}
\numberwithin{figure}{chapter}
\DeclareMathOperator{\id}{id}
\DeclareMathOperator{\SL}{\sf SL}
\DeclareMathOperator{\PSL}{\sf PSL}
\DeclareMathOperator{\GL}{\sf GL}
\DeclareMathOperator{\PGL}{\sf PGL}
\DeclareMathOperator{\M}{\mathcal M}
\DeclareMathOperator{\Isom}{\sf Isom}
\DeclareMathOperator{\SO}{\sf Isom^+}
\DeclareMathOperator{\Aut}{\sf Aut}
\DeclareMathOperator{\End}{\sf End}
\DeclareMathOperator{\Diff}{\sf Diff}
\DeclareMathOperator{\Bir}{\sf Bir}
\DeclareMathOperator{\BD}{\sf BirDiff}
\DeclareMathOperator{\Homeo}{\sf Homeo}
\DeclareMathOperator{\dist}{\sf dist}
\DeclareMathOperator{\kob}{\sf kob}
\DeclareMathOperator{\aire}{\sf aire}
\DeclareMathOperator{\vol}{\sf vol}
\DeclareMathOperator{\mvol}{\sf mvol}
\DeclareMathOperator{\length}{\sf length}
\DeclareMathOperator{\longueur}{\sf long}
\DeclareMathOperator{\diam}{\sf diam}
\DeclareMathOperator{\rg}{\sf rang}
\DeclareMathOperator{\rk}{\sf rank}
\DeclareMathOperator{\h}{\sf h_{top}}
\DeclareMathOperator{\liap}{\sf \chi_{top}}
\DeclareMathOperator{\Div}{\sf Div}
\DeclareMathOperator{\NS}{\sf NS}
\DeclareMathOperator{\Pic}{\sf Pic}
\DeclareMathOperator{\Amp}{\sf Amp}
\DeclareMathOperator{\Nef}{\sf Nef}
\DeclareMathOperator{\Kah}{\sf Kah}
\DeclareMathOperator{\Fat}{\sf Fatou}
\DeclareMathOperator{\Supp}{\sf Supp}
\DeclareMathOperator{\ddc}{dd^c}
\DeclareMathOperator{\Ind}{\sf Ind}
\DeclareMathOperator{\ml}{\sf m\ell}
\DeclareMathOperator{\ma}{\sf ma}
\DeclareMathOperator{\kod}{\sf kod}
\DeclareMathOperator{\Vect}{\rm Vect}
\renewcommand\epsilon{\varepsilon}
\renewcommand\Im{\mathfrak{Im}}
\renewcommand\Re{\mathfrak{Re}}
\newcommand\N{\mathbf{N}}
\newcommand\Z{\mathbf{Z}}
\newcommand\Q{\mathbf{Q}}
\newcommand\R{\mathbf{R}}
\newcommand\C{\mathbf{C}}
\newcommand\K{\mathbf{K}}
\renewcommand\P{\mathbb{P}}
\newcommand\D{\mathbb{D}}
\renewcommand{\H}{\mathbb{H}}
\renewcommand{\S}{\mathbb{S}}
\newcommand\T{\mathbb{T}}
\newcommand\G{\mathbb{G}}
\renewcommand\b{\overline}
\renewcommand{\O}{\mathcal O}
\newcommand{\Rot}{\mathrm{Rot}}
\newcommand{\e}{\mathrm{e}}
\def\@tvsp{\mathchoice{{}\mkern-4.5mu}{{}\mkern-4.5mu}{{}\mkern-2.5mu}{}}
\def\ltrivert{\left|\@tvsp\left|\@tvsp\left|}
\def\rtrivert{\right|\@tvsp\right|\@tvsp\right|}
\let\original@addcontentsline\addcontentsline
\newcommand{\dummy@addcontentsline}[3]{}
\newcommand{\DesactivateToc}{\let\addcontentsline\dummy@addcontentsline}
\newcommand{\ActivateToc}{\let\addcontentsline\original@addcontentsline}
\begin{document}

\includepdf{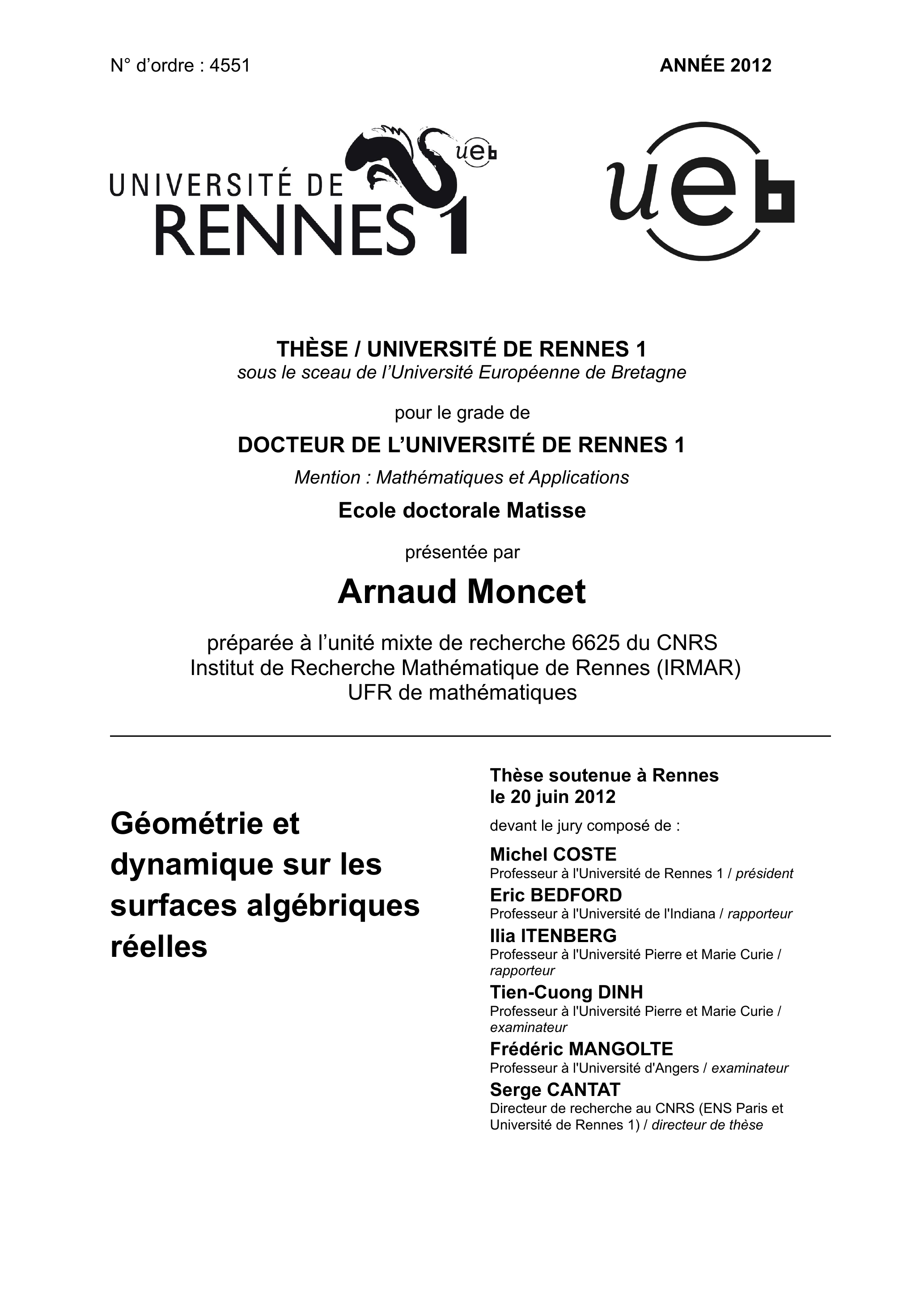}

\frontmatter

\title{Géométrie et dynamique sur les surfaces algébriques réelles}
\author{Arnaud Moncet}
\address{Université de Rennes 1\\
IRMAR\\
campus de Beaulieu\\
bâtiments 22 et 23\\
263 avenue du général Leclerc\\
CS 74205\\
35042 Rennes cedex
}
\email{arnaud.moncet@univ-rennes1.fr}
\date{20 juin 2012}

\begin{abstract}
Cette thèse s'intéresse aux automorphismes des surfaces algébriques réelles, c'est-à-dire les transformations polynomiales admettant un inverse polynomial. La question centrale est de savoir si leur restriction au lieu réel reflète toute la richesse de la dynamique complexe. Celle-ci est traitée sous deux aspects : celui de l'entropie topologique et celui de l'ensemble de Fatou.

Pour le premier point, on introduit une quantité purement géométrique, appelée concordance, qui ne dépend que de la surface. Puis on montre que le rapport des entropies réelle et complexe est relié à cette quantité. La concordance est calculée explicitement sur de nombreux exemples de surfaces, notamment les surfaces abéliennes qui sont traitées en détails, ainsi que certaines surfaces~K3.

Dans la seconde partie, on étudie l'ensemble de Fatou, qui correspond aux points complexes pour lesquels la dynamique est simple. On montre, grâce à des résultats antérieurs de Dinh et Sibony sur les courants positifs fermés, que celui-ci est hyperbolique au sens de Kobayashi, quitte à lui enlever certaines courbes fixées par (un itéré de) notre transformation. Cette propriété permet d'en déduire que ce lieu réel ne peut pas être entièrement contenu dans l'ensemble de Fatou, hormis quelques cas exceptionnels où la topologie du lieu réel est simple et la dynamique bien comprise. Ainsi la complexité de la dynamique est presque toujours observable sur les points réels.\end{abstract}

\begin{altabstract}
This thesis deals with automorphisms of real algebraic surfaces, which are polynomial transformations with a polynomial inverse. The main concern is whether their restriction to the real locus reflects all the richness of the complex dynamics. This question is declined in two directions: the topological entropy and the Fatou set.

For the first one, we introduce a purely geometric quantity depending only on the surface, and we call it concordance. Then we show that the ratio of real and complex entropies is linked to this quantity. The concordance is explicitely computed for many examples of surfaces, especially abelian surfaces which are broadly studied, as well as some K3 surfaces.

In the second part, we are interested in the Fatou set, which corresponds to complex points for which the dynamics is simple. Thanks to previous results of Dinh and Sibony about closed positive currents, we prove that this set is hyperbolic in the sense of Kobayashi, after possibly deleting some curves which are fixed by (an iterate of) our transformation. From this property we deduce that, except for some exceptional cases in which the topology of the real locus is simple and the dynamics well understood, this real locus cannot be entirely contained in the Fatou set. Thus the complexity of the dynamics is observable on real points in most cases.
\end{altabstract}

\keywords{Géométrie algébrique réelle et complexe, systèmes dynamiques, automorphismes, surfaces algébriques, entropie topologique, ensemble de Fatou, hyperbolicité au sens de Kobayashi, courants positifs fermés}

\maketitle

\setcounter{tocdepth}{2}
\tableofcontents


\chapter*{Remerciements}

En premier lieu, je remercie Serge Cantat qui a encadré cette thèse de manière exceptionnelle. Tout ce qu'il m'a apporté durant ces quatre années, autant sur le plan professionnel que humain, est considérable et je ne saurai hélas trouver les mots justes pour lui exprimer toute la gratitude qu'il mérite. Je lui suis en particulier reconnaissant de m'avoir toujours soutenu et encouragé, notamment dans les moments les plus difficiles où il a toujours réussi à me relancer. Je le remercie également pour toutes les connaissances qu'il m'a transmises, ses brillantes idées qui m'ont si souvent permis d'avancer et les interrogations pertinentes qu'il a soulevées. Merci aussi pour sa patience à relire ce manuscrit au fur et à mesure de sa rédaction. 

Je tiens ensuite à remercier Eric Bedford et Ilia Itenberg pour avoir rapporté cette thèse ainsi que pour l'intérêt qu'ils ont porté à mes travaux. C'est un véritable honneur pour moi que mon manuscrit ait été relu par deux scientifiques de tel renom, dont les livres et articles ont été une réelle source d'inspiration pour ma recherche.

Merci aussi à Michel Coste, Tien-Cuong Dinh et Frédéric Mangolte d'avoir accepté de faire partie du jury de thèse. Le premier a d'abord été pour moi un excellent professeur lorsque j'étais étudiant, puis pendant ma thèse je l'ai souvent côtoyé avec grand plaisir dans les différents séminaires ; son livre coécrit avec Jacek Bochnak et Marie-Françoise Roy a été d'une aide précieuse pour m'aider à comprendre les rudiments de la géométrie algébrique réelle. Quant au second, bien que ne l'ayant jamais rencontré, ses articles en collaboration avec Nessim Sibony ont constitué le point de départ de la seconde partie de cette thèse ; c'est grâce à un de leurs théorèmes non publié que j'ai obtenu les résultats sur l'hyperbolicité de l'ensemble de Fatou, qui en constituent l'ingrédient central. Quant à Frédéric, je lui suis plus spécialement reconnaissant pour s'être intéressé très tôt à mes travaux de recherche, ainsi que pour toutes les discussions mathématiques enrichissantes que nous avons pu avoir ; merci aussi de m'avoir invité à deux reprises à Angers au cours de cette dernière année.

J'en profite pour remercier tous les autres chercheurs qui m'ont invité à parler dans des conférences ou des séminaires : Florent Malrieu, Laura DeMarco, Ronan Quarez, Goulwen Fichou, Jean-Yves Briend et Damien Gayet.

Je remercie également l'ensemble des personnels de l'IRMAR, de l'UFR mathématiques et de l'ENS Cachan antenne de Bretagne pour l'ambiance très agréable qu'ils contribuent à mettre au sein du laboratoire. Merci à tous les collègues avec lesquels j'ai pu avoir des discussions mathématiques intéressantes, ainsi qu'aux secrétaires et bibliothécaires qui par leur efficacité nous rendent la vie plus facile.

Mes remerciements les plus sincères vont à l'ensemble des personnes qui ont contribué à m'insuffler la passion des mathématiques : tout d'abord mes parents, puis mon oncle Michel qui a su m'intéresser dès mon plus jeune âge à des problèmes mathématiques divers, et naturellement tous les professeurs de mathématiques que j'ai eus durant ma scolarité, plus particulièrement MM. Kervigno, Roblet, et Pierre qui restent pour moi des modèles de pédagogie.

L'enseignement a naturellement représenté une part importante de mon travail  de doctorant. Je remercie Arnaud Debussche et Michel Pierre de m'avoir permis d'exercer cette activité dans le cadre exceptionnel du département de mathématiques de Ker Lann, et pour m'y avoir accueilli si chaleureusement. Merci aux étudiants qui ont assisté à mes cours pour l'attention et la motivation dont ils font preuve : c'est un réel plaisir que d'avoir affaire à des élèves aussi intéressés et intéressants. Merci également aux collègues avec qui j'ai partagé des travaux dirigés ou des jurys d'oraux blancs pour les nombreux échanges pédagogiques enrichissants.

La vie de doctorant aurait sans doute été beaucoup plus terne sans quelques moments de détente. Je remercie pour cela les autres doctorants, et plus particulièrement ceux avec qui j'ai partagé un bureau -- Yoann, François, Damian, Andrea, Adel -- ainsi que ceux du bureau 334 et ceux qui le squattent régulièrement pour la pause café du midi : Baron, Baptiste, Jean-Louis, Gaël, Cyrille, Jacques, Jean-Romain. Merci également aux anciens du séminaire de midi trente, ainsi qu'à tous les thésards avec qui j'ai eu le plaisir de partager chaque midi les mets succulents du RU.

Merci aussi à tous les autres camarades rennais : Maëlle, Tiffany, Chloé, Baptiste, Marine, Lucas, Céline ($\times 2$), Yacine, Nora, Emeline, Adrien, Olivier, Stéphane, Jacques, Richy, Vincent, Mat, Eric, Morgane, Fabien, Basile, Jean-Seb, Jeff, et c\ae tera. Enfin, merci à tous mes très bons amis qui hélas vivent en dehors de la capitale bretonne, mais avec qui j'ai passé d'excellents moments ces dernières années : Bruno, Fred, Emma, Hélène, Boris, Julien, Laure, Romain, Lohen, Catherine, Karine, Brice, Léo, Bibi, Pitrolls, Vincent, Leïla, Antoine ($\times 2$). Merci surtout à ceux d'entre eux qui sont venus de loin pour assister à ma soutenance et boire quelques coupes.

Je ne saurais terminer ces remerciements sans mentionner ma famille, qui a été très présente durant ces quatre ans, et ce malgré l'éloignement géographique. Merci en particulier à mon père Gérard et à mon frère Romain, ainsi qu'à Christine et Denis qui m'ont souvent accueilli chez eux lors de mes week-ends à Paris. Merci aussi à ma mamie pour sa générosité, à mes tantes, oncles, cousines et cousins (je ne vais pas tous les énumérer, la liste serait trop longue). Enfin, j'adresse une pensée émue à ma mère Roseline, qui nous a quittés bien trop tôt : j'aurais tant souhaité qu'elle soit là pour participer à cette étape importante de ma vie.


\mainmatter



\chapter*{Introduction}

Cette thèse s'intéresse aux interactions entre géométrie réelle et géométrie complexe, notamment au niveau de la dynamique des automorphismes. Pour préciser un peu les choses, donnons-nous $X$ une variété algébrique réelle de dimension $d$. Une telle variété peut être vue ou bien comme l'ensemble des zéros d'un ensemble de polynômes à coefficients réels, ou bien comme une variété projective complexe munie d'une involution anti-holomorphe $\sigma$. Quelque soit le point de vue adopté, on a une notion de points réels et de points complexes : dans le premier cas, $X(\R)$ est l'ensemble des zéros réels, tandis que $X(\C)$ est l'ensemble des zéros complexes ; dans le second cas, $X(\R)$ est l'ensemble des points fixes de $\sigma$.\\

\textbf{On suppose dans ce qui suit que $X(\C)$ est une variété lisse et que $X(\R)$ est non vide.\\
}

Notons que $X(\C)$ est alors une variété analytique complexe compacte de dimension complexe $d$, donc de dimenion réelle $2d$, tandis que $X(\R)$ est une variété analytique réelle compacte de dimension $d$.


\section*{Groupes d'automorphismes}

Selon le point de vue choisi, un automorphisme $f$ de $X$ est ou bien une application birégulière de $X(\C)$ dans lui-même donnée par des équations à coefficients réels\footnote{La terminologie diffère ici de \cite{bcr} par exemple, pour lesquels les morphismes sont des applications régulières seulement sur $X(\R)$.}, ou bien une application biholomorphe de $X$ qui commute avec $\sigma$. Un tel automorphisme définit un difféomorphisme réel-analytique $f_\R$ sur $X(\R)$, ce qui donne une flèche injective
\begin{equation*}
\begin{split}
\Aut(X) &\hookrightarrow \Diff(X(\R))\\
f &\mapsto f_\R.
\end{split}
\end{equation*}
Dans une très large mesure, le but de cette thèse est d'essayer de comprendre l'image de ce morphisme. Avant d'explorer plusieurs questions relatives à ce sujet, étudions un exemple.

\begin{enonce*}[remark]{Exemple}[\cite{mazur}]
Soit $P(x_1,x_2,x_3)\in\R[x_1,x_2,x_3]$ un polynôme de degré $2$ par rapport à chacune des trois variables. On considère la surface algébrique $X$ donnée par l'annulation du polynôme $P$, que l'on projectivise dans $\P^1\times\P^1\times\P^1$. Autrement dit, si $\tilde P$ est le polynôme homogénéisé par rapport à chaque variable
\begin{equation*}
\tilde P(u_1,v_1,u_2,v_2,u_3,v_3) = (v_1v_2v_3)^2 \, P\left(\frac{u_1}{v_1},\frac{u_2}{v_2},\frac{u_3}{v_3}\right),
\end{equation*}
alors pour $\K=\R$ ou $\C$, la variété $X(\K)$ est donnée par
\begin{equation*}
X(\K) = \left\{
\left([u_i:v_i]\right)_{1\leq i\leq 3}\in{\P^1(\K)}^3 \,\big|\, \tilde P(u_1,v_1,u_2,v_2,u_3,v_3)=0
\right\}.
\end{equation*}
On dit que $X$ est une surface de degré $(2,2,2)$ dans $\P^1\times\P^1\times\P^1$. Une telle surface vient avec trois revêtements doubles $p_i:X\to\P^1\times\P^1$, qui consistent à oublier une des trois coordonnées $x_i$. Considérons les trois involutions $s_i$ de chacun de ces revêtements. D'après un théorème de Lan Wang \cite{wang} (voir aussi \cite{cantat-k3}), ces trois involutions engendrent un produit libre 
\begin{equation*}
\Gamma\simeq\Z/2\Z*\Z/2\Z*\Z/2\Z
\end{equation*}
dans $\Aut(X)$ dès que $X$ est lisse. On voit donc pour cet exemple que le groupe des automorphismes est gros, au sens où il contient un groupe libre. Cependant, l'image dans $\Diff(X(\R))$ peut être relativement mince. Il existe ainsi des exemples (voir \cite{moncet-m2}) pour lesquels il y a au plus un automorphisme par composante connexe de $\Diff(X(\R))$.\footnote{Plus exactement, on montre dans ce texte qu'il existe une surface réelle lisse $X$ de degré $(2,2,2)$ telle que :
\begin{itemize}
\item $X(\R)$ est une surface compacte orientable de genre $3$ ;
\item le groupe $\Aut(X)$ est égal au produit libre $\Gamma$ des trois involutions ;
\item $\Aut(X)$ s'injecte dans le mapping class group de la surface $X(\R)$, via le morphisme composé
$$
\Aut(X) \hookrightarrow \Diff(X(\R)) \twoheadrightarrow {\sf MCG}(X(\R)).
$$
\end{itemize}
}
\end{enonce*}


\section*{Dynamique des automorphismes}

Intéressons-nous maintenant aux aspects dynamiques. \'Etant donné un auto\-morphisme $f$, on cherche à comparer les deux systèmes dynamiques
\begin{equation*}
f_\R:X(\R)\to X(\R)
\quad\text{et}\quad
f_\C:X(\C)\to X(\C).
\end{equation*}
Par restriction, on sait que la dynamique de $f_\C$ est plus complexe que celle de~$f_\R$. Les deux questions que l'on se pose naturellement sont les suivantes :
\begin{enumerate}
\item 
La dynamique dans le réel peut-elle être aussi riche que celle dans le complexe ?
\item
À l'inverse, se peut-il que la dynamique dans le réel soit très simple alors que celle dans le complexe est très compliquée ?
\end{enumerate}

Dans \cite{bedford-kim-maxent}, Bedford et Kim ont donné une réponse positive à la première interrogation dans le cadre des surfaces (voir plus loin). Dans cette thèse, on s'intéresse plutôt à la seconde problématique. Afin de lui donner une forme plus précise, nous utilisons des outils quantitatifs (entropie) et qualitatifs (ensemble de Julia et de Fatou) pour étudier ces dynamiques.

\begin{enonce*}[remark]{Remarque}
En dimension $1$, la dynamique d'un automorphisme est simple, car on est dans un des trois cas suivants, selon le genre $g$ de $X(\C)$ :
\begin{enumerate}
\item Si $g=0$, on a $X(\C)\simeq\P^1(\C)$, donc $\Aut(X(\C))\simeq\PGL_2(\C)$ est connexe. En particulier tout automorphisme est isotope à l'identité, et la dynamique est bien comprise (c'est celle d'une application linéaire).
\item Si $g=1$, alors $X(\C)$ est un tore $\C/\Lambda$, et le groupe $\Aut(X(\C))$ est égal, à indice fini près, au groupe des translations sur ce tore.
\item Si $g\geq 2$, le groupe $\Aut(X(\C))$ est fini.
\end{enumerate}
\end{enonce*}

Dans cette thèse, on se limite à la dimension $2$, pour laquelle on a déjà de nombreux exemples intéressants (cf. \cite{cantat-k3,mcmullen-k3-siegel,bedford-kim-degree,mcmullen-rat,bedford-kim-maxent,bedford-kim-rotation} entre autres). Voir aussi \cite{bedford-survey} ou \cite{cantat-panorama} pour des surveys récents sur la dynamique complexe des automorphismes de surfaces.


\section*{Entropie topologique}

On définit de la manière suivante (voir \cite{katok-hasselblatt}) l'\emph{entropie topologique} d'un système dynamique continu $T:E\to E$, où $(E,\dist)$ est un espace métrique compact. Pour $\epsilon>0$ et $n\in\N^*$, on dit qu'un ensemble fini $F\subset E$ décrit~$E$ à la précision $\epsilon$ et à l'ordre $n$ si
\begin{equation*}
\forall x\in E,~\exists y\in F,~\forall k\in\{0,\cdots,n-1\},~\dist(T^k(x),T^k(y))<\epsilon.
\end{equation*}
On note $N(\epsilon,n)$ le cardinal minimal d'un tel ensemble, puis on définit l'entropie topologique de $T$ par la formule :
\begin{equation*}
\h(T) \, = \, \lim_{\epsilon\to 0} \, \limsup_{n\to+\infty} \, \frac{1}{n}\log N(\epsilon,n) \,\geq 0.
\end{equation*}
C'est un invariant topologique (il ne dépend pas du choix de la métrique $\dist$) qui mesure la complexité de la dynamique de $T$ : plus $\h(T)$ est grand, plus la dynamique est chaotique, au sens où l'on a besoin de plus de conditions initiales pour pouvoir décrire toutes les orbites à une précision $\epsilon$ donnée. Un système d'entropie nulle aura en ce sens une dynamique relativement simple.\\

Lorsque $f:X\to X$ est un automorphisme d'une variété algébrique réelle, les entropies des deux systèmes dynamiques $f_\R$ et $f_\C$ sont reliées par
\begin{equation*}
0\leq \h(f_\R)\leq \h(f_\C),
\end{equation*}
car $X(\R)\subset X(\C)$. L'entropie complexe $\h(f_\C)$ peut être calculée plus simplement en terme d'action sur la cohomologie (voir \cite{yomdin} et \cite{gromov-entropie}) :

\begin{theo*}[Gromov, Yomdin]
Soit $f$ une transformation holomorphe d'une variété complexe compacte kählérienne $X$. On note $\lambda(f)$ le rayon spectral de l'application induite sur la cohomologie $f^*: H^*(X;\R)\to H^*(X;\R)$. On a alors l'égalité\footnote{Gromov a montré l'inégalité $\h(f)\leq\log(\lambda(f))$ dans des notes rédigées en 1977 (finalement publiées en 2003) ; Yomdin a montré l'autre inégalité dans le cadre plus général d'une transformation $\mathcal{C}^\infty$, en considérant les taux de croissance du volume des sous-variétés.}
\begin{equation*}
\h(f)=\log(\lambda(f)).
\end{equation*}
De plus, le rayon spectral $\lambda(f)$ est atteint sur le sous-espace $\bigoplus_{p=0}^d H^{p,p}(X;\R)$, où $d$ est la dimension complexe de $X$.
\end{theo*}

Dans le cas où $f$ est un automorphisme d'une surface, $\h(f_\C)$ est donc égal au logarithme $\lambda(f)$ du rayon spectral de $f^*$ restreint à $H^{1,1}(X(\C);\R)$. On se limite au cas où cette entropie est positive (strictement), ce qui correspond à des dynamiques non triviales. En effet, dans tous les autre cas, soit un itéré de $f$ est isotope à l'identité, soit $f$ préserve une fibration elliptique, d'après \cite{cantat-k3} et \cite{gizatullin} (voir aussi le chapitre~\ref{chap-auto}).

\begin{enonce*}[definition]{Définition}
Un automorphisme $f$ d'une surface complexe compacte kählérienne $X$ est dit \emph{de type loxodromique}\footnote{Voir le chapitre \ref{chap-auto}, ou \cite{cantat-k3}, pour la justification d'une telle terminologie.} (ou plus simplement \emph{loxodromique}) lorsque
\begin{equation*}
\lambda(f)>1,
\end{equation*}
ce qui équivaut à $\h(f_\C)>0$.
\end{enonce*}

Ces automorphismes ont été décrits par Cantat dans \cite{cantat-cras} et \cite{cantat-k3}. En particulier, il y montre le théorème suivant (la seconde partie est due à \cite{nagata}) :

\begin{theo*}[Cantat, Nagata]
Soit $X$ une surface complexe compacte, et soit $f$ un automorphisme de $X$ de type loxodromique. Alors la dimension de Kodaira $\kod(X)$ vaut $0$ ou $-\infty$. Plus précisément, on est dans un des deux cas suivants :
\begin{enumerate}
\item Si $\kod(X)=0$, alors il existe un morphisme birationnel $\pi:X\to X_0$ tel que la surface $X_0$ soit un tore, une surface K3 ou une surface d'Enriques, et $f$ induit un automorphisme $f_0$ sur $X_0$, qui est de type loxodromique avec~$\h(f_0)=\h(f)>0$\footnote{Autrement dit, $f$ est obtenu à partir d'un automorphisme loxodromique sur une surface minimale en éclatant certaines orbites finies, et cette surface minimale est un tore, une surface K3 ou une surface d'Enriques.}.
\item Si $\kod(X)=-\infty$, alors $X$ est une surface rationnelle non minimale, isomorphe à $\P^2$ éclaté en $n$ points, avec $n\geq 10$.
\end{enumerate}
\end{theo*}

\begin{enonce*}[remark]{Suite de l'exemple}
La formule d'adjonction et le théorème des sections hyperplanes de Lefschetz montrent qu'une surface lisse $X$ de degré $(2,2,2)$ est une surface K3. L'automorphisme $f$ défini par la composition des trois involutions $s_i$ (dans n'importe quel ordre) est de type loxodromique, et son entropie se calcule facilement grâce à l'action de $f^*$ sur le sous-groupe de~$H^{1,1}(X(\C);\R)$ engendré par les fibres des trois fibrations $\pi_i:(x_1,x_2,x_3)\in X\mapsto x_i\in\P^1$ (voir \cite{cantat-k3}, ou l'exemple \ref{ex-loxo} du chapitre \ref{chap-auto}) :
\begin{equation*}
\h(f_\C)=\log(9+4\sqrt{5})>0.
\end{equation*}
\end{enonce*}

Dans le chapitre \ref{chap-k3}, on introduit une famille ${(X^t)}_{t\in\R^*}$ de telles surfaces, chacune étant munie de l'automorphisme loxodromique $f^t:X^t\to X^t$ décrit ci-dessus. Pour cette famille d'automorphismes, l'entropie du système dynamique complexe est constante, tandis que celle du système dynamique réel converge vers $0$ lorsque $t\to 0$. Il existe donc des exemples d'automorphismes de surfaces pour lesquels le rapport
\begin{equation*}
\frac{\h(f_\R)}{\h(f_\C)}
\end{equation*}
est arbitrairement petit. La question de savoir si, pour les automorphismes de surfaces, ce rapport peut être nul est actuellement ouverte.

\begin{enonce*}[remark]{Remarque}
Dans \cite{bedford-kim-maxent}, Bedford et Kim ont répondu à la question inverse de savoir s'il existe un automorphisme $f$ d'une surface algébrique réelle~$X$ qui soit d'entropie maximale, c'est-à-dire tel que $\h(f_\R)=\h(f_\C)>0$. Ils répondent positivement à cette question, en donnant des exemples de tels automorphismes sur des surfaces géométriquement rationnelles, construites par éclatements succesifs de $\P^2$. Par contre, on ne connaît pas de tels exemples sur d'autres types de surfaces (K3 ou Enriques).
\end{enonce*}


\section*{Comparaison de volumes}

Dans \cite{yomdin} (voir aussi \cite{gromov-aft-yomdin}), Yomdin relie l'entropie topologique avec la croissance des volumes des sous-variétés. Cette approche nous incite à comparer les longueurs des courbes réelles avec les aires des courbes complexifiées, plutôt que de comparer les entropies. Nous menons cette étude dans la partie \ref{part2}. 

Pour une courbe algébrique réelle $C$ sur~$X$, on note
\begin{align*}
&\mvol_\R(C) = \max\left\{ \longueur(C'(\R))\,|\,C'\text{ courbe telle que }[C]=[C'] \right\}\\
\text{et}\quad &\vol_\C(C) = \aire(C(\C)),\label{pasdemax}
\end{align*}
où $[C]$ désigne la classe d'homologie de $C(\C)$ dans $H_2(X(\C);\R)$. Les volumes sont ici calculés vis-à-vis d'une métrique de Kähler sur $X(\C)$, ce qui a pour conséquence que l'on n'a pas besoin de prendre de maximum dans la définition de $\vol_\C$ (l'aire d'une courbe ne dépend que de sa classe, par la formule de Wirtinger). On a toujours l'inégalité
$$
\mvol_\R(C)\leq C^{ste}\vol_\C(C),
$$
qui est une conséquence de la formule de Cauchy-Crofton (cf. chapitre \ref{chap-upper-bound}).

\begin{enonce*}[definition]{Définition}
On définit la \emph{concordance} $\alpha(X)$ comme la borne supérieure des exposants $\alpha\geq 0$ pour lesquels il existe $C^{ste}>0$ telle que
\begin{equation*}
\mvol_\R(C)\geq C^{ste}\vol_\C(C)^\alpha
\end{equation*}
pour toute courbe algébrique réelle $C$ dans un ensemble suffisamment large de courbes\footnote{Voir le chapitre \ref{chap-conc} pour une définition précise.}.
\end{enonce*}

La concordance $\alpha(X)$ est ainsi un nombre compris entre $0$ et $1$, qui ne dépend que de $X$ (il ne dépend pas du choix d'une métrique particulière sur~$X$).

On arrive à calculer cette concordance sur des exemples précis. Par exemple, lorsque le nombre de Picard $\rho(X)$\footnote{Il s'agit ici du nombre de Picard réel.} est égal à $1$, on a $\alpha(X)=1$. Ceci concerne par exemple le plan projectif ${X=\P^2}$, ou encore la quadrique projective ${x^2+y^2+z^2-t^2=0}$ dans $\P^3$. Plus généralement lorsque le cône nef\footnote{Là encore, il s'agit du cône nef réel.} est polyédral rationnel, et que ses rayons extrémaux sont engendrés par de \og vraies\fg~courbes réelles, au sens où ces courbes ont un lieu réel non vide qui n'est pas constitué uniquement de points isolés, alors $\alpha(X)=1$. Ceci s'applique lorsque $X$ est une surface de Del Pezzo (cf. chapitre~\ref{chap-conc}), par exemple~${X=\P^1\times\P^1}$. Les premiers exemples de surfaces pour lesquelles la concordance n'est pas $1$ sont donnés au chapitre \ref{chap-tores} :

\begin{theoa}\label{theo-conc-ab}
Soit $X$ une surface abélienne réelle, c'est-à-dire une surface algébrique réelle telle que $X(\C)$ est un tore. Le nombre de Picard $\rho(X)$ vaut $1$, $2$ ou $3$, et la concordance est donnée par :
\begin{enumerate}
\item Si $\rho(X)=1$, alors $\alpha(X)=1$.
\item Si $\rho(X)=2$, alors $\alpha(X)$ vaut $1$ ou $1/2$ selon que $X$ possède ou non une fibration elliptique réelle.
\item Si $\rho(X)=3$, alors $\alpha(X)=1/2$.
\end{enumerate}
De plus, la concordance vaut $1/2$ exactement pour les tores sur lesquels il existe un automorphisme loxodromique réel.
\end{theoa}

La concordance $1/2$ qui apparaît dans ce théorème est égale au rapport des entropies réelles et complexes des automorphismes loxodromiques sur les tores. Ce lien entre concordance et rapport des entropies est un phénomène plus général, qui est mis en relief au chapitre~\ref{chap-ent} :

\begin{theoa}\label{theo-conc-ent}
Soit $X$ une surface algébrique réelle.
\begin{enumerate}
\item On suppose qu'il existe un automorphisme $f$ de type loxodromique sur~$X$. On a alors la majoration
$$
\alpha(X)\leq\frac{\h(f_\R)}{\h(f_\C)}.
$$
\item Si de plus $\rho(X)=2$, alors
$$
\alpha(X)=\frac{\h(f_\R)}{\h(f_\C)}.
$$
\end{enumerate}
\end{theoa}

La première partie du théorème est liée au théorème de Yomdin sur la croissance des volumes \cite{yomdin}. La seconde se démontre grâce à l'approximation de l'entropie par des \og fers à cheval\fg, un théorème dû à Katok \cite{katok-hasselblatt}.

Au chapitre \ref{chap-k3}, on obtient comme corollaire de ce théorème et de l'exemple de famille ${(X^t,f^t)}_{t\in\R^*}$ cité plus haut :

\begin{corol}\label{coro-k3}
Il existe une famille $(X^t)_{t\in\R^*}$ de surfaces K3 réelles telles que
$$
\lim_{t\to 0}\alpha(X^t)=0.
$$
\end{corol}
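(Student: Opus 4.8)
Le principe est d'obtenir l'\'enonc\'e comme une cons\'equence imm\'ediate du Th\'eor\`eme~\ref{theo-conc-ent} et de la famille explicite $(X^t,f^t)_{t\in\R^*}$ de surfaces r\'eelles de degr\'e $(2,2,2)$ dans $\P^1\times\P^1\times\P^1$ dont la construction fait l'objet du chapitre~\ref{chap-k3}. Je commencerais donc par rappeler les deux propri\'et\'es de cette famille qui y sont \'etablies : chaque $X^t$ est une surface K3 r\'eelle lisse, munie de l'automorphisme loxodromique $f^t$ obtenu en composant les trois involutions de rev\^etement ; l'entropie complexe est constante le long de la famille, $\h(f^t_\C)=\log(9+4\sqrt5)$ ; et l'entropie r\'eelle d\'eg\'en\`ere, $\h(f^t_\R)\to 0$ lorsque $t\to 0$.

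Ces faits admis, l'argument est court. Comme $f^t$ est de type loxodromique sur $X^t$, la premi\`ere partie du Th\'eor\`eme~\ref{theo-conc-ent} donne
\[
0\leq\alpha(X^t)\leq\frac{\h(f^t_\R)}{\h(f^t_\C)}=\frac{\h(f^t_\R)}{\log(9+4\sqrt5)}
\]
pour tout $t\in\R^*$. Le d\'enominateur est une constante strictement positive ind\'ependante de $t$, et le num\'erateur tend vers $0$ quand $t\to 0$ ; le membre de droite tend donc vers $0$, ce qui force $\lim_{t\to 0}\alpha(X^t)=0$.

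Ainsi le corollaire lui-m\^eme ne pr\'esente aucune difficult\'e une fois le chapitre~\ref{chap-k3} en place : il r\'esulte formellement de la majoration $\alpha(X)\leq\h(f_\R)/\h(f_\C)$, d\'ej\`a disponible, et de la limite $\h(f^t_\R)\to 0$. Le v\'eritable travail -- et l'\'etape que j'attends comme principal obstacle -- ne se situe pas dans cette d\'eduction mais dans la construction de la famille : il faut choisir les coefficients des polyn\^omes $P^t$ de degr\'e $(2,2,2)$ de sorte que $X^t$ reste lisse (donc K3) et que la matrice de $(f^t)^*$ sur le sous-espace engendr\'e par les trois classes de fibres reste ind\'ependante de $t$ (afin que $\h(f^t_\C)$ soit constante), tout en faisant en sorte que le lieu r\'eel $X^t(\R)$ d\'eg\'en\`ere de fa\c{c}on contr\^ol\'ee pour imposer $\h(f^t_\R)\to 0$. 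Une mani\`ere naturelle d'obtenir cette derni\`ere estimation -- conforme \`a l'esprit de la concordance -- est de majorer $\h(f^t_\R)$ par le taux de croissance exponentiel des longueurs des courbes r\'eelles sous $f^t_\R$, puis de montrer que, lorsque $t\to 0$, le lieu r\'eel se contracte (ou s'effondre sur un ensemble invariant simple) de sorte que ce taux de croissance tende vers $0$ de fa\c{c}on uniforme.
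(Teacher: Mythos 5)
Ta d\'eduction centrale est exacte et co\"incide avec celle du texte : la premi\`ere partie du Th\'eor\`eme~\ref{theo-conc-ent} donne $\alpha(X^t)\leq\h(f^t_\R)/\h(f^t_\C)$, le d\'enominateur vaut la constante $\log(9+4\sqrt5)$, et la limite $\h(f^t_\R)\to 0$ fournit la conclusion. Jusque-l\`a, rien \`a redire.

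En revanche, le m\'ecanisme que tu proposes pour obtenir $\h(f^t_\R)\to 0$ n'est pas celui du papier, et il repose sur une intuition fausse. Tu sugg\`eres que le lieu r\'eel $X^t(\R)$ \og se contracte\fg~ou \og s'effondre sur un ensemble invariant simple\fg~quand $t\to 0$, et de majorer l'entropie par la croissance des longueurs de courbes. Or $X^t(\R)$ ne d\'eg\'en\`ere pas du tout : c'est la surface \emph{complexe} $X^0(\C)$ qui devient singuli\`ere \`a $t=0$ (douze points singuliers du type $(\infty,\pm i,\pm i)$, etc.), et ces douze points ne sont pas r\'eels, de sorte que $X^0(\R)$ reste une surface lisse compacte. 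L'argument du texte est tout autre : l'ensemble $\mathcal{X}=\{(x,t)\,:\,x\in X^t(\R)\}$ est un fibr\'e localement trivial au-dessus de $\R$, ce qui permet de conjuguer les $f^t_\R$ en une famille $\mathcal{C}^\infty$-continue $(g^t)$ de diff\'eomorphismes de la surface \emph{fixe} $X^0(\R)$. Au param\`etre $t=0$, la restriction de $f^0$ au lieu r\'eel est le diff\'eomorphisme d'ordre~$2$ $(x_1,x_2,x_3)\mapsto(-x_1,-x_2,-x_3)$, donc $\h(g^0)=0$, et la limite $\h(f^t_\R)\to 0$ d\'ecoule alors de la \emph{continuit\'e de l'entropie topologique} sur $\Diff^\infty(X^0(\R))$ (semi-continuit\'e sup\'erieure par Yomdin--Newhouse, inf\'erieure par Katok). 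Aucun effondrement du lieu r\'eel n'intervient, et il serait en fait difficile de faire fonctionner une majoration directe par croissance de longueurs de fa\c{c}on uniforme en $t$ sans passer, d'une mani\`ere ou d'une autre, par cet argument de continuit\'e.
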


Comme autre conséquence de la première partie du théorème \ref{theo-conc-ent}, on obtient au chapitre \ref{chap-non-dense} des informations sur l'image du morphisme
$$
\Aut(X)\to\Diff(X(\R)).
$$
Plus exactement, lorsque $X$ est de dimension de Kodaira nulle, les automorphismes préservent une forme d'aire canonique $\mu$ sur $X(\R)$, et on peut remplacer le groupe $\Diff(X(\R))$ par le sous-groupe $\Diff_\mu(X(\R))$ des difféomorphismes qui préservent l'aire. On a alors l'énoncé suivant (comparer avec \cite[Theorem 4, Proposition 6]{kollar-mangolte}) :

\begin{corol}\label{coro-non-dense}
Soit $X$ une surface algébrique réelle telle que $\alpha(X)>0$.
\begin{enumerate}
\item L'image de $\Aut(X)$ dans $\Diff(X(\R))$ n'est pas dense pour la topologie~$\mathcal{C}^\infty$ sur $\Diff(X(\R))$.
\item Si $\kod(X)=0$, l'image de $\Aut(X)$ dans $\Diff_\mu(X(\R))$ est également non dense, pour la topologie $\mathcal{C}^\infty$.
\item Si le groupe $\Aut(X)$ est discret pour sa topologie usuelle, alors son image dans $\Diff(X(\R))$ est aussi discrète pour la topologie $\mathcal{C}^1$.
\end{enumerate}
\end{corol}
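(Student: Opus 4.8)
The plan is to derive the three assertions from Theorem~\ref{theo-conc-ent}(1), combined with elementary facts about the action of automorphisms on $H^2(X(\C);\Z)$ and about smooth dynamics on surfaces. For (1) and (2) the mechanism is an \emph{entropy gap}: the real entropies realized by $\Aut(X)$ avoid an interval $(0,\delta)$, whereas $\Diff(X(\R))$ contains diffeomorphisms lying in that gap in a robust way. For (3) the mechanism is that an automorphism whose real trace is $\mathcal{C}^1$-close to the identity is an almost-isometry of $X(\R)$, hence moves the homology class of a curve very little, and one then invokes discreteness.

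First I would establish the gap. Since $f^*$ acts on the fixed lattice $H^2(X(\C);\Z)$, the number $\lambda(f)$ is the largest modulus of a root of a monic integer polynomial of degree $b_2(X)$ all of whose roots have modulus $\leq\lambda(f)$; hence $\{\lambda(f)\mid f\in\Aut(X)\}$ meets every bounded interval in a finite set, so there is $\lambda_X>1$ with $\lambda(f)\geq\lambda_X$ for every loxodromic $f$. By Theorem~\ref{theo-conc-ent}(1) and $\alpha(X)>0$, a loxodromic $f$ then satisfies $\h(f_\R)\geq\alpha(X)\log\lambda(f)\geq\alpha(X)\log\lambda_X=:\delta>0$, while a non-loxodromic $f$ has $\h(f_\C)=0$, hence $\h(f_\R)=0$; thus $\h(f_\R)\in\{0\}\cup[\delta,+\infty)$ for every $f\in\Aut(X)$. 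Now choose, on one connected component of the compact surface $X(\R)$, a $\mathcal{C}^\infty$ diffeomorphism $g$ which is the identity off a small ball, carries a hyperbolic basic set of entropy $h_0\in(0,\delta)$ (for instance realizing a subshift of finite type whose transition matrix has Perron eigenvalue close to $1$), and has $\h(g)=h_0$. By structural stability of hyperbolic basic sets, $g$ has a $\mathcal{C}^1$-neighbourhood $V$ on which $\h(\cdot)>0$; by upper semicontinuity of topological entropy in the $\mathcal{C}^\infty$ topology (Yomdin, Newhouse), $W=\{h:\h(h)<\delta\}$ is a $\mathcal{C}^\infty$-neighbourhood of $g$. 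Then $U=V\cap W$ is a nonempty $\mathcal{C}^\infty$-open set disjoint from the image of $\Aut(X)$: a loxodromic $f$ has $f_\R\notin W$, a non-loxodromic one has $f_\R\notin V$. This proves (1). For (2), if $\kod(X)=0$ one runs the same argument inside $\Diff_\mu(X(\R))$, where upper semicontinuity of $\h$ still holds for the induced $\mathcal{C}^\infty$ topology; it suffices to make $g$ area-preserving, which one does by renormalizing the (area-preserving) linear Smale horseshoe — place $N$ disjoint discs in a ball, translate each onto the next and fold the last across the first by the horseshoe map, so that $g$ is $\mu$-preserving, equals the identity off the ball, has $\h(g)=\tfrac1N\log2<\delta$ for $N$ large, and its basic set persists under $\mathcal{C}^1$ $\mu$-preserving perturbations.

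For (3), suppose $\Aut(X)$ discrete and let $f_n\in\Aut(X)$ with $(f_n)_\R\to\id$ in $\mathcal{C}^1$; I must show $f_n=\id$ for $n$ large. The norms of $D(f_n)_\R$ and of $D(f_n)_\R^{-1}$ tend to $1$, so $(f_n)_\R$ distorts lengths by a factor $1+o(1)$. Fix a real algebraic curve $C$ with ample class and with $C(\R)$ of positive length: for any real curve $C'$ with $[C']=[f_nC]$ in $H_2(X(\C);\R)$, the curve $f_n^{-1}(C')$ has class $[C]$ and real locus $(f_n)_\R^{-1}(C'(\R))$, whence $\mvol_\R(C)\geq(1-o(1))\longueur(C'(\R))$; taking the supremum over $C'$ gives $\mvol_\R(f_nC)\leq(1+o(1))\mvol_\R(C)$, and $\mvol_\R(C)<+\infty$ by Cauchy--Crofton. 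As $f_n(C)$ is an effective curve with ample class, the concordance estimate $\mvol_\R(f_nC)\geq C^{ste}\vol_\C(f_nC)^\alpha$ applies for any $\alpha<\alpha(X)$, and since $\alpha(X)>0$ this bounds $\vol_\C(f_nC)=\langle\omega,f_{n*}[C]\rangle$ for a fixed Kähler class $\omega$; as $\omega$ lies in the interior of the nef cone, which is dual to the cone of effective classes, $\langle\omega,\cdot\rangle$ dominates a norm on the latter, so $f_{n*}[C]$ takes only finitely many values in $H^2(X(\C);\Z)$. Doing this for curves whose classes span $\NS(X)_\R$, we get that $f_n^*|_{\NS(X)}$ takes finitely many values; since moreover $\{f\in\Aut(X):f^*|_{\NS(X)}=\id\}$ is finite — it fixes a real ample class $H$, and, $\Aut^0(X)$ being trivial, a polarized surface has only finitely many such automorphisms (concretely, when $\Pic^0(X(\C))=0$ this group embeds in the finite linear algebraic group $\Aut(X,\mathcal{O}(mH))$) — the set $\{f_n:n\gg0\}$ is finite. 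A convergent sequence in a finite set is eventually constant, and since $\Aut(X)\hookrightarrow\Diff(X(\R))$ is injective this forces $f_n=\id$ for $n$ large; as the image is a group, it is discrete for the $\mathcal{C}^1$ topology.

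The main obstacles: in (1)--(2), securing the entropy dichotomy — which rests on Theorem~\ref{theo-conc-ent}(1) together with the discreteness of spectral radii of bounded-size integer matrices — and, for (2), producing a $\mathcal{C}^1$-robust $\mu$-preserving horseshoe of arbitrarily small positive entropy; in (3), the a priori bound on $\mvol_\R(f_nC)$, which is where $\alpha(X)>0$ enters and which is overcome above by the almost-isometry remark, together with the finiteness of $\{f:f^*|_{\NS(X)}=\id\}$, where the presence of irregularity calls for replacing the polarization argument by the finiteness of the kernel of $\Aut(X)\to\GL(H^2(X(\C);\Z))$ and a short case analysis.
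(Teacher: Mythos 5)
Your proposal is correct and follows essentially the same route as the paper: for (1)--(2) an entropy gap $\h(f_\R)\in\{0\}\cup[\delta,+\infty)$ coming from Theorem~\ref{theo-conc-ent}(1) plus a uniform lower bound on $\lambda(f)$, confronted with the (semi)continuity of topological entropy on $\Diff^\infty$ and the existence of a (resp.\ area-preserving) diffeomorphism of small positive entropy; for (3) the Lipschitz bound $\mvol_\R(f_*D)\leq\ltrivert{\rm d}f_\R\rtrivert_\infty\mvol_\R(D)$ combined with the concordance inequality to bound $\vol_\C(f_*D)$, the finiteness of ample classes of bounded volume, and Lieberman--Fujiki for the finiteness of the stabilizer of an ample class. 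The only notable variant is that you obtain the uniform gap $\lambda(f)\geq\lambda_X>1$ from the discreteness of spectral radii of bounded-size integer matrices, where the paper invokes McMullen's bound $\lambda(f)\geq\lambda_{10}$ (Corollaire~\ref{coro-lehmer}); both yield the same conclusion.
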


La question de savoir s'il existe des surfaces avec $\alpha(X)=0$ reste ouverte. Cette question est liée à celle évoquée précédemment de savoir s'il existe un automorphisme loxodromique d'une surface $X$ d'entropie réelle nulle : une telle surface $X$ serait nécessairement de concordance nulle, d'après la majoration du théorème \ref{theo-conc-ent}.


\section*{Ensemble de Julia et ensemble de Fatou}

Par définition, l'\emph{ensemble de Fatou} de $f$, noté $\Fat(f)$, est le plus grand ouvert de $X$ sur lequel la famille ${(f^n)}_{n\in\Z}$ est une famille normale\footnote{Certains auteurs définissent l'ensemble de Fatou comme le domaine de normalité des itérés positifs de $f$. Si l'on note $\Fat^+(f)$ ce dernier ensemble, et $\Fat^-(f)=\Fat^+(f^{-1})$, on a la relation \begin{equation*}\Fat(f)=\Fat^+(f)\cap\Fat^-(f).\end{equation*}}. Autrement dit, un point $x\in X(\C)$ est dans l'ensemble de Fatou s'il possède un voisinage $U$ tel que toute suite à valeurs dans $\{f^n\,|\,n\in\Z\}$ possède une sous-suite qui converge uniformément sur les compacts de $U$. Le complémentaire de l'ensemble de Fatou est appelé \emph{ensemble de Julia}.

Intuitivement, l'ensemble de Julia correspond aux points autour desquels les orbites ont une forte dépendance aux conditions initiales : c'est donc le lieu où la dynamique est chaotique. \emph{A contrario}, l'ensemble de Fatou apparaît comme le lieu où la dynamique est plutôt simple. Un exemple évident d'ouvert inclus dans l'ensemble de Fatou est donné par les domaines sur lesquels $f$ est conjugué à une rotation sur un domaine circulaire de $\C^2$ : un tel ouvert est appelé \emph{domaine de rotation au sens fort}.

Dans la partie \ref{part3}, on cherche à répondre à la question suivante :

\begin{enonce*}[remark]{Question}
Existe-t-il un automorphisme loxodromique $f$ d'une surface algébrique réelle $X$ tel que $X(\R)\subset\Fat(f)$ ?
\end{enonce*}

Notons qu'un tel automorphisme serait d'entropie nulle sur $X(\R)$, et fournirait ainsi des réponses positives aux questions que l'on s'est déjà posées plus haut. On va plutôt essayer de montrer que la réponse est négative. Ceci fait l'objet de la partie \ref{part2}.

Pour cela, on étudie l'hyperbolicité de l'ensemble de Fatou, que l'on déduit essentiellement d'un théorème non publié de Dinh et Sibony sur les courants positifs fermés (cf. chapitre \ref{chap-hyp}).

\begin{theoa}\label{theoa-fatou}
Soit $f$ un automorphisme loxodromique d'une surface complexe compacte kählérienne $X$. Alors l'ensemble $\Fat(f)$, privé d'éventuelles courbes périodiques, est hyperbolique au sens de Kobayashi.\footnote{On a en fait un énoncé plus fort, cf. théorème \ref{fat-hyp}.}
\end{theoa}

Comme conséquence, on obtient dans le chapitre \ref{chap-rec} que toutes les composantes de Fatou qui possèdent des points récurrents sont des domaines de rotation au sens de \cite{bedford-kim-rotation}, ce qui transpose au cas des automorphismes des surfaces un résultat obtenu par Ueda pour les endomorphismes de $\P^n$ \cite[Theorem 3.1]{ueda}.

\begin{corol}\label{theoa-rot}
Soit $f$ un automorphisme loxodromique d'une surface complexe compacte kählérienne $X$, et soit $\Omega$ une composante connexe de l'ensemble de Fatou. On suppose qu'il existe $x$ et $y$ dans $\Omega$ tels que
\begin{equation*}
f^{n_k}(x) \mathop{\longrightarrow}\limits_{k\to+\infty} y
\end{equation*}
pour une suite $n_k\to\pm\infty$. Alors $\Omega$ est un domaine de rotation, au sens où il existe une suite $n_k\to\pm\infty$ telle que
\begin{equation*}
f^{n_k} \mathop{\longrightarrow}\limits_{k\to+\infty} \id_\Omega
\end{equation*}
uniformément sur les compacts de $\Omega$.
\end{corol}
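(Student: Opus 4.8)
The plan is to use Théorème~\ref{theoa-fatou} to equip a dense open subset of $\Omega$ with a genuine Kobayashi--hyperbolic structure on which the relevant iterates of $f$ act by \emph{automorphisms}, and then to conclude by a soft Lie-group argument applied to the difference sequence $n_{k+1}-n_k$. First, up to replacing $f$ by $f^{-1}$ (which has the same Fatou set and is again loxodromic) one may assume $n_k\to+\infty$. Since $y\in\Omega$ and $\Omega$ is open, $f^{n_k}(x)\in\Omega$ for $k$ large; as $f$ permutes the connected components of $\Fat(f)$, the component $f^{n_k}(\Omega)$ meets $\Omega$, hence equals $\Omega$. Dropping finitely many indices one assumes $f^{n_k}(\Omega)=\Omega$ for all $k$, and, passing to a subsequence once and for all, also $n_{k+1}-n_k\to+\infty$. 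Let $E\subset\Omega$ be the union of the periodic curves of $f$ contained in $\Omega$, a proper analytic subset; since $f$ permutes its periodic curves and preserves $\Omega$, each $f^{n_k}$ restricts to a biholomorphism of $\Omega^{*}:=\Omega\setminus E$, which by Théorème~\ref{theoa-fatou} (in the strong form~\ref{fat-hyp}) is Kobayashi hyperbolic.

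Next I would push the recurrence into $\Omega^{*}$. By normality of $\{f^{n}\}$ on $\Omega$, a subsequence of $(f^{n_k})$ converges uniformly on compacts of $\Omega$ to a holomorphic map $h\colon\Omega\to X(\C)$ with $h(x)=y$. If $h$ is non-constant, then $E\cup h^{-1}(E)$ is a proper analytic subset of $\Omega$; choosing $x'$ outside it and setting $y':=h(x')$ gives $x',y'\in\Omega^{*}$ with $f^{n_k}(x')\to y'$, and one replaces $(x,y)$ by $(x',y')$. The constant case must be excluded: if $h\equiv c$ with $c\in\Omega^{*}$, the maps $f^{n_k}|_{\Omega^{*}}$ are $k_{\Omega^{*}}$-isometries while $f^{n_k}(c)\to c$ and $f^{n_k}(z)\to c$ for any fixed $z\in\Omega^{*}$, which would force $k_{\Omega^{*}}(c,z)=0$, absurd.

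With $x,y\in\Omega^{*}$ in hand, the conclusion is soft. Since $\Omega^{*}$ is hyperbolic, $\Aut(\Omega^{*})$ is a (real) Lie group acting properly on $\Omega^{*}$, so the set of automorphisms sending $x$ into a fixed closed ball $\overline{B}\Subset\Omega^{*}$ around $y$ is compact. As $f^{n_k}(x)\to y$, after a further subsequence $f^{n_k}|_{\Omega^{*}}\to h$ in $\Aut(\Omega^{*})$ — in particular uniformly on compacts of $\Omega^{*}$ — and hence $f^{-n_k}|_{\Omega^{*}}\to h^{-1}$ as well. Continuity of composition in $\Aut(\Omega^{*})$ then gives
\[
f^{\,n_{k+1}-n_k}|_{\Omega^{*}}=\bigl(f^{n_{k+1}}|_{\Omega^{*}}\bigr)\circ\bigl(f^{n_k}|_{\Omega^{*}}\bigr)^{-1}\;\xrightarrow[k\to\infty]{}\;h\circ h^{-1}=\id_{\Omega^{*}},
\]
uniformly on compacts of $\Omega^{*}$, with $m_k:=n_{k+1}-n_k\to+\infty$. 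Finally $\{f^{m_k}\}$ is normal on $\Omega$, and every subsequential limit $\Omega\to X(\C)$ agrees with $\id$ on the dense subset $\Omega^{*}$, hence equals $\id_\Omega$; therefore $f^{m_k}\to\id_\Omega$ uniformly on compacts of $\Omega$, which is the assertion (the case $n_k\to-\infty$ follows from the initial reduction).

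The hard part will be the non-degeneracy claim underlying the second and third steps: namely, that the iterates $f^{n_k}|_{\Omega^{*}}$ do not ``escape compactly'', i.e.\ that the limit is a true automorphism of $\Omega^{*}$ rather than a constant, or more generally a map with image in $\partial\Omega$ or in a periodic curve. Escape towards $\partial\Omega$ is prevented by the hyperbolicity of Théorème~\ref{theoa-fatou}; the genuinely delicate possibility is escape towards a periodic curve $E\subset\Omega$ (the case ``$h$ constant equal to a point of $E$'' above), and dealing with it is exactly where the full strength of the unpublished Dinh--Sibony estimates, via the strong form~\ref{fat-hyp} of Théorème~\ref{theoa-fatou}, is needed. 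Everything else — normality of $\{f^{n}\}$ on a Fatou component, properness of the automorphism action of a hyperbolic manifold, and the density argument — is routine.
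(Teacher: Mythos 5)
Your overall plan is the right one and matches the paper's Th\'eor\`eme~\ref{rec=rot}: pass to $\Omega^{*}=\Omega\setminus{\sf CP}(f)$, use the strong hyperbolicity statement (Th\'eor\`eme~\ref{fat-hyp}) there, push the recurrence into $\Omega^{*}$, and conclude via a compactness argument applied to the difference sequence $m_k=n_{k+1}-n_k$, then upgrade to $\Omega$ by density. However, two steps are left genuinely open.

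\textbf{The constant case $h\equiv c\in E$.} You set this aside, saying its resolution ``is exactly where the full strength of the unpublished Dinh--Sibony estimates\dots is needed.'' That is not what the paper does, and in fact no further analytic input is required. After passing to a subsequence so that both $f^{n_k}\to h$ and $f^{-n_k}\to h'$ uniformly on compacts of $\Omega$, pick \emph{any} $x\in\Omega^{*}\cap h^{-1}(\Omega)$ (a nonempty open set since $\Omega^{*}$ is dense and $h^{-1}(\Omega)\ni x_0$). If $h(x)=y$ were in ${\sf CP}(f)$, then since ${\sf CP}(f)$ is closed and $f$-invariant one gets
\[
x \;=\; \lim_{k}f^{-n_k}\!\bigl(f^{n_k}(x)\bigr)\;=\;h'(y)\in{\sf CP}(f),
\]
contradicting $x\in\Omega^{*}$. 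This works uniformly, with no case distinction on whether $h$ is constant; your Kobayashi-isometry argument for the subcase $c\in\Omega^{*}$ is correct but becomes unnecessary. As it stands, your write-up never disposes of the case $c\in E$, and the reference to Dinh--Sibony there is a misattribution: those estimates establish the hyperbolicity, not the recurrence-pushing step.

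\textbf{Properness of the $\Aut(\Omega^{*})$-action.} You conclude by asserting that because $\Omega^{*}$ is hyperbolic, $\Aut(\Omega^{*})$ acts properly, so that the set of automorphisms carrying $x$ into a compact neighbourhood of $y$ is compact. Hyperbolicity alone does not give properness of the $\Aut$-action; one needs tautness (equivalently, that a limit of automorphisms either stays an automorphism or degenerates to the boundary), which here would have to be extracted from the fact that the proof of Th\'eor\`eme~\ref{fat-hyp} actually shows $\Omega_0=\pi(\Omega)$ is \emph{hyperbolically embedded} in the singular model $X_0$. This is doable but you do not carry it out. The paper instead uses Ueda's Kobayashi-ball argument, which sidesteps any completeness or tautness issue: choose small Kobayashi balls $V$ around $x$ and $W$ around $y$ in $\Omega^{*}$ with $V\subset h^{-1}(\Omega^{*})$, check $W\subset f^{n_k}(V)$ using the $\kob_{\Omega^{*}}$-isometry of $f$ and the triangle inequality, and then bound
\[
\sup_{W}\kob_{\Omega^{*}}\!\bigl(f^{m_k}(\cdot),\cdot\bigr)\;\le\;\sup_{V}\kob_{\Omega^{*}}\!\bigl(f^{n_{k+1}}(\cdot),f^{n_k}(\cdot)\bigr)\;\longrightarrow\;0,
\]
since $f^{n_k}\to h$ uniformly on $V$; this gives $f^{m_k}\to\id$ on $W$, and analytic continuation finishes. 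Your Lie-group route is a legitimate alternative but, as written, it is a gap, not a proof.
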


On déduit de cette étude des restrictions topologiques pour le lieu réel des surfaces $X$ qui répondent positivement à la question ci-dessus.

\begin{theoa}\label{theoa-fat-reel}
Soit $f$ un automorphisme loxodromique d'une surface algébrique\footnote{Ce théorème est vrai dans le cas plus général où $X$ est une surface complexe compacte kählérienne, munie d'une structure réelle, cf. chapitre \ref{chap-str-reelle}.} réelle $X$. Si une composante connexe de $X(\R)$ est contenue dans $\Fat(f)$, alors celle-ci ne peut être qu'une sphère, un tore, un plan projectif ou une bouteille de Klein.
\end{theoa}

On obtient aussi une description simple de ce que doit être la dynamique sur une telle composante réelle (voir chapitre \ref{chap-fatou-reel}).

Dès que la topologie de $X(\R)$ devient compliquée, il est donc impossible d'obtenir un tel exemple. En revanche, il est légitime de se demander s'il existe des modèles algébriques réels de surfaces topologiques simples qui possèdent un automorphisme loxodromique pour lequel le lieu réel est contenu dans l'ensemble de Fatou. Je montre au chapitre \ref{chap-bir} qu'un tel exemple existe, si l'on s'autorise à prendre une application birationnelle au lieu d'un automorphisme.

\begin{theoa}\label{theoa-birdiff}
Il existe une application birationnelle réelle $f$ sur la surface~$X=\P^1\times\P^1$ telle que
\begin{enumerate}
\item $\lambda(f)>1$ ;\footnote{Ici, $\lambda(f)$ doit être défini comme $\lim_{n\to+\infty}\left\|{(f^n)}^*\right\|^{1/n}$, plutôt que comme le rayon spectral de $f^*$ qui serait égal à $\lim_{n\to+\infty}\left\|{(f^*)}^n\right\|^{1/n}$.}
\item le lieu réel $X(\R)\simeq\R^2/\Z^2$ est inclus dans  l'ensemble de Fatou.
\end{enumerate}
Plus précisément, $X(\R)$ est inclus dans un domaine de rotation au sens fort.
\end{theoa}


\section*{Contenu de la thèse}

La première partie est constituée essentiellement de rappels de résultats éparpillés dans la littérature. Rien n'y est nouveau, excepté la proposition~\ref{courbe-entiere-aire-finie} et le théorème \ref{dilat+per}. Les chapitres~\ref{chap-courants} et \ref{chap-dilat} sont nécessaires seulement pour la partie~\ref{part3}, et peuvent donc n'être lus qu'après la partie \ref{part2}.

Le chapitre \ref{chap-str-reelle} fixe les principales notations et conventions, utilisées tout au long du texte, concernant les surfaces algébriques complexes et réelles, et plus généralement les surfaces kählériennes munies d'une structure réelle.

La chapitre \ref{chap-auto} s'intéresse aux automorphismes de ces surfaces. On y rappelle la classification homologique des automorphismes introduite dans \cite{cantat-k3}, qui justifie la terminologie de \emph{type loxodromique} pour les automorphismes d'entropie positive. Une attention particulière est aussi portée aux courbes périodiques d'un tel automorphisme.

Dans le chapitre \ref{chap-courants}, on passe en revue la théorie des courants positifs fermés sur une variété complexe. On y fait apparaître la notion de courant d'Ahlfors associé à une courbe entière.

Dans le chapitre \ref{chap-dilat}, on montre comment on peut associer à un automorphisme loxodromique $f$ des courants positifs fermés $T^+_f$ et $T^-_f$, avec potentiels locaux continus, qui sont étroitement liés à la dynamique de $f$. Lorsque~$f$ possède des courbes périodiques, on montre que les potentiels de $T^+_f$ et~$T^-_f$ sont constants le long de ces courbes, si l'on choisit un bon recouvrement (théorème \ref{dilat+per}). Cette dernière propriété s'avèrera utile au chapitre \ref{chap-hyp}.\\

La partie \ref{part2} s'intéresse à la comparaison de volumes évoquée plus haut. Ses résultats font l'objet d'un article \cite{volumes} à paraître dans \emph{International Mathematics Research Notices}, publié en ligne depuis septembre 2011. Cette partie est indépendante des chapitres \ref{chap-courants} et \ref{chap-dilat}.

Le chapitre \ref{chap-upper-bound} est consacré à la formule de Cauchy-Crofton dans le cadre de l'espace projectif. Cette formule implique la majoration, évoquée plus haut, de la longueur d'une courbe réelle par l'aire de la courbe complexe correspondante, à une constante multiplicative près.

Le chapitre \ref{chap-conc} est dédié à la définition et aux premières propriétés de la concordance. On y voit plusieurs exemples de surfaces ayant une concordance égale à un.

Le chapitre \ref{chap-ent} établit le lien entre cette concordance nouvellement définie et le rapport des entropies pour un automorphisme. On y démontre le théorème~\ref{theo-conc-ent}, en utilisant pour la première partie le théorème de Yomdin déjà mentionné sur la relation entre entropie et croissance des volumes, et pour la seconde un théorème de Katok sur l'approximation de l'entropie par celle d'un \og fer à cheval\fg.

Dans le chapitre \ref{chap-tores}, on décrit de manière complète la concordance pour les surfaces abéliennes réelles, et on démontre le théorème \ref{theo-conc-ab}. On y montre aussi que la concordance peut varier si l'on change de structure réelle sur la surface complexe sous-jacente (\textsection\ref{dep-str}, qui n'est pas dans \cite{volumes}).

Le chapitre \ref{chap-k3} est dédié aux surfaces K3 réelles. On commence par y donner une version améliorée, dans le cadre des surfaces K3, de la deuxième partie du théorème \ref{theo-conc-ent} (celle concernant les surfaces de nombre de Picard $2$) : on précise à quelles conditions les hypothèses de ce théorème sont satisfaites, et on montre que la concordance est un dans le cas contraire. Puis on décrit l'exemple de famille $(X^t,f^t)_{t\in\R^*}$ mentionné plus haut, ce qui donne le corollaire \ref{coro-k3}.

Enfin, le chapitre \ref{chap-non-dense} concerne la démonstration du corollaire \ref{coro-non-dense} sur la non-densité et la discrétude de $\Aut(X)$ dans $\Diff(X(\R))$.\\

Dans la troisième et dernière partie, on s'intéresse à l'ensemble de Fatou des automorphismes loxodromiques d'une surface kählérienne compacte, dans le but de répondre partiellement à la question posée plus haut.

Le chapitre~\ref{chap-hyp} contient la preuve du résultat principal de cette troisième partie, à savoir le théorème \ref{theoa-fatou} concernant l'hyperbolicité de l'ensemble de Fatou. On y trouve aussi l'énoncé et la preuve du théorème de Dinh et Sibony sur les courants, qui permet de montrer ce résultat. Les courbes périodiques sont un obstacle à pouvoir déduire directement l'hyperbolicité de ce dernier théorème ; on s'en tire néanmoins en se plaçant sur un modèle minimal singulier où l'on a contracté toutes ces courbes, et grâce au théorème \ref{dilat+per}.

On montre ensuite au chapitre \ref{chap-rec} que les composantes de Fatou récurrentes sont des domaines de rotation au sens de Bedford et Kim (corollaire \ref{theoa-rot}).

Au chapitre \ref{chap-fatou-reel}, on applique ce corollaire pour prouver le théorème~\ref{theoa-fat-reel} sur les restrictions topologiques à ce que $X(\R)$ soit inclus dans l'ensemble de Fatou. Une description plus précise de la dynamique est donnée dans ce cas.

Dans le dernier chapitre, on discute de l'optimalité des restrictions topologiques données par ce dernier théorème, et on donne un exemple où l'ensemble de Fatou contient un tore réel, mais pour une application birationnelle au lieu d'un automorphisme (théorème \ref{theoa-birdiff}).


\part{Préliminaires}\label{part1}


\chapter{Variétés algébriques complexes et réelles}\label{chap-str-reelle}


\section{Variétés algébriques complexes}

Dans ce texte, on appelle \emph{variété algébrique complexe} $X$ une sous-variété algébrique irréductible et \textbf{lisse} d'un espace projectif complexe $\P^n_\C$. De manière équivalente\footnote{C'est le théorème de Chow (voir par exemple \cite{griffiths-harris}).}, $X$ est une sous-variété analytique complexe, connexe, compacte et non singulière de $\P^n(\C)$.


\subsection{Groupes de diviseurs}

Par définition, un \emph{diviseur} (de Weil) sur~$X$ est une combinaison linéaire finie, à coefficients entiers, de sous-variétés algébriques irréductibles de codimension $1$, éventuellement singulières. Un tel diviseur est \emph{effectif} lorsque les coefficients sont positifs. On note $(\Div(X),+)$ le groupe abélien des diviseurs.

Lorsque $f:X\dashrightarrow\C$ est une fonction rationnelle non nulle, le diviseur associé à $f$ est
\begin{equation}
{\sf div}(f)=\sum_{Z}{\sf ord}_Z(f)\,Z,
\end{equation}
où la somme est prise sur les sous-variétés algébriques irréductibles de codimension $1$ (seul un nombre fini de termes sont non nuls) : un tel diviseur est dit \emph{principal}. Le quotient de $\Div(X)$ par le sous-groupe des diviseurs principaux est ce que l'on appelle le \emph{groupe de Picard}, noté $\Pic(X)$. Deux diviseurs ayant même classe dans $\Pic(X)$ sont dit \emph{linéairement équivalents}.

Par ailleurs, l'équivalence entre diviseurs de Weil et diviseurs de Cartier permet d'associer à un diviseur $D$ un fibré en droites $\O_X(D)$, défini à isomorphisme près : ceci induit un isomorphisme entre $\Pic(X)$ et le groupe $H^1(X,\O_X^*)$ des classes de fibrés en droites. La suite exacte courte
\begin{equation}
1 \longrightarrow \Z \longrightarrow \O_X \overset{\exp(2\pi i\cdot)}{\longrightarrow} \O_X^* \longrightarrow 1
\end{equation}
induit une suite exacte longue
\begin{equation}
\cdots\to H^1(X;\Z)\to H^1(X,\O_X)\to H^1(X,\O_X^*)\to H^2(X;\Z)\to\cdots,
\end{equation}
qui permet de définir la (première) classe de Chern de $D$ dans $H^2(X;\Z)$, notée~$[D]$. Deux diviseurs qui ont même classe de Chern sont dits \emph{algébrique\-ment équivalents}.

\begin{rema}
Modulo les sous-groupes de torsion de $H^2(X;\Z)$ et $H_{2n-2}(X;\Z)$, la classe de Chern $[D]$ est Poincaré-duale à la classe fondamentale de $D$ dans $H_{2n-2}(X;\Z)$. L'équivalence algébrique est donc caractérisée par l'égalité des classes d'homologie.
\end{rema}

\textbf{Dans la suite, on considère toujours les groupes de (co)homologie modulo torsion.}\\

Le sous-groupe de $H^2(X;\Z)$ formé par les classes de Chern est appelé \emph{groupe de Néron-Severi} de $X$, et noté $\NS(X)$, ou parfois $\NS(X;\Z)$ lorsqu'on veut insiter sur le fait que les coefficients sont entiers. C'est un groupe abélien libre, dont le rang $\rho(X)$ est appelé \emph{nombre de Picard} de $X$. Le théorème des $(1,1)$-classes de Lefschetz (voir \cite[p. 163]{griffiths-harris}) donne l'égalité
\begin{equation}\label{eq-ns-cplx}
\NS(X)=H^2(X;\Z)\cap H^{1,1}(X;\R).
\end{equation}

\begin{rema}
Lorsque le premier nombre de Betti de $X$ est nul, le groupe $\Pic(X)$ s'identifie à $\NS(X)$. Dans le cas général, on a un morphisme surjectif
\begin{equation}
\Pic(X)\to\NS(X)\to 0
\end{equation}
dont le noyau, noté $\Pic^0(X)$, peut être muni d'une structure de variété abélienne complexe, duale à la variété d'Albanese.
\end{rema}


\subsection{Cas des surfaces : forme d'intersection}

Lorsque $X$ est de dimension $2$, le cup-produit des classes de cohomologie induit une forme quadratique non dégénérée sur $H^2(X;\R)$, à valeurs entières sur $H^2(X;\Z)$. Cette forme quadratique $q$ est appelée \emph{forme d'intersection}. L'intersection $D_1\cdot D_2$ entre deux diviseurs $D_1$ et $D_2$ est définie par celle de leurs classes de Chern, qui correspond, par dualité de Poincaré, à l'intersection des classes d'homologie\footnote{Lorsque $D_1$ et $D_2$ se coupent transversalement, il s'agit simplement de compter le nombre de points d'intersection.}. On notera avec un point toutes ces intersections.

Sur l'espace vectoriel complexe $H^2(X;\C)=H^2(X;\R)\otimes_\R\C$, la forme d'intersection se prolonge en une forme hermitienne, toujours appelée forme d'intersection, donnée par
\begin{equation}
q([\omega])=\int_X\omega\wedge\b\omega
\end{equation}
pour toute $2$-forme différentielle $\omega$. La décomposition de Hodge
\begin{equation}
H^2(X;\C) = H^{2,0}(X;\C) \oplus H^{1,1}(X;\C) \oplus H^{0,2}(X;\C)
\end{equation}
est alors orthogonale vis-à-vis de cette forme d'intersection. Celle-ci est définie-positive sur les sous-espaces $H^{2,0}(X;\C)$ et $H^{0,2}(X;\C)$, et de signature $(1,h^{1,1}(X)-1)$ sur $H^{1,1}(X;\C)$, d'après le théorème de l'indice de Hodge. À noter que les sous-espaces complexes $H^{2,0}(X;\C)\oplus H^{0,2}(X;\C)$ et $H^{1,1}(X;\C)$ sont en fait définis sur $\R$ : on note $(H^{2,0}\oplus H^{0,2})(X;\R)$ et $H^{1,1}(X;\R)$ les sous-espaces réels correspondant. La forme d'intersection y est respectivement de signature $(2h^{2,0}(X),0)$ et $(1,h^{1,1}(X)-1)$.

Comme $X$ est projective, il existe des diviseurs qui sont d'auto-intersection strictement positive, donc la forme d'intersection sur le sous-espace $\NS(X;\R):=\NS(X)\otimes\R$ de $H^{1,1}(X;\R)$ est de signature $(1,\rho(X)-1)$. Le cône de positivité $\{q>0\}$ dans $\NS(X;\R)$ possède donc deux composantes connexes : on note $\NS^+(X)$ celle qui contient des classes de diviseurs effectifs.

\begin{figure}[h]
\begin{center}
\scalebox{0.4}{\input{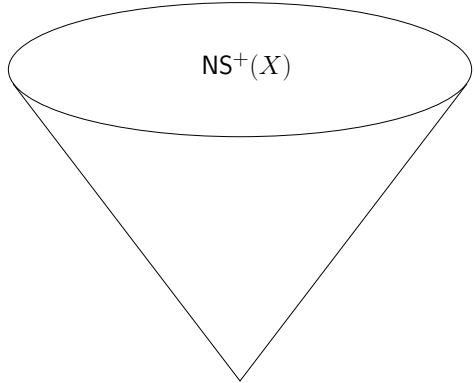}}
\end{center}
\caption{Le cône convexe ouvert $\NS^+(X)$.}
\end{figure}


\subsection{Autres notions de positivité} (voir \cite{lazarsfeld})

Un diviseur $D$ est dit \emph{ample} lorsque l'application rationnelle
\begin{equation}
\begin{split}
\phi_{|kD|} : X & \dashrightarrow \,\P H^0(X,\O_X(kD))^*\\
x & \longmapsto \left\{s\in H^0(X,\O_X(kD))\,\big|\,s(x)=0\right\}
\end{split}
\end{equation}
définit un plongement pour un certain $k\in\N^*$ (on dit que $kD$ est \emph{très ample}). Cela équivaut à l'existence d'un plongement projectif de $X$ tel que le diviseur~$kD$ soit une intersection hyperplane. On note $\Amp(X)$ le cône convexe de $\NS(X;\R)$ engendré par les classes de diviseurs amples.

Un $\R$-diviseur\footnote{c'est-à-dire un élément de $\Div(X;\R):=\Div(X)\otimes\R$} $D$ est dit \emph{nef}\footnote{pour numériquement effectif, mais aussi pour \emph{numerically eventually free}} lorsque $D\cdot C\geq 0$ pour toute courbe algébrique $C$ sur $X$, où $D\cdot C$ désigne l'intersection entre la classe de Chern $[D]\in H^2(X;\Z)$ et la classe d'homologie de $C$ dans $H_2(X;\Z)$. C'est une notion plus faible que l'amplitude. Si l'on note $\Nef(X)$ le cône convexe formé par les classes de $\R$-diviseurs nef, un théorème de Kleiman donne alors :
\begin{equation}
\Nef(X)=\b{\Amp(X)}
\quad\text{et}\quad
\Amp(X)=\overset{\circ}{\Nef(X)}.
\end{equation}

Dans le cas où $X$ est une surface, les diviseurs amples sont d'auto-intersection strictement positive, ce qui entraîne l'inclusion
\begin{equation}
\Amp(X)\subset\NS^+(X).
\end{equation}


\subsection{Quelques théorèmes classiques sur les surfaces}  (voir \cite{bpvdv})

On suppose à nouveau que $X$ est de dimension $2$. On note $K_X$ le diviseur canonique sur $X$ (défini à équivalence linéaire près), qui correspond au lieu des zéros moins le lieu des pôles d'une $2$-forme méromorphe sur $X$.

\begin{theo}[formule du genre]\label{theo-genre}
Soit $D$ une courbe irréductible sur~$X$. Son genre arithmétique, défini par $g(D)=h^1(D,\O_D)$, est donné par la formule
\begin{equation}
g(D)=1+\frac{1}{2}D\cdot(D+K_X).
\end{equation}
Dans le cas où $D$ est une courbe lisse, il est égal au genre de $D$ au sens topologique.
\end{theo}

Pour tout diviseur $D$ sur $X$, on note $\O_X(D)$ le faisceau inversible associé au diviseur~$D$, c'est-à-dire le faisceau des fonctions méromorphes $f$ telles que ${\sf div}(f)+D$ est effectif. Pour tout faisceau inversible $\mathcal{F}$, on définit la caractéristique d'Euler de $\mathcal{F}$ par la formule $\chi(\mathcal{F})=\sum_{k=0}^{2}(-1)^kh^k(X,\mathcal{F})$, où~${h^k(X,\mathcal{F})=\dim H^k(X,\mathcal{F})}$. La caractéristique d'Euler du faisceau structural~$\O_X$ est donnée par la formule de Noether :
\begin{equation}
\chi(\O_X)=\frac{\chi(X)+K_X^2}{12},
\end{equation}
où $\chi(X)=\sum_{k=0}^4 (-1)^k\dim H^k(X;\R)$ désigne la caractéristique d'Euler topologique de $X$.

\begin{theo}[formule de Riemann-Roch]
Pour tout diviseur $D$ sur $X$, on a
\begin{equation}
\chi(\O_X(D))=\chi(\O_X)+\frac{1}{2}D\cdot(D-K_X).
\end{equation}
\end{theo}

\begin{theo}[dualité de Serre]
Pour tout diviseur $D$ sur $X$, on a
\begin{equation}
h^2(X,\O_X(D))=h^0(X,\O_X(K_X-D)).
\end{equation}
\end{theo}

\begin{coro}\label{rr+serre}
Soit $D$ un diviseur sur $X$. On suppose que $K_X-D$ n'est linéairement équivalent à aucun diviseur effectif. Alors
\begin{equation}
h^0(X,\O_X(D))\geq \chi(\O_X)+\frac{1}{2}D\cdot(D-K_X).
\end{equation}
\end{coro}

\begin{proof}
On a $h^2(X,\O_X(D))=h^0(X,\O_X(K_X-D))=0$ par hypothèse, donc la formule de Riemann-Roch donne
\begin{equation}
h^0(X,\O_X(D))-h^1(X,\O_X(D))+0 = \chi(\O_X)+\frac{1}{2}D\cdot(D-K_X).
\end{equation}
On en déduit le résultat, car $h^1(X,\O_X(D))\geq 0$.
\end{proof}


\section{Structure réelle sur une variété algébrique complexe} 

Le lecteur désireux de plus de détails pourra ici se référer à \cite{silhol}.


\subsection{Généralités}

Par définition, une \emph{variété algébrique réelle} $X_\R$ est la donnée :
\begin{enumerate}
\item d'une variété algébrique complexe $X_\C$ (au sens de la section précédente) ;
\item d'une \emph{structure réelle} sur $X_\C$, c'est-à-dire une involution anti-holomorphe $\sigma_X$ (notée plus simplement $\sigma$ la plupart du temps).
\end{enumerate}

\textbf{On supposera en outre que l'ensemble $X(\R)$ des points fixés par~$\sigma$ est non vide.} Cet ensemble forme alors une variété analytique réelle compacte, non nécessairement connexe, de dimension $\dim_\C(X)$.\\

Par définition, les morphismes entre variétés algébriques réelles sont équivariants vis-à-vis des structures réelles, et les sous-variétés algébriques de~$X_\R$ sont préservées par $\sigma$. Au niveau des notations, on distinguera la variété algébrique réelle $X_\R$ de l'ensemble des points réels $X(\R)$ sous-jacent, ainsi que la variété algébrique complexe $X_\C$ et l'ensemble des points complexes~$X(\C)$.

\begin{rema}
Lorsque $X_\C$ est le lieu des zéros, dans $\P^n_\C$, de polynômes à coefficients réels, le choix naturel pour $\sigma$ est la structure réelle induite par la conjugaison complexe des coordonnées dans $\P^n(\C)$. L'ensemble $X(\R)$ des points réels est alors l'intersection de $X(\C)$ avec $\P^n(\R)$. On dit dans ce cas que $X_\R$ est une sous-variété algébrique de $\P^n_\R$, qui est irréductible et géométriquement lisse (d'après les hypothèses faites sur $X_\C$). En fait, toute variété algébrique réelle est obtenue de cette manière (cf. proposition \ref{sous-var-reelle}).
\end{rema}


\subsection{Action de $\sigma$ sur les diviseurs et sur la cohomologie}\label{action-sigma}

L'involution anti-holomorphe $\sigma$ agit naturellement :
\begin{enumerate}
\item sur l'ensemble des diviseurs $D$ de $X_\C$ : $D^\sigma$ est par définition l'image de $D$ par $\sigma$. En terme de diviseurs de Cartier, si $D$ a pour équation (localement)~${f=0}$, alors le diviseur $D^\sigma$ est donné par $\sigma_\C\circ f\circ\sigma_X$, où $\sigma_\C$ désigne la conjugaison dans $\C$. Par définition, un diviseur $D$ sur $X_\C$ est réel (on dit aussi que $D$ est un diviseur sur $X_\R$) si $D=D^\sigma$.
\item sur l'espace $H^2(X_\C;\R)$, grâce à l'action de $\sigma$ sur les $2$-formes différentielles. Cette action $\sigma^*$ préserve la structure entière $H^2(X_\C;\Z)$ et renverse la structure de Hodge, \emph{i.e.} $\sigma^*H^{p,q}(X;\C)=H^{q,p}(X;\C)$. En particulier, le sous-groupe~${\NS(X_\C;\Z)=H^2(X_\C;\Z)\cap H^{1,1}(X_\C;\R)}$ est préservé par~$\sigma^*$.
\end{enumerate}

En revanche, l'action de $\sigma$ sur les diviseurs ne correpsond pas à l'action sur les classes de Chern. Plus exactement, on a la formule suivante (voir \cite[\textsection I.4]{silhol}) :
\begin{equation}\label{pb-sign}
[D^\sigma] = -\sigma^*[D].
\end{equation}
Par conséquent, l'ensemble des classes de Chern de diviseurs réels est égal au sous-groupe
\begin{equation}\label{eq-ns-reel}
\NS(X_\R):=\left\{\theta\in\NS(X_\C)\,|\,\sigma^*\theta=-\theta\right\}.
\end{equation}
Ce groupe est appelé \emph{groupe de Néron-Severi} de $X_\R$, et son rang $\rho(X_\R)$ est le \emph{nombre de Picard} de $X_\R$.


\subsection{Positivité}

Les notions de positivité introduites précédemment sont préservées par l'action de $\sigma$ sur les diviseurs. On désigne par $\Amp(X_\R)$ et $\Nef(X_\R)$ l'intersection des cônes convexes $\Amp(X_\C)$ et $\Nef(X_\C)$ avec le sous-espace $\NS(X_\R;\R)\subset\NS(X_\C;\R)$. À noter que $\Amp(X_\R)$ est non vide, car si $D$ est un diviseur ample sur $X_\C$, alors $D+D^\sigma$ est un diviseur ample sur $X_\R$.

\begin{prop}\label{sous-var-reelle}
Toute variété algébrique réelle est isomorphe à une sous-variété algébrique de $\P^n_\R$.
\end{prop}

\begin{proof}[Esquisse de démonstration]
Soit $D$ un diviseur très ample sur $X_\R$. Le fait que $D^\sigma=D$ permet de munir le fibré $\O_X(D)$ d'une structure réelle naturelle. L'espace projectif complexe $\P^n_\C=\P H^0(X,\O_X(D))^*$ vient alors avec une conjugaison complexe $\sigma_{\P^n}$, et le plongement $\Phi_{|D|}:X_\C\hookrightarrow\P^n_\C$ est équivariant vis-à-vis de $\sigma_X$ et $\sigma_{\P^n}$. Il définit donc un isomorphisme entre $X_\R$ et son image, qui est une sous-variété algébrique de $\P^n_\R$, grâce au théorème de Chow.
\end{proof}

Dans le cas où $X$ est une surface, la restriction de la forme d'intersection au sous-espace $\NS(X_\R;\R)$ est de signature $(1,\rho(X_\R)-1)$, car un diviseur ample sur $X_\R$ est d'auto-intersection strictemement positive. L'intersection 
\begin{equation}
\NS^+(X_\R):=\NS^+(X_\C)\cap\NS(X_\R;\R)
\end{equation}
est donc un cône convexe ouvert dans $\NS(X_\R;\R)$, et on a le même dessin que pour $\NS^+(X_\C)$.

%

\section{Variétés kählériennes réelles}

Toutes ces notions s'étendent au cas où $X_\C$ est une variété complexe kählérienne, \textbf{compacte} et connexe, munie d'une structure réelle $\sigma$ : on dit alors (comme par exemple dans \cite{mangolte-k3}) que $X_\R$ est une \emph{variété kählérienne réelle}. Il faut alors remplacer la notion de sous-variété algébrique complexe par celle de sous-variété analytique complexe fermée.

Cependant, lorsque $X_\C$ est une surface qui n'est pas algébrique, la forme d'intersection est négative (mais pas nécessairement définie-négative) sur $\NS(X_\R;\R)$ et $\NS(X_\C;\R)$, et les cônes $\NS^+(X_\R)$ et $\NS^+(X_\C)$ sont vides. Il est donc préférable d'étudier l'espace $H^{1,1}(X_\C;\R)$ tout entier plutôt que les groupes de Néron-Severi.\\

Par analogie avec l'espace $\NS(X_\R;\R)$ (cf. $(\ref{eq-ns-reel})$), on pose
\begin{equation}
H^{1,1}(X_\R;\R):=\{\theta\in H^{1,1}(X_\C;\R)\,|\,\sigma^*\theta=-\theta\}.\footnote{Attention, cela n'a rien à voir avec la cohomologie de $X(\R)$.}
\end{equation}
Avec cette notation, l'équation $(\ref{eq-ns-reel})$ se réécrit (comparer avec $(\ref{eq-ns-cplx})$) :
\begin{equation}
\NS(X_\R) = H^{1,1}(X_\R;\R)\cap H^2(X_\C;\Z).
\end{equation}

La notion correspondant au cône ample est celle du cône de Kähler, dont les éléments sont les classes des formes de Kähler dans $H^{1,1}(X_\C;\R)$, et que l'on note $\Kah(X_\C)$. On définit aussi le cône de Kähler réel 
\begin{equation}
\Kah(X_\R):=\Kah(X_\C)\cap H^{1,1}(X_\R;\R).
\end{equation}
Pour $\K=\R$ ou $\C$, le théorème de plongement de Kodaira donne l'égalité
\begin{equation}
\Amp(X_\K)=\Kah(X_\K)\cap\NS(X_\K;\R),
\end{equation}
mais dans le cas où $X$ n'est pas algébrique, cet ensemble est vide.

Lorsque $X$ est une surface, la forme d'intersection $q$ restreinte aux sous-espaces $H^{1,1}(X_\C;\R)$ et $H^{1,1}(X_\R;\R)$ est non dégénérée de signature $(1,*)$\footnote{Pour $H^{1,1}(X_\R;\R)$, cela provient du fait que si $\kappa$ est une forme de Kähler quelconque sur $X_\C$, alors $\kappa-\sigma^*\kappa$ est également une forme de Kähler, et sa classe est dans $H^{1,1}(X_\R;\R)$.}. On peut donc définir, par analogie avec $\NS^+(X_\C)\subset\NS(X_\C;\R)$ et $\NS^+(X_\R)\subset\NS(X_\R;\R)$, des cônes convexes ouverts 
\begin{equation}
H^+(X_\C)\subset H^{1,1}(X_\C;\R)
\quad\text{et}\quad
H^+(X_\R)\subset\NS(X_\R;\R),
\end{equation}
en prenant la composante connexe de $\{q>0\}$ qui contient les classes de formes de Kähler.


\chapter{Automorphismes d'une surface}\label{chap-auto}


\section{Généralités sur les automorphismes d'une variété complexe}

Un \emph{automorphisme} d'une variété complexe compacte kählérienne $X$ est une application biholomorphe $f:X\to X$. Lorsque $X$ est algébrique, le principe GAGA \cite{gaga} garantit que $f$ est un automorphisme si et seulement si $f$ est un morphisme birégulier de $X$ dans elle-même. On note $\Aut(X)$ le groupe des automorphismes de $X$. Un théorème de Bochner et Montgomery\footnote{Ce théorème est facile à démontrer dans le cas où $X$ est algébrique.} \cite{bm1,bm2} affirme que, muni de la topologie de la convergence uniforme, $\Aut(X)$ est un groupe de Lie complexe, d'algèbre de Lie $H^0(X,{\rm T}_X)$.


\subsection{Action des automorphismes sur les diviseurs}

Le groupe $\Aut(X)$ agit de deux manières sur l'ensemble $\Div(X)$ des diviseurs de $X$ :

\begin{enumerate}
\item Si $f\in\Aut(X)$ et $D\in\Div(X)$, on peut considérer l'image directe de $D$ par $f$, notée $f_*D$. Ceci définit une action à gauche.
\item On peut aussi considérer $D$ comme un diviseur de Cartier défini localement par des équations $h_i=0$. Le diviseur $f^*D$ est alors le diviseur donné par les équations $h_i\circ f=0$. Ceci définit une action à droite.
\end{enumerate}
Ces deux actions sont en fait inverses l'une de l'autre : on a
\begin{equation}
f^*D=f_*^{-1}D \quad \forall D\in\Div(X).
\end{equation}

L'action de $\Aut(X)$ sur $\Div(X)$ est compatible avec la loi de groupe abélien, ainsi qu'avec les relations d'équivalence linéaire et d'équivalence algébrique. Elle définit donc une action par automorphismes sur les groupes $\Pic(X)$ et $\NS(X)$.


\subsection{Action des automorphismes sur la cohomologie}

Le groupe $\Aut(X)$ agit linéairement (à droite) sur la cohomologie de De Rham $H^k(X;\R)$, en préservant à la fois la structure entière $H^k(X;\Z)$, et la structure de Hodge
\begin{equation}
H^k(X;\C)=\bigoplus_{p+q=k}H^{p,q}(X;\C).
\end{equation}
Pour tout $f\in\Aut(X)$, on note $f^*$ les différents automorphismes induits par cette action, et on note aussi $f_*=(f^{-1})^*$ (ce qui définit une action à gauche). À noter que l'action sur le sous-groupe $\NS(X)=H^{1,1}(X;\C)\cap H^2(X;\Z)$ coïncide avec celle définie au paragraphe précédent. Autrement dit, l'action au niveau des classes de Chern est compatible avec celle au niveau des diviseurs (notez la différence avec le \textsection \ref{action-sigma}).


\subsection{Cas des surfaces}

Lorsque $X$ est une surface, l'action de ${f\in\Aut(X)}$ sur la cohomologie de $X$ est dominée par celle sur la cohomologie de rang $2$, au niveau du rayon spectral. En effet :
\begin{enumerate}
\item L'action de $f$ sur $H^0(X;\Z)$ et sur $H^4(X;\Z)$ est triviale.
\item Si $\omega$ est une $1$-forme holomorphe non nulle telle que $f^*\omega=\lambda\,\omega$, alors~${\omega\wedge\b\omega}$ définit une classe non nulle dans $H^2(X;\R)$ qui est un vecteur propre associé à $|\lambda|^2$. Comme $H^1(X;\C)=H^{1,0}(X;\C)\oplus\b{H^{1,0}(X;\C)}$, les valeurs propres de $f^*$ sur $H^1(X;\C)$ sont inférieures, en module, à celles sur $H^2(X;\C)$.
\item Par dualité, l'action sur $H^3(X;\C)$ est dominée par celle sur $H^2(X;\C)$.
\end{enumerate}
Le rayon spectral de $f^*$ sur $H^*(X;\C)$ est donc atteint sur $H^2(X;\C)$.

De plus, la forme d'intersection $q$ sur $H^2(X;\Z)$ est préservée par l'action de $\Aut(X)$. On a donc une action par isométries de $f\in\Aut(X)$ sur chacun des sous-espaces supplémentaires $(H^{2,0}\oplus H^{0,2})(X;\R)$ et $H^{1,1}(X;\R)$. Comme le premier est défini-positif pour $q$, la restriction de ${f^*}$ à ce sous-espace appartient au groupe compact 
\begin{equation}
{\sf O}\left((H^{2,0}\oplus H^{0,2})(X;\R),q\right)\simeq {\sf O}\left(2h^{2,0}(X),\R\right).
\end{equation}
En particulier les valeurs propres complexes de $f^*$ sur $(H^{2,0}\oplus H^{0,2})(X;\R)$ sont de module~$1$.

\begin{rema}
Dans le cas où $X$ est une surface algébrique, $f^*$ est même d'ordre fini sur ce sous-espace $(H^{2,0}\oplus H^{0,2})(X;\R)$. En effet, $f^*$ préserve~$\NS(X)$, et donc préserve aussi son supplémentaire orthogonal dans~${H^2(X;\R)}$, que l'on note $E$. Comme
\begin{equation}
E=(H^{2,0}\oplus H^{0,2})(X;\R) \overset{\perp}\oplus \left(E\cap H^{1,1}(X;\R)\right),
\end{equation}
et que les deux sous-espaces de la somme directe sont à la fois stables par $f^*$ et définis pour la forme d'intersection, on en déduit que ${f^*}_{|E}$ est dans un sous-groupe compact de $\GL(E)$. Comme par ailleurs $E$ est un sous-espace rationnel de $H^2(X;\R)$, la transformation ${f^*}_{|E}$ est donné par une matrice à coefficients entiers : elle est donc d'ordre fini.
\end{rema}

Tout ceci montre que l'action de $\Aut(X)$ sur la cohomologie est dominée par l'action sur le sous-espace $H^{1,1}(X;\R)$.

\begin{defi}
Pour $f\in\Aut(X)$, on appelle \emph{degré dynamique} de $f$, et on note $\lambda(f)$, le rayon spectral de $f^*$ sur la cohomologie de $X$ ou, ce qui est la même chose, sur $H^{1,1}(X;\R)$.
\end{defi}

Dans la suite, on s'intéresse donc à l'action de $f$ sur $H^{1,1}(X;\R)$, qui induit une isométrie d'un espace hyperbolique. On commence par rappeler la classification des isométries d'un tel espace.


\section{Isométries d'un espace hyperbolique}

Pour plus de détails, on renvoie à \cite{benedetti}. Soit $E$ un espace vectoriel réel de dimension $n+1\geq 2$, muni d'une forme quadratique $q$ de signature $(1,n)$. Pour fixer les idées, on peut prendre la forme quadratique standard sur $\R^{n+1}$ :
\begin{equation}
q(x)=x_0^2-x_1^2-\cdots-x_n^2.\footnote{On parle dans ce cas de l'espace de Minkowski.}
\end{equation}
Notons que les sous-espaces totalement isotropes maximaux sont de dimension~$1$ pour une telle forme quadratique.

La quadrique $\{q=1\}$ est un hyperboloïde à deux nappes. On note $\H^n$ l'une d'entre elles. Le plan tangent à $\H^n$ en l'un de ses points $v$ a pour direction l'orthogonal $v^\perp$, et hérite donc d'une forme quadratique définie négative par restriction de $q$ à $v^\perp$. En prenant l'opposé de ces formes quadratiques, on définit ainsi une métrique riemanienne sur $\H^n$, de courbure négative constante égale à $-1$. C'est la métrique dont la distance associée est donnée par la formule
\begin{equation}
\dist(v_1,v_2)={\rm argch}(v_1\cdot v_2),
\end{equation}
où $(v_1,v_2)\mapsto v_1\cdot v_2$ désigne la forme bilinéaire associée à $q$. Il s'agit de l'un des modèles de l'espace hyperbolique de dimension $n$. Topologiquement, $\H^n$ s'identifie à la projection de $\H^n$ dans $\P E$, qui est homéomorphe à une boule ouverte, dont le bord $\partial\H^n$ s'identifie avec la projection du cône isotrope ${\{q=0\}}$ dans $\P E$.

Les transformations linéaires de $E$ qui préservent $q$ fixent l'hyperboloïde $\{q=1\}=\H^n\cup-\H^n$. Le sous-groupe d'indice $2$ de ${\sf O}(E,q)$ formé par les éléments qui préservent chacune des deux nappes s'identifie avec le groupe des isométries de l'espace hyperbolique $\H^n$, que l'on note $\Isom(\H^n)$.

Une isométrie $\phi\in\Isom(\H^n)$ induit une transformation continue de la boule fermée $\b{\H^n}=\H^n\cup\partial\H^n$, et le théorème de Brouwer implique l'existence d'au moins un point fixe $v\in\b{\H^n}$, qui correspond à une direction propre pour $\phi$ dans le cône ${\{q\geq 0\}}$. On distingue trois types d'isométries selon la nature des points fixes. Commençons par énoncer un résultat préliminaire bien connu sur les valeurs propres de~$\phi$ :

\begin{lemm}\label{vp-reelle}
Soit $\phi\in{\sf O}(E,q)$. 
\begin{enumerate}
\item
Si $\lambda$ et $\mu$ sont deux valeurs propres réelles telles que $\lambda\mu\neq 1$, alors les sous-espaces propres $\ker(\phi-\lambda\id)$ et $\ker(\phi-\mu\id)$ sont orthogonaux.
\item
Toute valeur propre complexe $\lambda$ de module différent de $1$ est une valeur propre réelle, et le sous-espace propre associé est une droite contenue dans le cône isotrope $\{q=0\}$. 
\end{enumerate}
\end{lemm}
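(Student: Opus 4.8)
For part (1) the plan is a one-line computation: writing $v\cdot w$ for the symmetric bilinear form attached to $q$, if $\phi(v)=\lambda v$ and $\phi(w)=\mu w$ then, since $\phi$ preserves $q$, one has $v\cdot w=\phi(v)\cdot\phi(w)=\lambda\mu\,(v\cdot w)$, so $(1-\lambda\mu)(v\cdot w)=0$, and the hypothesis $\lambda\mu\neq 1$ gives $v\cdot w=0$; hence $\ker(\phi-\lambda\,\id)\perp\ker(\phi-\mu\,\id)$.

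For part (2) I would pass to the complexification $E_\C=E\otimes_\R\C$ equipped with the $\C$-bilinear extension of $q$. Then $\phi$ extends to an element of $\mathrm{O}(E_\C,q)$, so the computation of part (1) applies verbatim: the complex eigenspaces $V_\mu:=\ker(\phi-\mu\,\id)$ and $V_{\mu'}$ are $q$-orthogonal whenever $\mu\mu'\neq 1$. Now fix an eigenvalue $\lambda$ with $|\lambda|\neq 1$. Since $\phi$ is a real operator, $\bar\lambda$ is again an eigenvalue and $\overline{V_\lambda}=V_{\bar\lambda}$. The condition $|\lambda|\neq 1$ forbids $\lambda^2=1$, so (taking $\mu=\mu'=\lambda$, resp. $\bar\lambda$) each of $V_\lambda$ and $V_{\bar\lambda}$ is totally $q$-isotropic; and $\lambda\bar\lambda=|\lambda|^2\neq 1$ makes $V_\lambda$ and $V_{\bar\lambda}$ mutually orthogonal. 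Consequently $q$ vanishes identically on $U:=V_\lambda+V_{\bar\lambda}$.

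The crux is then a descent to the reals. The subspace $U$ is stable under complex conjugation, hence $U=U_0\otimes_\R\C$ for a real subspace $U_0=U\cap E$ with $\dim_\R U_0=\dim_\C U$, and $q$ vanishes on $U_0$. Because $q$ has signature $(1,n)$, its maximal totally isotropic real subspaces are lines, so $\dim_\C U=\dim_\R U_0\leq 1$; as $V_\lambda\neq\{0\}$ sits inside $U$, this forces $U=V_\lambda$ (in particular $\dim_\C V_\lambda=1$), then $V_{\bar\lambda}\subseteq V_\lambda$, and since eigenspaces for distinct eigenvalues meet trivially this yields $\bar\lambda=\lambda$, i.e. $\lambda\in\R$. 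Finally $V_\lambda$, being a conjugation-stable line, is the complexification of the real line $\ker(\phi-\lambda\,\id)\subseteq E$, which lies in $\{q=0\}$ because $q$ vanishes on $V_\lambda$. The only point requiring a little care — and the one place a naive argument would fail — is that the dimension bound must be applied over $\R$, not over $\C$: over $\C$ a nondegenerate quadratic form admits totally isotropic subspaces of dimension up to $\lfloor(n+1)/2\rfloor$, so $q|_U=0$ alone says nothing; it is only after passing to $U_0$ and invoking that $(E,q)$ has Witt index $1$ (as already noted above) that one obtains the desired contradiction with $\dim U\geq 2$. Everything else is elementary eigenvalue bookkeeping.
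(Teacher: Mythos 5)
Your proof is correct, and it follows a genuinely different (though parallel) route from the paper's. The paper also complexifies $E$, but it extends $q$ to a \emph{Hermitian} form on $E_\C$, for which the Witt index is still $1$; from $q(v)=|\lambda|^2q(v)$ and $v\cdot\bar v=\lambda^2\,(v\cdot\bar v)$ it concludes that $v$ and $\bar v$ span a totally isotropic complex line, hence are proportional, which gives $\lambda\in\R$ and the one-dimensionality at once. You instead extend $q$ as a $\C$-\emph{bilinear} form, which has the pleasant consequence that part~(1) applies verbatim on $E_\C$ and immediately yields the three orthogonalities $V_\lambda\perp V_\lambda$, $V_{\bar\lambda}\perp V_{\bar\lambda}$, $V_\lambda\perp V_{\bar\lambda}$. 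As you rightly flag, this is where a naive argument would break: the bilinear extension has Witt index up to $\lfloor(n+1)/2\rfloor$ over $\C$, so isotropy of $U$ alone is harmless; your descent to the conjugation-fixed real subspace $U_0=U\cap E$, where the Witt index of the original signature-$(1,n)$ form is $1$, is exactly what closes the gap. The paper avoids the descent by paying the price of switching to a Hermitian form; you keep the bilinear form and pay with the descent. Both are valid, and the substance --- a totally isotropic subspace must be one-dimensional --- is the same.
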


\begin{proof}
\begin{enumerate}
\item
Soient $v$ et $w$ deux vecteurs propres associés respectivement aux valeurs propres $\lambda$ et $\mu$. On a alors
\begin{equation}
v\cdot w = \phi(v)\cdot\phi(w) = \lambda\mu\,(v\cdot w),
\end{equation}
ce qui montre que $v$ et $w$ sont orthogonaux, car $\lambda\mu\neq 1$.
\item
On complexifie l'espace vectioriel $E$ en $E_\C=E\otimes_\R\C$, et on prolonge $q$ en une forme hermitienne sur $E_\C$,  toujours notée $q$. L'endomorphisme $\phi$ induit une application $\C$-linéaire $\phi\otimes_\R\id_\C$ sur $E_\C$, toujours notée~$\phi$, qui préserve la forme hermitienne $q$.

Soit $v\in E_\C$ un vecteur propre complexe pour la valeur propre $\lambda$. On a les égalités
\begin{gather}
q(v)=q(\phi(v))=q(\lambda v)=|\lambda|^2 q(v)\label{vp-iso}\\
v\cdot \b v = \phi(v)\cdot \phi(\b v) = \lambda v \cdot \b{\lambda v}  = \lambda^2 (v\cdot\b v).
\end{gather}
Comme $|\lambda|\neq 1$, cela implique $q(v)=0$ et $v\cdot \b v=0$. On en déduit que $q$ est nulle sur le sous-espace complexe engendré par $v$ et $\b v$. Or les sous-espaces totalement isotropes maximaux sont de dimension $1$, donc $\rg_\C(v,\b v)=1$. Ainsi $w:=\frac{v+\b v}{2}$ est un vecteur propre réel pour la valeur propre $\lambda$, ce qui montre que $\lambda\in\R$.

Par ailleurs, le sous-espace propre associé à $\lambda$ est un sous-espace totalement isotrope, en vertu de $(\ref{vp-iso})$. Il est donc de dimension $1$.
\end{enumerate}
\end{proof}

Le lemme \ref{vp-reelle} implique la trichotomie suivante pour les points fixes de $\phi$ sur~$\b{\H^n}$ :

\begin{prop}
Soit $\phi\in\Isom(\H^n)$. On a trois cas disjoints :
\begin{enumerate}
\item \textbf{\emph{Type elliptique :}}
$\phi$ possède un point fixe $v$ dans $\H^n$. L'application linéaire $\phi$ est semi-simple\footnote{On rappelle qu'un endomorphisme est semi-simple si sa matrice est $\C$-diagonalisable. Ceci équivaut au fait que tout sous-espace stable admet un supplémentaire stable (cf. \cite{bourbaki-algebre}).}, et ses valeurs propres complexes sont toutes de module $1$.
\item \textbf{\emph{Type parabolique :}}
$\phi$ ne possède pas de point fixe dans $\H^n$, mais possède un unique point fixe sur $\partial\H^n$. Un tel point fixe correspond alors à une droite~$D$ de points fixes pour $\phi$ dans le cône isotrope $\{q=0\}$. L'application linéaire $\phi$ n'est pas semi-simple, et ses valeurs propres complexes sont toutes de module $1$.
\item \textbf{\emph{Type loxodromique :}}
$\phi$ possède exactement deux points fixes sur~$\partial\H^n$. Ces points fixes correspondent à deux droites propres $D^+$ et $D^-$ dans le cône isotrope, associées à des valeurs propres $\lambda$ et~$\lambda^{-1}$, avec $\lambda>1$. L'application linéaire $\phi$ est semi-simple, avec $\lambda$ et $\lambda^{-1}$ valeurs propres simples et toutes les autres valeurs propres complexes de module $1$.
\end{enumerate}
\end{prop}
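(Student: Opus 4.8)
The plan is to classify an isometry $\phi\in\Isom(\H^n)$ according to where its fixed points on the closed ball $\b{\H^n}=\H^n\cup\partial\H^n$ can lie, keeping track of eigenvalues via Lemma~\ref{vp-reelle} at each step; recall that $\phi$ belongs to the index-$2$ subgroup of ${\sf O}(E,q)$ preserving the nappe $\H^n$, and that by Brouwer the induced self-map of $\b{\H^n}$ has a fixed point, which is an eigenline of $\phi$ inside $\{q\geq 0\}$. \textbf{Elliptic case.} If $\phi$ fixes some $v\in\H^n$, then $q(v)=1$; as $v$ is anisotropic, $E=\R v\overset{\perp}{\oplus}v^{\perp}$ with $q|_{v^{\perp}}$ negative definite and $\phi$-invariant, so $\phi$ acts on $v^{\perp}$ through the compact group ${\sf O}(v^{\perp},q|_{v^{\perp}})\simeq{\sf O}(n,\R)$. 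An orthogonal matrix is semi-simple with all complex eigenvalues of modulus $1$, and together with the eigenvalue $1$ on $\R v$ this is exactly the elliptic conclusion. From now on I assume $\phi$ has no fixed point in $\H^n$.

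\textbf{Loxodromic case.} Suppose $\phi$ has an eigenvalue $\lambda$ with $|\lambda|\neq 1$. By Lemma~\ref{vp-reelle}(2) it is real with a one-dimensional isotropic eigenspace $D^{+}$, and, replacing $\phi$ by $\phi^{-1}$ if necessary, I may take $|\lambda|>1$; since $\phi\in{\sf O}(E,q)$, $\lambda^{-1}$ is also an eigenvalue, with one-dimensional isotropic eigenspace $D^{-}$. I would first observe $D^{+}\not\perp D^{-}$ (otherwise $D^{+}\oplus D^{-}$ would be a two-dimensional totally isotropic subspace, which is excluded), so $P:=D^{+}\oplus D^{-}$ is a hyperbolic plane and $W:=P^{\perp}$ is negative definite and $\phi$-invariant. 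On $W$, $\phi$ sits in a compact orthogonal group, hence is semi-simple with unit-modulus eigenvalues; combined with the scalar action $\lambda\oplus\lambda^{-1}$ on $P$ this makes $\phi$ semi-simple, shows $\lambda,\lambda^{-1}$ are simple, and shows every other eigenvalue has modulus $1$. Next, a fixed point on $\partial\H^n$ is an isotropic eigenline; decomposing it along $D^{+}\oplus D^{-}\oplus W$ and using that its eigenvalue has modulus $1$ forces it to equal $D^{+}$ or $D^{-}$, so there are exactly two boundary fixed points. Finally $\phi$ preserves the convex positive light cone $C^{+}$ (the closure of $\bigcup_{t>0}t\,\H^n$ in $E$), which meets $D^{+}$ in a ray that $\phi$ must map into itself; hence the eigenvalue on $D^{+}$ is positive, i.e.\ $\lambda>1$. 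This is the loxodromic type.

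\textbf{Parabolic case.} Suppose instead all complex eigenvalues of $\phi$ have modulus $1$ (still with no fixed point in $\H^n$). If $\phi$ were semi-simple it would be $\C$-diagonalizable with eigenvalues on the unit circle, so $\{\phi^{k}\mid k\in\Z\}$ would be bounded in $\GL(E)$ and $\phi$ would generate a relatively compact subgroup of $\Isom(\H^n)$; the circumcenter of the (bounded) orbit of any point would then be a fixed point of $\phi$ in $\H^n$, a contradiction. So $\phi$ is not semi-simple. By Brouwer $\phi$ fixes a point of $\b{\H^n}$, not in $\H^n$, hence on $\partial\H^n$: there is at least one invariant isotropic line $D$. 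If there were two distinct such lines $D_1\neq D_2$, the same hyperbolic-plane argument applies: $D_1\not\perp D_2$, and from $\phi(v_1)\cdot\phi(v_2)=v_1\cdot v_2$ the eigenvalues on $D_1$ and $D_2$ would coincide, so $\phi$ would multiply a positive-norm vector of $D_1\oplus D_2$ by $\pm 1$, and by $+1$ because $\phi$ preserves the nappe $\H^n$, producing a fixed point in $\H^n$ — again a contradiction. Hence the boundary fixed point is unique; the same positivity remark applied to $D$ shows $\phi$ fixes $D$ pointwise. This is the parabolic type.

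The three types are then mutually exclusive and exhaustive: only the elliptic one has a fixed point in $\H^n$ (by Lemma~\ref{vp-reelle}(1), a fixed vector of positive norm would be orthogonal to $D^{\pm}$ in the loxodromic case, hence lie in the negative-definite $W$, while the parabolic case excludes it by hypothesis), and among the others parabolic versus loxodromic is decided by having one versus two fixed points on $\partial\H^n$; the case split above covers every $\phi$. I expect the genuinely delicate point to be the loxodromic case, namely checking that $[D^{+}],[D^{-}]$ are the \emph{only} boundary fixed points and, especially, that the expanding eigenvalue is truly $>1$ and not merely $|\lambda|>1$: this is precisely where one must use that $\phi$ preserves the nappe $\H^n$ (equivalently the light cone $C^{+}$), not just the quadric $\{q=1\}$. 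The parallel nappe-preservation input, together with the fact that a relatively compact group of isometries of $\H^n$ has a fixed point, is what powers the parabolic case.
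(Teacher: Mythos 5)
Your proof is correct, but the organizing case split is genuinely different from the paper's. The paper divides according to the fixed-point structure on $\b{\H^n}$: Brouwer produces a fixed point, and one distinguishes whether it lies in $\H^n$ (elliptic), or failing that whether the fixed point on $\partial\H^n$ is unique (parabolic) or not (loxodromic); the spectral description is then deduced in each case. You instead split on the spectrum: elliptic if there is a fixed point in $\H^n$; loxodromic if not and some eigenvalue has modulus $\neq 1$; parabolic if not and every eigenvalue has modulus $1$. Your split is manifestly exhaustive and makes the spectral claims nearly automatic; the difference shows up in the parabolic case. The paper proves non-semi-simplicity by a short linear-algebra argument: if $\phi$ were semi-simple, the stable hyperplane $D^\perp$ would have a $\phi$-stable line complement $\R w$, and since $v\cdot w\neq 0$ one gets $\phi(w)=w$, hence a fixed point in $\H^n$ by linear combination. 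You appeal instead to the Cartan fixed-point theorem (a relatively compact subgroup of $\Isom(\H^n)$ has a fixed point), which is correct but geometrically heavier; the paper's argument is more elementary and self-contained. Conversely, your split lets you read off that the eigenvalue on $D$ is $\pm 1$ for free, while the paper has to prove it separately by a determinant argument (if the eigenvalue on $D$ were $\neq 1$, then $\det(\phi)^2=1$ would force a second eigenvalue of modulus $\neq 1$, hence a second isotropic eigenline, contradicting uniqueness). Your loxodromic case is close to the paper's, built on the same hyperbolic plane $D^+\oplus D^-$ and negative-definite orthogonal $W$; you add the welcome explicit remark that $\lambda>0$ (not merely $|\lambda|>1$) because $\phi$ preserves the forward light cone, a point the paper states briefly. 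One small cosmetic note: replacing $\phi$ by $\phi^{-1}$ to get $|\lambda|>1$ is unnecessary, since the reciprocal eigenvalue $\lambda^{-1}$ is already present in the spectrum of $\phi$ by the reciprocity of the characteristic polynomial of an orthogonal transformation, which you in any case use.
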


\begin{proof}
\begin{enumerate}
\item
Soit $v$ un point fixe de $\phi$ dans $\H^n$. La forme d'intersection est définie-négative sur l'orthogonal de $\R v$, donc $\phi_{|v^\perp}$ est semi-simple avec des valeurs propres de module $1$. Comme $E=\R v\oplus v^\perp$ et $\phi(v)=v$, on en déduit la même propriété pour $\phi$.
\item
Soit $D=\R v$ l'unique droite propre dans le cône isotrope. Comme $\phi$ préserve $\H^n$, la valeur propre associée est positive. Si celle-ci est différente de $1$, le fait que $\det(\phi)^2=1$ ($\phi$ préserve une forme bilinéaire symétrique non dégénérée) implique l'existence d'une deuxième valeur propre de module différent de~$1$, donc (d'après le lemme \ref{vp-reelle}) une deuxième droite propre dans le cône isotrope : ceci contredit l'unicité. La droite $D$ est donc une droite de points fixes.

Supposons que $\phi$ soit semi-simple. L'hyperplan $D^\perp$ possède alors un supplémentaire stable $\R w$, avec $\phi(w)=\lambda w$. Comme $v\cdot w=\phi(v)\cdot\phi(w)=\lambda\,v\cdot w\neq 0$, on a $\lambda=1$ et $\phi(w)=w$. Une combinaison linéaire de $v$ et $w$ est alors un point fixe dans $\H^n$ : contradiction.
\item
Supposons que l'on ne soit dans aucun des deux premiers cas. Alors~$\phi$ possède au moins deux droites propres $D^+$ et $D^-$ dans le cône isotrope, associées à des valeurs propres positives $\lambda$ et $\mu$, avec par exemple $\mu\leq \lambda$. On ne peut pas avoir $\lambda=\mu=1$, car sinon on aurait un point fixe dans $\H^n$ par combinaison linéaire. Or $D^+$ et $D^-$ ne sont pas orthogonales (sinon $D^+\oplus D^-$ serait un plan isotrope). On en déduit d'après le lemme \ref{vp-reelle} que $\mu=1/\lambda$, et en particulier~${\lambda>1}$. Par ailleurs, il ne peut pas y avoir de troisième droite propre dans le cône isotrope, car d'après le même raisonnement, celle-ci serait associée à la fois à la valeur propre $1/\lambda$ et $1/\mu=\lambda$.

La forme d'intersection est définie-négative sur le supplémentaire orthogonal~$S$ de $D^+\oplus D^-$, donc $\phi_{|S}$ est semi-simple et ses valeurs propres complexes sont de module $1$. Comme $E=D^+\oplus D^-\oplus S$, le résultat s'en suit.
\end{enumerate}
\end{proof}

\begin{rema}\label{croissance}
Le type de $\phi$ est caractérisé par la croissance des normes~$\|\phi^k\|$, pour un choix quelconque de norme sur $\End(E)$ :
\begin{enumerate}
\item $\phi$ est elliptique si et seulement si $\|\phi^k\|$ est bornée.
\item $\phi$ est parabolique si et seulement si $\|\phi^k\|$ a une croissance polynomiale (non bornée).
\item $\phi$ est loxodromique si et seulement si $\|\phi^k\|$ croît exponentiellement. Plus précisément, $\|\phi^k\|$ croît comme $\lambda^k$, où $\lambda$ est l'unique valeur propre supérieure à $1$ de $\phi$.
\end{enumerate}
Le rayon spectral $\lim_{k\to+\infty}\|\phi^k\|^{1/k}$ vaut $1$ dans les deux premiers cas, et $\lambda>1$ dans le dernier cas. Remarquons aussi que $\phi$ et $\phi^{-1}$ ont même type et même rayon spectral.
\end{rema}

Dans le cas parabolique, la croissance est en fait quadratique, comme le montre la proposition suivante\footnote{On trouve dans l'appendice de \cite{diller-favre} une preuve de ce fait dans le cas particulier où~$\phi$ est induite par un automorphisme d'une surface complexe (voir le paragraphe suivant), et préserve donc une structure entière. Dans le cas général, c'est aussi bien connu, mais difficile à \og localiser\fg~dans la littérature.} :

\begin{prop}\label{car-D}
Soit $\phi$ une isométrie de type parabolique de $\H^n$, et soit~$D$ la droite fixe de $\phi$ dans le cône isotrope. Il existe un sous-espace vectoriel~$F$ de dimension $3$, stable par $\phi$ et contenant $D$, tel que :
\begin{enumerate}
\item $E=F\oplus F^\perp$ ;
\item $q_{|F^\perp}$ est définie-négative, donc $\phi_{|F^\perp}$ est semi-simple avec des valeurs propres de module $1$ ;
\item $q_{|F}$ a pour signature $(1,2)$ ;
\item il existe une base $(v_0,v_1,v_2)$ de $F$, avec $v_0\in D$, $v_1\in D^\perp$ et $v_2\in\H^n$, dans laquelle la matrice de $\phi_{|F}$ est un bloc de Jordan de taille $2$
\begin{equation}
\begin{pmatrix}1&1&0\\0&1&1\\0&0&1\end{pmatrix}.
\end{equation}
\global\edef\savecount{\arabic{enumi}}
\end{enumerate}
En particulier, on obtient les deux faits suivants :
\begin{enumerate}
\setcounter{enumi}{\savecount}
\item La droite $D$ admet la caractérisation suivante, qui est indépendante de la forme quadratique $q$ :
\begin{equation}
D=\ker(\phi-\id)\cap{\rm im}(\phi-\id).
\end{equation}
\item La croissance de $\|\phi^k\|$ est quadratique.
\end{enumerate}
\end{prop}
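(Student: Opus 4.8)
The plan is to reduce the statement to a single fact about the isometry $\phi$ itself: the ambient space splits $\phi$-invariantly and $q$-orthogonally as $E=F\oplus F^{\perp}$, where $q|_{F^{\perp}}$ is negative definite -- so that $\phi|_{F^{\perp}}$, lying in a compact orthogonal group, is automatically semisimple with eigenvalues of modulus $1$ -- and where $F\supseteq D$ has dimension $3$, $q|_{F}$ has signature $(1,2)$, and $\phi|_{F}$ is a single unipotent Jordan block. Items (1)--(4) then come from choosing an adapted basis of $F$, and (5)--(6) are short corollaries.

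I would first record two bookkeeping facts. Writing $N=\phi-\id$ and letting $*$ denote the $q$-adjoint, the identity $q(\phi a,\phi b)=q(a,b)$ expands to the operator relation $N+N^{*}+N^{*}N=0$, so $N^{*}=-N\phi^{-1}$; since $N$ and $\phi$ commute this yields $(N^{k})^{*}=\pm N^{k}\phi^{-k}$ and hence the clean formula $\mathrm{im}(N^{k})^{\perp}=\ker(N^{k})$ for every $k$, on any $\phi$-invariant subspace where $q$ is nondegenerate; in particular $\mathrm{im}(N)\perp\ker(N)$. Secondly, the non-semisimplicity of $\phi$ is concentrated at the eigenvalue $1$: let $W$ be the real subspace underlying the sum of the generalized eigenspaces of $\phi$ for eigenvalues $\ne 1$. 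It is $\phi$-invariant, and by Lemma~\ref{vp-reelle} together with the pairing of the $\lambda$- and $1/\lambda$-generalized eigenspaces, $q|_{W}$ is nondegenerate. If $q|_{W}$ were indefinite it would have signature $(1,*)$, and $\phi|_{W}$ would be an isometry of the totally geodesic hyperbolic subspace $\H(W)\subseteq\H^{n}$ with no eigenvalue $1$; but by the trichotomy such an isometry is elliptic, parabolic or loxodromic, and the first two force a fixed vector (eigenvalue $1$) while the third forces an eigenvalue $>1$ -- all impossible. Hence $q|_{W}$ is negative definite, $E=E_{1}\oplus^{\perp}W$ with $E_{1}$ the generalized $1$-eigenspace, $q|_{E_{1}}$ has Lorentzian signature $(1,\dim E_{1}-1)$, $\phi|_{E_{1}}=\id+N$ is unipotent, and $N|_{E_{1}}\ne 0$ (else $\phi=\phi|_{E_{1}}\oplus\phi|_{W}$ would be semisimple).

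The heart of the argument -- and, I expect, the main obstacle -- is the linear-algebra statement that, inside the Lorentzian space $E_{1}$, a nonzero nilpotent $N$ with $\id+N$ orthogonal has nilpotency index exactly $3$ and a \emph{single} Jordan block of size $\ge 2$. I would argue this in three steps. (a) $N^{2}\ne 0$: otherwise $\mathrm{im}(N)\subseteq\ker(N)=\mathrm{im}(N)^{\perp}$ makes $\mathrm{im}(N)$ totally isotropic, hence a line, so $N=\ell(\cdot)\,v$ with $v$ isotropic and $\ell(v)=0$; feeding this into the orthogonality relation gives $\ell(x)(v\cdot x)\equiv 0$, forcing $\ell=0$ or $v=0$ -- a contradiction. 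So there is a block of size $\ge 3$. (b) There is at most one Jordan block of size $\ge 2$: the bottom vector of such a block lies in $\mathrm{im}(N)\cap\ker(N)$, hence is isotropic (by $\mathrm{im}(N)\perp\ker(N)$) and fixed, hence lies in the \emph{unique} isotropic fixed line $D$; since the socles of distinct blocks are independent, there is at most one. Thus there is exactly one block of size $\ge 2$; call its size $r\ge 3$ and $F$ its span, a cyclic subspace containing $D$ -- so $v_{0}\in\mathrm{im}(\phi-\id)$, giving already half of (5). (c) $r=3$: $\mathrm{im}(N^{r-1})$ is totally isotropic (it lies in $\mathrm{im}(N)$ and, since $N^{r}=0$ on $E_{1}$, also in $\ker(N)$); if $r\ge 4$ then $2(r-2)\ge r$, so likewise $\mathrm{im}(N^{r-2})\subseteq\ker(N^{r-2})=\mathrm{im}(N^{r-2})^{\perp}$ is totally isotropic, yet it has dimension $\ge 2$ -- impossible. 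So $r=3$ and $E_{1}=F\oplus(\text{fixed vectors})$.

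It remains to check $q|_{F}$ and harvest the conclusions. With $v_{2}$ a generator of $F$, $v_{1}=Nv_{2}$ and $v_{0}\propto N^{2}v_{2}$, the relation $q(\phi\cdot,\phi\cdot)=q(\cdot,\cdot)$ forces $v_{0}$ isotropic, $v_{0}\cdot v_{1}=0$, and makes the Gram matrix of $q$ on $F$ have determinant a nonzero multiple of $q(v_{1})^{3}$; moreover $q(v_{1})\ne 0$, since otherwise $v_{1}$ would be an isotropic vector of $D^{\perp}$, hence (as $q$ is negative definite on $D^{\perp}/D$) would lie in $D$, contradicting $Nv_{1}=N^{2}v_{2}\ne 0$. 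So $q|_{F}$ is nondegenerate; it is not definite (else $\phi|_{F}$ semisimple) and does not have signature $(2,1)$ (else $q|_{E_{1}}$ would have two positive directions), so it has signature $(1,2)$, and $E=F\oplus^{\perp}F^{\perp}$. Rescaling $v_{1},v_{2}$, replacing $v_{2}$ by $v_{2}+s\,v_{0}$ (using $v_{0}\cdot v_{2}\ne 0$) and, if needed, negating the triple $(v_{0},v_{1},v_{2})$, puts $\phi|_{F}$ in the stated Jordan form with $v_{0}\in D$, $v_{1}\in D^{\perp}$, $v_{2}\in\H^{n}$, while $\phi|_{F^{\perp}}$ is semisimple with eigenvalues of modulus $1$ because $q|_{F^{\perp}}$ is negative definite; this gives (1)--(4). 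For (5), one has $\ker(\phi-\id)=\R v_{0}\oplus(F^{\perp}\cap E_{1})$ and $\mathrm{im}(\phi-\id)=(\R v_{0}\oplus\R v_{1})\oplus\mathrm{im}(\phi|_{F^{\perp}}-\id)$, and these two subspaces meet exactly in $\R v_{0}=D$ -- a description involving only $\phi$. For (6), $\phi^{k}|_{F}=\id+kN+\binom{k}{2}N^{2}$ grows like $k^{2}$ while $\phi^{k}|_{F^{\perp}}$ stays bounded, so $\|\phi^{k}\|$ grows quadratically.
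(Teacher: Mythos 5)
Your proof is correct, and while it shares the paper's skeleton --- the primary decomposition $E=E_1\oplus W$ at the eigenvalue $1$, with $q_{|W}$ negative definite and $q_{|E_1}$ Lorentzian, so that everything happens inside $E_1$ --- the heart of your argument is genuinely different. The paper proceeds constructively: it picks $v_2\in\H^n\cap E_1$, sets $v_1=\phi(v_2)-v_2$, uses the negative definiteness of the induced form on $D^\perp/D$ to force $\b\phi=\id$ there (whence $\phi(v_1)=v_1+v_0$ with $v_0\in D$), and rules out $v_0=0$ by a separate argument on the signature-$(1,1)$ plane $\Vect(v_1,v_2)$. You instead classify the nilpotent part $N=\phi-\id$ on $E_1$ abstractly, via the adjoint identity $N^*=-N\phi^{-1}$ and the resulting duality $\ker(N^k)=\mathrm{im}(N^k)^\perp$: socles of Jordan blocks of size $\geq 2$ are isotropic fixed vectors, hence lie in the unique line $D$, so there is exactly one such block; $N^2\neq 0$ forces its size to be $\geq 3$; and the bound $\dim\leq 1$ on totally isotropic subspaces kills size $\geq 4$. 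Your route is longer but more structural: it explains \emph{why} the nilpotency index is exactly $3$ (an isotropic-dimension count), and it supplies actual proofs of assertions (5) and (6), which the paper leaves as unproved \og en particulier\fg~consequences. Two small points to make explicit in a final write-up: the orthogonality $E_1\perp W$ needs the generalized-eigenspace version of the lemme \ref{vp-reelle} (or the paper's direct computation $v\cdot Q(\phi)(w)=Q(1)\,(v\cdot w)$ for $v\in D$), since that lemma as stated only treats genuine eigenvectors; and the Gram determinant of $(v_0,v_1,v_2)$ is $-(v_0\cdot v_2)^2\,q(v_1)$, which becomes the claimed nonzero multiple of $q(v_1)^3$ only after invoking the isometry relation $v_0\cdot v_2=-q(v_1)$, as you do implicitly.
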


\begin{proof}
Le polynôme caractéristique de $\phi$ s'écrit $(x-1)^mQ$, où~${Q\in\R[x]}$ n'admet pas $1$ pour racine. Le théorème de décomposition des noyaux donne la décomposition en sous-espaces stables $E=E_1\oplus E_Q$, où~${E_1=\ker\left((\phi-\id)^m\right)}$ et $E_Q=\ker\left(Q(\phi)\right)$. Remarquons que $D\subset E_1$. De plus, on a l'inclusion $E_Q\subset D^\perp$ : en effet, si $v\in D$ et $w\in E_Q$, on a
\begin{equation}
0 = v \cdot Q(\phi)(w) = Q(\phi)(v) \cdot w = Q(1) \, (v \cdot w),
\end{equation}
donc $v\cdot w=0$. On en déduit que $E_1\nsubseteq D^\perp$, donc la forme quadratique $q_{|E_1}$ est non dégénérée et a pour signature $(1,\dim(E_1)-1)$. Dans la suite, on se retreint à ce sous-espace caractéristique $E_1$, sur lequel la seule valeur propre complexe de $\phi$ est $1$.

Soit $v_2\in \H^n\cap E_1$, et soit $v_1=\phi(v_2)-v_2$. Ce vecteur $v_1$ est dans l'orthogonal de la droite $D$ dans $E_1$, que l'on note $D^\perp$, et il est non nul, car sinon $\phi$ serait de type elliptique.

Soit $\b\phi$ l'endomorphisme induit par $\phi$ sur $D^\perp/D$. Cet endomorphisme préserve la forme quadratique induite, qui est définie-négative (car $q_{|D^\perp}$ est négative de noyau $D$). Ainsi $\b\phi$ est semi-simple, avec pour seule valeur propre complexe~$1$ (car $D^\perp\subset E_1$ par hypothèse), donc $\b\phi=\id$. On en déduit qu'il existe un vecteur~$v_0\in D$ tel que~${\phi(v_1)=v_1+v_0}$.

Supposons que $v_0=0$. Le plan $P=\Vect(v_1,v_2)$ est alors stable par $\phi$. Or ce plan est de signature $(1,1)$, donc il contient deux droites isotropes, qui sont préservées ou échangées par $\phi$. Ainsi,~$\phi_{|P}$ admet une matrice de la forme
\begin{equation}
\begin{pmatrix}\lambda&0\\0&\mu\end{pmatrix}
\quad\text{ou}\quad
\begin{pmatrix}0&\lambda\\\mu&0\end{pmatrix},
\end{equation}
et comme de plus sa seule valeur propre complexe est $1$ (car $P\subset E_1$), on a~${\phi_{|P}=\id_P}$. Ceci contredit le fait que~${\phi(v_2)\neq v_2}$.

On en déduit que $v_0\in D\backslash\{0\}$. Les vecteurs $v_0$, $v_1$ et $v_2$ forment donc une base d'un sous-espace stable~$F\subset E_1$ de dimension $3$, sur lequel $\phi$ a pour matrice
\begin{equation}
\begin{pmatrix}1&1&0\\0&1&1\\0&0&1\end{pmatrix}.
\end{equation}
En restriction à~$F$,~$q$ est non dégénérée et a pour signature $(1,2)$. L'orthogonal de $F$ dans $E$ (et pas dans $E_1$) est alors un supplémentaire stable, sur lequel~$q$ est définie-négative. Ceci achève la démonstration.
\end{proof}

La proposition suivante est triviale, mais nous servira à de nombreuses reprises.

\begin{prop}\label{meme-type}
Soit $F$ un sous-espace de dimension $m+1$ de $E$ sur lequel la restriction de $q$ est de signature $(1,m)$, et soit $\phi$ une isométrie de $\H^n$ qui préserve le sous-espace $F$. Alors $\phi$ définit par restriction une isométrie $\phi_F$ de l'espace hyperbolique $\H^m=F\cap\H^n$, qui est de même type que $\phi$.
\end{prop}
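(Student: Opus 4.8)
The plan is to first check that $\phi_F$ is genuinely an isometry of a hyperbolic space $\H^m$ (so that speaking of its \emph{type} makes sense), and then to read that type off from the growth of its iterates by invoking Remark~\ref{croissance}.

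Since $\phi$ is a $q$-isometry preserving $F$, it also preserves $F^\perp$, and as $q_{|F}$ is nondegenerate one has the $\phi$-invariant orthogonal decomposition $E=F\oplus F^\perp$. By Sylvester's law of inertia, $q_{|F^\perp}$ has signature $(0,n-m)$, i.e.\ it is negative definite; hence $\phi_{|F^\perp}$ lies in a compact subgroup of $\GL(F^\perp)$, so that $\|\phi_{|F^\perp}^k\|$ stays bounded. On the other hand, as $q_{|F}$ has signature $(1,m)$, the quadric $\{q_{|F}=1\}$ is a two-sheeted hyperboloid in $F$; it equals $\{q=1\}\cap F=(\H^n\cap F)\cup(-\H^n\cap F)$, each of its two sheets is connected and hence lies entirely in $\H^n$ or in $-\H^n$, and the antipodal map exchanges the sheets while sending $\H^n$ to $-\H^n$. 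Therefore exactly one sheet lies in $\H^n$, so $F\cap\H^n$ is nonempty and equals that sheet, which with the metric induced by $q$ is the standard model of $\H^m$. Since $\phi$ preserves $F$ and preserves $\H^n$, the restriction $\phi_F=\phi_{|F}$ preserves $F\cap\H^n=\H^m$ and preserves $q_{|F}$, so $\phi_F\in\Isom(\H^m)$.

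It then remains to compare the types of $\phi$ and $\phi_F$. Fixing operator norms and writing $\iota_\bullet,p_\bullet$ for the inclusions and orthogonal projections attached to $E=F\oplus F^\perp$, one has $\phi_F^k=p_F\circ\phi^k\circ\iota_F$ and $\phi^k=\iota_F\circ\phi_F^k\circ p_F+\iota_{F^\perp}\circ\phi_{|F^\perp}^k\circ p_{F^\perp}$, whence constants such that
\begin{equation*}
\|\phi_F^k\|\leq C_1\,\|\phi^k\|
\qquad\text{and}\qquad
\|\phi^k\|\leq C_2\,\|\phi_F^k\|+C_3 ,
\end{equation*}
the second inequality using that $\|\phi_{|F^\perp}^k\|$ is bounded. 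Consequently $\|\phi_F^k\|$ is bounded (resp.\ has polynomial growth, resp.\ grows exponentially) if and only if $\|\phi^k\|$ is, and moreover $\lim_k\|\phi_F^k\|^{1/k}=\lim_k\|\phi^k\|^{1/k}$. By the characterization of the three types in Remark~\ref{croissance}, $\phi_F$ is elliptic, parabolic or loxodromic exactly when $\phi$ is (with the same dynamical degree in the loxodromic case), which is the claim.

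The only point needing any care is the identification of $F\cap\H^n$ with $\H^m$ and the observation that $q_{|F^\perp}$ is negative definite; the latter is precisely what ensures that the $F^\perp$-component of $\phi$ contributes nothing to the asymptotic growth, so that the growth of $\|\phi^k\|$ detects exactly the growth of $\|\phi_F^k\|$. Everything else is a direct application of Remark~\ref{croissance}, which is why the statement is essentially trivial.
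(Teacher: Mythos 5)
Your proof is correct and follows the same route as the paper's: decompose $E=F\oplus F^\perp$, observe that $q_{|F^\perp}$ is negative definite so $\|\phi_{|F^\perp}^k\|$ stays bounded, and conclude that the growth of $\|\phi^k\|$ is governed by $\|\phi_F^k\|$, then invoke the growth characterization of the three types. You merely spell out the preliminary identification of $F\cap\H^n$ with $\H^m$, which the paper leaves implicit.
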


\begin{proof}
Il suffit de remarquer que la suite $\|\phi^k\|$ est bornée sur le supplémentaire orthogonal $F^\perp$ de $F$, car $q$ y est définie-négative. La croissance de $\|\phi^k\|$ ne dépend donc que de la restriction à $F$.
\end{proof}

Dans les cas parabolique et loxodromique, on en déduit que les droites propres dans le cône isotrope sont contenues dans le sous-espace $F$.


\section{Classification homologique des automorphismes}

Soit $X$ une surface complexe compacte kählérienne. On applique les résultats de la section précédente en prenant pour $E$ l'espace $H^{1,1}(X;\R)$ muni de la forme d'intersection. On note $\H_X$ l'espace hyperbolique de dimension $h^{1,1}(X)-1$ obtenu en prenant celle des deux nappes de l'hyperboloïde~${\{q=1\}}$ qui est contenue dans $H^+(X)$ (\emph{i.e.} qui contient une classe de Kähler). Le groupe $\Aut(X)$ agit par isométries sur $H^{1,1}(X;\R)$, en préservant le cône de Kähler donc $H^+(X)$ : ses éléments induisent ainsi des isométries de l'espace hyperbolique $\H_X$.

\begin{defi}[\cite{cantat-k3}]
Soit $f\in\Aut(X)$. On dit que $f$ est un automorphisme de type \emph{elliptique}, \emph{parabolique} ou \emph{loxodromique}\footnote{On pourrait dire aussi \emph{de type hyperbolique}.} selon que $f^*$ définit une isométrie de type elliptique, parabolique ou loxodromique de $\H_X$.
\end{defi}

Le type de $f$ est aussi le type de $f_*=(f^*)^{-1}$, en vertu de la remarque \ref{croissance}. Le degré dynamique $\lambda(f)$ est le rayon spectral de $f^*$ ou $f_*$ agissant sur l'un des espaces $H^{1,1}(X;\R)$, $H^2(X;\R)$ ou $H^*(X;\R)$. Il est égal à~$1$ lorsque~$f$ est de type elliptique ou parabolique, et strictement supérieur à $1$ lorsque $f$ est de type loxodromique.


\subsection{Type elliptique}

Soit $f$ un automorphisme de type elliptique de~$X$, et soit $\theta\in\H_X$ un vecteur propre de $f^*$. Notons $F$ l'orthogonal du vecteur $\theta$ dans $H^{1,1}(X;\R)$. Puisque $f^*$ préserve à la fois la structure entière $H^2(X;\Z)$ et la décomposition orthogonale
\begin{equation}
H^2(X;\R)=\R \theta\oplus F \oplus (H^{2,0}\oplus H^{0,2})(X;\R),
\end{equation}
où la forme d'intersection est définie sur chacun des trois sous-espaces, $f^*$ est alors d'ordre fini. Un théorème de Lieberman et Fujiki permet d'en déduire :

\begin{theo}[\cite{lieberman, fujiki, cantat-these}]
Soit $f$ un automorphisme d'une surface complexe compacte kählérienne $X$. Alors $f$ est de type elliptique si et seulement s'il existe un itéré de $f$ qui est isotope à l'identité\footnote{En fait, $f$ est dans la composante connexe de l'identité du groupe de Lie $\Aut(X)$.} sur~$X$.
\end{theo}


\subsection{Type parabolique}

Soit $f$ un automorphisme de type parabolique de $X$. D'après la proposition \ref{car-D}, l'unique droite propre $D$ de $f^*$ contenue dans le cône isotrope est donnée par
\begin{equation}
D=\ker(f^*-\id)\cap{\rm im}(f^*-\id).
\end{equation}
Cette équation est vraie \emph{a priori} dans $H^{1,1}(X;\R)$, mais aussi dans $H^2(X;\R)$, car $f^*$ est semi-simple sur l'espace $(H^{2,0}\oplus H^{0,2})(X;\R)$. Comme $f^*-\id$ préserve la structure entière, on en déduit que la droite $D$ intersecte non trivialement~${H^2(X;\Z)}$.

Il existe donc un vecteur $\theta\in\NS(X;\Z)$ tel que $\theta^2=0$ et $f^*\theta=\theta$. On peut alors montrer qu'un multiple de $\theta$ est la classe d'une unique fibration elliptique~${\pi:X\to B}$ qui est préservée par $f$, et on en déduit la caractérisation suivante :

\begin{theo}[\cite{gizatullin,cantat-these}]\label{theo-gizatullin}
Soit $f$ un automorphisme d'une surface complexe compacte kählérienne $X$. Alors $f$ est de type parabolique si et seulement si $f$ préserve une fibration elliptique et aucun itéré de $f$ n'est isotope à l'identité.
\end{theo}

\begin{exem}\label{ex-para}
On reprend l'exemple vu dans l'introduction (voir aussi \cite{cantat-k3}). Soit $X$ une surface lisse de degré $(2,2,2)$ dans $\P^1\times\P^1\times\P^1$, donnée par l'annulation d'un polynôme~$P(x_1,x_2,x_3)$ de degré $2$ en chaque variable. On note $s_1$,~$s_2$ et $s_3$ les trois involutions canoniques de~$X$. Par exemple, $s_1$ est l'involution~${(x_1,x_2,x_3)\mapsto(x'_1,x_2,x_3)}$, où $x_1$ et $x'_1$ sont les deux racines du polynôme~${P(x,x_2,x_3)\in\C[x]}$. La composée $f=s_1\circ s_2$ définit un automorphisme de type parabolique sur $X$, qui préserve la fibration elliptique
\begin{equation}
\begin{split}
\pi_3:X & \rightarrow \P^1\\
x & \mapsto x_3.
\end{split}
\end{equation}

Pour montrer que $f$ est effectivement de type parabolique, notons $F_i$ la classe dans $\NS(X)$ de la fibre de $\pi_i:x\mapsto x_i$ (pour $i\in\{1,2,3\}$). Le sous-espace engendré $\NS^{\sf gen}(X)$ est stable par $f^*$, car
\begin{equation}
s_i^*F_j = F_j
\quad\text{et}\quad
s_i^*F_i=-F_i+2F_j+2F_k
\end{equation}
pour $\{i,j,k\}=\{1,2,3\}$. Sur ce sous-espace, la forme d'intersection a pour matrice
\begin{equation}
\begin{pmatrix}0&2&2\\2&0&2\\2&2&0\end{pmatrix}
\end{equation}
dans la base $(F_1,F_2,F_3)$, donc sa signature est $(1,2)$. Le type de $f$ ne dépend donc que de la restriction de $f^*$ à $\NS^{\sf gen}(X)$, en vertu de la proposition \ref{meme-type}. Or la matrice de $f^*$ sur ce sous-espace s'écrit
\begin{equation}
\begin{pmatrix}1&2&0\\0&-1&0\\0&2&1\end{pmatrix}
\begin{pmatrix}-1&0&0\\2&1&0\\2&0&1\end{pmatrix}
=\begin{pmatrix}3&2&0\\-2&-1&0\\6&2&1\end{pmatrix},
\end{equation}
de polynôme caractéristique $(x-1)^3$. Comme cette matrice n'est pas diagonalisable, $f^*$ est de type parabolique.
\end{exem}


\subsection{Type loxodromique}

Il apparaît donc que les seuls automorphismes ayant une dynamique riche sont ceux de type loxodromique. En effet, on a le théorème de Gromov et Yomdin :

\begin{theo}[\cite{gromov-entropie,yomdin}]\label{theo-gyc}
Soit $f$ un automorphisme d'une surface complexe compacte kählérienne $X$. Alors $f$ est de type loxodromique si et seulement si $f$ est d'entropie positive, et on a l'égalité
\begin{equation}\label{ent-deg}
\h(f)=\log\left(\lambda(f)\right).
\end{equation}
\end{theo}

\begin{rema}
Lorsque $f$ est un automorphisme d'une surface complexe compacte $X$ qui n'est pas kählérienne, on a nécessairement $\h(f)=0$ d'après \cite{cantat-cras} (voir aussi \cite[p. 26]{cantat-these} pour une démonstration plus détaillée).
\end{rema}

\begin{exem}\label{ex-loxo}
Reprenons l'exemple \ref{ex-para}, avec cette fois-ci $f=s_1\circ s_2\circ s_3$. La matrice de $f^*$ dans la base $(F_1,F_2,F_3)$ de $\NS^{\sf gen}(X)$ s'écrit
\begin{equation}
\begin{pmatrix}1&0&2\\0&1&2\\0&0&-1\end{pmatrix}
\begin{pmatrix}1&2&0\\0&-1&0\\0&2&1\end{pmatrix}
\begin{pmatrix}-1&0&0\\2&1&0\\2&0&1\end{pmatrix}
=\begin{pmatrix}15&6&2\\10&3&2\\-6&-2&-1\end{pmatrix}.
\end{equation}
Son polynôme caractéristique est $x^3-17x^2-17x+1$, et ses valeurs propres sont $-1$, $\lambda=9+4\sqrt{5}$ et $1/\lambda=9-4\sqrt{5}$. Ainsi $f$ est de type loxodromique, et son entropie vaut $\h(f)=\log(9+4\sqrt{5})$.
\end{exem}

\begin{rema}\label{lehmer}
Lorsque $f$ est un automorphisme de type loxodromique, on a vu que $f^*$ admet $\lambda(f)$ et $\lambda(f)^{-1}$ comme valeurs propres simples, et toutes les autres valeurs propres sont de module $1$. Comme $f^*$ provient d'une transformation $\Z$-linéaire de $H^2(X;\Z)$, son polynôme caractéristique est à coefficients entiers, et le nombre $\lambda(f)$ est également un entier algébrique. On voit ainsi que $\lambda(f)$ est un entier quadratique ou un \emph{nombre de Salem}\footnote{Par définition, un nombre de Salem est un entier algébrique $\lambda>1$ dont les conjugués sont $\lambda^{-1}$ et des nombres complexes de module $1$.}. Parmi les nombres de Salem, le nombre de Lehmer $\lambda_{10}\approx 1,17628081$, défini comme la plus grande racine du polynôme
\begin{equation}
x^{10}+x^9-x^7-x^6-x^5-x^4-x^3+x+1,
\end{equation}
est le plus petit connu. Un théorème de McMullen \cite{mcmullen-rat} affirme que $\lambda_{10}$ est le minimum\footnote{Voir aussi \cite{bedford-kim-degree} et \cite{mcmullen-k3-ent,mcmullen-k3-proj} pour des exemples où ce minimum est atteint, le premier pour une surface rationnelle, le deuxième pour une surface K3.} de tous les degrés dynamiques d'automorphismes loxodromiques (sur toutes les surfaces kählériennes compactes possibles).
\end{rema}

\begin{nota}
Fixons une forme de Kähler $\kappa$ sur $X$. Lorsque $f$ est un automorphisme loxodromique de $X$, on note $\theta^+_f$ et $\theta^-_f$ les vecteurs propres de $f^*$ sur $H^{1,1}(X;\R)$ définis par les conditions
\begin{equation}
\left\{
\begin{split}
&f^*\theta^+_f=\lambda(f)\,\theta^+_f\\
&\theta^+_f\cdot[\kappa]=1
\end{split}
\right.
\quad\text{et}\quad
\left\{
\begin{split}
&f^*\theta^-_f=\lambda(f)^{-1}\,\theta^-_f\\
&\theta^-_f\cdot[\kappa]=1
\end{split}
\right.
\end{equation}
On note aussi
\begin{equation}
D^+_f=\R\theta^+_f
\quad\text{et}\quad
D^-_f=\R\theta^-_f
\end{equation}
les directions propres associées, qui elles ne dépendent pas de $\kappa$.
\end{nota}

\begin{prop}\label{loxo-kahler}
Soit $f$ un automorphisme loxodromique d'une surface kählérienne compacte $X$. Les deux droites $D^+_f$ et $D^-_f$ de $f^*$ supportent des rayons extrémaux dans l'adhérence du cône de Kähler $\Kah(X)$. Plus précisément, pour toute classe de Kähler $\theta\in H^{1,1}(X;\R)$, les limites
\begin{equation}
\lim_{n\to+\infty}\frac{1}{\lambda(f)^{n}}\,{f^*}^n\theta
\quad\text{et}\quad
\lim_{n\to+\infty}\frac{1}{\lambda(f)^{n}}\,f_*^n\theta
\end{equation}
sont des vecteurs non nuls de $D^+_f$ et $D^-_f$ respectivement.
\end{prop}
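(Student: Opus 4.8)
The plan is to deduce both assertions from the spectral decomposition of the loxodromic isometry $f^*$ together with the invariance of the Kähler cone under $f^*$. I would begin with the orthogonal splitting $H^{1,1}(X;\R) = D^+_f \oplus D^-_f \oplus S$ provided by the loxodromic case of the classification of isometries, where $S = (D^+_f\oplus D^-_f)^\perp$, the restriction $q_{|S}$ is definite negative, and $f^*_{|S}$ is semisimple with all complex eigenvalues of modulus $1$, so that $\|{f^*}^n_{|S}\|$ stays bounded (and likewise $\|f_*^n{}_{|S}\|$). I would also record that $\theta^+_f\cdot\theta^-_f\neq 0$, since otherwise $D^+_f\oplus D^-_f$ would be a $2$-dimensional totally isotropic subspace, which is impossible in signature $(1,\ast)$.

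For the limit statements I would write $\theta = c^+\theta^+_f + c^-\theta^-_f + \theta_0$ with $\theta_0\in S$; then $\lambda(f)^{-n}{f^*}^n\theta = c^+\theta^+_f + \lambda(f)^{-2n}c^-\theta^-_f + \lambda(f)^{-n}{f^*}^n\theta_0$, whose last two terms tend to $0$, and symmetrically $\lambda(f)^{-n}f_*^n\theta \to c^-\theta^-_f$ (using $f_*\theta^+_f = \lambda(f)^{-1}\theta^+_f$ and $f_*\theta^-_f = \lambda(f)\theta^-_f$). The crux is to check that $c^+$ and $c^-$ do not vanish when $\theta$ is a Kähler class: pairing with $\theta^-_f$ gives $\theta\cdot\theta^-_f = c^+\,(\theta^+_f\cdot\theta^-_f)$, so $c^+=0$ would force $\theta\in D^-_f\oplus S$, where $q$ is negative semi-definite, contradicting $q(\theta)>0$; the vanishing of $c^-$ is excluded in the same way via $\theta\cdot\theta^+_f$. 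I expect this positivity argument to be the only genuinely substantive point; the rest is bookkeeping.

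Next I would upgrade the conclusion $c^+\theta^+_f\in D^+_f\setminus\{0\}$ to the statement $\theta^+_f\in\b{\Kah(X)}$. Since $f$ is biholomorphic, ${f^*}^n[\kappa]$ is again a Kähler class for every $n$, so $\lambda(f)^{-n}{f^*}^n[\kappa]\in\Kah(X)$ and its limit $c^+\theta^+_f$ lies in the closed cone $\b{\Kah(X)}\subset\b{H^+(X)}$. Because $\b{H^+(X)}$ is the nappe of $\{q\geq 0\}$ pairing nonnegatively with $[\kappa]$, and $\theta^+_f\cdot[\kappa]=1>0$ by the normalization, the nonzero constant $c^+$ must in fact be positive, whence $\theta^+_f\in\b{\Kah(X)}$.

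Finally, for extremality I would exploit the geometry of the light cone: $\theta^+_f$ is a nonzero isotropic vector, hence a boundary ray of the Lorentzian cone $\b{H^+(X)}$, and if $\theta^+_f = \alpha+\beta$ with $\alpha,\beta\in\b{\Kah(X)}\subset\b{H^+(X)}$, then $0 = q(\alpha) + 2\,\alpha\cdot\beta + q(\beta)$ is a sum of three nonnegative terms, so $q(\alpha)=q(\beta)=\alpha\cdot\beta=0$; two orthogonal isotropic vectors in signature $(1,\ast)$ being proportional, $\alpha$ and $\beta$ lie on $D^+_f$. Thus $D^+_f$ supports an extremal ray of $\b{\Kah(X)}$. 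The assertions about $D^-_f$ and about $f_*$ follow by applying everything above to $f^{-1}$, for which $(f^{-1})^* = f_*$ and $\theta^-_f$ is the expanding eigenvector, so that $D^-_f = D^+_{f^{-1}}$.
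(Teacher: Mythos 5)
Your proof is correct and follows essentially the same route as the paper's: the same decomposition $\theta=c^+\theta^+_f+c^-\theta^-_f+\theta_0$ with boundedness of the iterates on the negative-definite orthogonal complement, invariance of the K\"ahler cone to place the limit in $\b{\Kah(X)}$, and extremality from isotropy together with $\b{\Kah(X)}\subset\{q\geq 0\}$. You merely spell out two points the paper leaves implicit (the nonvanishing of $c^\pm$ via pairing with the opposite eigenvector, and the splitting $q(\alpha)+2\,\alpha\cdot\beta+q(\beta)=0$ into nonnegative terms), both correctly.
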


\begin{proof}
La classe de Kähler $\theta$ se décompose en 
\begin{equation}
\theta=\alpha^+\theta^+_f + \alpha^-\theta^-_f + \theta^\perp,
\end{equation}
où $\theta^\perp\in{\rm Vect}(\theta^+_f,\theta^-_f)^\perp$ et $\alpha^+\alpha^-\neq 0$. On a ${f^*}^n\theta^-_f=\lambda(f)^{-n}\,\theta^-_f\to 0$, et ${f^*}^n\theta^\perp$ est bornée car $q$ est définie-négative sur ${\rm Vect}(\theta^+_f,\theta^-_f)^\perp$. On en déduit que
\begin{equation}
\lim_{n\to+\infty}\frac{1}{\lambda(f)^n}\,{f^*}^n\theta
= \lim_{n\to+\infty} \frac{1}{\lambda(f)^n}\,{f^*}^n(\alpha^+\theta^+_f)=\alpha^+\theta^+_f.
\end{equation}
Le résultat pour la limite de $\frac{1}{\lambda(f)^{n}}f_*^n\theta$ est obtenu de manière similaire, en remplaçant $f$ par $f^{-1}$.

Le cône de Kähler étant invariant par $f^*$, on obtient des vecteurs non nuls de $D^+_f$ et $D^-_f$ comme limites d'éléments de $\Kah(X)$. Le fait que les rayons engendrés par ces vecteurs soient extrémaux dans $\b{\Kah(X)}$ provient du fait qu'ils sont d'auto-intersection nulle, et $\b{\Kah(X)}\subset\{q\geq 0\}$.
\end{proof}


\subsection{Cas des surfaces algébriques}

Dans le cas où $X$ est algébrique, la forme d'intersection est de signature $(1,\rho(X)-1)$ sur $\NS(X;\R)$. On dispose donc d'un second espace hyperbolique $\H_X^{alg}=\NS^+(X)\cap\{q=1\}$, sur lequel $\Aut(X)$ agit aussi par isométries. D'après la proposition \ref{meme-type}, le type d'un automorphisme $f$ est aussi le type de $f^*$ en tant qu'isométrie de $\H_X^{alg}$. Dans le cas où $f$ est de type loxodromique, on obtient donc :

\begin{prop}
Soit $X$ une surface algébrique complexe, et soit $f$ un automorphisme loxodromique de $X$. Les directions propres $D^+_f$ et $D^-_f$ sont dans le sous-espace $\NS(X;\R)$ de $H^{1,1}(X;\R)$, et supportent des rayons extrémaux dans le cône convexe $\Nef(X)$.
\end{prop}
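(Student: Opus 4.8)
Le plan est de transporter la classification loxodromique du sous-espace $H^{1,1}(X;\R)$ vers le sous-espace $\NS(X;\R)$, puis de reprendre presque mot pour mot les arguments de la proposition \ref{loxo-kahler} en rempla�ant le c�ne de K�hler par le c�ne ample.

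On commencerait par montrer que $D^+_f$ et $D^-_f$ sont contenues dans $\NS(X;\R)$. On sait d�j� (proposition \ref{meme-type}, appliqu�e au sous-espace $\NS(X;\R)$ sur lequel $q$ est de signature $(1,\rho(X)-1)$) que $f^*$ induit une isom�trie loxodromique de $\H_X^{alg}=\NS^+(X)\cap\{q=1\}$ ; ses deux points fixes au bord fournissent donc deux droites propres isotropes contenues dans $\NS(X;\R)$, associ�es � une valeur propre $\mu>1$ et � $\mu^{-1}$. Il resterait � identifier $\mu$ avec $\lambda(f)$ : comme $\mu$ est une valeur propre de $f^*$ agissant sur $H^{1,1}(X;\R)$, de module strictement sup�rieur � $1$, et que $\lambda(f)$ est la seule telle valeur propre (et qu'elle est \emph{simple}, d'apr�s la classification des isom�tries loxodromiques), on a n�cessairement $\mu=\lambda(f)$. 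La droite propre de $f^*_{|\NS(X;\R)}$ pour $\lambda(f)$ co�ncide alors avec $D^+_f$, et celle pour $\lambda(f)^{-1}$ avec $D^-_f$ ; ces deux droites sont donc dans $\NS(X;\R)$.

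On montrerait ensuite l'extr�malit� dans $\Nef(X)$. Le c�ne $\Amp(X)$ est invariant par $f^*$ (l'image r�ciproque d'un diviseur ample par un automorphisme est ample), donc aussi son adh�rence $\Nef(X)=\b{\Amp(X)}$, qui est ferm�e. En partant d'une classe ample $\theta\in\Amp(X)=\Kah(X)\cap\NS(X;\R)$, la proposition \ref{loxo-kahler} donne que $\lim_{n\to+\infty}\lambda(f)^{-n}\,{f^*}^n\theta$ est un vecteur non nul de $D^+_f$ ; comme limite d'�l�ments de $\Amp(X)$, il appartient � $\Nef(X)$, donc l'un des deux demi-axes de $D^+_f$ est contenu dans $\Nef(X)$. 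Enfin, puisque $\Nef(X)\subset\b{\NS^+(X)}\subset\{q\geq 0\}$ et que $q_{|\NS(X;\R)}$ est de signature $(1,\rho(X)-1)$, tout vecteur isotrope non nul de $\Nef(X)$ engendre un rayon extr�mal (c'est exactement l'argument de la fin de la preuve de la proposition \ref{loxo-kahler}) ; or $\theta^+_f$ est d'auto-intersection nulle. On conclurait de m�me pour $D^-_f$ en rempla�ant $f$ par $f^{-1}$.

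La seule �tape un peu d�licate est l'identification $\mu=\lambda(f)$, qui repose sur le fait que $\lambda(f)$ est une valeur propre simple de $f^*$ sur $H^{1,1}(X;\R)$ ; une fois ce point acquis, tout le reste est une adaptation imm�diate de la proposition \ref{loxo-kahler}, le c�ne $\Amp(X)$ jouant pour les surfaces alg�briques le r�le du c�ne $\Kah(X)$.
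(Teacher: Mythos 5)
La preuve proposée est correcte et reprend exactement le cheminement implicite du texte : la proposition~\ref{meme-type} appliquée à $\NS(X;\R)$ (de signature $(1,\rho(X)-1)$) donne le caractère loxodromique de $f^*_{|\NS(X;\R)}$, l'identification $\mu=\lambda(f)$ découle de la simplicité de la valeur propre $\lambda(f)$ sur $H^{1,1}(X;\R)$, et l'argument d'extrémalité dans $\Nef(X)=\b{\Amp(X)}$ décalque celui de la proposition~\ref{loxo-kahler} en substituant le cône ample au cône de Kähler. Aucune lacune à signaler.
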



\section{Classification d'Enriques-Kodaira et automorphismes}

Les puissances tensorielles du fibré en droites canonique $K_X$ définissent des applications rationnelles
\begin{equation}
\phi_k:X\dashrightarrow\P H^0(X,K_X^{\otimes k})^*.
\end{equation}
On définit la \emph{dimension de Kodaira} de $X$ par la formule
\begin{equation}
\kod(X)=\max_{k\in\N^*}\dim\left(\phi_k(X)\right),
\end{equation}
avec la convention que $\dim\left(\phi_k(X)\right)=-\infty$ lorsque $H^0(X,K_X^{\otimes k})=\{0\}$. On a donc
\begin{equation}
\kod(X)\in\{-\infty,0,1,\cdots,\dim(X)\}.
\end{equation}

Pour les surfaces kählériennes compactes, la classification suivante est due à Enriques et Kodaira (voir \cite[\textsection 6]{bpvdv}) :
\begin{enumerate}
\item $\kod(X)=-\infty$ : $X$ est une surface rationnelle ou une surface réglée.
\item $\kod(X)=0$ : le modèle minimal de $X$ est un tore, une surface K3, une surface hyper-elliptique ou une surface d'Enriques.
\item $\kod(X)=1$ : $X$ est une surface elliptique d'un type particulier (la fibration elliptique est donnée par une puissance du fibré canonique $K_X$).
\item $\kod(X)=2$ : $X$ est ce que l'on appelle une surface de type général.
\end{enumerate}
De plus, on a unicité du modèle minimal lorsque $\kod(X)\geq 0$.

Nous reviendrons plus en détails sur certains de ces types de surfaces, et notamment sur ceux qui apparaissent dans le théorème suivant :

\begin{theo}[\cite{cantat-cras,nagata}]\label{theo-cantat-loxo}
Soit $X$ une surface kählérienne compacte. On suppose qu'il existe un automorphisme $f$ de type loxodromique sur $X$. Alors $\kod(X)=-\infty$ ou $0$. De plus :
\begin{enumerate}
\item Dans le cas où $\kod(X)=-\infty$, $X$ est une surface rationnelle isomorphe à $\P^2_\C$ éclaté en au moins 10 points. 
\item Dans le cas où $\kod(X)=0$, $f$ induit un automorphisme loxodromique sur le modèle minimal de $X$, qui est un tore, une surface K3 ou une surface d'Enriques.
\end{enumerate}
\end{theo}

\begin{proof}[Idée de la démonstration]
Pour une démonstration plus complète, on pourra se reporter à  \cite{cantat-these}. Si $\kod(X)=2$, le groupe $\Aut(X)$ est fini d'après \cite{matsumura}. Si $\kod(X)=1$, la fibration elliptique d'Iitaka $\phi_k:X\to B$ (pour un certain~${k\in\N^*}$) est préservée par tout automorphisme $f$, ce qui implique que $f$ est de type parabolique ou elliptique. Le même argument montre aussi que les automorphismes des surfaces hyper-elliptiques et des surfaces réglées sont d'entropie nulle, car ils préservent une fibration elliptique ou rationnelle.

Pour les surfaces rationnelles, le fait qu'elles soient isomorphes à $\P^2$ éclatées en au moins $10$ est dû à \cite{nagata}.

Lorsque $X_0$ est une surface minimale de dimension de Kodaira nulle, ses applications birationnelles coïncident avec ses automorphismes d'après \cite[p. 180]{shafarevich}. Ainsi $f$ induit un automorphisme sur le modèle minimal $X_0$ lorsque~${\kod(X)=0}$.
\end{proof}

Des exemples sont connus pour chacun des cas énumérés dans le théorème \ref{theo-cantat-loxo} :
\begin{enumerate}
\item Lorsque $X=\C^2/\Lambda$ est un tore, un automorphisme $f$ de $X$ provient, à translation près, d'une transformation linéaire $F$ de $\C^2$. Le rayon spectral de $f^*$ sur $H^{1,1}(X;\R)$ est alors le carré de celui de $F$, et il existe de nombreux exemples où celui-ci est plus grand que $1$. Par exemple, lorsque le réseau $\Lambda$ se décompose en $\Gamma\times\Gamma$, la matrice $\begin{pmatrix}2&1\\1&1\end{pmatrix}$ induit un automorphisme $f$ sur $\C^2/\Lambda$ avec $\lambda(f)=\left(\frac{3+\sqrt{5}}{2}\right)^2$. On verra d'autres exemples au chapitre \ref{chap-tores}.
\item Un automorphisme loxodromique d'un tore donne par passage au quotient un automorphisme loxodromique sur la surface de Kummer associée, qui est une surface K3. Un exemple moins trivial est celui donné dans \cite{cantat-k3}, qui a déjà été décrit en \ref{ex-loxo}. D'autres exemples avaient été étudiés auparavant dans \cite{fano,severi} et \cite{wehler}.
\item Une surface d'Enriques est le quotient d'une surface K3 par une involution holomorphe agissant sans point fixe, et les automorphismes proviennent d'automorphismes sur cette surface K3.
\item Des exemples sur les surfaces rationnelles ont été étudiés dans  \cite{mcmullen-rat} et \cite{bedford-kim-degree,bedford-kim-maxent,bedford-kim-fam,bedford-kim-rotation} entre autres.
\end{enumerate}

\begin{rema}\label{forme-volume}
Soit $X$ une surface kählérienne compacte de dimension de Kodaira nulle. On sait que son modèle minimal $X'$ admet un revêtement étale fini $\pi:Y\to X'$, où~$Y$ est un tore ou une surface K3. Comme le fibré canonique~$K_Y$ est trivial,~$Y$ admet une $2$-forme holomorphe $\Omega$ qui ne s'annule pas, et $\Omega\wedge\b\Omega$ définit alors une forme volume sur $Y$, qui est uniquement déterminée à une constante près. On peut par exemple choisir la normalisation $\int_Y\Omega\wedge\b\Omega=1$. Cette forme volume est invariante par les automorphismes du revêtement, et passe donc au quotient en une forme volume \og canonique\fg~sur~$X$, de volume total $1$. Celle-ci est préservée par tous les automorphismes de $X$.

De même, lorsque $X$ (et donc $Y$) possède une structure réelle, $|\Omega|$ détermine une forme d'aire sur $Y(\R)$, qui passe au quotient en une forme d'aire canonique~$\mu_X$ sur $X(\R)$, qui est préservée par tous les automorphismes de $X_\R$.
\end{rema}


\section{Courbes périodiques}\label{sec-per}

Soit $f$ un automorphisme d'une surface kählérienne compacte $X$. Une courbe complexe compacte $C$ est dite \emph{périodique} lorsque cette courbe est invariante par un itéré de $f$.

\begin{prop}\label{prop-per}
Soit $f$ un automorphisme loxodromique d'une surface kählérienne compacte $X$. Une courbe irréductible $C$ est périodique si et seulement si sa classe $[C]$ dans $H^{1,1}(X;\R)$ appartient à l'orthogonal du plan ${\Pi=\Vect(\theta^+_f,\theta^-_f)}$. En particulier, toute courbe périodique est d'auto-intersection négative.
\end{prop}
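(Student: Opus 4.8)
The plan is to exploit the dynamical characterization of periodic curves via the action of $f^*$ on cohomology, together with the geometry of the hyperbolic space $\H_X$ and the fact that $f$ is loxodromic. First I would reduce to the case where $C$ is actually invariant: if $C$ is periodic of period $k$, then $C$ is invariant under $g = f^k$, which is again loxodromic with $\theta^\pm_g = \theta^\pm_f$ (up to scaling) and $\Pi_g = \Pi_f$, so it suffices to treat $g$; I will write $f$ for $g$ from now on. Since $f(C) = C$, the class $[C] \in H^{1,1}(X;\R)$ is fixed by $f_*$, hence $f^*[C] = \lambda(f)^{-1}\cdot(\text{something})$... more precisely $f^*$ permutes nothing nontrivially: $f_*[C]=[C]$ means $[C]$ is an eigenvector of $f_*$ for eigenvalue $1$, equivalently an eigenvector of $f^*$ for eigenvalue $1$.

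Now the key point: for a loxodromic isometry, the only eigenvalues of $f^*$ on $H^{1,1}(X;\R)$ that are equal to $1$ live in the orthogonal complement $S = \Pi^\perp$ of the plane $\Pi = \Vect(\theta^+_f,\theta^-_f)$, because on $\Pi$ itself $f^*$ acts with eigenvalues $\lambda(f)$ and $\lambda(f)^{-1}$ (both $\neq 1$ since $f$ is loxodromic). Indeed $H^{1,1}(X;\R) = D^+_f \oplus D^-_f \oplus S$ with each summand $f^*$-stable, $q$ negative definite on $S$, and the eigenvalue $1$ cannot occur on $D^\pm_f$. Therefore $[C] \in \ker(f^* - \id) \subset S = \Pi^\perp$. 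This proves one direction. For the converse, I would argue that any irreducible curve $C$ with $[C] \in \Pi^\perp$ has negative self-intersection — since $q$ is negative definite on $S$ and $[C]\neq 0$ (an irreducible curve is never homologically trivial on a surface, as it has positive intersection with a Kähler class), we get $[C]^2 = q([C]) < 0$. A curve of negative self-intersection is rigid in its linear system, and in fact the set of irreducible curves with negative self-intersection bounded below... the cleaner route: the orbit $\{f^n(C)\}_{n\in\Z}$ consists of irreducible curves all having the same (bounded, namely $q([C])<0$) self-intersection and the same class $[C]$ only if the orbit is finite; but distinct irreducible curves of negative self-intersection in the same homology class would each be the unique effective divisor in that class, forcing $f^n(C) = C$ for all $n$, i.e. $C$ is periodic (in fact invariant). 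So I would show: if $[C]\in\Pi^\perp$ then $[f^n(C)]$ is a sequence of classes of effective irreducible curves on which $f^*$ acts with bounded norm (as $f^*|_S$ is an integral isometry of a negative-definite lattice, hence finite order), so the orbit of $[C]$ under $f^*$ is finite; combined with negative self-intersection (uniqueness of the effective representative in each class), the orbit $\{f^n(C)\}$ itself is finite, so $C$ is periodic.

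The main obstacle I anticipate is making the converse rigorous: passing from "$[C]$ has finite orbit under $f^*$" to "$C$ has finite orbit under $f$" requires knowing that a class with $q < 0$ has at most one irreducible effective representative, or at least finitely many. This is standard on surfaces — if $C_1 \neq C_2$ are distinct irreducible curves with $[C_1]=[C_2]$, then $C_1 \cdot C_2 \geq 0$ whereas $[C_1]\cdot[C_2] = q([C_1]) < 0$, a contradiction — so in fact the representative is \emph{unique}, and then $[f^n(C)] = [C]$ immediately gives $f^n(C) = C$. The slightly delicate point is that $[C]\in\Pi^\perp$ only gives $q([C])\le 0$ a priori, but since $\Pi^\perp = S$ where $q$ is negative definite and $[C]\neq 0$, we genuinely have $q([C]) < 0$; so once I have $f_*[C] = [C]$ I am done. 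Hence the real content is the "only if" direction, which is the eigenvalue argument above, and the "if" direction follows from the negative-definiteness of $q|_S$ together with the elementary uniqueness of irreducible effective representatives. The final sentence of the statement, that every periodic curve has negative self-intersection, is then immediate: a periodic irreducible curve satisfies $[C]\in\Pi^\perp = S$, and $q$ is negative definite there with $[C]\neq 0$.
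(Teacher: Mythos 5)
Your argument is correct and follows essentially the same route as the paper: for the forward direction, use that $f^*$ is an isometry and that a fixed vector of $f^*$ must be orthogonal to the eigenspaces $D^\pm_f$ (so $[C]\in\Pi^\perp$, and negative definiteness of $q$ on $\Pi^\perp$ gives $C^2<0$); for the converse, use that the orbit of $[C]$ under $f^*$ is bounded and lies in the discrete set $H^2(X;\Z)$, hence finite, and then conclude by uniqueness of the irreducible effective representative of a class with negative self-intersection.

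One concrete error, though it is not load-bearing: your parenthetical assertion that $f^*|_{\Pi^\perp}$ has finite order is false in general. The plane $\Pi$ is spanned by the $\lambda,\lambda^{-1}$-eigenvectors, which are typically irrational, so $\Pi^\perp\cap H^2(X;\Z)$ need not be a full-rank lattice in $\Pi^\perp$; worse, when $\lambda(f)$ is a Salem number of degree $>2$, the Galois conjugates of $\lambda$ on the unit circle are eigenvalues of $f^*|_{\Pi^\perp}$ and are not roots of unity, so $f^*|_{\Pi^\perp}$ actually has infinite order. What you need, and what you also invoke, is only that $f^*|_{\Pi^\perp}$ is bounded (it preserves the negative definite form $q|_{\Pi^\perp}$, hence lies in a compact orthogonal group), so the orbit of $[C]$ sits inside the intersection of a compact ellipsoid $\{q=[C]^2\}$ with the discrete set $H^2(X;\Z)$ and is therefore finite. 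This is exactly the formulation in the paper; you should drop the ``hence finite order'' and keep the boundedness argument.
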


\begin{proof}
Soit $C$ une courbe (irréductible ou non) invariante par $f^k$. Puisque~$f^*$ est une isométrie pour la forme d'intersection, les sous-espaces propres de~${f^*}^k$ sont orthogonaux deux à deux, et on en déduit que $[C]\in\Pi^\perp$. La forme d'intersection est définie-négative sur $\Pi^\perp$ (car de signature $(1,1)$ sur~$\Pi$), d'où $C^2<0$.

Réciproquement, soit $C$ une courbe irréductible telle que $[C]\in\Pi^\perp$. Comme~$q$ est définie-négative sur $\Pi^\perp$, il n'y a qu'un nombre fini de points entiers $\theta\in\Pi^\perp\cap H^2(X;\Z)$ tels que $q(\theta)=[C]^2$. En particulier, il existe un entier $k>0$ tel que ${f^*}^k[C]=[C]$. Puisque $C^2<0$ et $C$ est irréductible, la classe $[C]$ est réduite à la seule courbe $C$, et donc $f^{-k}(C)=C$.
\end{proof}

En utilisant le critère de Grauert-Mumford (voir \cite[\textsection III (2.1) p.~91]{bpvdv}), on peut alors contracter les courbes périodiques, et on obtient le résultat suivant :

\begin{theo}[\cite{cantat-k3,djs,kawaguchi,cantat-invariant}]\label{theo-contr}
Soit $f$ un automorphisme loxodromique d'une surface kählérienne compacte~$X$.
\begin{enumerate}
\item Il n'y a qu'un nombre fini de courbes irréductibles périodiques, et celles-ci sont de genre\footnote{Cf. théorème \ref{theo-genre} pour la définition du genre pour une courbe non lisse.} $0$ ou $1$. Plus précisément, quitte à prendre un itéré de $f$ et à contracter un nombre fini de $(-1)$-courbes\footnote{Par définition, une $(-1)$-courbe est une courbe rationnelle lisse (irréductible) d'auto-intersection $-1$.} périodiques, chaque courbe périodique connexe est
\begin{itemize}
\item soit un arbre de courbes rationnelles lisses ;
\item soit une courbe de genre $1$ qui est de l'un des types suivants :
\begin{itemize}
\item une courbe elliptique lisse,
\item une courbe rationnelle avec une singularité nodale ou cuspidale,
\item une union de deux courbes rationnelles qui se coupent en deux points (confondus ou non),
\item trois courbes rationnelles qui s'intersectent en un unique point
\item ou un cycle de $n\geq 3$ courbes rationnelles lisses.
\end{itemize}
\end{itemize}

\item Il existe un morphisme birationnel $\pi:X\to X_0$, où $X_0$ est une surface normale éventuellement singulière, qui contracte les courbes périodiques connexes de~$f$. Par conséquent, $f$ induit sur $X_0$ un automorphisme ${f_0:X_0\to X_0}$ de type loxodromique.
\end{enumerate}
\end{theo}

\begin{rema} Si $X$ est un tore, il ne peut pas y avoir de courbe périodique, car $X$ n'admet pas de courbe d'auto-intersection négative. Lorsque~$X$ est une surface K3 ou une surface d'Enriques, la formule du genre permet de montrer que les courbes périodiques irréductibles sont des courbes rationnelles lisses d'auto-intersection $-2$. En revanche, les courbes périodiques peuvent être de genre $1$ sur une surface rationnelle (voir \cite{cantat-invariant} pour un exemple).
\end{rema}


\section{Automorphismes des surfaces réelles}

À partir de maintenant, $X_\R$ est une surface kählérienne réelle (compacte par hypothèse). Un automorphisme de $X_\R$ (ou \emph{automorphisme réel} de $X$) est par définition un automorphisme de $X$ qui commute avec $\sigma$\footnote{Il est demandé ici que les automorphismes soient définis sur $X(\C)$, ce qui diffère de la terminologie utilisée par certains auteurs, comme par exemple \cite{kollar-mangolte}. Dans ce dernier texte, un automorphisme de $X_\R$ est ce que nous appelons dans la suite un difféomorphisme birationnel (voir chapitre \ref{chap-bir}).}. Dans le cas où $X_\R$ est une sous-variété algébrique de $\P^n_\R$, les automorphismes de $X_\R$ correspondent aux automorphismes de $X$ qui sont définis par des formules polynomiales à coefficients réels. On note $\Aut(X_\R)$ le sous-groupe de Lie réel de $\Aut(X_\C)$ formé par ces automorphismes.

\begin{nota}
Lorsque $f\in\Aut(X_\R)$, on note $f_\R$ (resp. $f_\C$) le difféomorphisme analytique induit sur $X(\R)$ (resp. sur $X(\C)$).
\end{nota}

Par restriction, on a une action du groupe $\Aut(X_\C)$ sur les diviseurs et la cohomologie de $X_\C$. Le fait que les automorphismes réels commutent avec $\sigma$ implique que cette action commute avec l'action de $\sigma$. Par conséquent, l'action de $\Aut(X_\R)$ préserve :
\begin{itemize}
\item le sous-groupe $\Div(X_\R)$ de $\Div(X_\C)$ ;
\item le sous-groupe $\Pic(X_\R)$ de $\Pic(X_\C)$ ;
\item le sous-groupe $\NS(X_\R)$ de $\NS(X_\C)$, ainsi que les cônes convexes $\NS^+(X_\R)$ et $\Amp(X_\R)$ ;
\item le sous-espace $H^{1,1}(X_\R;\R)$ de $H^{1,1}(X_\C;\R)$, ainsi que les cônes convexes $H^+(X_\R)$ et $\Kah(X_\R)$.\\
\end{itemize}

Là encore, la proposition \ref{meme-type} implique que l'on peut regarder l'action de $\Aut(X_\R)$ sur le sous-espace $H^{1,1}(X_\R;\R)$, ou même sur $\NS(X_\R;\R)$ dans le cas où $X$ est algébrique, pour connaître le type de $f_\C:X_\C\to X_\C$ (on dit plus simplement le type de $f$). Dans le cas loxodromique, cela donne le résultat suivant :

\begin{prop}
Soit $X_\R$ une surface kählérienne réelle, et soit $f$ un automorphisme de type loxodromique de $X_\R$.\begin{enumerate}
\item Les directions propres $D^+_f$ et $D^-_f$ sont dans le sous-espace $H^{1,1}(X_\R;\R)$, et supportent des rayons extrémaux dans l'adhérence du cône de Kähler réel $\Kah(X_\R)$.
\item Si de plus $X$ est algébrique, alors les droites $D^+_f$ et $D^-_f$ sont dans le sous-espace $\NS(X_\R;\R)$, et supportent des rayons extrémaux dans le cône convexe $\Nef(X_\R)$.
\end{enumerate}
\end{prop}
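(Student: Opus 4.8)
The plan is to reduce everything to the already-established facts about loxodromic automorphisms on a (not necessarily real) Kähler surface, namely Proposition \ref{loxo-kahler} and, in the algebraic case, the proposition immediately following it, together with the equivariance of the $\Aut(X_\R)$-action with respect to $\sigma$. First I would observe that since $f$ commutes with $\sigma$, the induced map $f^*$ on $H^{1,1}(X_\C;\R)$ commutes with $\sigma^*$; hence $f^*$ preserves the $(-1)$-eigenspace $H^{1,1}(X_\R;\R)$ of $\sigma^*$, on which the intersection form has signature $(1,\ast)$ (as noted in Chapter \ref{chap-str-reelle}). By Proposition \ref{meme-type}, $f^*$ restricted to $H^{1,1}(X_\R;\R)$ is a hyperbolic isometry of the same type as $f^*$ on $H^{1,1}(X_\C;\R)$, hence loxodromic, with the same dynamical degree $\lambda(f)$.

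Next I would locate the eigendirections $D^+_f$ and $D^-_f$ inside $H^{1,1}(X_\R;\R)$. The eigenvalues $\lambda(f)$ and $\lambda(f)^{-1}$ are simple for $f^*$ on $H^{1,1}(X_\C;\R)$, so their eigenlines are uniquely determined; since $f^*$ preserves $H^{1,1}(X_\R;\R)$ and acts loxodromically there with the same $\lambda(f)$, the (again simple) eigenlines of $f^*|_{H^{1,1}(X_\R;\R)}$ must coincide with $D^+_f$ and $D^-_f$. Thus $D^+_f, D^-_f \subset H^{1,1}(X_\R;\R)$. Alternatively, and perhaps more cleanly, one can argue directly: $\sigma^* \theta^+_f$ is an eigenvector of $f^*$ for the eigenvalue $\lambda(f)$ (since $f^*\sigma^* = \sigma^* f^*$), hence proportional to $\theta^+_f$ by simplicity, so $D^+_f$ is $\sigma^*$-stable; as $\sigma^*$ is an involution, $D^+_f$ lies either in the $(+1)$- or the $(-1)$-eigenspace of $\sigma^*$, and since $(\theta^+_f)^2 = 0$ while $\sigma^*$ is anti-isometric ($q(\sigma^*\eta) = -q(\eta)$ is false in general — rather $\sigma^*$ preserves $q$ but reverses the Hodge structure; on $H^{1,1}$ it preserves $q$), one uses the fact established in the footnote of Chapter \ref{chap-str-reelle} that $\kappa - \sigma^*\kappa$ is Kähler with class in $H^{1,1}(X_\R;\R)$, so applying Proposition \ref{loxo-kahler} to this specific real Kähler class and renormalizing exhibits $D^+_f$ as a limit of elements of $\Kah(X_\R)$. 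This last observation is in fact exactly what gives the second half of statement (1).

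For the extremality of the rays: once $D^+_f$ (resp. $D^-_f$) is known to lie in $H^{1,1}(X_\R;\R)$ and to be supported by the closure of $\Kah(X_\R)$, extremality follows from the same argument as in Proposition \ref{loxo-kahler} — these rays have zero self-intersection and $\b{\Kah(X_\R)} \subset \{q \geq 0\}$, so a class of self-intersection zero in the closure of a cone contained in $\{q\ge 0\}$ necessarily spans an extremal ray (any nontrivial decomposition would force a nonzero intersection with some summand, contradicting $q \geq 0$ and the Hodge-index signature). For part (2), when $X$ is algebraic, one repeats the whole argument with $\NS(X_\C;\R)$ and $\NS(X_\R;\R)$ in place of $H^{1,1}$, using that $f^*$ preserves $\NS(X_\C)$ and commutes with $\sigma^*$, hence preserves $\NS(X_\R;\R)$, and invoking the algebraic-case proposition together with the fact that $\Amp(X_\R)$ is nonempty (so the intersection form on $\NS(X_\R;\R)$ has signature $(1,\rho(X_\R)-1)$). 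The cone $\Nef(X_\R) = \b{\Amp(X_\R)}$ then plays the role of $\b{\Kah(X_\R)}$, and $D^+_f, D^-_f$ support extremal rays there by the same self-intersection-zero argument.

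I do not expect a serious obstacle: the entire content is bookkeeping with $\sigma$-equivariance plus Proposition \ref{meme-type} and Proposition \ref{loxo-kahler}. The one point requiring mild care is ensuring that the Kähler class to which one applies Proposition \ref{loxo-kahler} actually lies in $H^{1,1}(X_\R;\R)$ — this is handled by the symmetrization $\kappa \mapsto \kappa - \sigma^*\kappa$ already recorded in Chapter \ref{chap-str-reelle} — and, in the algebraic case, that $\Amp(X_\R) \neq \emptyset$, which is likewise recorded there via $D \mapsto D + D^\sigma$.
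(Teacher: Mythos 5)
Your proof is correct and takes essentially the same route the paper intends: combine the $\sigma$-equivariance of $f^*$ (so $f^*$ preserves $H^{1,1}(X_\R;\R)$), Proposition \ref{meme-type} together with the remark following it (which places the isotropic eigenlines $D^\pm_f$ inside any $f^*$-stable subspace of signature $(1,\ast)$), and Proposition \ref{loxo-kahler} applied to the real K\"ahler class $[\kappa-\sigma^*\kappa]$, with the algebraic case handled identically by replacing $H^{1,1}$ by $\NS$ and $\Kah$ by $\Nef$. The only thing to prune is the parenthetical detour about $\sigma^*$ being ``anti-isometric,'' which you correctly discard: $\sigma$ preserves orientation on a complex surface, so $\sigma^*$ preserves the intersection form throughout, and all you actually need is that $[\kappa-\sigma^*\kappa]$ is a real K\"ahler class.
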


En combinant la proposition \ref{meme-type} avec le théorème \ref{theo-gizatullin}, on en déduit aussi la caractérisation suivante des automorphismes réels de type parabolique :

\begin{theo}
Soit $f$ un automorphisme d'une surface kählérienne réelle $X_\R$. Alors $f$ est de type parabolique si et seulement si $f$ préserve une fibration elliptique réelle et aucun itéré de $f_\C$ n'est isotope à l'identité sur~$X(\C)$.
\end{theo}

\begin{proof}[Esquisse de démonstration]
Seul le sens direct est non trivial. Supposons que $f_\C$ soit de type parabolique. Il préserve alors une fibration elliptique, dont la fibre générique $F$ est une courbe lisse de genre $1$ telle que~${F^2=0}$. La classe de $F$ engendre l'unique droite fixe $D$ de $f^*$ dans le cône isotrope, donc celle-ci est dans $\NS(X_\R)$. Autrement dit, on a $[F^\sigma]=[F]$. Le système linéaire~${|F+F^\sigma|}$ est alors un système linéaire réel non trivial, car~${H^0(X,\O_X(F))}$ s'injecte dans~${H^0(X,\O_X(F+F^\sigma))}$. Sa partie mobile est sans point base, car~${(F+F^\sigma)^2=0}$. La factorisation de Stein du morphisme induit par ce système linéaire est alors une fibration elliptique réelle qui est préservée par~$f$. Voir aussi \cite[p. 5]{kollar-surfaces} pour une démonstration similaire.
\end{proof}

\begin{rema}
Le difféomorphisme $f_\R$ peut être isotope à l'identité sur~$X(\R)$ sans que $f_\C$ soit isotope à l'identité sur $X(\C)$. Par exemple, soit~$X$ la surface réelle lisse de degré $(2,2,2)$ dans $\P^1_\R\times\P^1_\R\times\P^1_\R$, donnée par l'annulation du polynôme
\begin{equation}
P(x_1,x_2,x_3)=(x_1^2+1)(x_2^2+1)(x_3^2+1)+tx_1x_2x_3-2,
\end{equation}
où $t$ est un paramètre réel non nul. Topologiquement $X(\R)$ est une sphère, donc tout automorphisme de $X_\R$ est isotope à l'identité sur $X(\R)$. Cependant, si $f$ est l'automorphisme réel donné par l'un des deux exemples \ref{ex-para} ou \ref{ex-loxo},~$f$ est de type parabolique ou loxodromique, donc n'est pas isotope à l'identité sur $X(\C)$.
\end{rema}


\chapter{Courants positifs fermés}\label{chap-courants}

On suit ici \cite{sibony-survey}, auquel on pourra se référer pour plus de détails.


\section{Généralités sur les courants}

\begin{defi}
Soit $M$ une variété $\mathcal{C}^\infty$ de dimension $m$. Un \emph{courant de degré $k$} (ou plus simplement un $k$-courant) sur $M$ est une forme linéaire continue sur l'espace $\mathcal{D}^{m-k}(M)$ des formes différentielles $\mathcal{C}^\infty$ de degré $m-k$ à support compact. On note $\mathcal{D}^{m-k}(M)'$ l'espace des $k$-courants, que l'on munit de la topologie de la convergence faible.
\end{defi}

\begin{rema}
L'espace des $0$-courants s'identifie à l'espace des distributions, via un choix de forme volume sur $M$.
\end{rema}

\begin{exem}
Toute $k$-forme différentielle $\omega$ sur $M$ définit un $k$-courant, toujours noté $\omega$, par la formule
\begin{equation}
\langle \omega,\theta\rangle=\int_M \omega\wedge\theta
\end{equation}
pour toute $\theta\in\mathcal{D}^{m-k}(M)$.
\end{exem}

\begin{exem}
Soit $N$ une sous-variété de dimension $m-k$ de $M$. On définit le \emph{courant d'intégration sur $N$}, noté $\{N\}$, par la formule :
\begin{equation}
\langle\{N\},\theta\rangle=\int_N\theta_{|N}
\end{equation}
pour toute $\theta\in\mathcal{D}^{m-k}(M)$.
\end{exem}

\begin{defi}
Pour tout courant $T$ sur une variété $M$, on définit la \emph{restriction de $T$ à un ouvert $U$} par la formule
\begin{equation}
\langle T_{|U},\theta \rangle = \langle T,\theta \rangle
\end{equation}
pour toute forme différentielle $\theta$ à support compact dans $U$. Le \emph{support} de $T$, noté $\Supp(T)$, est alors le plus grand fermé $F\subset M$ tel que $T_{|M\backslash F}=0$.
\end{defi}

\begin{defi}
Soit $T$ un $k$-courant sur une variété $M$. On définit un $(k+1)$ courant ${\rm d}T$ par la formule
\begin{equation}
\langle{\rm d}T,\theta\rangle=(-1)^{k+1}\langle T,{\rm d}\theta\rangle
\end{equation}
pour toute $\theta\in\mathcal{D}^{m-k-1}(M)$. On dit que $T$ est \emph{fermé} lorsque ${\rm d}T=0$, et \emph{exact} lorsqu'il existe $S\in\mathcal{D}^{m-k+1}(M)'$ tel que $T={\rm d}S$.
\end{defi}

\begin{rema}
Lorsque $\omega$ est une $k$-forme différentielle de classe $\mathcal{C}^1$, la différentielle au sens des courants coïncide avec celle au sens des formes, grâce au théorème de Stokes. En particulier, $\omega$ est fermé en tant que courant si et seulement si $\omega$ est fermée en tant que forme différentielle.
\end{rema}

\begin{rema}
Un courant d'intégration sur une sous-variété fermée $N$ est fermé, toujours d'après le théorème de Stokes. Plus généralement, le courant $\{N\}$ est fermé si et seulement si $\partial N$ est de codimension $\geq 2$ dans $N$.
\end{rema}

Notons $H^k_{\mathcal{D}'}(M;\R)$ l'espace vectoriel quotient
\begin{equation}
H^k_{\mathcal{D}'}(M;\R)
=
\frac{
\{T\in\mathcal{D}^k(M)'\,|\,\text{$T$ est fermé}\}
}{
\{T\in\mathcal{D}^k(M)'\,|\,\text{$T$ est exact}\}
}.
\end{equation}
L'espace de cohomologie de De Rham $H^k(M;\R)$ se plonge naturellement dans $H^k_\mathcal{D'}(M;\R)$, et il se trouve que ce plongement est un isomorphisme (voir \cite[p. 382]{griffiths-harris}). On peut donc considérer la classe d'un $k$-courant fermé $T$, modulo les courants exacts, comme un élément de $H^k(M;\R)$. On notera $[T]$ une telle classe.


\section{Courants sur une variété complexe}

\begin{defi}
Soit $X$ une variété complexe de dimension $n$. Un \emph{courant de type $(p,q)$} (ou plus simplement un $(p,q)$-courant) est une forme $\C$-linéaire continue sur l'espace $\mathcal{D}^{n-p,n-q}(X)$ des formes différentielles $\mathcal{C}^\infty$ de type $(n-p,n-q)$ à support compact. On note $\mathcal{D}^{n-p,n-q}(X)'$ l'espace des $(p,q)$-courants, que l'on munit de la topologie de la convergence faible.
\end{defi}

Pour tout entier $k\leq 2n$, on a la décomposition en somme directe de sous-espaces fermés :
\begin{equation}
\mathcal{D}^{2n-k}(X)\otimes\C=\bigoplus_{p+q=k}\mathcal{D}^{n-p,n-q}(X).
\end{equation}
Si l'on identifie l'espace $\mathcal{D}^{n-p,n-q}(X)'$ au sous-espace de $\mathcal{D}^{2n-k}(X)'\otimes\C$ formé des formes linéaires qui s'annulent sur $\bigoplus_{(p',q')\neq(p,q)}\mathcal{D}^{n-p',n-q'}(X)$ (ce que l'on fera toujours), on a alors, par dualité, la décomposition en somme directe :
\begin{equation}
\mathcal{D}^{2n-k}(X)'\otimes\C=\bigoplus_{p+q=k}\mathcal{D}^{n-p,n-q}(X)'.
\end{equation}
Avec cette écriture, un courant de type $(p,q)$ est un courant (complexe) de degré $p+q$ particulier. Par exemple, le $(p+q)$-courant défini par une forme de type $(p,q)$ est un courant de type $(p,q)$.

Au niveau de la cohomologie, cette décomposition en somme directe induit la décomposition de Hodge usuelle lorsque $X$ est une variété kählérienne compacte. Autrement dit, la classe d'un courant de type $(p,q)$ est dans $H^{p,q}(X;\C)$.

De même que l'on a défini un opérateur ${\rm d}$ sur les courants, on définit les opérateurs $\partial$ et $\b\partial$. On définit également un opérateur
\begin{equation}
{\rm d^c}=\frac{1}{2i\pi}(\partial-\b\partial),
\end{equation}
de telle sorte que l'on a
\begin{equation}
\ddc=\frac{i}{\pi}\partial\b\partial
\end{equation}
et
\begin{equation}
\langle\ddc T,\theta\rangle=\langle T,\ddc\theta\rangle
\end{equation}
pour tout courant $T\in\mathcal{D}^{2n-k}(X)'$ et pour toute forme $\theta\in\mathcal{D}^{2n-k}(X)$.

Comme l'opérateur ${\rm d}$, mais contrairement aux opérateurs $\partial$ et $\b\partial$, ${\rm d^c}$ est un opérateur réel, au sens où $\b{{\rm d^c}T}={\rm d^c}\b T$. En particulier, si $T$ est une courant réel de type $(p,p)$, alors $\ddc T$ est un courant réel de type $(p+1,p+1)$. Le lemme suivant est une variation de \cite[p. 149]{griffiths-harris} :

\begin{lemm}[dit \og du $\ddc$ global\fg]\label{ddc-glob}
Soit $T$ un courant réel de type $(p,p)$ sur une variété kählérienne compacte $X$. On suppose que le courant $T$ est exact pour l'un des opérateurs ${\rm d}$, $\partial$ ou $\b\partial$. Alors il existe un courant réel $S$ de type $(p-1,p-1)$ tel que
\begin{equation}
T=\ddc S.
\end{equation}
De plus, si $T$ est donné par une forme différentielle $\mathcal{C}^\infty$, alors $S$ également.
\end{lemm}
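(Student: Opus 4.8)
The plan is to deduce this from the classical $\partial\b\partial$-lemma on compact K\"ahler manifolds (see \cite[p.~149]{griffiths-harris}), transported to currents, taking care of the reality of $T$ and of the normalization $\ddc=\frac{i}{\pi}\partial\b\partial$. First I would reduce to the case where $T$ is ${\rm d}$-closed. If $T=\partial R$ for a current $R$ of type $(p-1,p)$, then, $T$ being real, one also has $T=\b T=\b\partial\b R$, whence $\partial T=\partial\partial R=0$ and $\b\partial T=\b\partial\b\partial\b R=0$, so ${\rm d}T=0$; the case $T=\b\partial R$ is symmetric, and $T={\rm d}R$ forces ${\rm d}T=0$ at once. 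Thus in every case $T$ is a real ${\rm d}$-closed $(p,p)$-current which is exact for at least one of ${\rm d}$, $\partial$, $\b\partial$.

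Next I would invoke the $\partial\b\partial$-lemma at the level of currents. On a compact K\"ahler manifold the Laplacian still acts on currents and admits a Hodge decomposition whose harmonic elements form the finite-dimensional space of harmonic \emph{forms} (by elliptic regularity), and one has Green's operators together with the K\"ahler identities $\Delta_{\rm d}=2\Delta_\partial=2\Delta_{\b\partial}$ and $\partial\b\partial^{*}+\b\partial^{*}\partial=0$. Running the argument of \cite[p.~149]{griffiths-harris} with currents in place of forms then yields: a ${\rm d}$-closed $(p,q)$-current which is ${\rm d}$- (or $\partial$-, or $\b\partial$-) exact equals $\partial\b\partial R$ for some current $R$ of type $(p-1,q-1)$. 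Moreover that argument produces $R$ explicitly by applying Green's operators and the codifferentials $\partial^{*},\b\partial^{*}$ to $T$, and since all of these preserve $\mathcal{C}^\infty$ forms, $R$ is a smooth form whenever $T$ is. Applied to our $T$, and using $\partial\b\partial=-i\pi\,\ddc$, this gives $T=\ddc S_0$ with $S_0:=-i\pi R$ of type $(p-1,p-1)$, smooth when $T$ is.

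It then remains to make the potential real. As $\ddc$ is a real operator and $T=\b T$, we also have $T=\ddc\,\b{S_0}$; the bidegree $(p-1,p-1)$ being stable under conjugation, $S:=\tfrac12\bigl(S_0+\b{S_0}\bigr)$ is a real current of type $(p-1,p-1)$, it is a $\mathcal{C}^\infty$ form as soon as $S_0$ is, and $\ddc S=T$, which is the assertion.

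The only genuinely non-formal ingredient is the middle step: setting up enough Hodge theory for currents to run the $\partial\b\partial$-lemma and checking that it returns a smooth potential on a smooth input. One could instead regularize $T$ into smooth ${\rm d}$-closed $(p,p)$-forms lying in its de Rham class, solve the smooth $\partial\b\partial$-lemma, and pass to a weak limit; but in view of the identification of the cohomology of currents with de Rham cohomology already recalled (\cite[p.~382]{griffiths-harris}), the direct route is cleaner. The reality and normalization bookkeeping in the remaining steps is routine.
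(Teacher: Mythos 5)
Your proof is correct. The paper itself gives no proof of this lemma: it is stated with the remark that it is a ``variation'' of the smooth $\partial\b\partial$-lemma of Griffiths--Harris (p.~149), and the reader is left to supply the adaptations. What you wrote fills exactly those gaps: (i) reducing the $\partial$- or $\b\partial$-exact case to the ${\rm d}$-closed case by conjugation (using $T=\b T$ to convert a $\partial$-primitive into a $\b\partial$-primitive); (ii) transporting the $\partial\b\partial$-lemma to currents via the Hodge decomposition for currents, where elliptic regularity ensures harmonic currents are smooth forms and Green's operators preserve smoothness, which gives the regularity clause; and (iii) symmetrizing the potential $S_0\mapsto\tfrac12(S_0+\b{S_0})$ to restore reality, which is legitimate because $\ddc$ is a real operator and bidegree $(p-1,p-1)$ is conjugation-stable. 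The normalization bookkeeping $\partial\b\partial=-i\pi\,\ddc$ is also right. This is the standard argument and is precisely what the citation to Griffiths--Harris intends; there is no divergence of method to compare, only a proof where the paper supplies a pointer.
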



\section{Courants positifs}

\begin{defi}
Soit $T$ un courant de type $(p,p)$ sur une variété complexe $X$. On dit que $T$ est \emph{positif} lorsque
\begin{equation}
\langle T,i\alpha_1\wedge\b\alpha_1\wedge\cdots\wedge i\alpha_{n-p}\wedge\b\alpha_{n-p}\rangle\geq 0
\end{equation}
pour tout $(n-p)$-uplet de formes $(\alpha_1,\cdots,\alpha_{n-p})\in(\mathcal{D}^{1,0}(X))^{n-p}$.
\end{defi}

En particulier, un courant positif est réel.

\begin{exem}
Soit $Y$ une sous-variété complexe de $X$ de dimension $n-p$. Alors le courant d'intégration $\{Y\}$ est un courant de type $(p,p)$ qui est positif, et fermé lorsque $Y$ est fermée.

Plus généralement, lorsque $Y$ est une sous-variété analytique complexe qui n'est pas lisse, on définit le courant d'intégration $\{Y\}$ comme le courant d'intégration sur la partie lisse de $Y$. C'est un courant positif, qui est fermé lorsque $Y$ est fermée (car l'ensemble des points singuliers de $Y$ est de codimension réelle au moins $2$ dans $Y$).
\end{exem}

\begin{prop}[{\cite[p.386]{griffiths-harris}}]\label{ordre0}
Soit $T$ un $(p,p)$-courant positif sur une variété complexe $X$ de dimension $n$. Alors $T$ s'étend en une unique forme linéaire continue $\tilde T$ sur l'espace des $(n-p,n-p)$-formes \emph{continues} à support compact sur $X$.\footnote{En termes plus savants, on dit que le courant $S$ est d'ordre $0$.}
\end{prop}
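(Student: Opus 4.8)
The plan is to prove this by a purely local argument, reducing to the classical fact that a positive $(p,p)$-current on an open subset of $\C^n$ has \emph{measure coefficients}. Since the statement is local on $X$ (the extension $\tilde T$, if it exists on each member of an open cover, is automatically unique and glues by a partition of unity argument), I would first fix a coordinate chart $U\cong$ open subset of $\C^n$ and work there.

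First I would recall the structure of positive currents in local coordinates. Writing a $(p,p)$-current $T$ on $U$ as
\begin{equation*}
T=\sum_{|I|=|J|=p} T_{IJ}\, i^{p}\, dz_I\wedge d\b z_J
\end{equation*}
(with the standard multi-index conventions and the appropriate normalizing power of $i$), positivity of $T$ is equivalent to the statement that, for every choice of constant $(1,0)$-covectors $\alpha_1,\dots,\alpha_{n-p}$, the distribution obtained by pairing $T$ with $i\alpha_1\wedge\b\alpha_1\wedge\cdots$ is a positive distribution, hence a positive Radon measure (this is the classical fact that a positive distribution is a measure). Choosing $\alpha$'s to be the coordinate covectors $dz_{k}$ for the various $(n-p)$-subsets of $\{1,\dots,n\}$ shows that each "diagonal" coefficient $T_{II}$ is a positive measure; then polarizing — taking $\alpha$'s of the form $dz_k+dz_\ell$, $dz_k+i\,dz_\ell$ — and using the Cauchy–Schwarz-type inequality $|T_{IJ}|\le C\,(T_{II}+T_{JJ})$ (or more precisely $|T_{IJ}|^2 \le T_{II}\,T_{JJ}$ as measures) shows that every coefficient $T_{IJ}$ is a complex measure, of locally finite total variation, dominated by $\sum_I T_{II}$.

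Once the coefficients are measures, the extension is immediate: for a \emph{continuous} compactly supported $(n-p,n-p)$-form $\theta=\sum \theta_{KL}\,dz_K\wedge d\b z_L$ on $U$, the expression
\begin{equation*}
\langle \tilde T,\theta\rangle \;=\; \sum_{I,J} \pm\!\int_U \theta_{K(I)L(J)}\; dT_{IJ}
\end{equation*}
(where $K(I),L(J)$ are the complementary multi-indices and the signs come from $dz_I\wedge dz_{K(I)}=\pm dz_1\wedge\cdots\wedge dz_n$) makes sense because each $\theta_{KL}$ is continuous with compact support and each $T_{IJ}$ is a measure of locally finite mass; it is linear and continuous for the topology of uniform convergence on a fixed compact set, and it agrees with $\langle T,\theta\rangle$ when $\theta$ is smooth. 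Uniqueness of $\tilde T$ follows because smooth compactly supported forms are dense in continuous ones (uniformly on compacts, e.g. by convolution in the chart) and $\tilde T$ is continuous for that topology. Finally I would patch: cover $\Supp\theta$ by finitely many charts, write $\theta=\sum \chi_\alpha\theta$ with a subordinate smooth partition of unity, define $\langle\tilde T,\theta\rangle=\sum_\alpha\langle\widetilde{T_{|U_\alpha}},\chi_\alpha\theta\rangle$, and check this is independent of the choices using uniqueness on overlaps.

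The main obstacle — really the only non-formal point — is establishing the domination $|T_{IJ}|\le C(T_{II}+T_{JJ})$ of the off-diagonal coefficients by the diagonal ones, i.e. getting from "positive distribution = positive measure" to "\emph{all} coefficients are measures." This is where positivity is used in an essential way (a general $(p,p)$-current need not have measure coefficients), and it requires the correct bookkeeping of which test forms $i\alpha_1\wedge\b\alpha_1\wedge\cdots\wedge i\alpha_{n-p}\wedge\b\alpha_{n-p}$ to plug in and how their coefficients depend on the $\alpha_i$; the polarization identities are elementary but must be written out carefully with the right powers of $i$ and signs. Everything after that — the definition of $\tilde T$, its continuity, uniqueness by density, and the gluing — is routine.
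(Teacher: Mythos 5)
Your proof is correct and follows the standard argument that the paper itself delegates to Griffiths--Harris (the proposition is cited, not proved in the text): reduce to measure coefficients in local coordinates, show the diagonal coefficients $T_{II}$ are positive Radon measures by the ``positive distribution is a measure'' fact, then control the off-diagonal $T_{IJ}$ by polarization and a Cauchy--Schwarz bound of the form $|T_{IJ}|\le \tfrac12(T_{II}+T_{JJ})$, and extend, using density of smooth forms for uniqueness and a partition of unity for globalization. This is exactly the expected route and the one the cited reference takes. The only point worth tightening is the passage from the paper's definition of positivity --- which tests against $i\alpha_1\wedge\b\alpha_1\wedge\cdots$ with $\alpha_j\in\mathcal D^{1,0}(X)$ --- to the assertion that $T\wedge i\alpha_1\wedge\b\alpha_1\wedge\cdots$ with \emph{constant} $\alpha_j$ is a positive distribution: one needs to absorb a nonnegative cutoff $\chi$ into the $\alpha_j$, which requires approximating $\chi$ by squares of smooth compactly supported functions (e.g.\ $\chi\rho^2+\epsilon\rho^4=(\rho\sqrt{\chi+\epsilon\rho^2})^2$ for a fixed bump $\rho\equiv 1$ on $\Supp\chi$, letting $\epsilon\to 0$). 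This is routine but it is where the definition is actually being used, and it deserves one explicit line.
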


Dans la suite, lorsqu'on parle de \emph{courants positifs fermés}, il est sous-entendu que ceux-ci sont de type $(1,1)$.


\subsection{Masse}

Soit $X$ une variété complexe compacte de dimension $n$, que l'on munit d'une métrique hermitienne $h$. Cette métrique induit une norme sur l'espace $\mathcal{D}^{n-p,n-p}(X)$. Lorsque $T$ est un courant positif de type $(p,p)$, la norme de~$T$ en tant que forme linéaire sur $\mathcal{D}^{n-p,n-p}(X)$ est appelée \emph{masse} du courant~$T$ par rapport à la métrique $h$, que l'on note $\|T\|_h$. Si l'on note $\kappa$ la $(1,1)$-forme~$\Im(h)$ (non nécessairement fermée), on a la formule
\begin{equation}
{\|T\|}_h=\left\langle T,\frac{\kappa^{n-p}}{(n-p)!}\right\rangle.
\end{equation}
Lorsque $\kappa$ est une forme de Kähler, on note plutôt $\|T\|_\kappa$ la masse par rapport à la métrique kählérienne, et
l'égalité précédente se traduit en terme d'intersections de classes :
\begin{equation}
{\|T\|}_\kappa=\frac{1}{(n-p)!}[T]\cdot[\kappa]^{n-p}.
\end{equation}

Comme la norme induite par la métrique $h$ sur $\mathcal{D}^{n-p,n-p}(X)$ est continue par rapport à la topologie de $\mathcal{D}^{n-p,n-p}(X)$, on en déduit :

\begin{prop}
Un ensemble de courants positifs de masses bornées est relativement compact (pour la topologie faible sur l'espace des courants).
\end{prop}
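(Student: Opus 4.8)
Le plan est de se ramener au th\'eor\`eme de Banach--Alaoglu gr\^ace \`a l'estimation de masse suivante~: il existe une constante $C>0$, ne d\'ependant que de $(X,h)$ et des entiers $n,p$, telle que pour tout courant positif $T$ de type $(p,p)$ et toute $(n-p,n-p)$-forme continue $\theta$ sur $X$ on ait
\begin{equation}
\left|\langle T,\theta\rangle\right| \,\leq\, C\,\|\theta\|_\infty\,\|T\|_h .
\end{equation}
La proposition \ref{ordre0} assure d\'ej\`a que $\langle T,\theta\rangle$ a un sens pour $\theta$ seulement continue~; il reste donc \`a majorer. D'abord j'\'ecrirais, localement dans un nombre fini de cartes holomorphes et au moyen d'une partition de l'unit\'e, la forme $\theta$ comme combinaison lin\'eaire r\'eelle d'un nombre born\'e de formes \'el\'ementaires fortement positives du type $i\beta_1\wedge\b\beta_1\wedge\cdots\wedge i\beta_{n-p}\wedge\b\beta_{n-p}$, les $\beta_k$ provenant d'un rep\`ere fixe et les coefficients \'etant major\'es par $C'\|\theta\|_\infty$. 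Comme chacune de ces formes \'el\'ementaires est major\'ee (fortement) par $C''\kappa^{n-p}$ et comme $\langle T,\cdot\rangle$ est positive sur les formes fortement positives, on conclut \`a l'aide de $\langle T,\kappa^{n-p}/(n-p)!\rangle=\|T\|_h$. C'est cette estimation uniforme --- autrement dit le contr\^ole des mesures-coefficients d'un courant positif par sa seule masse --- qui constitue le point principal de la preuve.

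Ensuite, notons $E$ l'espace de Banach des $(n-p,n-p)$-formes continues sur $X$, muni de la norme uniforme~; il est s\'eparable puisque $X$ est compacte. L'estimation pr\'ec\'edente dit que tout courant positif $T$ de masse $\|T\|_h\leq M$ d\'efinit un \'el\'ement de la boule ferm\'ee de rayon $CM$ du dual $E'$. Par le th\'eor\`eme de Banach--Alaoglu cette boule est compacte pour la topologie faible-$*$, et elle est m\'etrisable car $E$ est s\'eparable, donc s\'equentiellement compacte. De toute suite $(T_k)$ de courants positifs de masse $\leq M$ on extrait ainsi une sous-suite convergeant faiblement-$*$ vers une forme lin\'eaire continue $L\in E'$~; sa restriction aux formes $\mathcal{C}^\infty$ est un courant $T$.

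Il reste \`a constater que $T$ est positif et de masse $\leq M$, et que la convergence faible-$*$ dans $E'$ co\"{\i}ncide sur cette famille avec la convergence faible des courants. La positivit\'e s'obtient en passant \`a la limite dans les in\'egalit\'es $\langle T_k,\,i\alpha_1\wedge\b\alpha_1\wedge\cdots\wedge i\alpha_{n-p}\wedge\b\alpha_{n-p}\rangle\geq 0$ prises avec des $\alpha_j$ de classe $\mathcal{C}^\infty$ (formes tests admissibles car lisses)~; la masse v\'erifie $\|T\|_h=\langle T,\kappa^{n-p}/(n-p)!\rangle=\lim_k\|T_k\|_h\leq M$ puisque $\kappa^{n-p}$ est une forme test lisse~; et l'identification des deux topologies provient de la densit\'e des formes lisses dans $E$ jointe \`a la borne uniforme $\|T_k\|_{E'}\leq CM$. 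La famille des courants positifs de masse $\leq M$ est donc compacte (s\'equentiellement compacte et m\'etrisable, et s\'epar\'ee donc ferm\'ee dans l'espace des courants)~; comme tout ensemble de courants positifs de masses born\'ees y est contenu, un tel ensemble est relativement compact.
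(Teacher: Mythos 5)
Votre d\'emonstration est correcte et suit essentiellement la m\^eme voie que le texte, qui se contente d'observer que la masse est, par d\'efinition, la norme de $T$ dans le dual de l'espace des formes tests muni de la norme induite par $h$, puis d'invoquer la compacit\'e faible-$*$ des parties born\'ees de ce dual. Vous explicitez simplement les points laiss\'es implicites : l'estimation $|\langle T,\theta\rangle|\leq C\,\|\theta\|_\infty\,\|T\|_h$ obtenue en d\'ecomposant $\theta$ en formes fortement positives domin\'ees par $\kappa^{n-p}$, l'application de Banach--Alaoglu, et le passage \`a la limite de la positivit\'e et de la masse.
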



\subsection{Potentiel}

Une fonction pluri-sous-harmonique sur une variété complexe $X$ est une fonction $u:X\to\R\cup\{-\infty\}$ telle que
\begin{enumerate}
\item[(i)] $u$ est semi-continue supérieurement ;
\item[(ii)] pour tout disque holomorphe $\varphi:\D\to X$, la fonction $v=u\circ\varphi$ est sous-harmonique, ce qui signifie que
\begin{equation}
v(z)\leq\frac{1}{2\pi}\int_0^{2\pi}v(z+r{\rm e}^{i\theta})\,{\rm d}\theta
\end{equation}
dès que le disque de rayon $r$ centré en $z$ est inclus dans $\D$ (cela n'exclut pas le cas où $v$ est identiquement égale à $-\infty$).
\end{enumerate}
Pour les propriétés de base des fonctions sous-harmoniques et pluri-sous-harmoniques, je renvoie le lecteur à \cite{gunning}. Par exemple, on montre que toute fonction pluri-sous-harmonique $u$ est localement intégrable (sauf si elle est identiquement égale à $-\infty$). Ceci permet de définir le $(1,1)$-courant $\ddc u$, qui est positif fermé. Localement,  tous les courants positifs fermés sont de cette forme :

\begin{lemm}[dit \og du $\ddc$ local\fg~{\cite[p. 387]{griffiths-harris}}]\label{ddc-loc}
Soit $T$ un courant positif fermé sur une variété complexe $X$. Alors localement, on peut écrire
\begin{equation}
T=\ddc u,
\end{equation}
où $u$ est une fonction pluri-sous-harmonique, unique à addition près d'une fonction pluri-harmonique. De plus, si $T$ est donné par une forme différentielle $\mathcal{C}^\infty$, alors $u$ est une fonction $\mathcal{C}^\infty$.
\end{lemm}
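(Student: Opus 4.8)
Le plan est de se ramener à un énoncé local, puis d'y appliquer successivement le lemme de Poincaré (version courants) et le lemme de Dolbeault--Grothendieck. Comme l'énoncé est local, on suppose que $X$ est un polydisque de $\C^n$, en particulier contractile, de sorte que $H^2$ y est nul. Le courant $T$ étant fermé de degré $2$, l'isomorphisme entre cohomologie de De Rham et cohomologie des courants rappelé plus haut donne alors $T={\rm d}A$ pour un certain $1$-courant $A$. Puisque $T$ est réel, je remplacerais $A$ par $\frac{1}{2}(A+\b A)$ afin de le supposer réel, ce qui force $A^{0,1}=\b{A^{1,0}}$ dans la décomposition en types $A=A^{1,0}+A^{0,1}$. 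En séparant l'égalité ${\rm d}A=T$ suivant les types, la composante $(2,0)$ fournit $\partial A^{1,0}=0$ et la composante $(1,1)$ fournit $T=\b\partial A^{1,0}+\partial A^{0,1}$.

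Ensuite, comme $A^{1,0}$ est $\partial$-fermé sur le polydisque, j'invoquerais le lemme de Dolbeault--Grothendieck pour les courants (valable parce que le complexe des courants est une résolution fine) pour obtenir une distribution $f$ avec $A^{1,0}=\partial f$, d'où $A^{0,1}=\b\partial\b f$ par conjugaison. En reportant, il vient
\[
T=\b\partial\partial f+\partial\b\partial\b f=\partial\b\partial(\b f-f),
\]
et comme $\b f-f$ est imaginaire pur, la distribution réelle $u:=-2\pi\,\Im(f)$ vérifie $T=\ddc u$. Il reste alors à voir que $u$ est, après modification sur un ensemble négligeable, pluri-sous-harmonique : c'est le résultat classique de régularité affirmant qu'une distribution $u$ telle que $i\partial\b\partial u\geq 0$ coïncide presque partout avec une fonction pluri-sous-harmonique, que l'on obtient par régularisation ($u*\rho_\epsilon$ est $\mathcal{C}^\infty$, pluri-sous-harmonique, et décroît vers $u$ lorsque $\epsilon\to 0$) ; on prend ce représentant. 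Pour l'unicité, si $\ddc u_1=\ddc u_2$ alors $\ddc(u_1-u_2)=0$, et le même argument de régularité montre que $u_1-u_2$ est pluri-harmonique, d'où l'unicité à addition près d'une telle fonction.

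Enfin, pour le cas lisse : si $T$ est donné par une forme $\mathcal{C}^\infty$, le lemme de Poincaré usuel permet de choisir $A$ de classe $\mathcal{C}^\infty$, puis le lemme de Dolbeault--Grothendieck appliqué à une donnée lisse (résolution de l'équation $\b\partial$ à données lisses par des fonctions lisses) fournit $f$ de classe $\mathcal{C}^\infty$, donc $u$ aussi. Le point délicat -- et le cœur analytique de la preuve -- est double : disposer du lemme de Dolbeault--Grothendieck au niveau des courants (et pas seulement des formes lisses), et disposer de l'énoncé de régularité identifiant les distributions $u$ avec $i\partial\b\partial u\geq 0$ aux fonctions pluri-sous-harmoniques. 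Les deux sont standards mais non triviaux ; tout le reste est de la manipulation formelle de types $(p,q)$.
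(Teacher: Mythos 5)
Le texte ne donne pas de démonstration de ce lemme : il renvoie simplement à Griffiths--Harris (p.~387). Votre preuve est correcte et reproduit essentiellement l'argument standard de cette référence (lemme de Poincaré pour les courants sur un polydisque, décomposition par types, puis lemme de Dolbeault--Grothendieck pour la composante $(1,0)$, et enfin régularité des distributions à $i\partial\b\partial$ positif via régularisation par convolution) ; il n'y a donc rien à comparer ni à corriger.
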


Cela signifie que pour tout point $x\in X$, il existe un voisinage ouvert $U$ et une fonction pluri-sous-harmonique $u$ sur $U$ telle que $T_{|U}=\ddc u$, où le courant $T_{|U}$ est défini par la restriction de la forme linéaire $T$ au sous-espace fermé $\mathcal{D}^{1,1}(U)\subset\mathcal{D}^{1,1}(X)$.

\begin{defi}
Une fonction $u$ comme dans le lemme \ref{ddc-loc} est appelée \emph{potentiel local} du courant $T$.
\end{defi}

\begin{rema}\label{regularite-potentiel}
Comme la différence entre deux potentiels locaux est une fonction pluri-harmonique, donc $\mathcal{C}^\infty$, les potentiels locaux de $T$ ont tous la même régularité. On peut ainsi parler sans ambiguïté de la régularité des potentiels (locaux) d'un courant positif fermé : régularité $\mathcal{C}^0$, $\alpha$-Hölder, $\mathcal{C}^\infty$, etc.
\end{rema}

\begin{prop}
Soit $T$ un courant positif fermé sur une variété complexe $X$. On suppose que $T$ est donné sur un ouvert $U$ par un potentiel local $u$. Alors
\begin{equation}
\left(X\backslash\Supp(T)\right)\cap U = \{x\in X\,|\,\text{$u$ est pluri-harmonique dans un voisinage de $x$}\}.
\end{equation}
\end{prop}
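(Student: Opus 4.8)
Il s'agit de montrer que, si $T$ est un courant positif fermé sur une variété complexe $X$ et que $u$ est un potentiel local de $T$ sur un ouvert $U$ (i.e. $T_{|U}=\ddc u$ avec $u$ pluri-sous-harmonique), alors le complémentaire du support de $T$ dans $U$ coïncide avec l'ensemble des points de $U$ au voisinage desquels $u$ est pluri-harmonique. C'est essentiellement une traduction locale du fait que $\Supp(T)$ est le plus grand fermé sur le complémentaire duquel $T$ s'annule, combinée avec le lemme du $\ddc$ local (lemme \ref{ddc-loc}) appliqué à des ouverts plus petits.

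\begin{proof}
Notons $V = \{x\in X\,|\,u \text{ est pluri-harmonique dans un voisinage de }x\}$ ; c'est un ouvert de $U$. Par définition du support, $(X\backslash\Supp(T))\cap U$ est le plus grand ouvert $W$ contenu dans $U$ tel que $T_{|W}=0$.

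\textbf{Inclusion $V\subset (X\backslash\Supp(T))\cap U$.} Soit $x\in V$ ; il existe un voisinage ouvert $W\subset U$ de $x$ sur lequel $u$ est pluri-harmonique. Alors pour toute forme test $\theta\in\mathcal{D}^{1,1}(W)$ on a $\langle T_{|W},\theta\rangle = \langle \ddc u, \theta\rangle = \langle u,\ddc\theta\rangle = \int_W u\,\ddc\theta$, et cette intégrale est nulle car $u$ est pluri-harmonique sur $W$ (une fonction pluri-harmonique est de classe $\mathcal{C}^\infty$ et annihilée par $\ddc$, donc $\int_W u\,\ddc\theta = \int_W (\ddc u)\wedge\theta = 0$ par intégration par parties et $\ddc u=0$ sur $W$). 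Donc $T_{|W}=0$, ce qui entraîne $W\cap\Supp(T)=\emptyset$, et en particulier $x\notin\Supp(T)$.

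\textbf{Inclusion réciproque.} Soit $x\in U$ avec $x\notin\Supp(T)$. Il existe alors un voisinage ouvert $W$ de $x$, que l'on peut supposer inclus dans $U$, tel que $T_{|W}=0$. Ainsi $\ddc u = 0$ sur $W$ au sens des courants, c'est-à-dire $\langle u,\ddc\theta\rangle = 0$ pour toute $\theta\in\mathcal{D}^{1,1}(W)$. On veut en déduire que $u$ est pluri-harmonique au voisinage de $x$. Pour cela, quitte à restreindre $W$ à une boule de coordonnées, on utilise le fait que $u$ est pluri-sous-harmonique et que son laplacien sur chaque disque holomorphe est une mesure positive dont la partie « globale » est gouvernée par $\ddc u$ ; plus précisément, on peut régulariser $u$ par convolution : les $u_\epsilon = u * \rho_\epsilon$ sont pluri-sous-harmoniques sur un ouvert légèrement plus petit et décroissent vers $u$, et $\ddc u_\epsilon = (\ddc u)*\rho_\epsilon = 0$ sur ce sous-ouvert (puisque $\ddc u = 0$ comme courant sur $W$). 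Donc chaque $u_\epsilon$ est une fonction lisse avec $\ddc u_\epsilon = 0$, i.e. pluri-harmonique. Comme $u_\epsilon\downarrow u$ ponctuellement et que $u$ est semi-continue supérieurement, et que les $u_\epsilon$ sont harmoniques sur chaque disque holomorphe donc vérifient l'égalité de la moyenne, la limite décroissante $u$ vérifie elle aussi l'inégalité de sous-moyenne dans les deux sens, donc est pluri-harmonique sur l'intérieur du domaine commun de définition. Par conséquent $u$ est pluri-harmonique au voisinage de $x$, c'est-à-dire $x\in V$.

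Les deux inclusions donnent l'égalité annoncée.
\end{proof}

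Le point délicat est la réciproque : il faut passer de l'information cohomologique « $\ddc u = 0$ au sens des courants sur $W$ » à la régularité et la pluri-harmonicité classiques de $u$. L'outil standard est la régularisation par convolution dans des coordonnées locales, qui commute avec $\ddc$ et préserve la pluri-sous-harmonicité ; on conclut ensuite par le caractère décroissant de l'approximation (propriété classique des fonctions pluri-sous-harmoniques, cf. \cite{gunning}). Le reste n'est qu'un dépliage des définitions de support et de potentiel local, et des intégrations par parties justifiées par le fait qu'un courant positif est d'ordre $0$ (proposition \ref{ordre0}) et que les fonctions pluri-harmoniques sont lisses.
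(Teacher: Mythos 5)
Le texte de la th\`ese \'enonce cette proposition sans d\'emonstration ; votre preuve en fournit une, et l'approche (d\'efinition du support, r\'egularisation par convolution) est la plus naturelle et correspond au traitement standard de ce fait dans la litt\'erature. L'inclusion directe est correcte et imm\'ediate : pluri-harmonicit\'e locale de $u$ donne $\ddc u=0$, donc $T$ s'annule sur un voisinage.

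Pour la r\'eciproque, l'id\'ee (convolution avec un noyau radial, qui pr\'eserve la plurisousharmonicit\'e, d\'ecro\^it vers $u$, et commute avec $\ddc$) est la bonne, mais la derni\`ere \'etape m\'erite d'\^etre pr\'ecis\'ee : pour conclure que la limite d\'ecroissante $u$ des fonctions pluri-harmoniques $u_\epsilon$ est elle-m\^eme pluri-harmonique, il ne suffit pas d'invoquer la propri\'et\'e de la moyenne (cela suppose d\'ej\`a $u$ finie et continue). Le plus propre est d'observer que les $u_\epsilon$ sont en particulier harmoniques en les $2n$ variables r\'eelles, et d'invoquer le th\'eor\`eme de Harnack pour les suites d\'ecroissantes de fonctions harmoniques : la limite est ou bien harmonique, ou bien identiquement $-\infty$ sur chaque composante connexe. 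Puisque $u$ est $L^1_{\rm loc}$ en tant que potentiel d'un courant, elle n'est pas identiquement $-\infty$, donc $u$ est harmonique, donc lisse, et alors $\ddc u=0$ au sens classique, ce qui est exactement la pluri-harmonicit\'e. Avec cette pr\'ecision, votre d\'emonstration est compl\`ete et correcte.
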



\section{Opérations sur les courants}


\subsection{Image directe}

Soit $f:M\to N$ une application lisse entre deux variétés $M$ et $N$ de dimensions respectives $m$ et $n$, et soit $T$ un $k$-courant sur $M$. On suppose que l'application $f$ est propre en restriction au support de $T$ (ceci est le cas par exemple lorsque $M$ est compacte). On peut alors définir l'\emph{image directe $f_*T$ du courant $T$ par $f$} par la formule :
\begin{equation}
\langle f_*T,\theta\rangle=\langle T,f^*\theta\rangle
\end{equation}
pour toute forme $\theta\in\mathcal{D}^{m-k}(N)$. Il s'agit donc d'un courant de degré $k+n-m$ sur $N$.


\subsection{Image réciproque}

Soit $f:X\to Y$ une application holomorphe entre deux variétés complexes $X$ et $Y$, et soit $T$ un courant positif fermé de type $(1,1)$ sur $Y$. On peut définir un courant positif fermé $f^*T$ sur $X$, dit \emph{image réciproque de $T$ par $f$}, de la manière suivante : si $u$ est un potentiel local de $T$ sur un ouvert $U\subset Y$, on pose $f^*T_{|f^{-1}(U)}=\ddc u\circ f$. Cette définition ne dépend pas du potentiel choisi, car si $u'$ est un autre potentiel, $(u-u')\circ f$ est pluri-harmonique, et donc $\ddc (u-u')\circ f=0$. Comme $X$ est recouvert par de tels ouverts, on définit ensuite $f^*T$ globalement grâce à une partition de l'unité subordonnée à un tel recouvrement.

L'image réciproque d'un courant est compatible avec l'opération pull-back sur la cohomologie, c'est-à-dire que pour tout courant positif fermé $T$, on a
\begin{equation}
f^*[T]=[f^*T].
\end{equation}

Dans le cas où $f:X\to X$ est un automorphisme d'une variété complexe compacte, on peut prendre l'image directe ou l'image réciproque d'un courant par $f$ ou par $f^{-1}$, et on a la formule :
\begin{equation}
f^*T=(f^{-1})_*T
\end{equation}
pour tout courant positif fermé $T$.


\subsection{Produit extérieur}

Soient $T$ et $S$ des courants positifs fermés sur une variété complexe $X$, de types respectifs $(1,1)$ et $(p,p)$. On suppose que les potentiels locaux de $T$ sont continus. Sous ces hypothèses, nous allons définir, selon \cite{bedford-taylor}, un courant $T\wedge S$ de type $(p+1,p+1)$, dit \emph{produit extérieur} (ou \emph{intersection}) \emph{de $T$ et $S$}.

Soit $u$ un potentiel continu de $T$ sur un ouvert $U\subset X$, et soit $\tilde S$ le prolongement de $S$ à l'espace des $(n-p,n-p)$-formes \emph{continues} à support compact (cf. proposition \ref{ordre0}). On peut donc définir un courant $uS$ sur $U$ par la formule
\begin{equation}
\langle uS,\theta\rangle=\langle\tilde S,u\theta\rangle
\end{equation}
pour tout $\theta\in\mathcal{D}^{n-p,n-p}(U)$. Puis on pose
\begin{equation}
(T\wedge S)_{|U}=\ddc uS.
\end{equation}
Comme $S$ est fermé, cette définition ne dépend pas du potentiel $u$ choisi. On définit ensuite $T\wedge S$ globalement grâce à une partition de l'unité.

Cette définition prolonge celle du produit extérieur des formes différentielles. Lorsque $S$ est un courant de type $(n-1,n-1)$, le produit extérieur de $T$ et $S$ est une mesure de masse totale $[T]\cdot[S]$.


\section{Nombres de Lelong}


\subsection{Définition et exemples}

Je suis ici \cite{griffiths-harris}. Soient $X$ une variété complexe de dimension $n$, $T$ un courant de type $(p,p)$ positif fermé sur $X$, et $x$ un point de $X$. Nous allons associer à chaque point $x\in X$ un nombre positif $\nu(T,x)$, dit \emph{nombre de Lelong de $T$ en $x$}, qui vaut $0$ lorsque le courant $T$ est lisse en $x$ (c'est-à-dire qu'en restriction à un voisinage de $x$, $T$ est donné par une $(n-p,n-p)$-forme lisse), et qui vaut $1$ lorsque $T$ est un courant d'intégration sur une sous-variété lisse passant par $x$.

Dans un premier temps, définissons les nombres de Lelong pour un courant positif fermé $T$ sur un ouvert $\Omega$ de $\C^n$ ($T$ de type $(p,p)$). On note $\omega=\frac{i}{2}\partial\b\partial\|z\|^2$ la forme de Kähler standard sur $\C^n$. Fixons un point $x\in\Omega$. Pour $0<r\leq\dist(x,\partial\Omega)$, on définit
\begin{equation}
\nu(T,x,r)=\frac{1}{(\pi r^2)^{n-p}}\langle T,\chi_{B(x,r)}\omega^{n-p}\rangle,
\end{equation}
où $\chi_{B(x,r)}$ désigne la fonction caractéristique de la boule $B(x,r)$ de centre $x$ et de rayon $r$ dans $\C^n$. Ici, le nombre $\langle T,\chi_{B(x,r)}\rangle$ est défini grâce à une limite croissante de fonctions $\mathcal{C}^\infty$ à support compact dans $B(x,r)$ qui convergent, dans $L^1_{\rm loc}$, vers $\chi_{B(x,r)}$.

\begin{theo}[Lelong (voir {\cite[p. 390]{griffiths-harris}})]\label{lemm-lelong}
La fonction $r\mapsto\nu(T,x,r)$ est une fonction positive et croissante sur l'intervalle $]0,\dist(x,\partial\Omega)]$.
\end{theo}

Cette fonction possède donc une limite positive en $0$, que l'on appelle \emph{nombre de Lelong de $T$ en $x$} :
\begin{equation}
\nu(T,x)=\lim_{r\to 0}\nu(T,x,r).
\end{equation}

Lorsque $T$ est un courant positif fermé sur $X$, tout point $x\in X$ possède une carte holomorphe $\psi:U\to\C^n$ centrée en $x$, et on définit $\nu(T,x)=\nu(\psi_*T,0)$. Ce nombre ne dépend pas du choix de la carte, car les nombres de Lelong sont invariants par isomorphisme.

\begin{exem}
Si le courant $T$ est donné par une forme à coefficients dans $ L^1_{\rm loc}$, alors $\nu(T,x)=0$ pour tout $x\in X$, par convergence dominée.
\end{exem}

\begin{exem}
Lorsque $T$ est le courant d'intégration sur un sous-espace vectoriel complexe de codimension $p$ dans $\C^n$, on voit, grâce à la formule de Wirtinger, que
\begin{equation}
\frac{1}{(n-p)!}\langle T,\chi_{B(0,r)}\omega^{n-p}\rangle = \frac{(\pi r^2)^{n-p}}{(n-p)!}
\end{equation}
(c'est le volume de la boule de rayon $r$ dans $\C^{n-p}$). Par conséquent, on a $\nu(T,0)=\nu(T,0,r)=1$ pour tout $r>0$.

De manière plus générale, si $T$ est le courant d'intégration sur une sous-variété analytique complexe fermée $Y\subset X$, le nombre de Lelong $\nu(T,x)$ est égal à la multiplicité de $Y$ en $x$.
\end{exem}


\subsection{Minoration de l'aire d'une courbe holomorphe et comparaison aire--diamètre}

Le résultat suivant est un cas particulier du théorème de Lelong \ref{lemm-lelong} (voir aussi \cite[appendice]{briend-duval} pour un énoncé similaire) :

\begin{coro}\label{coro-lelong}
Soit $X$ une variété complexe compacte munie d'une métrique riemannienne $g$. Il existe $r_0>0$ et $c>0$ tels que pour toute courbe complexe $C\subset X$, pour tout $x\in C$ et pour tout $r\leq r_0$, on ait :
\begin{equation}
\dist_g(x,\partial C)\geq r\implies\aire_g(C)\geq cr^2.
\end{equation}
\end{coro}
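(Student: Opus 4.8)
The plan is to reduce this global statement to the local monotonicity theorem of Lelong (Th\'eor\`eme \ref{lemm-lelong}) inside a holomorphic chart, and then to use the compactness of $X$ to make all the constants uniform. In essence, $\aire_g(C)$ is bounded below by the Euclidean area of $C$ inside a small ball around $x$, and Lelong's theorem says that this Euclidean area is at least the area of a flat disk of the corresponding radius, because $x$ lies on the analytic curve $C$.

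First I would fix a finite atlas $\{\psi_i:U_i\to\C^n\}$ of holomorphic charts such that, on each $U_i$, the Riemannian metric $g$ is uniformly comparable to the pulled-back Euclidean metric $g_i:=\psi_i^*g_{\mathrm{eucl}}$, say $A^{-1}g_i\leq g\leq A\,g_i$ on $U_i$ for some $A\geq 1$ independent of $i$. Let $\rho>0$ be a Lebesgue number of this cover, so that every $g$-ball of radius $\rho$ is contained in some $U_i$, and set $r_0:=\rho/2$ and $c:=\pi/A^{2}$ (any explicit constant of this shape will do).

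Next, given $C$, $x\in C$ and $r\leq r_0$ with $\dist_g(x,\partial C)\geq r$, I would choose a chart $U_i$ with $B_g(x,r)\subset U_i$, and write $y=\psi_i(x)$ and $V=\psi_i(B_g(x,r))$. Since $\dist_g(x,\partial C)\geq r$, the curve $C$ has no boundary inside $B_g(x,r)$, so $Y:=\psi_i(C)\cap V$ is a closed $1$-dimensional analytic subvariety of $V$ and $T:=\{Y\}$ is a positive closed current of bidegree $(n-1,n-1)$ on $V$. Its Lelong number at $y$ equals the multiplicity of $Y$ at $y$, hence $\nu(T,y)\geq 1$ because $y\in Y$ (this is the last example before the corollary). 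By Th\'eor\`eme \ref{lemm-lelong} the function $s\mapsto\nu(T,y,s)$ is nondecreasing on $\,]0,\dist(y,\partial V)]$, so $\nu(T,y,s)\geq\nu(T,y)\geq 1$ for all such $s$; unwinding the definition with $n-p=1$ and using the Wirtinger formula, this reads
\begin{equation*}
\aire_{g_i}\bigl(\psi_i(C)\cap B(y,s)\bigr)=\langle T,\chi_{B(y,s)}\,\omega\rangle\geq\pi s^2 .
\end{equation*}

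Finally I would translate this back to $g$. The metric comparison provides a Euclidean ball $B(y,r/\sqrt{A})\subset V$, so I may take $s=r/\sqrt{A}$ above, and likewise $\aire_g(C)\geq\aire_g\bigl(C\cap B_g(x,r)\bigr)\geq A^{-1}\aire_{g_i}\bigl(\psi_i(C)\cap B(y,s)\bigr)$; combining the two inequalities yields $\aire_g(C)\geq A^{-1}\pi(r/\sqrt{A})^2=(\pi/A^{2})\,r^2=c\,r^2$. The only genuine point requiring care — the main obstacle — is the uniformity: one must check that a \emph{finite} atlas indeed allows one to choose $r_0$ and $c$ independently of $C$ and of $x$, which is exactly where compactness of $X$ is used; and one should keep in mind that $T$ is closed on the ball $B_g(x,r)$ precisely because $\partial C$ lies at distance $\geq r$, which is what legitimizes the appeal to Lelong's monotonicity there.
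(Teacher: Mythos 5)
Your proof is correct and follows essentially the same route as the paper: cover $X$ by finitely many holomorphic charts with uniform two-sided comparison between $g$ and the Euclidean metric, note that $\dist_g(x,\partial C)\geq r$ makes $\{C\}$ a \emph{closed} positive current on a small chart-ball, apply Lelong's monotonicity (Th\'eor\`eme~\ref{lemm-lelong}) to bound the Euclidean area of $C$ from below by $\pi s^2$ with $s$ a fixed fraction of $r$, and convert back via the metric comparison. The only cosmetic difference is that the paper packages the uniformity through a ``recouvrement relativement compact'' $U^\alpha\Subset V^\alpha$ (which makes the inclusion of the Euclidean ball into $\psi^\alpha(V^\alpha)$ automatic), whereas you use a Lebesgue number — a point you should shore up slightly to guarantee the Euclidean ball $B(y,r/\sqrt A)$ really sits inside $\psi_i(U_i)$ — but the constants and the argument are the same.
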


Dans la démonstration, on utilise un type particulier de recouvrement, que l'on réutilisera à de nombreuses reprises :

\begin{defi}\label{rec-compact}
Soit $X$ une variété complexe compacte de dimension~$n$. Un \emph{recouvrement relativement compact} de $X$ est une famille de triplets $(U^\alpha,V^\alpha,\psi^\alpha)_{\alpha\in A}$, avec $A$ fini, telle que :
\begin{enumerate}
\item
$U^\alpha$ et $V^\alpha$ sont des ouverts de $X$ ;
\item
$X=\bigcup_{\alpha\in A} U^\alpha$ ;
\item
$U^\alpha\Subset V^\alpha$, \emph{i.e.} $U^\alpha$ est  relativement compact dans $V^\alpha$ ;
\item
$\psi^\alpha:V^\alpha\to\C^n$ est une carte holomorphe.
\end{enumerate}
\end{defi}

\begin{proof}[Démonstration du corollaire \ref{coro-lelong}]
Fixons un recouvrement relativement compact $(U^\alpha,V^\alpha,\psi^\alpha)_\alpha$ de $X$. Soit $r_0>0$ tel que chaque boule $B_g(x,r_0)$ de rayon $r_0$ soit incluse dans un des ouverts $U^\alpha$ du recouvrement. Comme $U^\alpha$ est relativement compact dans $V^\alpha$, il existe une constante $\delta>0$ telle que l'on ait, pour tout $\alpha$ et tout vecteur $v$ tangent à $U^\alpha$ :
\begin{equation}\label{rapport-normes}
\frac{1}{\delta}\|\psi^\alpha_*v\|
\leq {\|v\|}_g
\leq \delta\|\psi^\alpha_*v\|.
\end{equation}

Soient $C$ une courbe analytique dans $X$, et $x$ un point de $C$. On suppose qu'il existe $r\in\,]0,r_0]$ tel que $\dist_g(x,\partial C)\geq r$. Soit $\alpha$ tel que $B(x,r)\subset U^\alpha$. On pose
\begin{equation}
x'=\psi^\alpha(x), \quad C'=\psi^\alpha(C\cap U^\alpha) \quad\text{et}\quad r'=r/\delta,
\end{equation}
de telle sorte que $\psi^\alpha(B_g(x,r))$ contienne la boule $B(x',r')$ de $\C^n$, et donc
\begin{equation}
\partial C'\cap B(x',r')=\emptyset.
\end{equation}

On définit sur $B(x',r')\subset\C^n$ le courant d'intégration 
\begin{equation}
T=\{C'\cap B(x',r')\}.
\end{equation}
Comme $\partial C'\cap B(x',r')=\emptyset$, il s'agit d'un courant positif fermé, donc on peut lui appliquer le théorème de Lelong \ref{lemm-lelong} : la fonction $t\mapsto\nu(T,0,t)$ est décroissante sur $]0,r']$, et converge en $0$ vers $\nu(T,0)=1$. On en déduit que
\begin{equation}
\frac{\aire(C'\cap B(x',r'))}{\pi r'^2}=\nu(T,0,r')\geq 1,
\end{equation}
et donc, grâce à l'inégalité $(\ref{rapport-normes})$,
\begin{equation}
\aire_g(C)\geq\frac{1}{\delta^2}\aire(C')\geq\frac{\pi r'^2}{\delta^2}=\frac{\pi}{\delta^4}r^2.
\end{equation}
On a ainsi la minoration voulue, avec $c=\pi/\delta^4$.
\end{proof}

\begin{rema}
Lorsque $X=\C^n$ munie de sa métrique hermitienne standard, on peut choisir $r_0=+\infty$ et $c=\pi$.
\end{rema}

Comme conséquence du corollaire \ref{coro-lelong}, on a le théorème suivant sur la comparaison aire--dimamètre (voir \cite[appendice]{briend-duval}) :

\begin{theo}[Briend -- Duval]\label{aire-diam}
Soit $X$ une variété complexe compacte (ou plus généralement un espace analytique complexe), munie d'une métrique riemannienne $g$. Pour tous $\epsilon>0$ et $a\in\,]0,1[$, il existe $\eta>0$ tel que pour tout disque holomorphe $\psi:\D\to X$, on ait :
\begin{equation}
\aire_g(\psi(\D))<\eta \implies \diam_g(\psi(\D_a))<\epsilon,
\end{equation}
où $\D_a$ désigne le disque de rayon $a$ dans $\C$.
\end{theo}


\subsection{Estimées volumiques}

Le théorème suivant est une généralisa\-tion de \cite[théorème 3.1]{kiselman} :

\begin{theo}[Zeriahi {\cite[4.2]{zeriahi}}]\label{theo-zeriahi}
Soit $\Omega$ un ouvert de $\C^n$, et soit $\mathcal{U}$ un ensemble de fonctions pluri-sous-harmoniques sur $\Omega$. On suppose qu'il existe un compact $K\subset\Omega$ et un réel $\theta\in\R_+^*$ tels que
\begin{equation}\label{hyp-zeriahi}
\sup_{x\in K}\,\lim_{r\to 0}\,\sup_{u\in\mathcal{U}}\,\nu(\ddc u,x,r)\leq \theta<+\infty.
\end{equation}
Alors pour tout ouvert $W$ contenant $K$ et relativement compact dans $\Omega$, il existe des constantes $C_1>0$ et $C_2>0$ telles que
\begin{equation}
\log\left(\vol\{x\in K\,|\,u(x)<-t\}\right)
\leq C_1\int_W|u|\,{\rm dvol}+C_2-\frac{t}{\theta}
\end{equation}
pour tous $u\in\mathcal{U}$ et $t\in\R_+^*$.
\end{theo}


L'hypothèse $(\ref{hyp-zeriahi})$ du théorème de Zeriahi est vérifiée lorsque l'ensemble $\mathcal{U}$ est donné par des potentiels locaux de courants positifs fermés définis sur une variété kählérienne compacte, et que les masses de ces courants sont bornées :

\begin{prop}\label{controle-masse}
Soit $X$ une variété complexe compacte, munie d'une métrique hermitienne $h$. Il existe une constante $\theta>0$ telle que pour tout $(p,p)$-courant positif fermé $T$ et pour tout $x\in X$, on ait la majoration
\begin{equation}
\nu(T,x)\leq \theta{\|T\|}_h.
\end{equation}
Plus précisément, si l'on fixe un recouvrement relativement compact $(U^\alpha,V^\alpha,\psi^\alpha)_\alpha$ de $X$, alors il existe des compacts $K^\alpha\subset U^\alpha$ qui recouvrent $X$, et il existe un réel $r_0>0$, tels que pour tout $x\in K^\alpha$, on a 
\begin{align}
&B(\psi^\alpha(x),r_0)\subset \psi^\alpha(U^\alpha)\\
\text{et}\quad
&\nu(\psi^\alpha_*T,\psi^\alpha(x),r)\leq\theta{\|T\|}_h
\end{align}
pour tout $r\leq r_0$ et pour tout $(p,p)$-courant positif fermé $T$.
En particulier, si~$\mathcal{T}$ est un ensemble de courants positifs fermés de masse $\leq 1$, alors
\begin{equation}
\max_{\alpha}\,\sup_{x\in K^\alpha}\,\lim_{r\to 0}\,\sup_{T\in\mathcal{T}}\,\nu(\psi^\alpha_*T,\psi^\alpha(x),r)\leq\theta.
\end{equation}
\end{prop}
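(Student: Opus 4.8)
L'approche consiste � tout d�duire de la monotonie des nombres de Lelong (th�or�me~\ref{lemm-lelong}), qui permet de majorer $\nu(\psi^\alpha_*T,\psi^\alpha(x),r)$ pour $r$ petit par sa valeur au rayon fix� $r_0$, cette derni�re �tant ensuite contr�l�e par la masse globale de $T$ gr�ce � une comparaison �l�mentaire de m�triques. Je fixerais d'abord un recouvrement relativement compact $(U^\alpha,V^\alpha,\psi^\alpha)_\alpha$ de $X$ (d�finition~\ref{rec-compact}). Par compacit�, on peut choisir des compacts $K^\alpha\subset U^\alpha$ recouvrant encore $X$, puis un r�el $r_0>0$ assez petit pour que, pour tout $\alpha$ et tout $x\in K^\alpha$, la boule euclidienne $B(\psi^\alpha(x),r_0)$ soit contenue dans $\psi^\alpha(U^\alpha)$ et que $r_0\le\dist\bigl(\psi^\alpha(K^\alpha),\partial\psi^\alpha(V^\alpha)\bigr)$ (on prend le minimum sur le nombre fini d'indices). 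Le th�or�me~\ref{lemm-lelong}, appliqu� au courant positif ferm� $\psi^\alpha_*T$ sur l'ouvert $\psi^\alpha(V^\alpha)$, fournit alors $\nu(\psi^\alpha_*T,\psi^\alpha(x),r)\le\nu(\psi^\alpha_*T,\psi^\alpha(x),r_0)$ pour tout $0<r\le r_0$.

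La deuxi�me �tape est la majoration de $\nu(\psi^\alpha_*T,\psi^\alpha(x),r_0)$. Notons $\omega=\frac{i}{2}\partial\b\partial\|z\|^2$ la forme de K�hler standard sur $\C^n$, $\kappa=\Im(h)$, et $\omega^\alpha=(\psi^\alpha)^*\omega$ : c'est une $(1,1)$-forme lisse d�finie positive sur $V^\alpha$. Comme $\b{U^\alpha}\Subset V^\alpha$, il existe une constante $C^\alpha>0$ telle que $\omega^\alpha\le C^\alpha\kappa$ sur $\b{U^\alpha}$, c'est-�-dire que $C^\alpha\kappa-\omega^\alpha$ y est une $(1,1)$-forme positive. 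En d�veloppant
\[
(C^\alpha\kappa)^{n-p}-(\omega^\alpha)^{n-p}=(C^\alpha\kappa-\omega^\alpha)\wedge\sum_{j=0}^{n-p-1}(C^\alpha\kappa)^{j}\wedge(\omega^\alpha)^{n-p-1-j},
\]
on voit que cette diff�rence est une somme de produits de $(1,1)$-formes positives, donc une $(n-p,n-p)$-forme fortement positive sur $\b{U^\alpha}$. Comme $T$ est positif et d'ordre $0$ (proposition~\ref{ordre0}), donc s'�value sur les formes bor�liennes born�es par approximation monotone, j'obtiendrais, en posant $U'=(\psi^\alpha)^{-1}\bigl(B(\psi^\alpha(x),r_0)\bigr)\subset U^\alpha$,
\[
\langle\psi^\alpha_*T,\chi_{B(\psi^\alpha(x),r_0)}\,\omega^{n-p}\rangle=\langle T,\chi_{U'}\,(\omega^\alpha)^{n-p}\rangle\le(C^\alpha)^{n-p}\langle T,\kappa^{n-p}\rangle=(C^\alpha)^{n-p}(n-p)!\,\|T\|_h,
\]
la derni�re in�galit� utilisant aussi que $\chi_{X\backslash U'}\,\kappa^{n-p}$ est fortement positive.

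En combinant les deux �tapes, on obtient $\nu(\psi^\alpha_*T,\psi^\alpha(x),r)\le(C^\alpha)^{n-p}(n-p)!\,(\pi r_0^2)^{-(n-p)}\,\|T\|_h$ pour tous $x\in K^\alpha$, $r\le r_0$ et tout $(p,p)$-courant positif ferm� $T$ ; on pose alors $\theta=\max_\alpha(C^\alpha)^{n-p}(n-p)!\,(\pi r_0^2)^{-(n-p)}$, ce qui donne imm�diatement les deux in�galit�s de la partie \og plus pr�cis�ment\fg. La majoration $\nu(T,x)\le\theta\,\|T\|_h$ en d�coule en choisissant $\alpha$ avec $x\in K^\alpha$, en faisant tendre $r$ vers $0$ et en invoquant l'invariance du nombre de Lelong par changement de carte ; la derni�re in�galit� s'obtient en sp�cialisant aux courants de masse $\le1$, puis en passant � la limite en $r$, au supremum en $T$ et en $x$, et au maximum en $\alpha$. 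Je n'anticipe pas d'obstacle s�rieux : les seuls points demandant un peu de soin sont l'uniformit� des constantes sur le recouvrement fini et le maniement des courants sur des formes non lisses, tous deux routiniers ; l'ingr�dient v�ritable est la monotonie de Lelong, sans laquelle le facteur $(\pi r^2)^{-(n-p)}$ exploserait quand $r\to0$.
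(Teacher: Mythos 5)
Your argument is correct and follows essentially the same route as the paper: reduce to a fixed radius $r_0$ by Lelong monotonicity, then compare the pulled-back Euclidean form with $\kappa$ on each chart to bound the localized mass by the global mass $\|T\|_h$. The only cosmetic differences are that the paper uses a single uniform comparison constant $\delta$ across the finite cover (rather than per-chart $C^\alpha$) and determines $r_0$ first via a Lebesgue-number argument before defining the $K^\alpha$, whereas you fix the $K^\alpha$ first; both organizations work.
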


\begin{proof}
Comme $U^\alpha\Subset V^\alpha$, il existe $\delta>0$ tel que
\begin{equation}
\forall\alpha,\, \forall v\in{\rm T}U^\alpha, \quad \frac{1}{\delta}\|\psi^\alpha_*v\|
\leq {\|v\|}_h
\leq \delta\|\psi^\alpha_*v\|.
\end{equation}
En termes de $(1,1)$-formes positives, cela se traduit par
\begin{equation}
\frac{1}{\delta^2}\,\psi^\alpha_*\omega \leq \kappa \leq \delta^2\,\psi^\alpha_*\omega,
\end{equation}
où $\kappa=\Im(h)$ et $\omega$ désigne la forme de Kähler standard sur $\C^n$ ($n=\dim(X)$).

Soit $\rho_0>0$ tel que pour tout point $x\in X$, il existe $\alpha$ tel que $B_h(x,\rho_0)\subset U^\alpha$. Pour un tel $\alpha$, on a
\begin{equation}
B(\psi^\alpha(x),\rho_0/\delta)\subset\psi^\alpha(B_h(x,\rho_0))\subset\psi^\alpha(U^\alpha).
\end{equation}
On pose $r_0=\rho_0/\delta$ et
\begin{equation}
K^\alpha=\{x\in U^\alpha\,|\,B(\psi^\alpha(x),r_0)\subset\psi^\alpha(U^\alpha)\}.
\end{equation}
Par construction, ce sont des ensembles compacts qui recouvrent $X$.

Soit $x\in K^\alpha$. Pour tout $r\leq r_0$, on obtient :
\begin{align}
\nu(\psi^\alpha_*T,\psi_*^\alpha(x),r)
&\leq
\nu(\psi^\alpha_*T,\psi_*^\alpha(x),r_0)\\
&=\frac{1}{(\pi r_0^2)^{n-p}}\langle \psi^\alpha_*T,\chi_{B(\psi^\alpha(x),r_0)}\omega^{n-p}\rangle,\\
&\leq \left(\frac{\delta^2}{\pi r_0^2}\right)^{n-p}\langle T,\kappa^{n-p}\rangle\\
&= (n-p)!\left(\frac{\delta^2}{\pi r_0^2}\right)^{n-p} {\|T\|}_h.
\end{align}
On a donc la majoration voulue avec $\theta=(n-p)!\left(\delta^2/\pi r_0^2\right)^{n-p}$. On en déduit :
\begin{equation}
\nu(T,x)
=\nu(\psi^\alpha_*T,\psi^\alpha(x))
\leq\nu(\psi^\alpha_*T,\psi_\alpha(x),r)
\leq \theta {\|T\|}_h.
\end{equation}
\end{proof}


\section{Courants d'Ahlfors}

Soit $X$ une variété complexe de dimension $n$, munie d'une métrique hermitienne $h$. Soit $\varphi:\C\to X$ une courbe entière (non nécessairement injective). Pour $r>0$, on pose
\begin{align}
{\sf a}_\varphi(r)&=\int_{\D_r}{\|\varphi'\|}_h^2\,{\rm dvol}=\int_0^r\int_0^{2\pi}{\|\varphi'(t\e^{i\theta})\|}_h^2\,t\,{\rm d}\theta\,{\rm d}t\\
\text{et}\quad
\ell_\varphi(r)&=\int_{\partial\D_r}{\|\varphi'\|}_h\,{\rm d}\sigma_r=\int_0^{2\pi}{\|\varphi'(r\e^{i\theta})\|}_h\,r\,{\rm d}\theta,
\end{align}
où $\D_r$ désigne le disque de rayon $r$ dans $\C$ (centré en $0$), ${\rm dvol}$ la forme volume standard sur $\C$ et $\sigma_r$ la mesure de Lebesgue sur le cercle $\partial\D_r$. On pose aussi
\begin{align}
\ma_\varphi(r)&=\int_0^r {\sf a}_\varphi(t)\frac{{\rm d}t}{t}\\
\text{et}\quad
\ml_\varphi(r)&=\int_0^r \ell_\varphi(t)\frac{{\rm d}t}{t}
\end{align}

On note $S_{\varphi,r}$ le $(n-1,n-1)$-courant positif défini par :
\begin{equation}
\langle S_{\varphi,r},\theta\rangle = \frac{1}{\ma_\varphi(r)}\int_0^r \left[\int_{\D_t}\varphi^*\theta\right]\frac{{\rm d}t}{t}.
\end{equation}
Lorsque $\varphi$ est injectif, on a
\begin{align}
{\sf a}_\varphi(r)&=\aire(\varphi(\D_r)),\\
\ell_\varphi(r)&=\longueur(\varphi(\partial\D_r))\\
\text{et}\quad
S_{\varphi,r}&=\int_0^r\{\varphi(\D_t)\}\frac{{\rm d}t}{t} \left/ \int_0^r\aire(\varphi(\D_t))\frac{{\rm d}t}{t}\right..
\end{align}

\begin{rema}
Si l'on note $\kappa$ la $(1,1)$-forme positive associée à $h$, on a la formule
\begin{equation}
{\sf a}_\varphi(r)=\int_{\D_r}\varphi^*\kappa.
\end{equation}
En particulier, les courants $S_{\varphi,r}$ sont tous de masse $1$.
\end{rema}

\begin{lemm}[Ahlfors]\label{lemm-ahlfors}
Soit $X$ une variété complexe \emph{compacte}, munie d'une métrique hermitienne $h$. Pour toute courbe entière $\varphi:\C\to X$, on a :
\begin{equation}\label{lim-ahlfors}
\liminf_{r\to+\infty} \frac{\ml_\varphi(r)}{\ma_\varphi(r)}=0.
\end{equation}
Par conséquent, il existe des suites $r_n\to+\infty$ telles que les courants $S_{\varphi,r_n}$ convergent vers un courant positif fermé $S$. Par définition, une telle limite est un \emph{courant d'Ahlfors}\footnote{Certains auteurs les appellent courants de Nevanlinna, et réservent l'appellation courants d'Ahlfors aux limites fermées des courants non moyennés $\{\varphi(\D_r)\}/{\sf a}_\varphi(r)$.} associé à $\varphi$.
\end{lemm}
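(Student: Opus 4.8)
The plan is to first establish the analytic identity $(\ref{lim-ahlfors})$, namely $\liminf_{r\to+\infty}\ml_\varphi(r)/\ma_\varphi(r)=0$ (the substantive part), and then to deduce the existence and closedness of Ahlfors currents from it, using the compactness of sets of positive currents of bounded mass. Throughout, $\varphi$ is assumed nonconstant, so that ${\sf a}_\varphi(r_1)>0$ for some $r_1$.

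For $(\ref{lim-ahlfors})$ I would argue by contradiction: suppose there are $c>0$ and $r_0$ with $\ml_\varphi(r)\ge c\,\ma_\varphi(r)$ for all $r\ge r_0$. The starting point is the isoperimetric-type inequality obtained by applying the Cauchy--Schwarz inequality on each circle $\partial\D_r$:
\begin{equation*}
\ell_\varphi(r)^2=\left(\int_0^{2\pi}\|\varphi'(re^{i\theta})\|_h\,r\,{\rm d}\theta\right)^2\le 2\pi r\,{\sf a}_\varphi'(r).
\end{equation*}
Passing to the logarithmic variable $u=\log r$ and setting $T(u)=\ma_\varphi(e^u)$, $L(u)=\ml_\varphi(e^u)$, one computes $T'(u)={\sf a}_\varphi(e^u)$ and $L'(u)=\ell_\varphi(e^u)$, so the inequality above reads $L'(u)^2\le 2\pi\,T''(u)$; moreover $T'$ is nondecreasing (hence $T$ is convex) and $T(u)\to+\infty$ since $\varphi$ is nonconstant. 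A second Cauchy--Schwarz, this time on the interval $[u_0,u]$, gives
\begin{equation*}
(L(u)-L(u_0))^2\le (u-u_0)\int_{u_0}^u L'(s)^2\,{\rm d}s\le 2\pi(u-u_0)\,T'(u).
\end{equation*}
Injecting the contradiction hypothesis $L\ge cT$ and using $T\to+\infty$ to absorb the constant $L(u_0)$, one gets $T(u)^2\le C\,u\,T'(u)$ for $u$ large, that is $T'(u)/T(u)^2\ge (Cu)^{-1}$. Integrating from some $u_1$ to $u$ yields $1/T(u_1)\ge 1/T(u_1)-1/T(u)\ge C^{-1}\log(u/u_1)\to+\infty$, a contradiction. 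This proves $(\ref{lim-ahlfors})$.

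For the consequence, recall (from the remark preceding the statement) that each $S_{\varphi,r}$ is a positive current of mass $1$; by the compactness of sets of positive currents of bounded mass, every sequence $r_n\to+\infty$ admits a subsequence along which $S_{\varphi,r_n}$ converges weakly to a positive current $S$ (positivity passes to weak limits). To make $S$ closed, one chooses, using $(\ref{lim-ahlfors})$, a sequence $r_n\to+\infty$ with $\ml_\varphi(r_n)/\ma_\varphi(r_n)\to0$, and then extracts so that $S_{\varphi,r_n}\to S$. For a test $1$-form $\psi$, Stokes' formula applied fibrewise gives
\begin{equation*}
\langle S_{\varphi,r_n},{\rm d}\psi\rangle=\frac{1}{\ma_\varphi(r_n)}\int_0^{r_n}\left[\int_{\partial\D_t}\varphi^*\psi\right]\frac{{\rm d}t}{t},
\end{equation*}
and the crude bound $\left|\int_{\partial\D_t}\varphi^*\psi\right|\le C\,\|\psi\|_{\mathcal{C}^0}\,\ell_\varphi(t)$ (with $C$ depending only on $h$) gives $|\langle{\rm d}S_{\varphi,r_n},\psi\rangle|\le C\,\|\psi\|_{\mathcal{C}^0}\,\ml_\varphi(r_n)/\ma_\varphi(r_n)\to0$. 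Letting $n\to+\infty$ yields ${\rm d}S=0$, so $S$ is a closed positive current, i.e. an Ahlfors current.

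The main obstacle is the first part. The delicate point is to use the logarithmic averaging correctly: the plain ratio $\ell_\varphi(r)/{\sf a}_\varphi(r)$ is controlled only by ${\sf a}_\varphi'(r)$, which cannot be bounded pointwise by ${\sf a}_\varphi(r)$, whereas the averaged quantities fit into the convex differential inequality for $T$ above, whose integration against ${\rm d}u/u$ produces the required divergence. Once $(\ref{lim-ahlfors})$ is established, the current-theoretic part (Stokes, compactness, stability of positivity) is routine.
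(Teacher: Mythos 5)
Your proof is correct and follows essentially the same route as the paper's (which reproduces Brunella's argument): the two applications of Cauchy--Schwarz — first on each circle to get $\ell_\varphi(r)^2\le 2\pi r\,{\sf a}_\varphi'(r)$, then radially with the logarithmic weight to get $(\ml_\varphi(r)-\ml_\varphi(1))^2\le 2\pi\log(r)\,{\sf a}_\varphi(r)$ — and the final integration of $\ma_\varphi'/\ma_\varphi^2$ against ${\rm d}r/(r\log r)$ are exactly the paper's steps, merely rewritten in the logarithmic variable $u=\log r$ and organized as a direct contradiction rather than via the sets $B(\epsilon,R)$. The current-theoretic conclusion (mass $1$, weak compactness, Stokes) also matches the paper.
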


\begin{proof} Je reproduis ici la preuve de \cite{brunella}. Par Cauchy-Schwarz, on a, pour tout $r>0$, l'\emph{inégalité d'Ahlfors} :
\begin{equation}\label{ineg-ahlfors}
\begin{split}
\ell_\varphi(r)^2
&=
\left(\int_0^{2\pi}{\|\varphi'(r\e^{i\theta})\|}_h\,r\,{\rm d}\theta\right)^2\\
&\leq
2\pi r\left(\int_0^{2\pi}{\|\varphi'(r\e^{i\theta})\|}_h^2\,r\,{\rm d}\theta\right)
=
2\pi r{\sf a}_\varphi'(r).
\end{split}
\end{equation}
On en déduit, à nouveau par Cauchy-Schwarz,
\begin{align}
\ml_\varphi(r)-\ml_\varphi(1)
&\leq \int_1^r (2\pi {\sf a}_\varphi'(t))^{1/2}\frac{{\rm d}t}{t^{1/2}} \\
&\leq \left( \int_1^r 2\pi {\sf a}_\varphi'(t)\,{\rm d}t \, \int_1^r \frac{{\rm d} t}{t} \right)^{1/2} \\
&\leq \left( 2\pi\log(r) {\sf a}_\varphi(r) \right)^{1/2} \\
&= \left( 2\pi r\log(r) \ma_\varphi'(r) \right)^{1/2}.
\end{align}

Pour $\epsilon>0$ et $R\geq 1$, on pose $B(\epsilon,R)=\{r\geq R\,|\ml_\varphi(r)-\ml_\varphi(1)\geq \epsilon\ma_\varphi(r)\}$. Lorsque $r\in B(\epsilon,R)$, on a
\begin{equation}
\ma_\varphi(r)^2
\leq
\frac{2\pi}{\epsilon^2}\,r\log(r)\ma_\varphi'(r),
\end{equation}
et on en déduit que
\begin{equation}
\int_{B(\epsilon,R)}\frac{{\rm d}r}{r\log(r)}
\leq
\frac{2\pi}{\epsilon^2}\int_{B(\epsilon,R)}\frac{\ma_\varphi'(r)}{\ma_\varphi(r)^2}\,{\rm d}r\\
\leq
\frac{2\pi}{\epsilon^2\ma_\varphi(R)}
<+\infty.
\end{equation}
Comme $\int_R^{+\infty}\frac{{\rm d}r}{r\log(r)}=+\infty$, il existe donc $r\geq R$ tel que $r\notin B(\epsilon,R)$, \emph{i.e.}
\begin{equation}
\frac{\ml_\varphi(r)}{\ma_\varphi(r)}<\epsilon.
\end{equation}
Ceci montre l'égalité $(\ref{lim-ahlfors})$ pour la limite inférieure, car $\ma_\varphi(r)\to+\infty$.

Soit $r_n\to+\infty$ une suite telle que
\begin{equation}
\lim_{n\to+\infty}\frac{\ml_\varphi(r_n)}{\ma_\varphi(r_n)}=0.
\end{equation}
La suite de courants $\left(S_{\varphi,r_n}\right)_n$ est de masse constante égale à $1$, donc quitte à en extraire une sous-suite, on peut supposer qu'elle converge faiblement vers un courant positif $S$. Montrons que ce courant est fermé. En effet, pour toute $1$-forme $\theta$ sur $X$, on a, grâce au théorème de Stokes,
\begin{align}
\langle {\rm d}S,\theta\rangle
&= -\lim_{n\to+\infty}\langle S_{\varphi,r_n},{\rm d}\theta\rangle \\
&= \lim_{n\to+\infty}\frac{-1}{{\sf a}_\varphi(r_n)} \int_0^{r_n} \left[\int_{\D_{t}}\varphi^*{\rm d}\theta\right] \frac{{\rm d}t}{t} \\
&= \lim_{n\to+\infty}\frac{-1}{{\sf a}_\varphi(r_n)} \int_0^{r_n} \left[\int_{\partial\D_{t}}\varphi^*\theta\right] \frac{{\rm d}t}{t}.
\end{align}
Pour $x\in X$, on note $\ltrivert \theta(x) \rtrivert_h$ la norme d'opérateur de $\theta(x):{\rm T}_xX\to\R$ vis-à-vis de la métrique kählérienne sur ${\rm T}X$, et on note $\ltrivert\theta\rtrivert_{h}=\max_{x\in X}\ltrivert\theta(x)\rtrivert_h<+\infty$ (par compacité de $X$). Avec ces notations, on a alors
\begin{equation}
\left|\int_{\partial\D_{t}}\varphi^*\theta\right|
\leq
\int_{\partial\D_t} \ltrivert\theta(\varphi(z))\rtrivert_h {\|\varphi'\|}_h \,{\rm d}\sigma_t(z)
\leq
\ltrivert\theta\rtrivert_h \, \ell_\varphi(t),
\end{equation}
d'où
\begin{align}
\left|\frac{1}{{\sf a}_\varphi(r_n)}\int_{\partial\D_{r_n}}\varphi^*\theta\right|
&\leq
\ltrivert\theta\rtrivert_h \frac{\ml_\varphi(r_n)}{\ma_\varphi(r_n)}
\quad\underset{n\to+\infty}{\longrightarrow}\quad 0,
\end{align}
et donc $\langle{\rm d}S,\theta\rangle=0$, ce qui montre que le courant $S$ est fermé.
\end{proof}

À partir de maintenant, $X$ désigne une surface. Dans ce cas, la formule de Jensen (cf. \cite[p. 12]{demailly-gazette}) permet de démontrer (voir \cite{brunella}) :

\begin{theo}[Nevanlinna]\label{ahlfors-nef}
Soit $S$ un courant d'Ahlfors associé à une courbe entière $\varphi:\C\to X$, où $X$ est une surface complexe compacte. On suppose que l'image de $\varphi$ n'est contenue dans aucune courbe complexe compacte. Alors $[S]$ est une classe nef, ce qui signifie
\begin{equation}
[S]\cdot[C] \geq 0
\end{equation}
pour toute courbe complexe $C$. En particulier, $[S]^2\geq 0$.
\end{theo}

Lorsque la fonction ${\sf a}_\varphi$ est bornée, on montre en revanche que l'image de $\varphi$ est contenue dans une courbe rationnelle, ce qui était déjà connu dans le cas où $X$ est une surface projective \cite{demailly-gazette}, mais pas dans le cas général.

\begin{prop}\label{courbe-entiere-aire-finie}
Soit $X$ une variété complexe munie d'une métrique hermitienne $h$, et soit $\varphi:\C\to X$ une courbe entière telle que
\begin{equation}
\lim_{r\to+\infty}{\sf a}_\varphi(r)<+\infty.
\end{equation}
Alors $\varphi$ se prolonge par continuité en une application holomorphe
\begin{equation}
\tilde\varphi:\P^1(\C)\to X.
\end{equation}
\end{prop}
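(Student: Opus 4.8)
The plan is to show that $\varphi(z)$ tends to a limit $p^{*}\in X$ as $|z|\to+\infty$, and then to finish by Riemann's removable singularity theorem: once $\varphi(z)\to p^{*}$, in a holomorphic chart $\Theta:W\to\C^{n}$ centred at $p^{*}$ the map $w\mapsto\Theta(\varphi(1/w))$ is holomorphic and bounded on a small punctured disc, hence extends holomorphically across $w=0$, so $\varphi$ extends to a holomorphic $\tilde\varphi:\P^{1}(\C)\to X$ with $\tilde\varphi(\infty)=p^{*}$. One may assume $\varphi$ non-constant (otherwise the statement is trivial), so that ${\sf a}_\varphi$ is strictly increasing; write $M:=\lim_{r\to+\infty}{\sf a}_\varphi(r)<+\infty$ and $F(r):=M-{\sf a}_\varphi(r)>0$, a decreasing function with $F(r)\to 0$. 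All distances, areas and diameters below are taken for the metric $h$.

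First I would produce a good sequence of circles. The Ahlfors inequality $(\ref{ineg-ahlfors})$ gives $\ell_\varphi(r)^{2}\le 2\pi r\,{\sf a}_\varphi'(r)$, hence
\begin{equation*}
\int_{1}^{+\infty}\frac{\ell_\varphi(r)^{2}}{r}\,\mathrm{d}r\ \le\ 2\pi\bigl(M-{\sf a}_\varphi(1)\bigr)\ <\ +\infty .
\end{equation*}
Since $\int_{1}^{+\infty}\mathrm{d}r/r=+\infty$, this forces $\liminf_{r\to+\infty}\ell_\varphi(r)=0$, so one can choose radii $r_{k}\to+\infty$ with $\ell_\varphi(r_{k})\to 0$. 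Pick a point $p_{k}\in\varphi(\{|z|=r_{k}\})$; since that loop has length $\ell_\varphi(r_{k})$, it is contained in $\overline{B(p_{k},\ell_\varphi(r_{k}))}$.

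Next I would localise the image near infinity using the Lelong-type lower bound. Let $c>0$ and $r_{0}>0$ be the constants of Corollary \ref{coro-lelong}. For $k<l$ the curve $C=\varphi(\{r_{k}\le|z|\le r_{l}\})$ has area at most ${\sf a}_\varphi(r_{l})-{\sf a}_\varphi(r_{k})\le F(r_{k})$, and its boundary is $\varphi(\{|z|=r_{k}\})\cup\varphi(\{|z|=r_{l}\})$; so for $k$ large enough that $\sqrt{F(r_{k})/c}\le r_{0}$, Corollary \ref{coro-lelong} forces every point of $C$ to lie within distance $\sqrt{F(r_{k})/c}$ of $\partial C$. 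Putting $\rho_{k}:=\sqrt{F(r_{k})/c}+\sup_{m\ge k}\ell_\varphi(r_{m})$, which tends to $0$, this yields
\begin{equation*}
\varphi\bigl(\{r_{k}\le|z|\le r_{l}\}\bigr)\ \subset\ B(p_{k},\rho_{k})\cup B(p_{l},\rho_{k})\qquad\text{for all }l>k .
\end{equation*}
The left-hand side is connected and meets both balls (it contains $p_{k}$ and $p_{l}$), so the two balls must intersect, whence $\dist(p_{k},p_{l})<2\rho_{k}$ for every $l>k$; thus $(p_{k})$ is Cauchy. Applying the same inclusion with $z$ an arbitrary point of $\{|z|\ge r_{k}\}$ gives $\diam\,\varphi(\{|z|\ge r_{k}\})\le 6\rho_{k}\to 0$, and for $k$ large $\varphi(\{|z|\ge r_{k}\})$ then sits inside a $\rho_{k}$-neighbourhood of the compact loop $\varphi(\{|z|=r_{k}\})$, hence inside a compact subset of $X$. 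Therefore $(p_{k})$ converges to some $p^{*}\in X$ and $\varphi(z)\to p^{*}$, which is what the final step requires.

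The main obstacle is to control the image of $\varphi$ at infinity without assuming $X$ compact: a naive covering of an end of $\C$ by small discs would make the diameter estimates add up and possibly diverge. The device that avoids this is to apply Corollary \ref{coro-lelong} to the \emph{whole} annulus $\{r_{k}\le|z|\le r_{l}\}$ rather than to consecutive annuli, since the area of that curve stays bounded by $F(r_{k})$ uniformly in $l$; combined with connectedness this delivers the Cauchy property in one stroke. (Alternatively, one could invoke the extension theorem for analytic subsets of finite volume applied to the graph of $\varphi$ in $\P^{1}\times X$, but the argument above is self-contained given the results of this chapter.)
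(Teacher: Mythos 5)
Your proof is correct and rests on the same two ingredients as the paper's: the Ahlfors inequality to produce circles with $\ell_\varphi(r_k)\to 0$, and the Lelong lower bound of Corollary \ref{coro-lelong}. But you run Corollary \ref{coro-lelong} in the opposite logical direction. The paper argues by contradiction: it introduces $\delta=\lim_{r\to\infty}\diam_h(E_r)$ with $E_r=\varphi(\C\backslash\D_r)$, supposes $\delta>0$, carves out infinitely many disjoint annular images $A_n$ of diameter $\geq 2\delta/3$ bounded by short loops, uses connectedness to locate an interior point $x_n$ at distance $\geq\delta/12$ from $\partial A_n$, and then invokes Corollary \ref{coro-lelong} to give each $A_n$ area $\geq\eta>0$, contradicting finiteness of ${\sf a}_\varphi$. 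You instead use the contrapositive: the image of the \emph{whole} annulus $\{r_k\leq|z|\leq r_l\}$ has area at most $F(r_k)=M-{\sf a}_\varphi(r_k)\to 0$, a bound which is uniform in $l$, so Corollary \ref{coro-lelong} forces every point of it to lie within $\sqrt{F(r_k)/c}$ of the boundary loops, which are themselves short; connectedness then gives $\dist(p_k,p_l)\leq 2\rho_k$ in one stroke. This removes the auxiliary quantity $\delta$ and the contradiction, and buys you a quantitative modulus of continuity at infinity ($\diam\varphi(\{|z|\geq r_k\})\leq 6\rho_k$), at the small cost of having to check the inclusion of $C_{kl}$ in the union of two balls with care. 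Both arguments inherit the same implicit requirement that Corollary \ref{coro-lelong} apply and that the Cauchy sequence $(p_k)$ actually converge in $X$, so this is not a defect specific to your version.
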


\begin{proof}
L'inégalité d'Ahlfors $(\ref{ineg-ahlfors})$ implique qu'il existe une suite $R_n\to+\infty$ telle que 
\begin{equation}
\ell_\varphi(R_n)\to 0.
\end{equation}
En effet, dans le cas contraire, il existerait $\epsilon>0$ et $R>0$ tel que $\ell_\varphi(r)>\epsilon$ pour $r>R$, puis par l'inégalité d'Ahlfors
\begin{equation}
+\infty
=
\frac{\epsilon^2}{2\pi}\int_{R}^{+\infty}\frac{{\rm d}r}{r}
\leq
\int_{R}^{+\infty}{\sf a}_\varphi'(r)
\leq
\lim_{r\to+\infty}{\sf a}_\varphi(r)
< +\infty.
\end{equation}

Pour $r>0$, posons $E_r=\varphi(\C\backslash\D_r)$, où $\D_r$ désigne le disque de rayon $r$. Les ensembles $E_r$ sont décroissants avec $r$, et on pose
\begin{equation}
\delta=\lim_{r\to+\infty}\diam_h(E_{r}).
\end{equation}
Il suffit de montrer que $\delta=0$. En effet : pour tout suite $z_n\to\infty$, la suite $\varphi(z_n)$ sera alors de Cauchy, donc convergente dans $X$, et toutes ces suites auront la même limite $L$, que l'on prend comme valeur de $\tilde\varphi$ en $\infty$.

Supposons donc que $\delta>0$. Pour $r<r'$, on note $\mathcal{C}_{r,r'}$ la couronne
\begin{equation}
\mathcal{C}_{r,r'}=\left\{z\in\C\,\big|\,r<|z|<r'\right\}.
\end{equation}
À $r$ fixé, on a
\begin{equation}
\diam_h\varphi(\mathcal{C}_{r,r'})\mathop{\longrightarrow}\limits_{r'\to+\infty}\diam_h\varphi(\C\backslash\D_r)\geq\delta,
\end{equation}
donc pour $r'$ assez grand, ce diamètre est supérieur à $2\delta/3$. Il existe donc $(r_n)$ et $(r'_n)$ deux suites extraites de $(R_n)$ telles que :
\begin{enumerate}
\item $r_n<r'_n<r_{n+1}$ ;
\item $\ell_\varphi(r_n)\leq \frac{\delta}{6}$ et $\ell_\varphi(r'_n)\leq \frac{\delta}{6}$ ;
\item $\diam_h(A_n)\geq\frac{2\delta}{3}$, où $A_n=\varphi(\mathcal{C}_{r_n,r'_n})$.
\end{enumerate}

\begin{enonce*}{Affirmation}
Il existe $x_n\in A_n$ tel que
\begin{equation}
\dist_h(x_n,\partial A_n)\geq \frac{\delta}{12}.
\end{equation}
\end{enonce*}

Dans le cas contraire, on aurait $A_n=B_n\cup B'_n$, avec 
\begin{align}
B_n
&=
\left\{x\in A_n\,|\,\dist_h(x,\varphi(\partial\D_{r_n}))<\frac{\delta}{12}\right\}\\
\text{et}\quad B'_n
&=
\left\{x\in A_n\,|\,\dist_h(x,\varphi(\partial\D_{r'_n}))<\frac{\delta}{12}\right\}.
\end{align}
Comme
$\max\left\{\diam_h(\varphi(\partial\D_{r_n})),\diam_h(\varphi(\partial\D_{r'_n}))\right\}
\leq\frac{\delta}{12}$
(d'après la condition $(2)$), cela impliquerait :
\begin{equation}
\max\left\{\diam_h(B_n),\diam_h(B'_n)\right\}
\leq \frac{\delta}{12}+\frac{\delta}{12}+\frac{\delta}{12}
= \frac{\delta}{4}.\end{equation}
Or par connexité de $A_n$, $B_n\cap B'_n\neq\emptyset$, et donc
\begin{equation}
\diam_h(A_n)\leq\diam_h(B_n)+\diam_h(B'_n)\leq\frac{\delta}{2},
\end{equation}
ce qui contredit la condition (3) et termine la preuve de l'affirmation.\\

Appliquons alors le corollaire \ref{coro-lelong} du théorème de Lelong, avec les courbes $A_n$ et les boules $B(x_n,\delta/12)$ : il existe $\eta>0$, indépendant de $n$, tel que
\begin{equation}
\aire_h(A_n)\geq \eta
\end{equation}
pour tout $n$. Cela implique :
\begin{equation}
\aire_h(\varphi(\C))\geq \sum_{n=0}^{+\infty} \aire_h(A_n) = +\infty,
\end{equation}
ce qui contredit l'hypothèse, et achève ainsi la preuve de la proposition \ref{courbe-entiere-aire-finie}.
\end{proof}






\chapter[Courants dilatés et contractés]{Courants dilatés et contractés par un automorphisme}\label{chap-dilat}

Le but de ce chapitre est de donner une démonstration complète du théorème suivant :

\begin{theo}[Cantat \cite{cantat-k3}, Dinh -- Sibony \cite{ds-green,ds-jalg}]\label{theo-cds}
Soit $X$ une surface complexe compacte kählérienne de forme de Kähler $\kappa$, et soit $f:X\to X$ un automorphisme loxodromique de $X$. Il existe un unique courant positif fermé $T^+_f$ tel que $[T^+_f]=\theta^+_f$\footnote{On rappelle que $\theta^+_f$ l'unique vecteur de $H^{1,1}(X;\R)$ qui vérifie $f^*\theta^+_f=\lambda(f)\,\theta^+_f$ et $\theta^+_f\cdot[\kappa]=1$.}. En particulier, $T^+_f$ est l'unique courant positif fermé de masse $1$ tel que
\begin{equation}
f^*T^+_f=\lambda(f)\, T^+_f.
\end{equation}

De plus, les potentiels locaux de $T^+_f$ sont $\alpha$-höldériens (donc continus)\footnote{Cette régularité ne dépend pas du choix des potentiels locaux, cf. remarque \ref{regularite-potentiel}.} pour tout $\alpha<\h(f)/\liap(f)=\log(\lambda(f))<\liap(f)$, où $\liap(g)$ désigne l'exposant de Liapunov topologique $\liap(g)=\lim_{n\rightarrow+\infty}\frac{1}{n}\log\ltrivert{\rm d}g^n\rtrivert_{\kappa,\infty}$ (qui ne dépend pas du choix de la métrique $\kappa$).
\end{theo}

En appliquant le même résultat à $f^{-1}$, on obtient l'existence et l'unicité d'un courant positif fermé $T^-_f=T^+_{f^{-1}}$, de classe $\theta^-_f$ et de masse $1$, tel que
\begin{equation}
f^*T^-_f=\frac{1}{\lambda(f)}\,T^-_f,
\end{equation}
avec potentiels höldériens également. On dit que le courant $T^+_f$ est \emph{dilaté} par $f$, alors que $T^-_f$ est \emph{contracté} par $f$. 

Dans le dernier paragraphe, on montre que si $f$ possède des courbes périodiques, alors les potentiels de $T^+_f$ et $T^-_f$ sont constants le long de ces courbes, quitte à réduire les ouverts sur lesquels ils sont définis. Cette propriété s'avèrera essentielle au chapitre \ref{chap-hyp} lors de la démonstration du théorème \ref{fat-hyp}.

La plupart du temps, on notera simplement $T^+$ et $T^-$ au lieu de $T^+_f$ et $T^-_f$. Dans la suite du chapitre, on note aussi $\lambda=\lambda(f)$.


\section{Existence du courant dilaté et continuité du potentiel}

Appelons $r$ la dimension de $H^{1,1}(X;\R)$, et fixons $r$ formes de Kähler $\omega_1,\cdots,\omega_r$ dont les classes forment une base de $H^{1,1}(X;\R)$. Notons $A$ la matrice de $\frac{1}{\lambda}f^*$ dans cette base, et $\underline\omega$ le vecteur colonne
\begin{equation}
\underline{\omega}=\begin{pmatrix}\omega_1\\\vdots\\\omega_r\end{pmatrix}.
\end{equation}
Comme les composantes du vecteur $\frac{1}{\lambda}f^*\underline{\omega}-A\underline{\omega}$ sont des formes exactes, le lemme du $\ddc$ global \ref{ddc-glob} donne l'existence de fonctions $\mathcal{C}^\infty$ $u_1,\cdots,u_r$ sur $X$ telles que, si l'on note $\underline{u}$ le vecteur $\,^t(u_1,\cdots,u_r)$,
\begin{equation}
\frac{1}{\lambda}f^*\underline{\omega}=A\underline{\omega}+\ddc \underline{u}.\end{equation}
On a ainsi, pour tout $n\in\N$ :
\begin{equation}
\frac{1}{\lambda^n}f^{*n}\underline{\omega}=A^n\underline{\omega}+\ddc \underbrace{\sum_{k=0}^{n-1}\frac{1}{\lambda^k}A^{n-k-1}\underline{u}\circ f^k}_{\underline{v}^n}.
\end{equation}

Comme toutes les valeurs propres de $A$ autres que $1$ sont de module $<1$, la suite $A^n$ converge vers la matrice d'une projection sur $\R\theta^+_f$ (c'est la projection de direction $(\theta^-_f)^\perp$). Ainsi, le premier membre de la somme converge vers un vecteur $\underline{\eta}$, dont les composantes $\eta_i$ sont des $(1,1)$ formes fermées.

Par ailleurs, la suite $(\underline{v}^n)_{n\in\N}$ est de Cauchy pour la topologie de la convergence uniforme. En effet, fixons $\epsilon>0$, et notons $N$ un entier positif tel que
\begin{equation}
\max\left\{
\sum_{k=N}^{+\infty}\frac{1}{\lambda^k}
~;~
\sup_{(n,p)\in\N^2,\,n\geq N}\ltrivert A^n-A^{n+p}\rtrivert
\right\}
\leq\epsilon,
\end{equation}
où $\ltrivert\cdot\rtrivert$ désigne la norme subordonnée à la norme infinie sur $\R^r$. Posons également
\begin{equation}
M=\sup_{n\in\N}\ltrivert A^n \rtrivert <+\infty.
\end{equation}
Pour une fonction continue $\underline v:X\to\R^n$, on note 
\begin{equation}
{\|\underline v\|}_\infty=\max_{x\in X}\|\underline v(x)\|,
\end{equation}
où $\|\underline v(x)\|$ désigne la norme infinie du vecteur $\underline v(x)\in\R^r$. Lorsque $n\geq 2N$ et $p\geq 0$, on a alors :
\begin{align}
\begin{split}
{\|\underline{v}^{n}-\underline{v}^{n+p}\|}_\infty\;
&\leq\;
\sum_{k=0}^{n-N+1}\frac{1}{\lambda^k}\ltrivert A^{n-k-1}-A^{n-k-1+p}\rtrivert{\|\underline{u}\|}_\infty\\
&\quad\quad\quad
+ \sum_{k=n-N}^{n-1}\frac{1}{\lambda^k}\ltrivert A^{n-k-1}\rtrivert  {\|\underline{u}\|}_\infty\\
&\quad\quad\quad\quad
+ \sum_{k=n-N}^{n+p-1}\frac{1}{\lambda^k}\ltrivert A^{n+p-k-1}\rtrivert {\|\underline{u}\|}_\infty\end{split}\\
&\leq\;
\sum_{k=0}^{n-N+1}\frac{\epsilon{\|\underline{u}\|}_\infty}{\lambda^k}
+
2\sum_{k=n-N}^{+\infty}\frac{M{\|\underline{u}\|}_\infty}{\lambda^k}
\\
&<\;
\epsilon{\|\underline{u}\|}_\infty\left(\sum_{k=0}^{+\infty}\frac{1}{\lambda^k}+2M\right).
\end{align}
On en déduit que $(\underline{v}^n)_{n\in\N}$ converge uniformément vers une fonction continue 
$\underline{v}^\infty=\,^t(v_1^\infty,\cdots,v_r^\infty)$.

Ainsi, les courants positifs fermés $\frac{1}{\lambda^n}f^{*n}\omega_i$ convergent faiblement vers les courants $T_i=\eta_i+\ddc v_i^\infty$. En particulier, on a $f^*T_i=\lambda T_i$. Ces courants $T_i$ sont positifs fermés, comme limites de courants positifs fermés. Par ailleurs, les potentiels locaux de $T_i$ sont continus, comme sommes de la fonction continue $v_i^\infty$ et des potentiels locaux de $\eta_i$, qui sont lisses. On a ainsi montré l'existence de courants positifs fermés à potentiels continus (de classe $\theta^+_f$ quitte à les multiplier par un nombre positif) qui sont dilatés par $f$.\\

Bien que cela ne soit pas nécessaire dans la suite du texte, nous allons montrer que les fonctions $v_i^\infty$ sont en fait höldériennes, ce qui montrera que les potentiels de $T_i$ le sont également (voir aussi \cite{briend-these}, \cite{ds-green} et \cite{cantat-leborgne} par exemple). Pour cela, soit $\alpha>0$ tel que
\begin{equation}
\alpha<\frac{\log(\lambda)}{\liap(f)}=\frac{\h(f)}{\liap(f)}\leq 1.
\end{equation}

Posons
\begin{equation}
\epsilon=\frac{\log(\lambda)-\alpha\liap(f)}{2\alpha}>0.
\end{equation}
Par définition de $\liap(f)$, il existe $n_0\in\N^*$ tel que pour tout $n\geq n_0$, on ait :
\begin{equation}
\frac{\log\ltrivert{\rm d}f^n\rtrivert_{\kappa,\infty}}{n}\leq\liap(f)+\epsilon,
\end{equation}
ce qui se réécrit
\begin{equation}
\frac{{\left(\ltrivert{\rm d}f^n\rtrivert_{\kappa,\infty}\right)}^\alpha}{\lambda^n}\leq \mu^n,
\end{equation}
où l'on a posé
\begin{equation}
\mu=\exp\left(\frac{\alpha\liap(f)-\log(\lambda)}{2}\right)<1.
\end{equation}

Les fonctions $u_i$ sont $\mathcal{C}^1$, donc $\alpha$-höldériennes. Il existe alors $C>0$ tel que
\begin{equation}
|u_i(x)-u_i(y)|\leq C\dist_\kappa(x,y)^\alpha
\end{equation}
pour tout $i$ et pour tous $x$ et $y$ dans $X$.

Soient $n\in\N$ et $x, y\in X$. On a alors :
\begin{align}
{\|\underline{v}^n(x)-\underline{v}^n(y)\|}_\infty
&\leq
\sum_{k=0}^{n-1}\frac{\ltrivert A^{n-k-1}\rtrivert {\|\underline{u}\circ f^k(x)-\underline{u}\circ f^k(y)\|}_\infty}{\lambda^k}\\
&\leq
\sum_{k=0}^{n-1}\frac{MC\dist_\kappa(f^k(x),f^k(y))^\alpha}{\lambda^k}\\
&\leq
MC\sum_{k=0}^{n-1}\frac{\left(\ltrivert{\rm d}f^k\rtrivert_{\kappa,\infty}\dist_\kappa(x,y)\right)^\alpha}{\lambda^k} \\
&\leq
\underbrace{
MC
\left(
\sum_{k=0}^{n_0-1}\frac{{\left(\ltrivert{\rm d}f^k\rtrivert_{\kappa,\infty}\right)}^\alpha}{\lambda^k}
+
\sum_{k=n_0}^{n-1}\mu^k
\right)
}
_{C'<+\infty}
\dist_\kappa(x,y)^\alpha.
\end{align}
Par passage à la limite, on obtient donc
\begin{equation}
{\|\underline{v}^\infty(x)-\underline{v}^\infty(y)\|}_\infty\leq C'\dist_\kappa(x,y)^\alpha,
\end{equation}
ce qui montre que les fonctions $v_i^\infty$ sont $\alpha$-höldériennes.


\section{Unicité du courant dilaté}

Nous venons de voir qu'il existe un courant positif fermé $T^+$ tel que $[T^+]=\theta^+_f$, $f^*T^+=\lambda T^+$, et qui a des potentiels locaux continus. Supposons qu'il existe un courant positif fermé $T\neq T^+$ tel que $[T]=[T^+]$. D'après le lemme du $\ddc$ global \ref{ddc-glob}, il existe une distribution $g$ telle que
\begin{equation}
T=T^+ +\ddc g.
\end{equation}
Localement, cette distribution est donnée par la différence entre un potentiel pluri-sous-harmonique de $T$ et un potentiel continu de $T^+$. On voit donc que $g$ est en fait une fonction semi-continue supérieurement. Par conséquent, cette fonction atteint son maximum, et on va supposer que
\begin{equation}
\max_{x\in X}g(x)=0.
\end{equation}

Pour tout $n\in\N$, on pose $T_n=\lambda^nf_*^nT$ et $g_n=\lambda^ng\circ f^{-n}$. Puisque $\lambda^nf_*T^+=T^+$, on a
\begin{equation}
T_n=T^++\ddc g_n.
\end{equation}
De plus,
\begin{equation}\label{maxg_n}
\max_{x\in X}g_n(x)=0.
\end{equation}

Fixons un recouvrement relativement compact $(U^\alpha,V^\alpha,\psi^\alpha)_{\alpha\in A}$ de $X$ (cf. définition \ref{rec-compact}) tel que les courants $T^+$ et $T_n$ admettent des potentiels $u_+^\alpha$ et $u_n^\alpha$ sur chaque ouvert $V^\alpha$, avec
\begin{equation}\label{g_n}
{g_n}_{|V^\alpha}=u_n^\alpha-u_+^\alpha
\end{equation}
pour tous $n$ et $\alpha$. Comme les fonctions $u_+^\alpha$ sont bornées sur $U^\alpha$ (car $\b U^\alpha\subset V^\alpha$ est compact et $u_+^\alpha$ est continue), on peut également supposer que les potentiels $u_+^\alpha$ sont négatifs sur $U^\alpha$, et donc
\begin{equation}
{u_n^\alpha}_{|U^\alpha}\leq{g_n}_{|U^\alpha}\leq 0
\end{equation}
pour tout $n\in\N$.

\begin{lemm}
Il existe une suite strictement croissante $(n_k)_{k\in\N}\in\N^\N$ telle que pour tout $\alpha\in A$, la suite $(u_{n_k}^\alpha)_{k\in\N}$ converge dans $L^1_{\rm loc}(U^\alpha)$.
\end{lemm}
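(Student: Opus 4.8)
The plan is to reduce the statement to the classical compactness theorem for plurisubharmonic functions (see \cite{gunning}): a sequence of psh functions on a connected open subset of $\C^n$ that is locally uniformly bounded above either tends to $-\infty$ uniformly on every compact set, or admits a subsequence converging in $L^1_{\rm loc}$. For each $\alpha$ the functions $u^\alpha_n$ are psh on $V^\alpha$, and by $(\ref{g_n})$ one has $u^\alpha_n=g_n+u^\alpha_+$ on $V^\alpha$, where $g_n\leq 0$ on all of $X$ (this is exactly $(\ref{maxg_n})$) and $u^\alpha_+$ is continuous, hence bounded on compact subsets of $V^\alpha$; therefore $(u^\alpha_n)_n$ is locally uniformly bounded above on $V^\alpha$. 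So the only real work is to exclude the divergence-to-$-\infty$ alternative and to carry out a single extraction valid simultaneously for every $\alpha$.

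First I would produce one good chart by pigeonhole. By $(\ref{maxg_n})$, for each $n$ there is a point $x_n\in X$ with $g_n(x_n)=0$; since $A$ is finite and $X=\bigcup_{\alpha}U^\alpha$, there exist $\alpha_0\in A$ and an infinite set $S\subset\N$ such that $x_n\in U^{\alpha_0}$ for all $n\in S$. For such $n$, $(\ref{g_n})$ gives $u^{\alpha_0}_n(x_n)=u^{\alpha_0}_+(x_n)\geq\min_{\overline{U^{\alpha_0}}}u^{\alpha_0}_+>-\infty$, so $\sup_{\overline{U^{\alpha_0}}}u^{\alpha_0}_n$ stays bounded below uniformly over $n\in S$; in particular $(u^{\alpha_0}_n)_{n\in S}$ does not tend to $-\infty$ uniformly on the compact $\overline{U^{\alpha_0}}$. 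The dichotomy then yields an infinite $S_0\subset S$ along which $u^{\alpha_0}_n$ converges in $L^1_{\rm loc}(V^{\alpha_0})$.

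The point that lets one propagate this to the remaining charts is that the transition functions do not depend on $n$: on an overlap $U^\alpha\cap U^\beta$, $(\ref{g_n})$ gives $u^\alpha_n-u^\beta_n=u^\alpha_+-u^\beta_+$, a fixed (pluriharmonic, hence smooth) function. Since $X$ is connected, I would order $A$ as $\alpha_0,\alpha_1,\dots,\alpha_m$ so that each $U^{\alpha_j}$ with $j\geq 1$ meets $U^{\alpha_i}$ for some $i<j$, and then build nested infinite sets $S_0\supset S_1\supset\cdots\supset S_m$ of $\N$: assuming $(u^{\alpha_i}_n)_{n\in S_{j-1}}$ converges in $L^1_{\rm loc}(V^{\alpha_i})$ for all $i<j$, choose $i<j$ with $U^{\alpha_j}\cap U^{\alpha_i}\neq\emptyset$; on this overlap $u^{\alpha_j}_n=u^{\alpha_i}_n+(u^{\alpha_j}_+-u^{\alpha_i}_+)$ converges in $L^1_{\rm loc}(U^{\alpha_j}\cap U^{\alpha_i})$ along $S_{j-1}$, so $(u^{\alpha_j}_n)_{n\in S_{j-1}}$ cannot tend to $-\infty$ uniformly on compacts of $V^{\alpha_j}$ (that would blow up its $L^1$ norm on a fixed compact of the overlap), and the dichotomy furnishes $S_j\subset S_{j-1}$ along which $u^{\alpha_j}_n$ converges in $L^1_{\rm loc}(V^{\alpha_j})$. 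Enumerating $S_m$ increasingly as $(n_k)_{k\in\N}$ gives the sought sequence, since $U^\alpha\Subset V^\alpha$ so convergence in $L^1_{\rm loc}(V^\alpha)$ restricts to convergence in $L^1_{\rm loc}(U^\alpha)$.

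The one delicate step is excluding the $-\infty$ alternative: a priori the maximum of $g_n$ might be attained in a chart depending on $n$, so $(\ref{maxg_n})$ by itself does not localize anything. Finiteness of $A$ (pigeonhole) pins down a single chart where the alternative is ruled out, and the $n$-independence of the transition cocycle together with connectedness of $X$ spreads both the exclusion and the convergence to all charts; the rest is a routine diagonal extraction.
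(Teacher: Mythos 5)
Your proof is correct and follows essentially the same route as the paper's: both invoke the H\"ormander/Gunning compactness dichotomy for plurisubharmonic functions, rule out the uniform divergence to $-\infty$ using the normalization $\max_X g_n=0$ of $(\ref{maxg_n})$, and iterate the extraction over the finite cover using the $n$-independence of the transitions $u^\alpha_n-u^\beta_n=u^\alpha_+-u^\beta_+$ coming from $(\ref{g_n})$. The only difference is organizational: you localize the exclusion of the $-\infty$ branch to a single chart by pigeonholing the points where $g_n$ attains its maximum, then propagate along a connected chain of overlapping charts, whereas the paper argues that divergence in one chart would propagate to all and force $g_n\to-\infty$ uniformly on $X$; your version simply makes more explicit the connectedness step that the paper leaves implicit.
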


\begin{proof}
On applique \cite[théorème 3.2.12]{hormander} aux fonctions $u_n^\alpha$, qui sont uniformément majorées par $0$ sur $U^\alpha$ : pour tout $\alpha\in A$,
\begin{enumerate}
\item
ou bien $(u_n^\alpha)_{n\in\N}$ converge vers $-\infty$ localement uniformément sur $U^\alpha$,
\item
ou bien il existe une sous-suite $\left(u_{n_k}\right)_{k\in\N}$ qui converge dans $L^1_{\rm loc}(U^\alpha)$.
\end{enumerate}
Si l'on est dans le premier cas pour un certain $\alpha$, alors on est dans le même cas pour tous les $\beta\in A$ (car $u_n^\alpha-u_n^\beta=u_+^\alpha-u_+^\beta$ ne dépend pas de $n$), et donc d'après $(\ref{g_n})$ la suite $g_n$ converge uniformément vers $-\infty$ sur $X$, ce qui contredit $(\ref{maxg_n})$.

On est donc dans le second cas pour tous les $\alpha\in A$. On commence par prendre une première suite extraite $(u^{\alpha_1}_{n_k})_{k\in\N}$ qui converge dans $L^1_{\rm loc}(U^{\alpha_1})$. Les suites $(u_{n_k}^\alpha)_{k\in\N}$ vérifient les mêmes propriétés que les suites de départ, donc on peut faire une deuxième extraction de manière à avoir des suites qui convergent dans $U^{\alpha_1}$ et $U^{\alpha_2}$, etc.
\end{proof}

D'après la proposition \ref{controle-masse}, il existe des compacts $K^\alpha\subset U^\alpha$ qui recouvrent $X$, et il existe $\theta>0$, tels que
\begin{equation}
\max_{\alpha}\,\sup_{x\in K^\alpha}\,\lim_{r\to 0}\,\sup_{n\in\N}\,\nu(\psi^\alpha_*T_n,\psi^\alpha(x),r)\leq\theta.
\end{equation}
Fixons des voisinages ouverts $W^\alpha$ de $K^\alpha$, avec $W^\alpha\Subset U^\alpha$. D'après le théorème de Zeriahi \ref{theo-zeriahi}, il existe des constantes $C_1>0$ et $C_2>0$ telles que
\begin{equation}
\log\left(\vol_\kappa\{x\in K^\alpha\,|\,u_n^\alpha(x)<-t\}\right)
\leq
C_1\int_{W^\alpha} |u_n^\alpha|\,{\rm dvol_\kappa}+C_2-\frac{t}{\theta}
\end{equation}
pour tous $\alpha\in A$, $n\in\N$ et $t\in\R_+^*$. Comme les suites $(u_{n_k}^\alpha)_{k\in\N}$ convergent dans $L^1_{\rm loc}(U^\alpha)$, donc dans $L^1(W^\alpha)$, il existe en fait une constante $C>0$ telle que
\begin{equation}
\log\left(\vol_\kappa\{x\in K^\alpha\,|\,u_{n_k}^\alpha(x)<-t\}\right)
\leq
C-\frac{t}{\theta}
\end{equation}
pour tous $\alpha\in A$, $k\in\N$ et $t\in\R_+^*$.

Comme on a supposé $T\neq T^+$, alors la fonction $g$ n'est pas presque partout nulle, et il existe $t>0$ tels que
\begin{equation}
\epsilon:=\vol_\kappa\{x\in X\,|\,g(x)< -t\} >0.
\end{equation}
En utilisant que $g_n\circ f^n=\lambda^n g$, on obtient, pour tout $k\in\N$ :
\begin{align}
\epsilon\;
&=\;
\vol_\kappa\left(f^{-n_k}\{x\in X\,|\,g_{n_k}\leq -\lambda^{n_k} t\}\right)\\
&\leq\;
\ltrivert{\rm d}f^{-{n_k}}\rtrivert_{\kappa,\infty}\vol_\kappa\{x\in X\,|\,g_{n_k}(x)\leq-\lambda^{n_k}t\}\\
&\leq\;
\left(\ltrivert{\rm d}f^{-1}\rtrivert_{\kappa,\infty}\right)^{n_k}\sum_\alpha\vol_\kappa\{x\in K^\alpha\,|\,u_{n_k}^\alpha(x)<-\lambda^{n_k}t\}\\
&\leq\;
{\rm card}(A)\,\exp\left(C-\lambda^{n_k}t+n_k\log\ltrivert{\rm d}f^{-1}\rtrivert_{\kappa,\infty}\right).
\end{align}
Comme le membre de droite tend vers $0$ lorsque $n\to+\infty$, on a obtenu une contradiction. Ceci achève la preuve du théorème \ref{theo-cds}.
\qed


\section{Courants dilatés et courbes périodiques}\label{section-dilat+per}

\begin{theo}\label{dilat+per}
Soit $f$ un automorphisme loxodromique d'une surface kählérienne compacte $X$. Il existe un recouvrement de $X$ par des ouverts $V^\alpha$ tels que
\begin{enumerate}
\item $T^+_{|V^\alpha}=\ddc u^\alpha$, où $u^\alpha$ est une fonction continue sur $V^\alpha$ ;
\item le potentiel $u^\alpha$ est constant sur ${\sf CP}(f)\cap V^\alpha$, où ${\sf CP}(f)$ désigne l'union des courbes périodiques de $f$.
\end{enumerate}
\end{theo}

Si l'on note $\pi:X\to X_0$ le morphisme birationnel de contraction des courbes périodiques (cf. théorème \ref{theo-contr}), les potentiels $u^\alpha$ sont alors constants sur les fibres de $\pi$, et induisent donc des fonctions continues $u^\alpha_0$ sur $V_0^\alpha:=\pi(V^\alpha)$.

\begin{rema}
On a la même propriété pour le courant $T^-_f$, en remplaçant $f$ par $f^{-1}$.
\end{rema}

\begin{proof}
D'après le premier point du théorème \ref{theo-contr}, il existe un morphisme birationnel ${\eta:X\to X_1}$, avec $X_1$ lisse, tel que $f$ (quitte à le remplacer par un itéré, ce qui ne change pas le courant $T^+_f$) induit un automorphisme~${f_1:X_1\to X_1}$, pour lequel chaque courbe périodique connexe est
\begin{itemize}
\item soit un arbre de courbes rationnelles lisses ;
\item soit une courbe (réductible) de genre $1$ qui est de l'un des types suivants :
\begin{itemize}
\item une courbe elliptique lisse,
\item une courbe rationnelle avec une singularité nodale ou cuspidale,
\item une union de deux courbes rationnelles qui se coupent en deux points (confondus ou non),
\item trois courbes rationnelles qui s'intersectent en un unique point
\item ou un cycle de $n\geq 3$ courbes rationnelles lisses.
\end{itemize}
\end{itemize}

Nous affirmons qu'il suffit de montrer le théorème sur la surface $X_1$, pour l'automorphisme $f_1$. En effet, l'unicité du courant dilaté donne $T^+_f=\eta^*T^+_{f_1}$, à une constante multiplicative près. Si $u_1^\alpha$ est un potentiel local pour $T^+_{f_1}$ sur~$V_1^\alpha$, alors $u^\alpha:=u_1^\alpha\circ\eta$ sera un potentiel pour~$T^+_f$ sur~${V^\alpha:=\eta^{-1}(V_1^\alpha)}$. Si l'on montre que $u^\alpha_1$ est constant sur ${\sf CP}(f_1)\cap V_1^\alpha$, alors~$u^\alpha$ sera constant sur~${\eta^{-1}({\sf CP}(f_1)\cap V_1^\alpha)={\sf CP}(f)\cap V^\alpha}$.

On suppose donc dans la suite que chaque courbe périodique connexe est de l'un des types décrits plus haut. Quitte à faire un nombre fini d'éclatements $f$-équivariants (au plus trois), on peut alors supposer que chaque courbe périodique connexe est
\begin{itemize}
\item une courbe elliptique lisse,
\item un arbre de courbes rationnelles lisses
\item ou un cycle d'au moins trois courbes rationnelles lisses.
\end{itemize}
En effet, si l'on note $\beta:\tilde X\to X$ une telle suite d'éclatements, et $\tilde f$ l'automorphisme induit sur $\tilde X$, le courant $T^+_{\tilde f}$ a pour potentiels localement $u^\alpha\circ\beta$, si bien qu'il suffit de montrer la propriété $(2)$ sur $\tilde X$.

Dans la suite, on suppose donc, ces modifications étant faites, que pour tout point $x\in X$ sur une courbe périodique, l'union $E$ des courbes périodiques irréductibles contenant $x$ est de la forme suivante :
\begin{itemize}
\item $E$ est une courbe elliptique lisse,
\item $E$ est une courbe rationnelle lisse
\item ou $E$ est l'union de deux courbes rationnelles qui se coupent transversalement en $x$ (et uniquement en $x$).
\end{itemize}
Nous allons montrer qu'un tel point $x$ possède un voisinage comme souhaité, en montrant que $E$ possède un voisinage ouvert $V$ sur lequel $T^+_f$ admet un potentiel, qui est continu et constant sur $E$. Il suffira ensuite de réduire la taille de ce voisinage de telle sorte qu'il n'intersecte pas les autres courbes périodiques.

Pour construire un tel voisinage~$V$, on considère le morphisme birationnel~${\pi_E:X\to X_E}$ qui contracte $E$ sur un point~$y$ (a priori $X_E$ est singulière en $y$), et on prend pour $V$ la préimage par~$\pi_E$ d'une petite boule autour de $y$. Lorsque cette boule est suffisamment petite, on a
\begin{equation}\label{km}
H^1(V,\O)\simeq H^1(E,\O_E)
\end{equation}
par \cite[proposition 4.45]{kollar-mori}. Par ailleurs, cet espace $H^1(E,\O_E)$ est nul lorsque $E$ est constitué d'une ou deux courbes rationnelles lisses, et isomorphe à~$\C$ lorsque $E$ est une courbe elliptique lisse (voir par exemple \cite{bpvdv}).

Dans la suite, on note $T$ le courant $T^+_f$ restreint à $V$. Ce courant est localement défini par des potentiels continus $u^\alpha$ sur un recouvrement~${(V^\alpha)}_\alpha$ de~$V$ :
\begin{equation}
T_{|V^\alpha}=\ddc u^\alpha,
\end{equation}
où le potentiel $u^\alpha$ est uniquement déterminé modulo l'addition d'une fonction pluri-harmonique. Notons $\mathcal{C}^0$ le faisceau des fonctions continues, et~$\mathcal{PH}$ celui des fonctions pluri-harmoniques. Alors $T$ peut être vu comme un élément de $H^0(V,\mathcal{C}^0/\mathcal{PH})$. La suite exacte courte de faisceaux
\begin{equation}
\xymatrix{
0 \ar[r] & \mathcal{PH} \ar[r] & \mathcal{C}^0 \ar[r] & \mathcal{C}^0/\mathcal{PH} \ar[r] & 0
}
\end{equation}
fournit la suite exacte longue
\begin{equation}\label{suite-ex}
\xymatrix{
0 \ar[r] & H^0(V,\mathcal{PH}) \ar[r] & H^0(V,\mathcal{C}^0) \ar[r] & H^0(V,\mathcal{C}^0/\mathcal{PH}) \ar@/_/[lld]^{c_1} \\
& H^1(V,\mathcal{PH}) \ar@{.>}[r] &
}
\end{equation}
où $c_1$ est le morphisme connectant. Le but est de montrer que
\begin{equation}
T\in H^0(V,\mathcal{C}^0/\mathcal{PH})
\end{equation}
provient d'un élément de $H^0(V,\mathcal{C}^0)$, ce qui équivaut à $c_1(T)=0$.

Pour cela on considère la suite exacte courte de faisceaux
\begin{equation}
\xymatrix{
0 \ar[r] & \R \ar[r]^-{j} & \O \ar[r]^-{\Re} & \mathcal{PH} \ar[r] & 0\\
},
\end{equation}
où $j$ désigne la multiplication par $2i \pi$. Le fait que cette suite soit exacte correspond au fait que toute fonction pluri-harmonique est localement partie réelle d'une fonction holomorphe, unique à l'addition près d'une constante imaginaire pure. On obtient la suite exacte longue
\begin{equation}
\xymatrix{
0 \ar[r] & H^0(V,\R) \ar[r]^-{j_0} & H^0(V,\O) \ar[r]^-{\Re_0} & H^0(V,\mathcal{PH}) \ar@/_/[lld]^{\delta_1}\\
& H^1(V,\R) \ar[r]^-{j_1} & H^1(V,\O) \ar[r]^-{\Re_1} & H^1(V,\mathcal{PH}) \ar@/_/[lld]^{\delta_2}\\
& H^2(V,\R) \ar@{.>}[r] & 
}
\end{equation}

\begin{lemm}
Le morphisme $\Re_0$ est surjectif, quitte à réduire la taille de~$V$.
\end{lemm}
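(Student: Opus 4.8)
Le plan est d'exploiter la suite exacte longue \'ecrite ci-dessus, issue de $0\to\R\xrightarrow{j}\O\xrightarrow{\Re}\mathcal{PH}\to 0$. Par exactitude, $\Re_0$ est surjectif si et seulement si le morphisme connectant $\delta_1:H^0(V,\mathcal{PH})\to H^1(V,\R)$ est nul, ce qui, par exactitude en $H^1(V,\R)$, \'equivaut \`a l'injectivit\'e de
\begin{equation*}
j_1:H^1(V,\R)\longrightarrow H^1(V,\O).
\end{equation*}
C'est donc cette injectivit\'e que l'on va \'etablir, quitte \`a r\'etr\'ecir $V$.

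\emph{Premi\`ere \'etape : se ramener \`a $E$.} Pour $V$ assez petit, $V$ se r\'etracte par d\'eformation sur $E$, d'o\`u un isomorphisme $H^1(V,\R)\simeq H^1(E,\R)$ induit par la restriction \`a $E$ ; de m\^eme, l'isomorphisme $(\ref{km})$ donne $H^1(V,\O)\simeq H^1(E,\O_E)$. Comme le morphisme $j$ commute aux restrictions le long de $E\hookrightarrow V$, ces isomorphismes s'ins\`erent dans un carr\'e commutatif, et il suffit de prouver l'injectivit\'e du morphisme analogue $j_1^E:H^1(E,\R)\to H^1(E,\O_E)$ sur $E$ elle-m\^eme.

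\emph{Deuxi\`eme \'etape : les trois types de $E$.} Suivant les r\'eductions faites plus haut, $E$ est une courbe elliptique lisse, une courbe rationnelle lisse, ou l'union de deux courbes rationnelles lisses se coupant transversalement en un point.
\begin{itemize}
\item Si $E$ est une ou deux courbes rationnelles lisses, $E$ est homotopiquement \'equivalente \`a une sph\`ere ou \`a un bouquet de deux sph\`eres ; on a donc $H^1(E,\R)=0$, et $j_1^E$ est trivialement injectif (on a d'ailleurs d\'ej\`a observ\'e que $H^1(E,\O_E)$ est lui aussi nul).
\item Si $E$ est une courbe elliptique lisse, on \'ecrit la suite exacte longue analogue sur $E$ : comme $E$ est compacte et connexe, toute fonction pluri-harmonique (donc harmonique) globale sur $E$ est constante, si bien que $H^0(E,\mathcal{PH}_E)=\R$ et que le morphisme $\Re:H^0(E,\O_E)=\C\to\R$ est surjectif. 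Le morphisme connectant $H^0(E,\mathcal{PH}_E)\to H^1(E,\R)$ est alors nul, d'o\`u l'injectivit\'e de $j_1^E$. (On peut aussi invoquer la th\'eorie de Hodge : $j_1^E$ s'identifie \`a la restriction \`a $H^1(E;\R)$ de la projection $H^1(E;\C)\to H^{0,1}(E)$, qui est injective.)
\end{itemize}

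L'obstacle principal est la justification pr\'ecise du carr\'e commutatif de la premi\`ere \'etape, et surtout la compatibilit\'e de l'isomorphisme $(\ref{km})$ de Koll\'ar--Mori avec $j$ : il faut s'assurer que cet isomorphisme provient bien de la restriction \`a $E$ (via le passage au voisinage formel, puis \`a $E$) et que la restriction $i^{-1}\O_V\to\O_E$ est compatible avec la multiplication par $2i\pi$. Le reste de l'argument est formel ou \'el\'ementaire.
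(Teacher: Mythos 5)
Your proof is correct but takes a genuinely different route from the paper's. You invert the logical order: observing that, by exactness of the long exact sequence, surjectivity of $\Re_0$ is equivalent to injectivity of $j_1:H^1(V,\R)\to H^1(V,\O)$, you prove the latter directly by transporting it to the analogous map $j_1^E$ on $E$ via the retraction $H^1(V,\R)\simeq H^1(E,\R)$ and the Koll\'ar--Mori isomorphism $H^1(V,\O)\simeq H^1(E,\O_E)$, then treat the three types of $E$ (topological vanishing in the rational cases, Hodge theory for the elliptic case). The paper does the opposite: it proves surjectivity of $\Re_0$ by a direct patching construction --- the constancy of a pluri-harmonic $u$ on $E$ allows one to normalize the local holomorphic primitives $h^\alpha$ (first on $E\cap V^\alpha$, then on overlaps) so that the purely imaginary transition constants all vanish --- and only afterwards deduces that $\delta_1=0$ and $j_1$ is injective, using Koll\'ar--Mori merely for a dimension count. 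Your approach is more functorial but pays for it exactly where you flagged: it requires knowing that the isomorphism $(\ref{km})$ is the natural restriction map (so that the square commutes), whereas the paper sidesteps this entirely since it never needs to identify that isomorphism as a particular morphism. That compatibility is indeed true (the Koll\'ar--Mori isomorphism is built from the formal-functions comparison along $E\hookrightarrow E_n\hookrightarrow V$, hence compatible with the sheaf map $\R\hookrightarrow\O$ by functoriality of cohomology restriction), so your argument does go through, but the paper's patching argument is both shorter and more self-contained, which is likely why it was chosen.
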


\begin{proof}
On peut supposer que $V$ est recouvert par des ouverts $V^\alpha$ tels que pour tous $\alpha$ et $\beta$ :
\begin{itemize}
\item $V^\alpha$ est simplement connexe ;
\item $E\cap V^\alpha$ est non vide et connexe ;
\item Si $V^\alpha\cap V^\beta\neq\emptyset$, alors $E$ rencontre cet ouvert.
\end{itemize}

Soit $u$ une fonction pluri-harmonique sur $V$. Cette fonction est harmonique sur $E$, donc constante, et on peut supposer que cette constante est nulle. Comme $V^\alpha$ est simplement connexe, il existe une fonction $h^\alpha\in\O(V^\alpha)$ telle que $u_{|V^\alpha}=\Re(h^\alpha)$. Sur $E\cap V^\alpha$, $h^\alpha$ est alors une fonction holomorphe de partie réelle nulle, donc une constante imaginaire pure, que l'on peut choisir nulle. Sur les intersections $V^\alpha\cap V^\beta$ qui sont non vides, la fonction $h^\alpha-h^\beta$ est une fonction pluri-harmonique, de partie réelle $u-u=0$ : c'est donc une constante imaginaire pure, qui est nulle car $h^\alpha=h^\beta=0$ sur $E\cap V^\alpha\cap V^\beta\neq\emptyset$. On peut donc recoller les fonctions $h^\alpha$ en une fonction holomorphe $h$ définie globalement sur~$V$, et telle que $u=\Re(h)$.
\end{proof}

On en déduit que $\delta_1$ est le morphisme nul, et que $j_1$ est injectif. Lorsque $E$ est une courbe elliptique lisse, on a \begin{itemize}
\item $H^1(V,\O)\simeq H^1(E,\O_E)\simeq\C$ par $(\ref{km})$, \item $H^1(V;\R)\simeq H^1(E,\R)\simeq\R^2$ car $V$ est contractible sur $E$.
\end{itemize}
Comme $j_1$ est injectif, on en déduit qu'il est aussi surjectif, car il s'agit d'une application linéaire entre deux espaces vectoriels réels de même dimension. Dans le cas où $E$ est formé d'une ou deux courbes rationnelles, $H^1(V,\O)=0$, si bien que $j_1$ est surjectif dans tous les cas.

Ceci montre que $\Re_1$ est le morphisme nul, donc $\delta_2$ est injectif. Pour montrer que $c_1(T)\in H^1(V,\mathcal{PH})$ est nul, il suffit donc de montrer $\delta_2\circ c_1(T)=0$.

\begin{lemm}
L'image $\delta_2\circ c_1(T)\in H^2(V,\R)$ correspond à la classe de cohomologie du courant $T$ via l'isomorphisme de De Rham.
\end{lemm}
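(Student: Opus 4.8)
The plan is to trace both connecting homomorphisms $c_1$ and $\delta_2$ explicitly at the level of Čech cochains with respect to a good cover $(V^\alpha)_\alpha$ of $V$, and match the resulting degree-$2$ real Čech cocycle with the standard Čech–de Rham representative of $[T]\in H^2(V;\R)$. First I would recall the two short exact sequences in play: $0\to\mathcal{PH}\to\mathcal{C}^0\to\mathcal{C}^0/\mathcal{PH}\to 0$, which produces $c_1$, and $0\to\R\xrightarrow{j}\O\xrightarrow{\Re}\mathcal{PH}\to 0$, which produces $\delta_2:H^1(V,\mathcal{PH})\to H^2(V,\R)$. The element $T\in H^0(V,\mathcal{C}^0/\mathcal{PH})$ is, by construction, the collection of local continuous potentials $u^\alpha$ with $u^\alpha-u^\beta$ pluri-harmonic on $V^\alpha\cap V^\beta$; hence $c_1(T)$ is represented by the Čech $1$-cocycle $(u^\alpha-u^\beta)\in Z^1((V^\alpha),\mathcal{PH})$.

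Next I would compute $\delta_2$ of this class. On each overlap $V^{\alpha\beta}:=V^\alpha\cap V^\beta$ (which we may assume simply connected, after refining the cover), write the pluri-harmonic function $u^\alpha-u^\beta = \Re(h^{\alpha\beta})$ for some $h^{\alpha\beta}\in\O(V^{\alpha\beta})$, well defined up to an imaginary constant. The obstruction to these holomorphic functions forming a $\mathcal{PH}$-coboundary is the cocycle $h^{\alpha\beta}+h^{\beta\gamma}+h^{\gamma\alpha}\in\O(V^{\alpha\beta\gamma})$, which has vanishing real part, hence is a purely imaginary constant $c^{\alpha\beta\gamma}\in i\R$; dividing by $2\pi i$ (the map $j$) gives a real Čech $2$-cocycle $a^{\alpha\beta\gamma}=\frac{1}{2\pi i}(h^{\alpha\beta}+h^{\beta\gamma}+h^{\gamma\alpha})\in Z^2((V^\alpha),\R)$, and by definition $\delta_2\circ c_1(T)=[a^{\alpha\beta\gamma}]$. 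Now I would compare with the de Rham side. Locally $T=\ddc u^\alpha = \frac{i}{\pi}\partial\bar\partial u^\alpha$; since $u^\alpha-u^\beta=\Re(h^{\alpha\beta})=\frac12(h^{\alpha\beta}+\overline{h^{\alpha\beta}})$ is pluri-harmonic, one gets $T={\rm d}\big(\text{something}\big)$ with the local primitive ${\rm d^c}u^\alpha$, and the Čech–de Rham double complex then converts $T$ into exactly the cocycle built from the jumps of $\frac{1}{2\pi i}h^{\alpha\beta}$ across triple overlaps. Chasing the signs and the factor ${\rm d^c}=\frac{1}{2i\pi}(\partial-\bar\partial)$ carefully shows this agrees with $a^{\alpha\beta\gamma}$ up to the (conventional) normalization built into the De Rham isomorphism $H^2_{\mathcal{D}'}(V;\R)\simeq H^2(V;\R)$.

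The main obstacle I expect is bookkeeping rather than conceptual: keeping track of the precise normalization constants ($2\pi i$ versus $\pi$, the $\frac12$ in taking real parts, orientation/sign conventions in the connecting maps of the two long exact sequences) so that the two representatives match \emph{on the nose} and not merely up to a nonzero scalar — since the lemma as stated is an equality of classes, a scalar discrepancy would be fatal. A clean way to avoid fighting constants is to phrase everything through the standard fact that, for a $\ddc$-closed real $(1,1)$-current with local potentials $u^\alpha$, the Bott–Chern / Čech description of $[T]$ is exactly the triple-overlap cocycle of the imaginary periods of the holomorphic functions $h^{\alpha\beta}$ realizing $u^\alpha-u^\beta=\Re h^{\alpha\beta}$; then the identification with $\delta_2\circ c_1(T)$ is immediate because both are literally that cocycle. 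I would also need to check that the refinement of $(V^\alpha)$ needed to make all $V^{\alpha\beta\gamma}$ simply connected does not disturb the earlier structural hypotheses on the cover (each $V^\alpha$ meeting $E$ in a nonempty connected set, etc.), which is harmless since refining only shrinks the pieces.
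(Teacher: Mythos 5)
Your proposal follows the same route as the paper: represent $c_1(T)$ by the \v{C}ech $1$-cocycle of pluriharmonic jumps $u^\alpha-u^\beta$, write these as $\Re(h^{\alpha\beta})$, apply $\delta_2$ to get the purely imaginary triple-overlap cocycle divided by $2i\pi$, and match this against the \v{C}ech--de Rham representative of $[T]$ built from the local primitives ${\rm d^c}u^\alpha$. The paper carries out exactly this normalization bookkeeping (via ${\rm d^c}u^{\alpha\beta}={\rm d}\bigl(\tfrac{1}{2\pi}\Im(h^{\alpha\beta})\bigr)$) and the constants do match on the nose, so your plan is sound as written.
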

\begin{proof}
Commençons par voir comment est construit $\delta_2\circ c_1(T)$. Le $0$-cocycle $T\in H^0(V,\mathcal{C}^0/\mathcal{PH})$ est donné localement par des fonctions continues~$u^\alpha\in\mathcal{C}^0(U^\alpha)$, avec $u^{\alpha\beta}:=u^\beta-u^\alpha$ pluri-harmonique sur les intersections~${U^{\alpha\beta}=U^\alpha\cap U^\beta}$. Ces fonctions $u^{\alpha\beta}$ définissent un $1$-cocycle qui correspond à $c_1(T)\in H^1(V,\mathcal{PH})$, modulo un cobord.

On peut supposer, quitte à raffiner le recouvrement, que les intersections~$U^{\alpha\beta}$ sont simplement connexes, et que les tri-intersections $U^{\alpha\beta\gamma}:=U^\alpha\cap U^\beta\cap U^\gamma$ sont connexes. Ainsi $u^{\alpha\beta}\in\mathcal{PH}(U^{\alpha\beta})$ est la partie réelle d'une fonction holomorphe $h^{\alpha\beta}$ sur $U^{\alpha\beta}$. Sur les tri-intersections $U^{\alpha\beta\gamma}$, la fonction $h^{\beta\gamma}-h^{\alpha\gamma}+h^{\alpha\beta}$ est holomorphe de partie réelle nulle, donc il existe une constante réelle $C^{\alpha\beta\gamma}$ telle que 
\begin{equation}
h^{\beta\gamma}-h^{\alpha\gamma}+h^{\alpha\beta} = 2i\pi\, C^{\alpha\beta\gamma}.
\end{equation}
Par construction, $(C^{\alpha\beta\gamma})_{\alpha\beta\gamma}$ est le $2$-cocycle qui correspond à $\delta_2\circ c_1(T)$.

Voyons maintenant comment ce $2$-cocycle est relié à la classe de cohomologie de De Rham du courant $T$. La première étape dans l'isomorphisme de De Rham est la correspondance
\begin{equation}
H^2_{\rm{DR}}(V) \simeq H^1(V,\mathcal{Z}^1),
\end{equation}
où $\mathcal{Z}^1$ désigne le faisceau des $1$-formes différentielles fermées. Comme $T$ est donné localement par la $1$-forme exate $\ddc u^\alpha$, le $1$-cocycle correspondant à~$[T]$ dans~$H^1(V,\mathcal{Z}^1)$ est donné par 
\begin{equation}
{\rm d^c} \,u^\beta - {\rm d^c}\, u^\alpha = {\rm d^c}\,u^{\alpha\beta}\in\mathcal{Z}^1(U^{\alpha\beta}).
\end{equation}
Or $u^{\alpha\beta}=\Re(h^{\alpha\beta})$, donc
\begin{align}
{\rm d^c}\,u^{\alpha\beta} &= \frac{1}{2i\pi}\,(\partial - \b\partial)\, u^{\alpha\beta}\\
&= \frac{1}{4i\pi} \left( \partial \,h^{\alpha\beta} - \b\partial \, \b h^{\alpha\beta} \right)\\
&= {\rm d}\left( \frac{1}{2\pi} \, \Im(h^{\alpha\beta}) \right).
\end{align}
Ainsi, le $2$-cocycle correspondant dans l'isomorphisme de De Rham
\begin{equation}
H^1(V,\mathcal{Z}^1) \simeq H^2(V,\R)
\end{equation}
est donné par
\begin{equation}
\frac{1}{2\pi}\, \Im(h^{\beta\gamma} - h^{\alpha\gamma} +  h^{\alpha\beta}) 
= C^{\alpha\beta\gamma}.
\end{equation}
Le lemme est donc démontré.
\end{proof}

Pour terminer la démonstration du théorème \ref{dilat+per}, il s'agit maintenant de montrer que $[T]\in H^2(V,\R)$ est nul. Notons $\phi_{[T]}$ la forme linéaire sur $H_2(V,\R)$ qui correspond à $[T]$ par dualité. Par construction du voisinage $V$, $H_2(V,\R)$ est engendré par les classes d'homologie $[E_i]$, où les $E_i$ sont les composantes de la courbe $E$ (il y en a une ou deux). Comme les $E_i$ sont des courbes périodiques, la proposition \ref{prop-per} implique
\begin{equation} 
\phi_{[T]}([E_i]) = [T^+_f] \cdot [E_i] = 0.
\end{equation}
Ceci montre que la forme linéaire $\phi_{[T]}$ est nulle, et donc $[T]=0$.

Récapitulons. On a montré que $\delta_2\circ c_1(T)=0$, ce qui implique par injectivité que $c_1(T)=0$. Par exactitude de la suite $(\ref{suite-ex})$, ceci implique que $T$ est donné par un potentiel continu $u$ globalement défini sur $V$. De plus, $\ddc u_{|E_i}$ définit une $(1,1)$-forme positive sur $E_i$, qui est d'intégrale nulle car
\begin{equation}
\int_{E_i} \ddc u = [T^+_f]\cdot [E_i] = 0.
\end{equation}
On en déduit que $\ddc u_{|E_i}=0$, et $u_{|E_i}$ est une fonction harmonique, donc constante. Comme $E=\cup_i E_i$ est connexe, $u$ est constante sur $E$.
\end{proof}


\part{Entropie et comparaison de volumes}\label{part2}

\selectlanguage{english}


\chapter[Cauchy-Crofton Formula]{Cauchy-Crofton Formula and an Upper Bound for Real Volumes}\label{chap-upper-bound}

Let $X$ be an arbitrary $d$-dimensional real algebraic variety\footnote{Recall that with our conventions, $X$ is projective, smooth, irreducible, and has non empty real locus $X(\R)$.}. We fix a Riemannian metric on the complex manifold $X(\C)$. When $Y$ is a $k$-dimensional real algebraic subvariety of $X$, we denote by
\begin{itemize}
\item $\vol_\R(Y)$ the $k$-volume of $Y(\R)$,
\item $\vol_\C(Y)$ the $2k$-volume of $Y(\C)$,
\end{itemize}
both with respect to the Riemannian metric. The goal of what follow is to compare $\vol_\R(Y)$ and $\vol_\C(Y)$ for arbitrary subvarieties. In this chapter, we show that $\vol_\R(Y)$ is bounded from above by $\vol_\C(Y)$, up to a positive constant. We begin with the case of projective spaces, where we have a Cauchy-Crofton formula. This easily implies the case of a projective variety.


\section{Cauchy-Crofton formula}

What is described here can be found in the manuscript \cite{christol}, except for the proof of Lemma \ref{deg-2}.\\

There is a classical way, in integral geometry (see \cite{santalo}), to compute the volume of a $k$-dimensional submanifold $N$ of $\P^d(\R)$, just by taking the mean of the number of intersections between $N$ and $k$-codimensional projective subspaces. In order to make it work, we choose both a metric on $\P^d(\R)$, and a probability measure on the Grassmannian $\G(d-k,d)$ (i.e., the real algebraic variety of $(d-k)$-dimensional projective subspaces of $\P^d(\R)$), which are invariant under the action of the orthogonal group ${\rm O}(d+1)$. Namely, we set the Fubini--Study metric on $\P^d(\R)$, and the probability $\mu_{d-k,d}$ on $\G(d-k,d)$ induced by the Haar measure on ${\rm O}(d+1)$ (the Grassmannian is homogeneous with respect to this group). Now we can state the formula.

\begin{theo}[Cauchy--Crofton formula]
Let $N$ be a $k$-dimensional submanifold of $\P^d(\R)$. With respect to the Fubini--Study metric,
\begin{equation}\label{formule-crofton}
\vol(N)=\vol(\P^k(\R))\int_{\Pi\in\G(d-k,d)}\sharp(N\cap \Pi)\,{\rm d}\mu_{d-k,d}(\Pi).
\end{equation}
\end{theo}

\begin{proof}[Sketch of proof]
It is enough to check the formula when $N$ is a $k$-simplex, and then to approach an arbitrary submanifold by such simplices. A similar formula in the Euclidean context can be found in \cite[p. 245 (14.70)]{santalo}.
\end{proof}


\section{Un upper bound for real volumes}


\subsection{Case of projective space}

We set the Fubiny-Study metric on~$\P^d(\C)$. Now Wirtinger equality (cf. \cite[p. 31]{griffiths-harris}) implies, for any $k$-dimensional subvariety $Y$,
\begin{equation}
\vol_\C(Y)=\deg(Y)\vol_\C(\P^k),
\end{equation}
where $\deg(Y)$ is the degree of $Y$ as a subvariety of $\P^d$, \emph{i.e.} the number of intersections with a general $(d-k)$-plane. Combining this with Cauchy-Crofton formula, we get:

\begin{coro}\label{1.11}
Let $Y$ be a real $k$-dimensional algebraic subvariety of $\P^d_{\R}$. With respect to the Fubini--Study metric, 
\begin{equation}\label{vol-reel}
\vol_\R(Y)\leq\deg(Y)\vol_\R(\P^k)=\frac{\vol_\R(\P^k)}{\vol_\C(\P^k)}\vol_\C(Y),
\end{equation}
with equality if and only if $Y$ is a union of $\deg(Y)$ real projective subspaces.
\end{coro}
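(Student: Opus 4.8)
The plan is to feed the trivial degree bound $\sharp(Y(\R)\cap\Pi)\le\deg(Y)$ into the Cauchy--Crofton formula \eqref{formule-crofton} and then rewrite the constant using the Wirtinger identity $\vol_\C(Y)=\deg(Y)\vol_\C(\P^k)$ established just above. Concretely, identify $\G(d-k,d)$ with the variety of real $(d-k)$-planes of $\P^d(\R)$ and recall a real Bertini statement: the $(d-k)$-planes $\Pi$ that fail to be transverse to $Y(\C)$, or that meet the singular locus of $Y$, or that meet the strata of dimension $<k$ of the semialgebraic set $Y(\R)$, form a proper subvariety of $\G(d-k,d)(\R)$, hence a $\mu_{d-k,d}$-null set; off this set $Y(\C)\cap\Pi(\C)$ has exactly $\deg(Y)$ points, so $Y(\R)\cap\Pi$, being contained in it, has at most $\deg(Y)$. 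Let $N$ be the regular locus of the $k$-dimensional part of $Y(\R)$: it is a $k$-submanifold of $\P^d(\R)$ with $\vol(N)=\vol_\R(Y)$, and $N\cap\Pi=Y(\R)\cap\Pi$ for $\mu_{d-k,d}$-a.e.\ $\Pi$. Applying \eqref{formule-crofton} to $N$,
\[
\vol_\R(Y)=\vol(\P^k(\R))\int_{\G(d-k,d)}\sharp(N\cap\Pi)\,{\rm d}\mu_{d-k,d}(\Pi)\ \le\ \deg(Y)\,\vol_\R(\P^k),
\]
and substituting $\deg(Y)=\vol_\C(Y)/\vol_\C(\P^k)$ turns the right-hand side into $\tfrac{\vol_\R(\P^k)}{\vol_\C(\P^k)}\vol_\C(Y)$.

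For the equality statement, one direction is immediate: if $Y$ is a union of $\deg(Y)$ distinct real $k$-planes, a generic real $(d-k)$-plane meets each of them in exactly one (real) point, so $\sharp(Y(\R)\cap\Pi)=\deg(Y)$ for a.e.\ $\Pi$ and both sides of \eqref{vol-reel} equal $\deg(Y)$ times the relevant volume of $\P^k$. Conversely, equality in the chain above forces $\sharp(N\cap\Pi)=\deg(Y)$, hence $Y(\C)\cap\Pi(\C)=N\cap\Pi\subseteq Y(\R)$, for $\mu_{d-k,d}$-a.e.\ $\Pi$: a generic real $(d-k)$-plane meets $Y(\C)$ only in real points. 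From this I would first show that every $\C$-irreducible component $Z$ of $Y_\C$ is $\sigma$-invariant and has $Z(\R)$ Zariski-dense in $Z$: otherwise $Z\cap Z^\sigma$, resp.\ the Zariski closure of $Z(\R)$, has dimension $<k$, a generic real $\Pi$ avoids it, and then the $\deg(Z)\ge 1$ points of $Z(\C)\cap\Pi(\C)$ are all non-real, contradicting the previous sentence.

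It remains to show $\deg(Z)=1$ for every component $Z$. Cutting $Z$ by $k-1$ generic real hyperplanes yields, by Bertini, an irreducible real curve $C\subseteq Z$ of degree $\deg(Z)$ with $C(\R)$ Zariski-dense; moreover for any $(d-k)$-plane $\Pi$ contained in the $(d-k+1)$-plane $L$ spanned by $C$ one has $Z(\C)\cap\Pi(\C)=C(\C)\cap\Pi(\C)$. Assume $\deg(Z)\ge 2$. Pick a real smooth point $p\in C(\R)$ and a real linear form $\ell$ on $L$ whose restriction to $C$ has a nondegenerate critical point at $p$ (a simple tangency of $\{\ell=\ell(p)\}$ to $C$) and no other critical point on the relevant real level; in a local real coordinate $t$ on the arc $C(\R)$ near $p$ one may write $\ell|_C=\ell(p)+t^2$, so for real $c$ slightly below $\ell(p)$ the fibre $\{\ell=c\}\cap C(\C)$ contains, near $p$, a complex-conjugate pair of non-real points. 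Letting $p$, the tangency, the parameter $c$, and the flag $C\subseteq L$ vary, the planes $\Pi=\{\ell=c\}\subseteq L$ sweep out a subset of $\G(d-k,d)(\R)$ of positive $\mu_{d-k,d}$-measure on which $Z(\C)\cap\Pi(\C)\subseteq Y(\C)\cap\Pi(\C)$ has a non-real point --- contradicting the almost-everywhere conclusion of the previous paragraph. Hence $\deg(Z)=1$, each component of $Y_\C$ is a real $k$-plane, and $Y$ is a union of $\deg(Y)$ distinct real $k$-planes.

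The routine ingredients are the real transversality/Bertini genericity statements and the bookkeeping $\vol(N)=\vol_\R(Y)$. The genuine difficulty is the measure count in the last paragraph: one must check that the perturbations described really sweep out a set of positive measure in $\G(d-k,d)(\R)$ --- the dimension count does work out, the $k-1$ hyperplanes defining $L$, the choice of $p\in C(\R)$ and the parameter $c$ adding up to $\dim\G(d-k,d)(\R)$ --- and that simple real tangencies to $C$ exist at all, which they do because $C(\R)$ is a nonempty $1$-manifold and $\deg C\ge 2$.
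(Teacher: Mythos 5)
Your treatment of the inequality and of the easy direction of the equality case agrees with the paper's. For the hard direction, however, the crucial measure-theoretic step is not proved, and the route you take is more delicate than necessary. You correctly observe that equality forces $Y(\C)\cap\Pi(\C)\subseteq\P^d(\R)$ for $\mu_{d-k,d}$-almost every real $\Pi$, and you try to contradict this by sweeping out a positive-measure family of $\Pi$ with non-real intersection points, parametrized by a tangency point $p\in C(\R)$, a level $c$, and a flag. You yourself flag that the submersivity of this parametrization (equivalently, that the image has positive measure) is not checked; that check is nontrivial, and without it the argument is incomplete.

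The gap is avoidable by a one-line openness observation, which is in fact what the paper does: it suffices to exhibit a \emph{single} real $(d-k)$-plane $\Pi_0$ for which $Y(\C)\cap\Pi_0(\C)$ is finite and contains a non-real point, because the set of such $\Pi$ is open in $\G(d-k,d)(\R)$ (the finitely many intersection points vary continuously with $\Pi$, and non-real points have non-real neighbourhoods), and any nonempty open set has positive $\mu_{d-k,d}$-measure. The paper's Lemma~\ref{deg-2} also produces $\Pi_0$ more directly than your tangency argument: after a Bertini slice to an irreducible curve $C$, choose a non-real conjugate pair $P,\overline{P}\in C(\C)$ (which exists since $C(\R)$ has measure zero in $C(\C)$) and a real hyperplane of the span of $C$ passing through the real line $\langle P,\overline{P}\rangle$. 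This sidesteps the discussion of simple tangencies, inflection points, and the extra case distinction needed when $C$ spans a proper subspace of $H_1\cap\cdots\cap H_{k-1}$, all of which your construction would still have to address.
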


\begin{proof}[Proof of Corollary \ref{1.11}]
Observe that $\sharp(Y(\R)\cap\Pi)\leq\deg(Y)$ for all $\Pi$ in~$\G(d-k,d)$. Thus we get Inequality (\ref{vol-reel})  using the Cauchy--Crofton formula.

The equality is obviously achieved when $Y$ is the union of $\deg(Y)$ projective subspaces. Now we prove that this last condition is necessary.

\begin{lemm}\label{deg-2}
Let $Y$ be a geometrically irreducible real $k$-dimensional algebraic subvariety of $\P^d_{\R}$. If $\deg(Y)>1$, then there exists a real projective $k$-codimensional subspace $\Pi$ such that the number of real points of $Y\cap\Pi$, counted with multiplicities, is no more than $\deg(Y)-2$.
\end{lemm}

\begin{proof}
Observe that the assumptions imply $0<k<d$. By Bertini's theorem (see \cite[3.3.1]{lazarsfeld}), there exists a real projective subspace $L$ of dimension~${d-k+1\geq 2}$, such that the curve $C=Y\cap L$ is irreducible over $\C$ and 
\begin{equation}
\deg(C)=\deg(Y)>1.
\end{equation}

First we suppose that there is no hyperplane of $L$ containing the curve $C$. We choose two distinct complex conjugate points $P$ and $\b P$ on the curve $C(\C)$, and a real hyperplane $\Pi$ of $L$ such that $\Pi(\C)$ contains these two points. As~$C$ is irreducible and not contained in $\Pi$, the intersection $C\cap\Pi=Y\cap \Pi$ is a finite number of points, including the complex points $P$ and $\b P$. The number of complex points of this intersection, counted with multiplicities, is exactly $\deg(Y)$, and at least two complex points are not real. The result follows.

Otherwise, let $L'\varsubsetneq L$ be the minimal projective subspace that contains the curve~$C$. As $\deg(C)>1$, it follows that $\dim(L')\geq 2$. By the first step, we can choose a hyperplane $\Pi'\subset L'$ such that
\begin{equation}
\sharp(C\cap\Pi')(\R)\leq\deg(C)-2.
\end{equation}
Then we take any hyperplane~$\Pi$ of $L$ containing $\Pi'$ and not $L'$, and we are done.
\end{proof}

Let us go back to the proof of the case of equality. Let $Y$ be a real $k$-dimensional subvariety of $\P^d$ that is not the union of real projective subspaces. We may assume that $Y$ is irreducible over $\R$. If it is not geometrically irreducible, then~${Y=Z\cup\sigma(Z)}$, with $Z$ a complex subvariety that is not real, hence ${\vol_\R(Y)=0 <\deg(Y)\vol_\R(\P^k)}$. Otherwise we can deduce from Lemma \ref{deg-2} that there exists a real hyperplane $\Pi$ such that $\sharp (Y\cap\Pi)(\R)\leq\deg(Y)-2$ (with multiplicities). This inequality remains satisfied on a neighborhood of $\Pi$ in the Grassmannian ${\G(d-k,d)}$. Such a neighborhood has a positive probability for $\mu_{d-k,k}$, so the Cauchy--Crofton formula implies~${\vol_\R(Y)<\deg(Y)\vol_\R(\P^k)}$.
\end{proof}


\subsection{General case}\label{metric-comparable}

Let $X$ be an arbitrary $d$-dimensional real algebraic variety. Since it is projective, we can put a Riemannian metric that is induced by a Fubini--Study metric on some $\P^n$ in which it is embedded. Corollary \ref{1.11} provides positive constants $C_k>0$ such that $\vol_\R(Y)\leq C_k\vol_\C(Y)$ for any $k$-dimensional subvariety $Y$. Since $X(\C)$ is compact, two arbitrary Riemannian metric are comparable and we get a similar inequality for any Riemannian metric on $X$. This yields the following proposition.

\begin{prop}\label{mvolr-leq}
Let $X$ be a real algebraic variety, equipped with an arbitrary Riemannian metric. For any $k\in\N^*$ there exists a constant $C_k>0$, depending on the choice of the metric, such that
\begin{equation}\label{ineg-gen}
\vol_\R(Y)\leq C_k\vol_\C(Y)
\end{equation}
for all $k$-dimensional subvarieties $Y$ of $X$.
\end{prop}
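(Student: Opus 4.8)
The statement to prove is Proposition~\ref{mvolr-leq}: for a real algebraic variety $X$ with an arbitrary Riemannian metric, and for each $k\in\N^*$, there is a constant $C_k>0$ with $\vol_\R(Y)\leq C_k\vol_\C(Y)$ for all $k$-dimensional subvarieties $Y$. The plan is to reduce everything to the case of projective space, where Corollary~\ref{1.11} already gives the inequality with a universal constant $\vol_\R(\P^k)/\vol_\C(\P^k)$, and then to handle the change of Riemannian metric by a compactness argument.

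First I would fix an embedding of $X$ into some $\P^n_\R$; this exists by definition of a real algebraic variety (cf.\ Proposition~\ref{sous-var-reelle}), since $X$ is projective. Pull back the Fubini--Study metric from $\P^n$ to get a particular Riemannian metric $g_0$ on $X(\C)$. For any $k$-dimensional real algebraic subvariety $Y\subset X$, $Y$ is also a $k$-dimensional subvariety of $\P^n_\R$, so Corollary~\ref{1.11} applied in $\P^n$ gives
\begin{equation*}
\vol_{\R,g_0}(Y)\leq\frac{\vol_\R(\P^k)}{\vol_\C(\P^k)}\,\vol_{\C,g_0}(Y),
\end{equation*}
where the volumes on the left are those of $Y$ as a subvariety of $\P^n$, but since $g_0$ on $X(\C)$ is the restriction of the Fubini--Study metric these agree with the volumes computed intrinsically on $X$. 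This settles the proposition for the specific metric $g_0$, with $C_k=\vol_\R(\P^k)/\vol_\C(\P^k)$.

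Next I would pass to an arbitrary Riemannian metric $g$ on $X(\C)$. Since $X(\C)$ is compact, any two Riemannian metrics on it are comparable: there is a constant $\delta\geq 1$ with $\delta^{-1}g_0\leq g\leq\delta g_0$ as quadratic forms on each tangent space, uniformly over $X(\C)$. Consequently, for any $j$-dimensional (real) submanifold, the $j$-volume computed with $g$ and with $g_0$ differ by at most a factor $\delta^{j/2}$. Applying this with $j=k$ on $Y(\R)$ and with $j=2k$ on $Y(\C)$ gives
\begin{equation*}
\vol_{\R,g}(Y)\leq\delta^{k/2}\vol_{\R,g_0}(Y)\leq\delta^{k/2}\,\frac{\vol_\R(\P^k)}{\vol_\C(\P^k)}\,\vol_{\C,g_0}(Y)\leq\delta^{k/2}\,\delta^{k}\,\frac{\vol_\R(\P^k)}{\vol_\C(\P^k)}\,\vol_{\C,g}(Y),
\end{equation*}
so the proposition holds with $C_k=\delta^{3k/2}\,\vol_\R(\P^k)/\vol_\C(\P^k)$, which depends on the choice of metric through $\delta$ but not on $Y$.

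There is really no serious obstacle here: the content of the proposition is entirely in Corollary~\ref{1.11} (hence ultimately in the Cauchy--Crofton formula and Wirtinger's equality), and the present statement is just the bookkeeping needed to get from projective space with a fixed metric to an arbitrary variety with an arbitrary metric. The only points that require a word of care are (i) noting that $Y$ being a subvariety of $X$ makes it automatically a subvariety of the ambient $\P^n$, so Corollary~\ref{1.11} genuinely applies, and (ii) that the comparison of metrics is uniform over the compact manifold $X(\C)$, which legitimizes absorbing the $\delta$-factors into the constant $C_k$. I would state the volume-comparison estimate $\vol_{j,g}\leq\delta^{j/2}\vol_{j,g_0}$ as an elementary lemma (it follows pointwise from the comparison of volume forms induced on a $j$-plane by comparable inner products) and then assemble the chain of inequalities above.
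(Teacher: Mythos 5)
Your proof is correct and follows the same route as the paper: embed $X$ in some $\P^n_\R$, apply Corollary~\ref{1.11} for the Fubini--Study metric, and then pass to an arbitrary Riemannian metric by the comparability of metrics on the compact manifold $X(\C)$. The paper leaves the metric-comparison step as a one-line remark; you simply make the $\delta$-bookkeeping explicit, which is fine.
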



\chapter{Concordance of a Real Algebraic Variety}\label{chap-conc}

Let $X$ be a real algebraic variety, whose dimension is $d$. We choose a Kähler\footnote{The reason for this choice will be made obvious in what follows.} metric on $X(\C)$ (this is a particular case of a Riemannian metric), whose Kähler $2$-form is denoted by $\kappa$. We are interested in volumes of real subvarieties $Y$ with respect to this metric. We restrict ourselves to codimension $1$ subvarieties, which carry sufficient information on the geometry of $X$. We rather use the language of divisors instead of that of algebraic $(d-1)$-dimensional subvarieties, and we will talk about the (real or complex) volume of an effective divisor $D$.

In the preceding chapter, we have shown that $\vol_\R(D)$ is bounded from above by $\vol_\C(D)$ (up to a constant). In what follows, we are interested in finding a lower bound for $\vol_\R(D)$ which depends only on $\vol_\C(D)$. The first step is to assure positivity of $\vol_\R(D)$\footnote{Let us recall that our conventions imply $X(\R)\neq\emptyset$.}.


\section{Positivity of volumes} 


\subsection{Complex volumes}

Let $D$ be an effective divisor on $X_\C$. Since we have chosen a Kähler metric, the volume of $D(\C)$ is given by Wirtinger formula :
\begin{equation}\label{formule-vol}
\vol_\C(D)=\frac{1}{(d-1)!}\int_{D(\C)}\kappa^{d-1}=\frac{1}{(d-1)!}[\kappa^{d-1}]\cdot[D]>0,
\end{equation}
where $[\kappa^{d-1}]\cdot[D]$ denotes the cup product between the De Rham cohomology class of $\kappa^{d-1}$ in $H^{2d-2}(X_\C;\R)$ and the Chern class of $D$ in $H^2(X_\C;\R)$. In particular, $\vol_\C(D)$ only depends on this Chern class $[D]\in\NS(X_\C)$.

\begin{prop}\label{min-unif-volc}
There exists a positive constant $K$ such that
\begin{equation}
\vol_\C(D)\geq K
\end{equation}
for all effective divisors $D\neq 0$.
\end{prop}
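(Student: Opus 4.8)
The plan is to reduce the statement to an intersection number on the N\'eron--Severi lattice and then to conclude by an elementary positivity argument. First, I would recall from $(\ref{formule-vol})$ that
\[
\vol_\C(D)=\frac{1}{(d-1)!}\,[\kappa^{d-1}]\cdot[D],
\]
so that $\vol_\C(D)$ depends only on the Chern class $[D]\in\NS(X_\C)$; it therefore suffices to find a constant $c>0$ with $[\kappa^{d-1}]\cdot[D]\geq c$ for every nonzero effective divisor $D$ on $X_\C$.

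Next I would fix an ample divisor $A$ on $X$, which exists since $X$ is projective. Two facts will be used. (i) The intersection number $A^{d-1}\cdot D=\int_X[A]^{d-1}\cup[D]$ is a \emph{positive integer}: integrality because $[A]$ and $[D]$ lie in $H^2(X;\Z)$, and strict positivity because $[A]^{d-1}$ is represented by an effective curve (a complete intersection of members of a linear system $|mA|$, divided by $m^{d-1}$) which meets the nonzero effective divisor $D$, or directly by Kleiman's criterion; hence $A^{d-1}\cdot D\geq 1$. (ii) By Kodaira's embedding theorem $[A]$ is a K\"ahler class, and since the K\"ahler cone is open and contains $[\kappa]$, one can fix $\delta>0$ small enough that $[\kappa]-\delta[A]$ is again a K\"ahler class.

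The key step is then the binomial expansion
\[
[\kappa^{d-1}]\cdot[D]=\bigl(([\kappa]-\delta[A])+\delta[A]\bigr)^{d-1}\cdot[D]=\sum_{k=0}^{d-1}\binom{d-1}{k}\delta^{k}\,\bigl([A]^{k}\cdot([\kappa]-\delta[A])^{d-1-k}\cdot[D]\bigr).
\]
Each summand is the pairing of a product of powers of two K\"ahler classes with the effective class $[D]$; representing the two K\"ahler classes by K\"ahler forms, the product is a nonnegative (indeed strictly positive) $(d-1,d-1)$-form on $X(\C)$, whose integral over the $(d-1)$-dimensional analytic set $D(\C)$ is therefore $\geq 0$. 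Discarding all terms but $k=d-1$ yields $[\kappa^{d-1}]\cdot[D]\geq\delta^{d-1}\,A^{d-1}\cdot D\geq\delta^{d-1}$, whence $\vol_\C(D)\geq\delta^{d-1}/(d-1)!=:K>0$.

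The step requiring the most care is the nonnegativity of the mixed terms $[A]^{k}\cdot([\kappa]-\delta[A])^{d-1-k}\cdot[D]$: one must use that a wedge of K\"ahler forms is a positive $(d-1,d-1)$-form, that its restriction to the regular locus of $D(\C)$ is a volume form (Wirtinger), and that the singular locus of $D(\C)$ is negligible, so that the cohomological pairing is computed by a convergent positive integral. An alternative, more conceptual route would be to note that the classes of effective divisors lie in a salient convex cone of $\NS(X_\C;\R)$ on whose closure the linear functional $[\kappa^{d-1}]\cdot(-)$ is strictly positive, and to conclude by compactness of a suitable slice of that cone; but this conceals the same positivity input (and needs the fact that $[\kappa^{d-1}]$ pairs positively with pseudo-effective classes), so I would prefer the explicit expansion above.
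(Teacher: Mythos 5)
Your proof is correct, but it follows a genuinely different route from the paper's. The paper reduces to the Fubini--Study metric by the equivalence of Riemannian metrics on the compact $X(\C)$: under Fubini--Study, Wirtinger gives $\vol_\C(D)=\deg(D)\cdot\vol_\C(\P^{d-1})$ where $\deg(D)$ is the projective degree, which is a positive integer, so $K=\vol_\C(\P^{d-1})$ works; switching back to the original metric costs only a multiplicative constant. You instead work intrinsically with the chosen Kähler class $[\kappa]$, pick an ample $A$ with $[\kappa]-\delta[A]$ still Kähler, expand $([\kappa]-\delta[A]+\delta[A])^{d-1}\cdot[D]$ by the binomial theorem in the (even-degree, hence commutative) cohomology ring, drop the nonnegative mixed terms (nonnegativity holds because a wedge of Kähler forms is strongly positive and $\{D\}$ is a positive current), and extract $\delta^{d-1}A^{d-1}\cdot D$, which is at least $\delta^{d-1}$ by integrality of intersection numbers. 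Both arguments ultimately rest on the same arithmetic input --- a positive integral intersection number is $\geq 1$ --- but yours avoids fixing a projective embedding and comparing metrics, which makes the constant $K=\delta^{d-1}/(d-1)!$ explicit in terms of the original $\kappa$ and a chosen $A$; the paper's version is shorter but implicitly buries the metric-comparison constant. Your sketched alternative via compactness of a slice of the pseudo-effective cone would also work, but as you rightly note it hides the same integrality input, so the explicit expansion is the cleaner exposition.
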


\begin{proof}
As all Riemannian metrics are equivalent, it is enough to show the inequality when the metric is the Fubini--Study metric on $\P^n\supset X$. In this case the volume of~$D(\C)$ is  proportional to the degree of $D$ as a subvariety of $\P^n$, which is a positive integer. Thus we get the lower bound with $K=\vol_\C(\P^{d-1})>0$.
\end{proof}


\subsection{Real volumes}\label{pos-mvolr}

Let $D$ be a real effective divisor on $X$. Although the volume of~$D(\C)$ is always positive, it may happen that $\vol_\R(D)=0$ for some divisors~$D$. For instance, on $X=\P^d_\R$, for any even degree $\delta$ the divisor $D_\delta$ given by the equation $\sum_{j=0}^dZ_j^\delta=0$ has an empty real locus, hence $\vol_\R(D_\delta)=0$. Yet this divisor is algebraically (and even linearly) equivalent to $D'_\delta$ given by $\sum_{j=1}^dZ_j^\delta=Z_0^\delta$, and we have $\vol_\R(D'_\delta)>0$.

\begin{defi}
We denote by $\mathcal{V}(D)$ the family of effective divisors $D'$ on $X_\R$ such that $[D]=[D']$ in $\NS(X_\R)$\footnote{For a subvariety $Y$ of arbitrary dimension $k$, we would rather define $\mathcal{V}(Y)$ as the family of real algebraic subvarieties which have the same homology class in $H_{2k}(X_\C;\Z)$. This definition is equivalent for divisors, by Poincaré duality.}. Then  we set
\begin{equation}
\mvol_\R(D)=\max_{D'\in\mathcal{V}(D)}\vol_\R(D).
\end{equation}
\end{defi}

\begin{rema}
Since $\vol_\C(D)$ only depends on $[D]$, there is no need to define $\mvol_\C(D)$.
\end{rema}

In the preceding example, we have $\vol_\R(D_\delta)=0$, but $\mvol_\R(D_\delta)\geq\vol_\R(D'_\delta)>0$. In the following, we are interested in the positivity of $\mvol_\R(D)$ instead of that of $\vol_\R(D)$.

\begin{rema}
Since
\begin{equation}
\mathcal{V}(D_1+D_2)\supset\mathcal{V}(D_1)+\mathcal{V}(D_2),
\end{equation}
the function $\mvol_\R$ is superadditive on the set of real effective divisors. In particular
\begin{equation}
\mvol_\R(kD)\geq k\mvol_\R(D) \quad \forall k\in\N^*.
\end{equation}
\end{rema}

\begin{prop}\label{pinceau}
Let $D$ be a real effective divisor such that the linear system $|D|$ contains a pencil, that is, $h^0(X,\O_X(D))\geq 2$. Then 
\begin{equation}\mvol_\R(D)>0.\end{equation}
\end{prop}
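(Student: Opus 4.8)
L'id�e est d'exploiter le fait que le syst�me lin�aire $|D|$ contient un pinceau pour produire \emph{beaucoup} de diviseurs dans $\mathcal{V}(D)$, dont au moins un doit avoir un lieu r�el non vide (avec une partie de dimension $d-1$, pas seulement des points isol�s), ce qui assurera $\mvol_\R(D)>0$. Concr�tement, puisque $h^0(X,\O_X(D))\geq 2$, on peut choisir deux sections $s_0,s_1\in H^0(X,\O_X(D))$ lin�airement ind�pendantes, que l'on peut m�me supposer r�elles (quitte � prendre parties r�elle et imaginaire d'une section complexe, en utilisant que $D$ est r�el donc que $H^0(X,\O_X(D))$ est d�fini sur $\R$). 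On obtient ainsi un pinceau r�el de diviseurs effectifs $D_{[a:b]} = \{a s_0 + b s_1 = 0\}$ param�tr� par $[a:b]\in\P^1$, tous lin�airement �quivalents � $D$, donc tous dans $\mathcal{V}(D)$. L'application rationnelle $\phi=[s_0:s_1]:X\dashrightarrow\P^1$ est d�finie et r�elle, et les $D_{[a:b]}$ sont (� la partie fixe du pinceau pr�s) les fibres de $\phi$.

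La premi�re �tape serait de se ramener � un pinceau sans point base : on �crit $|D| = F + |M|$ o� $F$ est la partie fixe (un diviseur effectif r�el) et $|M|$ la partie mobile, de dimension $\geq 1$. Comme $\mvol_\R(D)\geq\mvol_\R(M)$ par superadditivit� (car $D = F + M'$ pour tout $M'\in|M|$, et $\mvol_\R$ est superadditive d'apr�s la remarque pr�c�dente), il suffit de traiter le cas de $M$. Apr�s �clatement �ventuel des points base restants, ou directement en consid�rant le morphisme $\phi_{|M|}$ vers une courbe apr�s factorisation de Stein, on peut supposer que le pinceau est base-point-free, donc que ses membres g�n�raux sont des diviseurs lisses (par Bertini) variant dans une famille de dimension $1$.

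La deuxi�me �tape, qui est le c\oe ur de l'argument, consiste � montrer qu'au moins un membre r�el de ce pinceau a un lieu r�el de volume strictement positif. Pour cela on regarde le morphisme r�el $\phi:X\to B$ vers une courbe r�elle $B$ (apr�s factorisation de Stein). Le lieu r�el $X(\R)$ est non vide par hypoth�se, de dimension $d$, et $\phi_\R:X(\R)\to B(\R)$ est une application analytique r�elle. Comme $X(\R)$ est de dimension $d$ et les fibres de $\phi$ de dimension $d-1$, l'image $\phi_\R(X(\R))$ ne peut pas �tre finie : elle contient un ouvert non vide de $B(\R)$ (ou au moins un ensemble non d�nombrable). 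Pour un $t\in B(\R)$ g�n�ral dans cet ouvert, la fibre $\phi^{-1}(t)$ est un diviseur r�el lisse dont le lieu r�el contient un point o� $\phi_\R$ est une submersion, donc contient une sous-vari�t� lisse de dimension $d-1$, d'o� $\vol_\R(\phi^{-1}(t))>0$, et ce diviseur est dans $\mathcal{V}(M)\subset$ (translat� de) $\mathcal{V}(D)$.

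\textbf{Principal obstacle.} Le point d�licat est l'�tape deux : garantir que la projection $\phi_\R$ n'a pas une image r�elle trop petite (typiquement finie), ce qui pourrait arriver si $X(\R)$ �tait ``mal plac�'' par rapport � la fibration. L'argument de comptage de dimension ($\dim X(\R) = d > d-1 = \dim$ fibre) donne bien que l'image ne peut �tre z�ro-dimensionnelle, mais il faut �tre soigneux : il faut invoquer le fait que $\phi_\R$ est analytique r�elle et que $X(\R)$ est une vari�t� lisse compacte de dimension $d$, pour conclure via le th�or�me de Sard r�el (ou le rang g�n�rique) que $\phi_\R$ est de rang $1$ en au moins un point r�el, donc submersive l� o� elle est d�finie hors du lieu base. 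Une subtilit� suppl�mentaire : apr�s �clatements, il faut v�rifier que $\mvol_\R$ ne diminue pas, ce qui se voit car un diviseur effectif r�el sur l'�clat� se pousse en avant sur un diviseur effectif r�el de m�me classe ad�quate et de volume r�el au moins aussi grand (l'�clatement �tant un isomorphisme hors d'un ferm� de mesure nulle). Je m'attends � ce que l'auteur proc�de plut�t via le morphisme vers une courbe apr�s factorisation de Stein, en �vitant les �clatements, et en utilisant directement la non-finitude de l'image r�elle.
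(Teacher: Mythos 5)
Your strategy is correct in spirit---use Bertini to find a smooth real member of the pencil through a general real point---but the execution is considerably more roundabout than the paper's, and some of the machinery you invoke (elimination of base points by blow-up, Stein factorization, Sard on $\phi_\R$) is unnecessary and introduces subtleties you then have to patch.

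The paper's argument is direct and avoids the fibration setting altogether. Take a real pencil $(D_\lambda)_{\lambda\in\P^1(\C)}$ in $|D|$, meaning $D_{\b\lambda}=\sigma(D_\lambda)$ (such a pencil exists because $H^0(X,\O_X(D))$ carries a real structure, exactly as you say). Apply Bertini as stated in \cite[p.137]{griffiths-harris}: there is a finite set $S\subset\P^1(\C)$ such that for $\lambda\notin S$, the member $D_\lambda$ is smooth away from the base locus $B$ of the pencil. Now simply choose
\[
P\in X(\R)\setminus\Bigl(B\cup\bigcup_{\lambda\in S}D_\lambda\Bigr);
\]
this is possible because the removed set is a finite union of complex subvarieties of codimension $\geq 1$, whose real loci have real dimension $\leq d-1 < d = \dim X(\R)$. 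Since $P\notin B$, it lies on a \emph{unique} member $D_\lambda$, and by reality of $P$ and of the pencil, uniqueness forces $\lambda\in\P^1(\R)$; since $P\notin\bigcup_{\mu\in S}D_\mu$, this $\lambda$ is not in $S$, so $D_\lambda$ is smooth at $P$. A smooth real point on a real divisor gives a $(d-1)$-dimensional germ of $D_\lambda(\R)$, whence $\vol_\R(D_\lambda)>0$ and $\mvol_\R(D)>0$.

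What this buys compared to your route: there is no need to separate fixed and mobile parts (the fixed part sits inside the base locus $B$, which is automatically avoided), no blow-ups (and hence no need to track how $\mvol_\R$ behaves under push-forward), no Stein factorization (and hence no need to compare $\mvol_\R$ of connected components of fibers with $\mvol_\R(M)$), and no appeal to Sard---picking a single generic real point does all the work. Your dimension-counting step (the image of $\phi_\R$ cannot be finite because $\dim X(\R)=d > d-1$) is correct, but the paper gets the same payoff by simply noting that $X(\R)$ cannot be contained in finitely many divisors; this is the same observation phrased without the fibration. Your version would need a bit more care at the Stein-factorization step (a fiber of $X\to B$ is only a \emph{component} of a member of $\mathcal{V}(M)$, so you must observe that $\vol_\R$ of a component still bounds $\mvol_\R(M)$ from below), but that gap is easily filled. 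Overall: same Bertini-based key idea, but the paper's pointwise formulation is cleaner and sidesteps all the auxiliary reductions you list as ``obstacles.''
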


\begin{proof}
Let $(D_\lambda)_{\lambda\in\P^1(\C)}$ be a real pencil in $|D|$ (in this context, real means $D_{\b\lambda}=\sigma(D_\lambda)$). By Bertini's theorem \cite[p.137]{griffiths-harris} there is a finite set $S\subset\P^1(\C)$ such that, for all $\lambda\notin S$, the subvariety $D_\lambda(\C)$ is smooth away from the base locus~$B$ of the pencil $(D_\lambda)_{\lambda\in\P^1(\C)}$. Let $P\in X(\R)\backslash \left(B\cup\bigcup_{\lambda\in S} D_\lambda\right)$. Then there exists $\lambda$ in~$\P^1(\R)\backslash S$ such that the point $P$ is on (the support of) the divisor $D_\lambda$. As~$D_\lambda$ is smooth at $P$, the real locus $D_\lambda(\R)$ contains an arc around $P$, and thus  $\mvol_\R(D)\geq\vol_\R(D_\lambda)>0$.
\end{proof}

\begin{coro}\label{very-ample}
Let $D$ be a very ample divisor on $X_\R$. Then \begin{equation}\mvol_\R(D)>0.\end{equation}
\end{coro}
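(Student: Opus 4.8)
The plan is to observe that very ampleness forces the linear system to be at least a pencil, and then to quote Proposition \ref{pinceau}. First I would recall that, by definition, $D$ being very ample on $X_\R$ means that the complete linear system $|D|$ defines a closed embedding $\Phi_{|D|}\colon X_\C\hookrightarrow\P H^0(X,\O_X(D))^*$, which moreover is equivariant for the real structures exactly as in the proof of Proposition \ref{sous-var-reelle}. Since $\Phi_{|D|}$ is an embedding, its target has dimension at least $\dim X=d\geq 1$, whence
$$
h^0(X,\O_X(D))\geq d+1\geq 2 .
$$

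Next I would produce a \emph{real} effective divisor in $|D|$: because $D=D^\sigma$, the line bundle $\O_X(D)$ carries a natural real structure (again as in the proof of Proposition \ref{sous-var-reelle}), so $H^0(X,\O_X(D))$ is defined over $\R$ and admits a nonzero real section; its zero locus is a real effective divisor $D_0$ with $[D_0]=[D]$ in $\NS(X_\R)$ and $h^0(X,\O_X(D_0))=h^0(X,\O_X(D))\geq 2$. Applying Proposition \ref{pinceau} to $D_0$ yields $\vol_\R(D_0')>0$ for a suitable member $D_0'$ of the pencil, and since $\mvol_\R$ depends only on the class in $\NS(X_\R)$ we conclude $\mvol_\R(D)=\mvol_\R(D_0)\geq\vol_\R(D_0')>0$.

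There is no real obstacle in this argument; the only minor point to handle with care is the reduction from the very ample class to an actual real effective divisor in $|D|$, i.e. making sure that $\mvol_\R(D)$ — defined as a maximum over effective divisors $D'$ with $[D']=[D]$ in $\NS(X_\R)$ — is indeed fed a legitimate input for Proposition \ref{pinceau}. This is exactly what the real structure on $H^0(X,\O_X(D))$ provides.
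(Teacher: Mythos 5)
Your argument is correct and matches the paper's approach: the paper records this corollary without proof, treating it as an immediate consequence of Proposition \ref{pinceau} via the standard observation that very ampleness forces $h^0(X,\O_X(D))\geq \dim X+1\geq 2$. Your additional care in producing a genuinely real effective divisor in $|D|$ via the real structure on $H^0(X,\O_X(D))$ is a reasonable precision but is already implicit in the proof of Proposition \ref{pinceau}, which works with a real pencil from the outset.
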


By contrast, it may happen that $\mvol_\R(D)=0$ for some effective divisors that are not ample, as shown in the two following examples.

\begin{exem}\label{blowup}
Let $X$ be the variety obtained by blowing up $\P^d_\R$ at two (distinct) complex conjugate points, and let $E$ be the exceptional fiber of the blow-up. For any $k\in\N^*$, we have $\mathcal{V}(kE)=\{kE\}$, thus 
\begin{equation}
\mvol_\R(kE)=0,
\end{equation}
for~$E(\R)$ is empty. Nevertheless, observe that $[E]$ is not in the closure of the cone $\NS^+(X_\R)$, since its self-intersection is negative.
\end{exem}

\begin{exem}\label{quartique}
Let $C$ be a real smooth quartic in $\P^2_{\R}$ such that $C(\R)$ is empty, for instance the one given by the equation
\begin{equation}
Z_0^4+Z_1^4+Z_2^4=0.
\end{equation}
Take $8$ pairs of complex conjugate points $(P_i,\b P_i)$ on $C$ in such a way that the linear class of
\begin{equation}
\sum_i(P_i+\b P_i) -\O_{\P^2}(4)_{|C}
\end{equation}
is not a torsion point of ${\rm Pic}^0(C)$. Let $\pi:X\to\P^2$ be the blow-up morphism above these $16$ points (defined over $\R$) and let $C'$ be the strict transform of~$C$ in~$X$. For all divisors $D$ in $\mathcal{V}(kC')$ the curve $\pi_*D$ has degree $4k$ and passes through the~$16$ blown-up points with multiplicity at least $k$. Then the choice of the points~$P_i$ implies that $\pi_*D=kC$, hence $D=kC'$. So we see that 
\begin{equation}
\mvol_\R(kC')=0
\end{equation}
for all~$k\in\N^*$. Here~$C'$ is a nef divisor that is not ample, as an irreducible divisor with self-intersection~$0$ (see \cite[\textsection 1.4]{lazarsfeld}).
\end{exem}


\section{Definition and first properties of concordance}\label{section-concordance}

We would like to know for which non-negative exponents $\alpha$ we can write inequalities such as $\mvol_\R(D)\geq C\vol_\C(D)^\alpha$, with $C>0$ independent of $D$. Since such an inequality is possible only when $\mvol_\R(D)>0$, the preceding paragraph suggests us to restrict to ample divisors.

\begin{defi}\label{defi-concordance}
Let $\mathcal{A}(X)$ be the set of non-negative exponents $\alpha$ for which there exist $C>0$ and $q\in\N^*$ such that
\begin{equation}\label{eq-alpha}
\mvol_\R(D)\geq C\vol_\C(D)^\alpha
\end{equation}
for all real ample divisors $D$ whose Chern classes are $q$-divisible. The upper bound of $\mathcal{A}(X)$ is the \emph{concordance} of $X$, and is denoted by $\alpha(X)$. We say the concordance is \emph{achieved} when $\alpha(X)$ is contained in $\mathcal{A}(X)$.
\end{defi}

\begin{rema}
The set $\mathcal{A}(X)$, and thus the concordance $\alpha(X)$, only depend on $X$, and not on the choice of a particular metric (by the same argument as in \textsection\ref{metric-comparable}).
\end{rema}

\begin{prop}
The set $\mathcal{A}(X)$ is an interval of the form $[0,\alpha(X)]$ or $[0,\alpha(X))$, depending on whether the concordance is achieved or not.
\end{prop}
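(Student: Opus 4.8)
The plan is to reduce the statement to three properties of the set $\mathcal{A}(X)$: that it contains $0$; that it is bounded above, in fact $\mathcal{A}(X)\subseteq[0,1]$; and that it is downward closed, meaning $\alpha\in\mathcal{A}(X)$ and $0\leq\beta\leq\alpha$ imply $\beta\in\mathcal{A}(X)$. Granting these, $\mathcal{A}(X)$ is a downward closed, bounded-above subset of $[0,+\infty)$ containing $0$, hence necessarily $[0,s]$ or $[0,s)$ with $s=\sup\mathcal{A}(X)=\alpha(X)$; the first case occurs exactly when $\alpha(X)\in\mathcal{A}(X)$, which by Definition \ref{defi-concordance} is what it means for the concordance to be achieved.

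The cleanest point is downward closedness, and it is the only place where the lower bound on complex volumes enters. Suppose $\alpha\in\mathcal{A}(X)$ is witnessed by $C>0$ and $q\in\N^*$, and let $0\leq\beta\leq\alpha$. For every real ample divisor $D$ with $q$-divisible Chern class, Proposition \ref{min-unif-volc} gives $\vol_\C(D)\geq K$ for a fixed $K>0$; since $t\mapsto t^{\alpha-\beta}$ is nondecreasing on $(0,+\infty)$,
\[
\vol_\C(D)^{\alpha}=\vol_\C(D)^{\beta}\,\vol_\C(D)^{\alpha-\beta}\geq K^{\alpha-\beta}\,\vol_\C(D)^{\beta},
\]
so $\mvol_\R(D)\geq C\,\vol_\C(D)^{\alpha}\geq (CK^{\alpha-\beta})\,\vol_\C(D)^{\beta}$. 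Hence $\beta\in\mathcal{A}(X)$, with the same $q$ and the constant $CK^{\alpha-\beta}>0$.

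For the upper bound I would fix an integral ample class $h\in\NS(X_\R;\Z)$ and test $(\ref{eq-alpha})$ on divisors of class $qkh$, $k\in\N^*$. For $k$ large, $qkh$ is very ample, hence the class of a real effective divisor $D_k$, and $\vol_\C(D_k)=qk\,c_0$ with $c_0=\frac{1}{(d-1)!}[\kappa^{d-1}]\cdot h>0$. If $\alpha\in\mathcal{A}(X)$, then $C(qkc_0)^{\alpha}\leq\mvol_\R(D_k)\leq C_{d-1}\vol_\C(D_k)=C_{d-1}\,qk\,c_0$ by Proposition \ref{mvolr-leq}, and letting $k\to+\infty$ forces $\alpha\leq1$. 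Thus $\mathcal{A}(X)\subseteq[0,1]$ and $\alpha(X)\leq1$.

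The real obstacle is $0\in\mathcal{A}(X)$: I must produce $q\in\N^*$ and $C>0$ with $\mvol_\R(D)\geq C$ for \emph{every} real ample divisor $D$ whose Chern class is $q$-divisible. A fixed very ample real class $h$ has $\mvol_\R(h)=c_0>0$ by Corollary \ref{very-ample}, and superadditivity of $\mvol_\R$ yields $\mvol_\R(D)\geq\mvol_\R([D]-h)+\mvol_\R(h)\geq c_0$ once the class $[D]-h$ is represented by a real effective divisor; so one wants $q$ large enough that $q$-divisibility of an ample class forces $[D]-h$ to be effective (or, for $q\geq2$, one could instead try to see via Riemann--Roch, Corollary \ref{rr+serre}, that a $q$-divisible ample divisor carries a pencil and then invoke Proposition \ref{pinceau}). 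I expect this uniformity to be the crux: obtaining a single positive lower bound on $\mvol_\R$ that holds for all $q$-divisible ample classes, including ones near the boundary of the ample cone. With it established, the three properties above combine to give the claimed form of $\mathcal{A}(X)$.
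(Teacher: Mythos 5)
Your downward-closedness argument is exactly the paper's proof of this proposition: the paper fixes $\alpha\in\mathcal{A}(X)$ with witnessing constants, invokes Proposition \ref{min-unif-volc} for the uniform lower bound $\vol_\C(D)\geq K$, and deduces $\mvol_\R(D)\geq CK^{\alpha-\beta}\vol_\C(D)^\beta$ for $0\leq\beta<\alpha$. That single step is the whole proof. Your upper bound $\mathcal{A}(X)\subseteq[0,1]$ is the paper's \emph{next} proposition, stated and proved separately with essentially the argument you give.

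Where your proposal and the paper part ways is on $0\in\mathcal{A}(X)$. You are right that your sketch does not establish it: for a fixed $q$ and fixed very ample $h$, the class $q\theta-h$ can fail to be effective as $\theta$ runs over integral ample classes --- indeed $(q\theta-h)^2=q^2\theta^2-2q\,\theta\cdot h+h^2$ is negative as soon as $\theta\cdot h$ is large compared to $q$, so neither superadditivity nor the Riemann--Roch pencil route through Proposition \ref{pinceau} and Corollary \ref{rr+serre} yields a $q$ and $C$ uniform over the whole ample cone, exactly the boundary trouble you anticipate. But observe that you do not need a direct proof of $0\in\mathcal{A}(X)$: once downward closedness is in hand, taking $\beta=0$ already puts $0$ in $\mathcal{A}(X)$ whenever $\mathcal{A}(X)$ is nonempty. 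The only point still open is nonemptiness, and the paper's proof does not address it either --- it opens ``Let $\alpha\in\mathcal{A}(X)$'' and tacitly treats the set as nonempty, verifying membership (for instance $1\in\mathcal{A}(X)$ when $\rho(X_\R)=1$, Proposition \ref{rho=1}) only for the specific surfaces studied later. So your concern exposes an implicit hypothesis of the paper rather than an omission specific to your reduction; your argument and the paper's share the same unstated assumption, and the cleanest fix on your side is to replace the ``contains $0$'' ingredient with ``is nonempty'' and note that this, not a uniform lower bound on $\mvol_\R$, is what is actually being taken for granted.
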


\begin{proof}
Let $\alpha\in\mathcal{A}(X)$, and take a real number $\beta$ such that $0\leq\beta<\alpha$. We show that $\beta\in\mathcal{A}(X)$.

By Proposition \ref{min-unif-volc}, there exists $K>0$ such that $\mvol_\C(D)\geq K$ for all real effective divisors $D$. When $[D]$ is $q$-divisible, we then have
\begin{equation}
\mvol_\R(D)\geq C\vol_\C(D)^\alpha \geq CK^{\alpha-\beta}\vol_\C(D)^{\beta},
\end{equation}
and so $\beta$ is in $\mathcal{A}(X)$ too.
\end{proof}

\begin{prop}
The concordance~$\alpha(X)$ is in the interval $[0,1]$.
\end{prop}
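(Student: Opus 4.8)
L'idée est de montrer que l'exposant $\alpha=1$ ne peut jamais appartenir strictement au-delà : plus précisément, pour tout $\alpha\in\mathcal{A}(X)$ on a $\alpha\leq 1$. Il suffit donc d'exhiber, pour un exposant $\alpha>1$ fixé arbitraire, une suite de diviseurs amples réels $D_k$ avec classes de Chern $q$-divisibles pour lesquels l'inégalité $(\ref{eq-alpha})$ devient fausse dès que $k$ est grand. La piste naturelle est d'utiliser la sur-additivité de $\mvol_\R$ et la linéarité de $\vol_\C$ par rapport aux multiples entiers. Concrètement, je partirais d'un diviseur très ample $D$ sur $X_\R$ dont la classe est $q$-divisible (quitte à remplacer $D$ par $qD$), et je considérerais la suite $D_k=kD$.

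D'une part, par le corollaire \ref{very-ample} on a $\mvol_\R(D)>0$, et la remarque sur la sur-additivité de $\mvol_\R$ donne $\mvol_\R(kD)\geq k\,\mvol_\R(D)$. D'autre part, la formule de Wirtinger $(\ref{formule-vol})$ montre que $\vol_\C$ est additive sur les multiples : $\vol_\C(kD)=k\,\vol_\C(D)$. Mais l'inégalité $(\ref{eq-alpha})$ donne aussi une borne \emph{supérieure} implicite sur $\mvol_\R$ : en effet, la proposition \ref{mvolr-leq} (appliquée aux diviseurs $D'\in\mathcal{V}(kD)$) fournit une constante $C_{d-1}>0$ telle que $\mvol_\R(kD)\leq C_{d-1}\vol_\C(kD)=C_{d-1}k\,\vol_\C(D)$. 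Ainsi $\mvol_\R(kD)$ croît au plus linéairement en $k$.

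Supposons alors que $\alpha>1$ soit dans $\mathcal{A}(X)$, avec constante $C>0$ et entier $q$ témoins. En appliquant $(\ref{eq-alpha})$ à $D_k=kD$ (dont la classe reste $q$-divisible), on obtient
\begin{equation}
C_{d-1}\,k\,\vol_\C(D)\;\geq\;\mvol_\R(kD)\;\geq\;C\,\vol_\C(kD)^\alpha\;=\;C\,\vol_\C(D)^\alpha\,k^\alpha.
\end{equation}
Comme $\vol_\C(D)>0$, cela force $C_{d-1}\,\vol_\C(D)^{1-\alpha}\geq C\,k^{\alpha-1}$ pour tout $k\in\N^*$, ce qui est absurde puisque $\alpha-1>0$ et donc $k^{\alpha-1}\to+\infty$. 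On conclut que $\mathcal{A}(X)\subset[0,1]$, d'où $\alpha(X)\leq 1$ ; comme de plus $0\in\mathcal{A}(X)$ trivialement (prendre $\alpha=0$, $C=K$ donné par la proposition \ref{min-unif-volc}), on a bien $\alpha(X)\in[0,1]$.

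Le point qui demande un peu de soin est de s'assurer qu'il existe effectivement au moins un diviseur très ample réel de classe $q$-divisible : cela se règle en prenant $qD_0$ pour $D_0$ très ample, un multiple d'un diviseur très ample restant très ample. L'obstacle principal — en réalité mineur ici — est simplement d'articuler proprement les deux bornes (inférieure via la sur-additivité et le corollaire \ref{very-ample}, supérieure via la proposition \ref{mvolr-leq}) pour faire apparaître la contradiction asymptotique en $k$. Aucun calcul délicat n'est nécessaire.
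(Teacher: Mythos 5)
Your proof is correct and follows essentially the same route as the paper: both sandwich $\mvol_\R(kD)$ between the lower bound from the definition of $\mathcal{A}(X)$ and the upper bound $C'\vol_\C(kD)$ from Proposition \ref{mvolr-leq}, then let $k\to+\infty$ to obtain a contradiction when $\alpha>1$. The extra remarks you include (superadditivity of $\mvol_\R$, existence of a $q$-divisible very ample class, the trivial membership $0\in\mathcal{A}(X)$) are harmless but not needed for the core argument.
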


\begin{proof}
Let $\alpha\in\mathcal{A}(X)$. By Proposition \ref{mvolr-leq}, there exists a positive constant $C'>0$ such that~$\mvol_\R(D)\leq C'\vol_\C(D)$ for all real ample divisors $D$. When $[D]$ is also $q$-divisible, we obtain, for all $k\in\N^*$,
\begin{equation}
C\vol_\C(kD)^\alpha\leq\mvol_\R(kD)\leq C'\vol_\C(kD).
\end{equation}
If $\alpha>1$, this contradicts $\lim_{k\to+\infty}\vol_\C(kD)=+\infty$.
\end{proof}


\section{Examples of varieties with concordance $1$}\label{section-alpha=1}


\subsection{Picard number $1$}

We can apply the corollary of Cauchy-Crofton formula  to compute the concordance of the projective space.

\begin{prop}
The concordance of the $d$-dimensional projective space~$\P^d_\R$ is equal to $1$, and it is achieved.
\end{prop}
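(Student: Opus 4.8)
The plan is to exhibit, for every real ample divisor on $\P^d_\R$, an explicit effective divisor in its class whose real locus is as large as the Cauchy--Crofton bound allows, namely a union of real hyperplanes; this immediately forces $\alpha(\P^d_\R)=1$ and shows the concordance is achieved.

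First I would record that $\rho(\P^d_\R)=1$, so that $\NS(\P^d_\R)=\Z H$, where $H$ is the hyperplane class: for the standard real structure $H^\sigma$ is linearly equivalent to $H$, hence $[H^\sigma]=[H]$, so $\sigma^*[H]=-[H]$ by $(\ref{pb-sign})$ and $H\in\NS(\P^d_\R)$. Fix the Fubini--Study metric; since $\mathcal{A}(X)$ does not depend on the metric (as in \textsection\ref{metric-comparable}), this is harmless. Let $D$ be a real ample divisor on $\P^d_\R$; then $[D]=\delta H$ for some $\delta\in\N^*$.

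Next I would choose $\delta$ pairwise distinct real hyperplanes $H_1,\dots,H_\delta$ and set $D'=H_1+\cdots+H_\delta$, a real effective divisor with $[D']=\delta H=[D]$, so that $D'\in\mathcal{V}(D)$. Since the pairwise intersections $H_i\cap H_j$ have real dimension $d-2$, the real volume is additive, giving $\vol_\R(D')=\sum_{i=1}^{\delta}\vol_\R(H_i)=\delta\,\vol_\R(\P^{d-1})$; on the complex side, Wirtinger's formula (as in $(\ref{formule-vol})$, or Corollary \ref{1.11}) gives $\vol_\C(D)=\delta\,\vol_\C(\P^{d-1})$. Hence
\begin{equation*}
\mvol_\R(D)\ \geq\ \vol_\R(D')\ =\ \delta\,\vol_\R(\P^{d-1})\ =\ \frac{\vol_\R(\P^{d-1})}{\vol_\C(\P^{d-1})}\,\vol_\C(D),
\end{equation*}
which is exactly inequality $(\ref{eq-alpha})$ with $\alpha=1$, $C=\vol_\R(\P^{d-1})/\vol_\C(\P^{d-1})>0$ and $q=1$. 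Thus $1\in\mathcal{A}(\P^d_\R)$, so $\alpha(\P^d_\R)\geq 1$; combined with the general bound $\alpha(X)\leq 1$ proved just above, this yields $\alpha(\P^d_\R)=1$ with the concordance achieved.

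There is essentially no obstacle: the only points needing (mild) care are that the union of $\delta$ distinct real hyperplanes really represents $\delta H$ in $\NS(\P^d_\R)$ and that its real volume adds up cleanly, both of which are immediate. Equivalently, one may invoke the equality case of Corollary \ref{1.11}, since a union of $\delta$ real projective hyperplanes is precisely the configuration for which the Cauchy--Crofton upper bound is an equality.
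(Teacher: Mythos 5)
Your proof is correct and follows essentially the same route as the paper: the paper simply invokes Corollary \ref{1.11} (whose equality case is exactly the union of $\deg(Y)$ real hyperplanes) to get $\mvol_\R(D)=C\vol_\C(D)$ with $C=\vol_\R(\P^{d-1})/\vol_\C(\P^{d-1})$, whereas you make the extremal divisor explicit before drawing the same conclusion.
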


\begin{proof}
We take the Fubini--Study metric on $\P^d(\C)$, and we set
\begin{equation}
C=\frac{\vol_\R(\P^{d-1})}{\vol_\C(\P^{d-1})}.
\end{equation}
Corollary \ref{1.11} implies that $\mvol_\R(D)=C\vol_\C(D)$ for any effective divisor on~$\P^d_\R$. This shows that $1$ in contained in $\mathcal{A}(X)$.
\end{proof}
 
More generally, the concordance is $1$ when the Picard number of $X_\R$ is $1$.

\begin{prop}\label{rho=1}
Let $X$ be a real algebraic variety such that $\rho(X_\R)=1$. Then $\alpha(X)=1$, and the concordance is achieved.
\end{prop}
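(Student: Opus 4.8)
The idea is to reduce the statement about arbitrary ample divisors to the single case handled by the Cauchy--Crofton corollary, using the rigidity coming from $\rho(X_\R)=1$. First I would pick a very ample real divisor $H$ on $X_\R$; its Chern class generates $\NS(X_\R)\otimes\Q$ since the real N\'eron--Severi group has rank $1$. Replacing $H$ by a multiple I may assume that for every real ample divisor $D$ there is a positive integer $m=m(D)$ with $[D]=m[H]$ in $\NS(X_\R)$. The projective embedding $\Phi_{|H|}\colon X_\R\hookrightarrow\P^n_\R$ (Proposition \ref{sous-var-reelle}) carries the Fubini--Study metric back to a K\"ahler metric on $X(\C)$, with respect to which $H$ is a hyperplane section; since concordance does not depend on the metric chosen (the remark after Definition \ref{defi-concordance}), I may compute everything in this metric.

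Next I would extract the essential content of Corollary \ref{1.11}: for the hyperplane section $H$ itself (and its multiples $mH$, realised as intersections of $X$ with a projective subspace of the appropriate codimension, or more simply as $X\cap Q$ for a hypersurface $Q$ of degree $m$ in $\P^n$), the Cauchy--Crofton formula gives a divisor $D'\in\mathcal V(mH)$ whose real locus has positive $(d-1)$-volume, with a lower bound proportional to $\deg$. Concretely, one obtains a constant $c>0$, independent of $m$, with $\mvol_\R(mH)\ge c\, m\,\vol_\C(H)$; here the linearity in $m$ comes from superadditivity of $\mvol_\R$ (the remark $\mvol_\R(kD)\ge k\,\mvol_\R(D)$), together with $\mvol_\R(H)>0$ from Corollary \ref{very-ample}. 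At the same time $\vol_\C(mH)=m\,\vol_\C(H)$ by Wirtinger, equation (\ref{formule-vol}). Hence
\begin{equation}
\mvol_\R(mH)\ \ge\ c\, m\,\vol_\C(H)\ =\ c\,\vol_\C(mH)\ \ge\ c\,\vol_\C(mH)^{1}.
\end{equation}

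Finally, given an arbitrary real ample divisor $D$ whose class is $q$-divisible, write $[D]=m[H]$ with $m\in\N^*$. Although $D$ need not be linearly equivalent to $mH$, it has the same class in $\NS(X_\R)$, so $\mathcal V(D)=\mathcal V(mH)$ and therefore $\mvol_\R(D)=\mvol_\R(mH)$ and $\vol_\C(D)=\vol_\C(mH)$, both quantities depending only on the Chern class. The inequality above then reads $\mvol_\R(D)\ge c\,\vol_\C(D)$, which is exactly (\ref{eq-alpha}) with $\alpha=1$, $C=c$, and any $q$ (say $q=1$). Thus $1\in\mathcal A(X)$, and since $\alpha(X)\le 1$ always, we get $\alpha(X)=1$ with the concordance achieved. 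The only point requiring a little care — the ``main obstacle'' — is the passage from $\vol_\R$ of the specific Cauchy--Crofton divisor to $\mvol_\R$ of a class with a given multiple $m$, i.e.\ checking that the lower bound is genuinely linear in $m$ and uniform; this is where superadditivity of $\mvol_\R$ and the positivity $\mvol_\R(H)>0$ (Corollary \ref{very-ample}) are used, and where one must make sure the constant does not degenerate as $m\to\infty$.
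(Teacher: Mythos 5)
Your argument follows the same route as the paper's: fix a very ample real divisor $H$, express the class of any $q$-divisible ample $D$ as $m[H]$, then combine superadditivity $\mvol_\R(mH)\geq m\,\mvol_\R(H)$ (with $\mvol_\R(H)>0$ by Corollary~\ref{very-ample}) with the linearity $\vol_\C(mH)=m\,\vol_\C(H)$ coming from Wirtinger. One technical slip, however, should be corrected: you claim that by replacing $H$ with a multiple you may assume every real ample $D$ satisfies $[D]=m[H]$ for some integer $m$. Multiplying $H$ only shrinks the sublattice $\Z[H]\subset\NS(X_\R;\Z)$, so it cannot force $[H]$ to generate; a very ample divisor whose class generates $\NS(X_\R;\Z)$ need not exist. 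The correct move --- and the one the paper makes --- is to let $q$ be the index of $\Z[H]$ in $\NS(X_\R;\Z)$ and restrict to real ample $D$ whose class is $q$-divisible, as the definition of $\mathcal{A}(X)$ permits. Your closing remark ``any $q$ (say $q=1$)'' is therefore wrong unless $[H]$ happens to generate; $q$ must be taken equal to that index. Finally, the reference to the Cauchy--Crofton corollary (Corollary~\ref{1.11}) is superfluous here: the only ingredient actually needed is $\mvol_\R(H)>0$, which already follows from Bertini and the pencil argument of Proposition~\ref{pinceau} via Corollary~\ref{very-ample}.
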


\begin{proof}
Let $D_0$ be a very ample divisor on $X_\R$. Its Chern class $[D_0]$ spans a finite index subgroup of $\NS(X_\R;\Z)$ : let $q$ denote this index. If $D$ is a real effective divisor on $X_\R$ such that $[D]$ is $q$-divisible, we can write $[D]=k[D_0]$ for some positive integer $k$. Thus, by setting 
\begin{equation}
C=\mvol_\R(D_0)/\vol_\C(D_0),
\end{equation}
we have
\begin{equation}
\mvol_\R(D)=\mvol_\R(kD_0)\geq k\mvol_\R(D_0)=C k\vol_\C(D_0) = C\vol_\C(D).
\end{equation}
This shows that $1$ belongs to the interval $\mathcal{A}(X)$.
\end{proof}

\begin{coro}
If $X$ is a real algebraic curve, then $\alpha(X)=1$ and is achieved.
\end{coro}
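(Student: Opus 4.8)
The plan is to deduce this from Proposition \ref{rho=1}, which already gives $\alpha(X)=1$, with the concordance achieved, as soon as $\rho(X_\R)=1$. So the only thing left to verify is that a real algebraic curve has real Picard number equal to $1$.

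First I would record that $X_\C$, being a smooth projective complex curve, is a compact Riemann surface, so $H^2(X_\C;\Z)\cong\Z$ and the Hodge decomposition of $H^2$ satisfies $h^{2,0}=h^{0,2}=0$. By the Lefschetz $(1,1)$-theorem in the form $(\ref{eq-ns-cplx})$ this yields $\NS(X_\C)=H^2(X_\C;\Z)$, hence $\rho(X_\C)=1$. Then I would pass to the real locus: the structure involution $\sigma$ is anti-holomorphic, hence orientation-reversing on the real surface $X_\C$, so $\sigma^*$ acts as $-\mathrm{id}$ on the top cohomology $H^2(X_\C;\Z)$. Combining this with the definition $(\ref{eq-ns-reel})$ of $\NS(X_\R)$ as the $(-1)$-eigenspace of $\sigma^*$ inside $\NS(X_\C)$, we get $\NS(X_\R)=\NS(X_\C)$, so $\rho(X_\R)=1$ as well. (Equivalently, one may argue that a real algebraic curve carries a real very ample divisor, whose Chern class generates $\NS(X_\C)\cong\Z$ by $(\ref{pb-sign})$.) With $\rho(X_\R)=1$ in hand, Proposition \ref{rho=1} applies verbatim and gives $\alpha(X)=1$ with the concordance achieved.

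There is essentially no obstacle here; the one point deserving a word of justification is the claim that $\sigma^*$ acts by $-1$ on $H^2(X_\C;\Z)$. This is the standard fact that an anti-holomorphic self-map of a complex manifold reverses the canonical (complex) orientation, which in real dimension $2$ is precisely the reversal of the usual orientation of the curve; alternatively it follows from $(\ref{pb-sign})$ applied to the class of a real point. This settles the corollary.
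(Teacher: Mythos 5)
Your proof is correct and matches the intended argument: the paper states the corollary without proof, and it is clearly meant to follow from Proposition~\ref{rho=1} once one notes $\rho(X_\R)=1$ for a curve, which is exactly what you verify. Your justification that $\sigma^*=-\mathrm{id}$ on $H^2(X_\C;\Z)\cong\Z$ (via orientation-reversal, or via $(\ref{pb-sign})$ applied to a real point) is sound and fills in the only detail the paper leaves implicit.
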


Thus, we see that nontrivial cases (those with $\alpha(X)<1$) can only occur when both the dimension and the Picard number are at least $2$. In the following, we focus on the case of surfaces, which already include many intersting examples (cf. chapter \ref{chap-tores} for examples with $\alpha(X)<1$).


\subsection{Rational polyhedral nef cone and Del Pezzo surfaces}

Proposition \ref{rho=1} is a particular case of the following one.

\begin{prop}\label{conenef}
Let $X$ be a real algebraic variety. Assume that the cone $\Nef(X_\R)$ is polyhedral, with extremal rays spanned by classes $[D_j]$ such that $\mvol_\R(D_j)>0$. Then the concordance $\alpha(X)$ is $1$, and it is achieved.
\end{prop}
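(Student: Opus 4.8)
The plan is to show that the exponent $\alpha=1$ lies in $\mathcal{A}(X)$; combined with the already established bound $\alpha(X)\le 1$, this gives $\alpha(X)=1$ and that the concordance is achieved. First I would fix, in each class $[D_j]$ spanning an extremal ray of $\Nef(X_\R)$, an effective real divisor (still written $D_j$) with $\mvol_\R(D_j)>0$, and set $C=\min_{j}\mvol_\R(D_j)/\vol_\C(D_j)$, a minimum of finitely many strictly positive numbers, hence $C>0$ (recall $\vol_\C(D_j)>0$ by Proposition~\ref{min-unif-volc}). The goal is then to produce $q\in\N^*$ such that $\mvol_\R(D)\ge C\,\vol_\C(D)$ for every real ample divisor $D$ whose Chern class is divisible by $q$ in $\NS(X_\R;\Z)$.

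The heart of the argument is the choice of $q$, which must turn a conic decomposition into an integral one. Since the $[D_j]$ are integral classes, $\Nef(X_\R)=\mathrm{cone}([D_1],\dots,[D_m])$ is a rational polyhedral cone, so by Gordan's lemma the monoid $S=\Nef(X_\R)\cap\NS(X_\R;\Z)$ is finitely generated, say by $h_1,\dots,h_N$. Each $h_i$ lies in $\mathrm{cone}([D_j])$, and the set of $(r_j)\in\R_{\ge 0}^m$ with $\sum_j r_j[D_j]=h_i$ is a nonempty polyhedron defined over $\Q$, hence contains a rational point $(r_{ij})_j$ with $r_{ij}\in\Q_{\ge 0}$. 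I would then take $q$ to be a common denominator of all the $r_{ij}$, so that $q\,h_i=\sum_j(q r_{ij})[D_j]$ with $q r_{ij}\in\Z_{\ge 0}$ for all $i,j$.

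With this $q$ fixed, let $D$ be a real ample divisor with $[D]=q\theta'$, $\theta'\in\NS(X_\R;\Z)$. Then $\theta'=\tfrac1q[D]$ is nef (the nef cone is stable under multiplication by positive scalars), so $\theta'\in S$ and $\theta'=\sum_i a_i h_i$ with $a_i\in\Z_{\ge 0}$; consequently $[D]=q\theta'=\sum_j m_j[D_j]$ with $m_j=\sum_i a_i\,q r_{ij}\in\Z_{\ge 0}$. Now $\mvol_\R$ and $\vol_\C$ depend only on the Chern class, so $\mvol_\R(D)=\mvol_\R\!\big(\sum_j m_j D_j\big)$ and, by superadditivity of $\mvol_\R$ on effective real divisors together with $\mvol_\R(kD')\ge k\,\mvol_\R(D')$, this is $\ge\sum_j m_j\,\mvol_\R(D_j)$; on the other hand Wirtinger's formula $\vol_\C(D)=\frac1{(d-1)!}[\kappa^{d-1}]\cdot[D]$ is linear in $[D]$, so $\vol_\C(D)=\sum_j m_j\,\vol_\C(D_j)$. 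Putting these together,
\[
\mvol_\R(D)\ \ge\ \sum_j m_j\,\mvol_\R(D_j)\ =\ \sum_j m_j\,\vol_\C(D_j)\,\frac{\mvol_\R(D_j)}{\vol_\C(D_j)}\ \ge\ C\sum_j m_j\,\vol_\C(D_j)\ =\ C\,\vol_\C(D),
\]
so $1\in\mathcal{A}(X)$, which is what we want.

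The step I expect to be the main obstacle is the passage from the easy real decomposition $[D]=\sum_j t_j[D_j]$ with $t_j\ge 0$ to an \emph{integral} one with $m_j\in\Z_{\ge 0}$: superadditivity of $\mvol_\R$ is only available for nonnegative integer coefficients, while the lattice points of a rational polyhedral cone need not be $\Z_{\ge 0}$-combinations of its ray generators. This is exactly what the uniform denominator $q$, extracted via Gordan's lemma from the finitely many monoid generators, is designed to fix; everything else — superadditivity of $\mvol_\R$, linearity of $\vol_\C$, and the inequality $\alpha(X)\le 1$ — is already in place in the text.
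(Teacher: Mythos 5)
Your proof is correct, and it follows the same overall strategy as the paper's: choose a $q$ such that every $q$-divisible ample class decomposes as a nonnegative \emph{integer} combination $\sum_j m_j[D_j]$ of the extremal generators, then conclude via superadditivity of $\mvol_\R$ and linearity of $\vol_\C$,
\begin{equation}
\mvol_\R(D)\ \geq\ \sum_j m_j\,\mvol_\R(D_j)\ \geq\ C\sum_j m_j\,\vol_\C(D_j)\ =\ C\,\vol_\C(D).
\end{equation}
Where the two arguments diverge is in the choice and justification of $q$, which is really the only nontrivial step. The paper takes $q$ to be the index of the sublattice $\langle[D_j]\rangle_\Z\subset\NS(X_\R;\Z)$ and asserts that $q$-divisibility of an ample class then yields a $\Z_{\geq 0}$-decomposition. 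This is immediate when $\Nef(X_\R)$ is simplicial, i.e.\ has exactly $\rho(X_\R)$ extremal rays: the $[D_j]$ are then a basis of that sublattice, so integrality and nonnegativity of the coordinates come simultaneously and for free. When there are more extremal rays than the Picard number — which happens already for Del Pezzo surfaces of low degree, the very examples to which this proposition is applied — the assertion is no longer automatic, and indeed the index alone need not suffice: for the cone over the lattice quadrilateral with vertices $(0,0),(2,0),(3,2),(0,2)$, the sublattice spanned by the four ray generators has index $2$, yet the interior point $2\cdot(2,1,1)=(4,2,2)$ admits no $\Z_{\geq0}$-decomposition in those generators. Your Gordan's-lemma route — take monoid generators $h_i$ of $\Nef(X_\R)\cap\NS(X_\R;\Z)$, write each as a nonnegative rational combination of the $[D_j]$, and clear denominators — sidesteps the combinatorics of the cone entirely and produces a $q$ that works in all cases. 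So you have written out a slightly longer but more robust version of the paper's argument, making explicit (and correct) the integral-decomposition step that the paper passes over.
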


\begin{proof}
Define 
\begin{equation}
C=\min_j\left(\mvol_\R(D_j)/\vol_\C(D_j)\right)>0.
\end{equation}
The classes $[D_j]$ span a finite index subgroup of $\NS(X_\R;\Z)$. Denote by $q$ this index. Since $\Amp(X_\R)\subset\Nef(X_\R)$, every real ample divisor $D$ with $[D]$ $q$-divisible is equivalent to a divisor of the form~$\sum_j k_jD_j$, where the $k_j$'s are nonnegative integers. Hence
\begin{equation}
\mvol_\R(D) \geq \sum_jk_j\mvol_\R(D_j)
\geq C\sum_jk_j\vol_\C(D_j)
= C\vol_\C(D).
\end{equation}
We can then conclude that $1$ is contained in $\mathcal{A}(X)$.
\end{proof}

\begin{coro}\label{DP}
Let $X$ be a real Del Pezzo surface. The concordance of~$X$ is $1$, and it is achieved.
\end{coro}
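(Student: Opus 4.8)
The plan is to deduce Corollary \ref{DP} directly from Proposition \ref{conenef}. A real Del Pezzo surface $X$ has ample anticanonical divisor $-K_X$, and the cone $\Nef(X_\C)$ is known to be rational polyhedral (this is classical: for $\deg X \geq 3$ the extremal rays are spanned by the classes of the $(-1)$-curves together with, in low degree, a few conic classes; for $\deg X = 1, 2$ one argues via the action of the Weyl group on the lattice $K_X^\perp$). Intersecting with the $\sigma^*$-anti-invariant subspace $\NS(X_\R;\R)$, the cone $\Nef(X_\R)$ is again rational polyhedral — this is a general fact for an invariant polyhedral cone under a linear involution, its intersection with a coordinate subspace of the $\pm 1$-eigenspace decomposition stays polyhedral with rational generators.

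First I would exhibit generators $[D_j]$ of the extremal rays of $\Nef(X_\R)$ and check that each satisfies $\mvol_\R(D_j) > 0$. The key point is that on a Del Pezzo surface every nef divisor class is very ample, or at least base-point-free with $h^0 \geq 2$: indeed for $-K_X$ and its multiples this is standard, and more generally a nef divisor on a Del Pezzo surface is semiample and, after passing to a suitable multiple, defines a morphism with sections; so one can apply Proposition \ref{pinceau} (or Corollary \ref{very-ample}) to conclude $\mvol_\R(D_j) > 0$. Here one must be slightly careful that the generators $[D_j]$ chosen for the \emph{real} Néron–Severi cone are themselves classes of effective real divisors with a pencil — but since $-K_X$ is real and ample, and the real nef cone is spanned by finitely many rational rays each lying in the real nef cone, one can replace each $[D_j]$ by a positive integer multiple that is both very ample on $X_\R$ and has a real pencil (using that $-K_X$ real ample forces each extremal real nef ray to contain genuinely effective real classes, by adding a large multiple of $-K_X$ and using Bertini over $\R$ as in the proof of Proposition \ref{pinceau}).

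Once the hypotheses of Proposition \ref{conenef} are verified, the conclusion $\alpha(X) = 1$ and that the concordance is achieved follows immediately by invoking that proposition. So the proof is essentially a two-line reduction, modulo the input facts about Del Pezzo geometry.

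The main obstacle is the verification that the extremal rays of $\Nef(X_\R)$ can be generated by classes $[D_j]$ with $\mvol_\R(D_j) > 0$: this requires knowing that $\Nef(X_\R)$ is genuinely polyhedral (inherited from $\Nef(X_\C)$ via the involution, which needs a small lemma) and that each extremal ray supports a real effective divisor moving in a pencil, which is where Bertini over $\R$ and the semiampleness of nef divisors on Del Pezzo surfaces enter. Everything else is a direct citation of Proposition \ref{conenef}.

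\begin{proof}
Since $X$ is a Del Pezzo surface, $-K_X$ is ample, and the cone $\Nef(X_\C)$ is rational polyhedral. The structure $\sigma$ acts on $\NS(X_\C;\R)$ preserving $\Nef(X_\C)$, and $\NS(X_\R;\R)$ is the $(-1)$-eigenspace of $\sigma^*$ up to sign (cf. $(\ref{eq-ns-reel})$); the intersection of a rational polyhedral cone with this rational linear subspace is again rational polyhedral. Let $[D_1],\dots,[D_r]$ span the extremal rays of $\Nef(X_\R)$, chosen in $\NS(X_\R;\Z)$.

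For each $j$, the class $[D_j] + m[-K_X]$ is ample on $X_\R$ for $m$ large enough, and after replacing $[D_j]$ by a suitable positive multiple we may assume that $D_j$ is a very ample real divisor lying on the same ray. By Corollary \ref{very-ample}, $\mvol_\R(D_j) > 0$ for each $j$.

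The hypotheses of Proposition \ref{conenef} are thus satisfied, and we conclude that $\alpha(X) = 1$ and that the concordance is achieved.
\end{proof}
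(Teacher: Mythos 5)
Your overall plan matches the paper's: both reduce the corollary to Proposition~\ref{conenef} by (a)~establishing that $\Nef(X_\R)$ is rational polyhedral and (b)~checking that each extremal ray is spanned by a class $[D_j]$ with $\mvol_\R(D_j)>0$. Part~(a) is fine (your route via intersecting $\Nef(X_\C)$ with the rational subspace $\NS(X_\R;\R)$ works; the paper instead cites the cone theorem over $\R$).

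Part~(b) contains a genuine gap. You write that ``after replacing $[D_j]$ by a suitable positive multiple we may assume that $D_j$ is a very ample real divisor lying on the same ray,'' and then invoke Corollary~\ref{very-ample}. This is false as soon as $\rho(X_\R)\geq 2$: an extremal ray of $\Nef(X_\R)$ lies on the boundary of the nef cone, and every positive multiple of a boundary class stays on the boundary, hence is nef but not ample, hence not very ample. Concretely, on $X=\P^1\times\P^1$ (a Del Pezzo surface with $\rho(X_\R)=2$ for the standard real structure) the extremal rays are spanned by the fiber classes $[H]$ and $[V]$, which have self-intersection $0$; no multiple of either is ample. The preceding sentence about $[D_j]+m[-K_X]$ being ample is a red herring, since that class no longer lies on the ray spanned by $[D_j]$, and Proposition~\ref{conenef} requires positivity of $\mvol_\R$ for the extremal generators of $\Nef(X_\R)$ themselves.

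The paper closes this gap not via very ampleness but via a short Riemann--Roch argument (Lemma~\ref{nef}): for a nonzero nef divisor $D$ on a Del Pezzo surface, Corollary~\ref{rr+serre} gives
\begin{equation}
h^0(X,\O_X(D))\;\geq\;\chi(\O_X)+\tfrac{1}{2}\bigl(-K_X\cdot D+D^2\bigr)\;=\;1+\tfrac{1}{2}\bigl(-K_X\cdot D+D^2\bigr)>1,
\end{equation}
using $\chi(\O_X)=1$ (since $X$ is rational), $-K_X\cdot D>0$ (ampleness of $-K_X$) and $D^2\geq 0$ (nefness of $D$). Thus $|D_j|$ contains a pencil, and the tool to conclude $\mvol_\R(D_j)>0$ is Proposition~\ref{pinceau}, not Corollary~\ref{very-ample}. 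Your informal preamble does gesture at the correct mechanism (semiampleness of nef divisors on Del Pezzo surfaces forcing $h^0\geq 2$), but that observation never makes it into the formal proof you wrote, which relies on the false very-ampleness claim.
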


\begin{proof}
By definition, a surface is Del Pezzo when its anticanonical divisor~$-K_X$ is ample. The cone of curves is then rational polyhedral by the cone theorem (see \cite[1.5.33, 1.5.34]{lazarsfeld}, or \cite{kollar-surfaces} for a statement over $\R$). Thus its dual cone $\Nef(X_\R)$ is also rational polyhedral. 

\begin{lemm}\label{nef}
Let $D$ be a nef divisor on a Del Pezzo surface $X$, which is not numerically trivial. Then the linear system $|D|$ contains a pencil.
\end{lemm}

\begin{proof}
The proof is a simple application of Riemann--Roch formula and Serre duality (cf. corollary \ref{rr+serre}):
\begin{equation}
h^0(X,\O_X(D))\geq\chi(\O_X)+\frac{1}{2}(-K_X\cdot D+D^2).
\end{equation}
Here, $\chi(\O_X)=1$ since $X$ is rational. Moreover, we have 
\begin{equation}
-K_X\cdot D>0 \quad \text{and} \quad D^2\geq 0,
\end{equation}
since $-K_X$ is ample and $D$ is nef. This shows that
\begin{equation}
h^0(X,\O_X(D))>1.
\end{equation}
\end{proof}

Let us go back to the proof of Corollary \ref{DP}. Since the extremal rays of the convex cone $\Nef(X_\R)$ are spanned by classes $[D_j]$ such that $|D_j|$ contains a pencil, Proposition \ref{pinceau} implies that $\mvol_\R(D_j)>0$, and thus we are in the context of Proposition~\ref{conenef}.
\end{proof}


\chapter{Concordance and Entropy of Automorphisms}\label{chap-ent}

From now on $X$ is a real algebraic surface equipped with a K\"ahler metric, whose K\"ahler form is denoted by $\kappa$.


\section{Complex volumes}

\begin{theo}\label{iteres-mvolc}
Let $f$ be an automorphism of a complex algebraic surface~$X$. For all ample divisors $D$ we have
\begin{equation}\label{ent-c}
\lim_{n\to+\infty}\frac{1}{n}\log\left(\vol_\C(f_*^nD)\right)=\h(f_\C)=\log(\lambda(f)).
\end{equation}
\end{theo}

\begin{proof}
Wirtinger's equality gives (cf. (\ref{formule-vol})) 
\begin{equation}
\vol_\C(f_*^nD)=f_*^n[D]\cdot[\kappa].
\end{equation}

If the spectral radius $\lambda(f)$ of $f_*$ is $1$, then the last sequence has at most a polynomial growth (actually it is bounded or quadratic, cf. chapter \ref{chap-auto}). Hence
\begin{equation}
\lim_{n\to+\infty}\frac{1}{n}\log\left(\vol_\C(f_*^nD)\right)=0=\log(\lambda(f)).
\end{equation}

If $\lambda(f)>1$, since $[D]$ is in the ample cone, the sequence $\left(\frac{f_*^n[D]}{\lambda(f)^n}\right)_{n\in\N}$ converges to $\alpha\,\theta^-_f$, with $\alpha>0$ (cf. Proposition \ref{loxo-kahler}). In particular
\begin{equation}\label{volc-borne}
\lim_{n\to+\infty}\frac{\vol_\C(f_*^nD)}{\lambda(f)^n}=\alpha,
\end{equation}
and then
\begin{equation}
\lim_{n\to+\infty}\frac{1}{n}\log\left(\vol_\C(f_*^nD)\right)=\log(\lambda(f)).
\end{equation}
The equality $\log(\lambda(f))=\h(f)$ is given by Gromov--Yomdin theorem \ref{theo-gyc}.
\end{proof}

\begin{rema}
For varieties with arbitrary dimension $d$, the formula
\begin{equation}
\lim_{n\to+\infty}\frac{1}{n}\log\left(\vol_\C(f_*^nD)\right)=\log(\lambda(f))
\end{equation}
still holds. But this is not necessarily equal to the entropy $\h(f)$, which is the logarithm of the spectral radius on the whole cohomology, \emph{a priori} distinct from the spectral radius~$\lambda(f)$ on $H^2(X_\C;\R)$.
\end{rema}


\section{An upper bound for real volumes}

\begin{theo}\label{iteres-mvolr}
Let $f$ be a real automorphism of a real algebraic surface $X$. For all ample real divisors $D$ we have
\begin{equation}\label{ent-r}
\limsup_{n\to+\infty}\frac{1}{n}\log\left(\mvol_\R(f_*^nD)\right)\leq\h(f_\R).
\end{equation}
\end{theo}

The proof of this result relies on \cite[Theorem 1.4]{yomdin}, which gives a lower bound for entropy in terms of volume growth. It is here stated in the particular case of dimension $1$ submanifolds.

\begin{theo}[Yomdin]\label{yomdin}
Let $M$ be a compact Riemannian manifold,
\begin{equation}
{g:M\rightarrow M}
\end{equation}
be a differentiable map and $\gamma:[0,1]\to M$ be an arc, each of class~$\mathcal{C}^r$, with~${r\geq 1}$. Then \begin{equation}
\limsup_{n\rightarrow+\infty}\frac{1}{n}\log\left(\length(g^n\circ\gamma)\right)\leq\h(g)+\frac{2}{r}\liap(g),
\end{equation}
where $\liap(g)$ denotes the \emph{topological Liapunov exponent}
\begin{equation}
\liap(g)=\lim_{n\rightarrow+\infty}\frac{1}{n}\log\ltrivert{\rm d}g^n\rtrivert_\infty.\footnote{The notation $\ltrivert{\rm d}g\rtrivert_\infty$ stands for $\max_{x\in M}\ltrivert{\rm d}g(x)\rtrivert$, the norm being taken with respect to the Riemannian metric. Nevertheless the number $\liap(g)$ does not depend on the choice of the Riemannian metric.}
\end{equation}
In particular when the regularity is $\mathcal{C}^\infty$, then
\begin{equation}
\limsup_{n\rightarrow+\infty}\frac{1}{n}\log\left(\length(g^n\circ\gamma)\right)\leq\h(g).
\end{equation}
\end{theo}

Looking carefully at the proof in Yomdin \cite{yomdin}, one sees that this result can be improved to the case when we consider a family of $\mathcal{C}^r$-arcs $(\gamma_j)_j$ whose derivatives are uniformly bounded to the order $r$, that is, there is a positive number $K$ such that~$\|\gamma_j^{(k)}(t)\|\leq K$ for all $j$, $t\in [0,1]$ and $k\leq r$. Under these assumptions we have
\begin{equation}\label{yomdin-famille}
\limsup_{n\rightarrow+\infty}\frac{1}{n}\log\left(\max_j\left\{\length(g^n\circ\gamma_j)\right\}\right)\leq\h(g)+\frac{2}{r}\liap(g).
\end{equation}

We also use the following lemma:

\begin{lemm}[{\cite[Algebraic Lemma 3.3]{gromov-aft-yomdin}}]\label{lemme-gromov}
Let $Y$ be the intersection of an algebraic affine curve in~$\R^d$ with $[-1,1]^d$. For any $r\in\N^*$ there exist at most $m_0$ arcs of class $\mathcal{C}^r$
\begin{equation}
\gamma_j:[0,1]\to Y,
\end{equation}
where~$m_0$ is an integer depending only on~$d$,~$r$, and $\deg(Y)$, such that
\begin{enumerate}
\item $Y=\bigcup_j \gamma_j([0,1])$;
\item $\|\gamma_j^{(k)}(t)\|\leq 1$ for all $j$, $t\in[0,1]$ and $k\leq r$;
\item all $\gamma_j$'s are analytic diffeomorphisms from $(0,1)$ to their images;
\item the images of the $\gamma_{j}$'s can only meet on their boundaries.
\end{enumerate}
\end{lemm}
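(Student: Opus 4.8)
The plan is to deduce the lemma from elementary facts about projections of algebraic curves, combined with a rescaling argument, which is the one-dimensional case of the Yomdin--Gromov parametrization technique. First I would reduce to the case where the curve is irreducible and of pure dimension one: the number of irreducible components of an affine curve of degree $\delta$ in $\R^d$ is at most $\delta$, so it suffices to parametrize each component separately and to add up the resulting counts. So assume from now on that $C\subset\R^d$ is an irreducible real algebraic curve with $\deg(C)\le\delta$, and put $Y=C\cap[-1,1]^d$.

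Next I would choose a coordinate projection $\pi_i\colon(x_1,\dots,x_d)\mapsto x_i$ that is non-constant on $C$, and introduce the finite set $Z\subset C$ consisting of the singular points of $C$ together with the ramification points of $\pi_i$ on the smooth locus (points where the tangent line to $C$ is vertical for $\pi_i$). Each of these loci is cut out on $C$ by equations of degree bounded in terms of $\delta$ and $d$, so a B\'ezout-type estimate gives $\#Z\le N_1(\delta,d)$. Removing $Z$, the real locus $Y\setminus Z$ is a $1$-manifold, and a further B\'ezout count bounds by some $N_2(\delta,d)$ the number of its connected components meeting $[-1,1]^d$; on each such component $\pi_i$ has nowhere-vanishing derivative, hence is strictly monotone, hence a diffeomorphism onto an open subinterval $I\subset(-1,1)$, so the component is the graph $x\mapsto(x,\phi(x))$ of a real analytic map $\phi=(\phi_j)_{j\ne i}\colon I\to[-1,1]^{d-1}$ whose components are branches of algebraic functions of degree bounded by $\delta$.

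The core step is to turn each such graph into finitely many $\mathcal{C}^r$-arcs with all derivatives of order $\le r$ bounded by $1$, the number of arcs being bounded by some $N_3(\delta,d,r)$. An affine change of parameter $t\mapsto a+(b-a)t$ sending $[0,1]$ onto $[a,b]\subset I$ multiplies the $k$-th derivative of $t\mapsto\bigl(a+(b-a)t,\phi(a+(b-a)t)\bigr)$ by $(b-a)^k$ with $b-a\le2$ and $k\le r$; so the task is to partition $I$ so that on each piece $(b-a)^k|\phi_j^{(k)}|$ stays below a fixed constant. Since each $\phi_j$ is algebraic of bounded degree, its derivatives can only blow up at the finitely many ``bad'' points of $\overline{I}$ (endpoints of $I$, i.e.\ points of $Z$ or points where $C$ exits the cube), where a Puiseux expansion $\phi_j(x)-\phi_j(P)\sim c(x-P)^{p/q}$ with $p/q>0$ gives a growth estimate $|\phi_j^{(k)}(x)|\le C\,\dist(x,P)^{p/q-k}$, with exponent controlled by $\delta,r$. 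Subdividing $I$ near each bad point $P$ into a \emph{bounded} number of intervals whose endpoints form a geometric sequence accumulating at $P$, affinely rescaling the outer pieces, reparametrizing the innermost piece by the Puiseux uniformization $x-P=\varepsilon t^q$ (which is an analytic diffeomorphism of $(0,1)$ onto its image and makes $\phi$ a genuine analytic function of $t$ with derivatives of size $O(\varepsilon^{p/q})$), and finally performing one uniform rescaling of the parameter to pass from ``$\le$ constant'' to ``$\le 1$'', produces the desired arcs, with all constants and counts depending only on $\delta,d,r$.

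Finally I would collect all arcs obtained from all components of all irreducible components of the original curve: their number is then bounded by an integer $m_0$ depending only on $d$, $r$ and $\deg(Y)$. Property $(2)$ holds by construction; property $(3)$ holds because each arc is an affinely reparametrized graph over an interval, possibly precomposed with a power map, hence an analytic diffeomorphism on $(0,1)$; property $(4)$ holds because distinct arcs come from disjoint subintervals of one component or from distinct components, so they can meet only at endpoints; and property $(1)$ follows after enlarging the parameter domains to closed intervals, since each removed point of $Z$ and each boundary point on $\partial[-1,1]^d$ is a limit of points on adjacent arcs and hence lies in the closure of their images. The main obstacle is precisely the quantitative rescaling step: showing that the \emph{number} of subdivisions needed near each bad point, and hence $m_0$ itself, depends only on $\delta,d,r$ and not on the particular curve --- this is exactly the content of the Yomdin--Gromov algebraic lemma, and it is here that the degree-controlled \L{}ojasiewicz (Puiseux) estimates are essential.
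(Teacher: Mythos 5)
The paper itself does not give a proof of this lemma: it is quoted verbatim from Gromov's exposition of Yomdin's work (Algebraic Lemma 3.3 in \cite{gromov-aft-yomdin}), so there is no in-paper argument to compare against, and I evaluate your sketch on its own terms.

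Your structural reduction is fine: passing to irreducible components, bounding the singular/ramification/boundary locus by a B\'ezout estimate, and representing the good pieces as graphs of algebraic branches over a coordinate axis is standard and correct. The problem is in the ``core step'', and it is not merely a missing detail --- as written, the geometric-subdivision-plus-Puiseux scheme does \emph{not} produce a number of arcs depending only on $d$, $r$, $\deg(Y)$. The point is that the constant $C$ and the radius of validity in a Puiseux estimate $|\phi_j^{(k)}(x)|\leq C\,\dist(x,P)^{p/q-k}$ are \emph{not} controlled by the degree of the curve: they depend on its coefficients. Concretely, take $d=2$, $r=1$, and the degree-$2$ algebraic function $\phi(x)=\frac{\sqrt{x}}{\sqrt{x}+a}$ on $(0,1)$ with $a>0$ small; one has $|\phi|\leq 1$ but $\phi'(x)\sim a\,x^{-3/2}$ for $a^2\lesssim x\lesssim 1$, so the affinely rescaled derivative on the dyadic piece $[2^{-i-1},2^{-i}]$ is of order $a\,2^{i/2}$, which becomes of order $1$ only once $i\gtrsim 2\log_2(1/a)$, and the Puiseux regime $\phi(x)\approx a^{-1}\sqrt{x}$ only begins around $x\lesssim a^2$. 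Thus your subdivision needs $\sim\log(1/a)$ outer intervals before the innermost Puiseux-uniformized piece takes over, and this is unbounded as $a\to 0$ even though $d$, $r$ and the degree are fixed. So the sentence ``a bounded number of intervals whose endpoints form a geometric sequence'' is precisely the unproved point, and the mechanism you propose to prove it (degree-controlled Puiseux/\L{}ojasiewicz constants) is false as stated.

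What actually makes the count uniform in Gromov--Yomdin is a different device: whenever the derivative $|\phi'|$ exceeds $1$, one reparametrizes the graph by the \emph{other} coordinate (i.e.\ uses $\phi^{-1}$ as the new variable, where $|(\phi^{-1})'|\leq 1$), and the number of alternations between the two parametrizations is bounded by a B\'ezout count of $\{|\phi'|=1\}$; for higher $r$ one induction step is needed, using power-function reparametrizations $t\mapsto t^r$ whose effect on derivatives is a purely qualitative calculus estimate, together with B\'ezout bounds on the extrema of $\phi^{(r)}$. In the example above this gives at most $2$ arcs for $r=1$, independent of $a$. You candidly flag the rescaling step as ``the main obstacle'', but the sketch you propose for it does not close the gap and would need to be replaced by the coordinate-swapping/inductive reparametrization argument.
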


\begin{proof}[Proof of Theorem \ref{iteres-mvolr}]
Inequality (\ref{ent-r}) does not depend on the choice of a particular metric on $X$, so we can consider an embedding $X\subset\P^d_\R$ and take the metric induced by Fubini--Study on $X$. The projective space $\P^d(\R)$ is covered by the $(d+1)$ cubes~$Q_k$, for $k\in\{0,\cdots,d\}$, given in homogeneous coordinates by
\begin{equation}
|Z_k|=\max_j|Z_j|.
\end{equation}
Each of these $Q_k$ is located in the affine chart~${U_k=\{Z_k\neq 0\}\simeq\R^d}$, and in this chart $Q_k$ is identified with $[-1,1]^d$.

The degree of $D$ as a subvariety of $\P^d$ only depends on the Chern class~$[D]$. Therefore we can apply Lemma \ref{lemme-gromov} to any divisor $D'\in\mathcal{V}(D)$, intersected with one  of the $Q_k$'s: any real locus of $D'\in\mathcal{V}(D)$ is covered by at most $m_1$ arcs~$\gamma_{D',j}$ of class $\mathcal{C}^r$, the integer $m_1=(d+1)m_0$ being independent of $D'$, such that
\begin{equation}
\|\gamma_{D',j}^{(k)}\|_\infty\leq K
\end{equation}
for all $k\leq r$, where $r$ is a fixed positive integer and~$K$ a positive constant (which comes from the comparison of Euclidean and Fubini--Study metrics on~${[-1,1]^d}$). Now we apply (\ref{yomdin-famille}) to obtain
\begin{equation}
\begin{split}
\limsup_{n\to+\infty}\frac{1}{n}\log\left(\mvol_\R(f_*^nD)\right)
&\leq
\limsup_{n\to+\infty}\frac{1}{n}\log\left(m_1 \max_{D',j}\left\{\length(f_\R^n\circ\gamma_{D',j})\right\}\right)\\
&\leq
\h(f_\R)+\frac{2}{r}\liap(f_\R).
\end{split}
\end{equation}
Since the regularity of both $X(\R)$ and $f_\R$ is $\mathcal{C}^\infty$, we may take the limit as $r$ goes to $+\infty$ and get the desired inequality.
\end{proof}

\begin{rema}
Yomdin's theorem (as well as its version in family) and Gromov's lemma still hold for arbitrary dimensional submanifolds. Therefore the proof of Theorem \ref{iteres-mvolr} can be adapted when $X$ is a variety with higher dimension.
\end{rema}


\section{Link between the concordance and the ratio of entropies}

\begin{theo}\label{entropie}
Let $X$ be a real algebraic surface and let $f$ be a real loxodromic type\footnote{Recall that \emph{loxodromic type} means $\lambda(f)>1$, or equivalently $\h(f_\C)>0$.} automorphism of $X$. Then 
\begin{equation}\label{dist-ent}
\alpha(X)\leq\frac{\h(f_\R)}{\h(f_\C)}.
\end{equation}
\end{theo}

\begin{proof}
Let $\alpha$ be an exponent in the interval $\mathcal{A}(X)$. This means that there are~${q\in\N^*}$ and~$C>0$ such that 
\begin{equation}
\mvol_\R(D)\geq C\vol_\C(D)^\alpha
\end{equation}
for all real ample divisors $D$ with~$[D]$~$q$-divisible. For such a divisor, $f_*^n[D]$ is also $q$-divisible for all $n\in\N$, and by Theorems \ref{iteres-mvolc} and \ref{iteres-mvolr} we get
\begin{equation}
\begin{split}
\h(f_\R)
&\geq
\limsup_{n\to+\infty}\frac{1}{n}\log\mvol_\R(f_*^nD)\\
&\geq
\limsup_{n\rightarrow+\infty}\frac{1}{n}\left(\log C+\alpha\log\vol_\C(f_*^nD)\right)\\
&=
\alpha\h(f_\C).
\end{split}
\end{equation}
Then we take the limit as $\alpha\to\alpha(X)$ and we obtain (\ref{dist-ent}).
\end{proof}

We have seen in Remark \ref{lehmer} that all loxodromic type automorphisms $f$ of~$X$ satisfy
\begin{equation}
\h(f_\C)\geq \log(\lambda_{10}),
\end{equation}
where $\lambda_{10}>1$ denotes the Lehmer number. Combined with Theorem \ref{entropie}, this gives the following corollary.

\begin{coro}\label{coro-lehmer}
Let $f$ be a real automorphism of a real algebraic surface~$X$. If $\h(f_\R)>0$, then
\begin{equation}
\h(f_\R)\geq \alpha(X)\log(\lambda_{10}).
\end{equation}
\end{coro}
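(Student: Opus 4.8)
The plan is to obtain this statement as a direct corollary of Theorem~\ref{entropie}, combined with the Lehmer-number lower bound recalled in Remark~\ref{lehmer}. The first step is to check that the hypothesis $\h(f_\R)>0$ puts us in the setting where Theorem~\ref{entropie} applies, namely that $f$ is of loxodromic type. This is immediate: since $X(\R)\subset X(\C)$ one always has $\h(f_\R)\leq\h(f_\C)$, so $\h(f_\C)>0$, and by the Gromov--Yomdin theorem (Theorem~\ref{theo-gyc}) this is equivalent to $\lambda(f)>1$, i.e.\ to $f$ being loxodromic.

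Once loxodromicity is established, Theorem~\ref{entropie} gives the inequality $\alpha(X)\leq\h(f_\R)/\h(f_\C)$, which, since $\h(f_\C)>0$, rewrites as $\alpha(X)\,\h(f_\C)\leq\h(f_\R)$. It then remains to bound $\h(f_\C)$ from below. By Remark~\ref{lehmer}, McMullen's theorem asserts that $\lambda(f)\geq\lambda_{10}$ for every loxodromic automorphism of a compact Kähler surface, hence $\h(f_\C)=\log\left(\lambda(f)\right)\geq\log(\lambda_{10})$. Since $\alpha(X)\geq 0$ (indeed $\alpha(X)\in[0,1]$ by the propositions of Chapter~\ref{chap-conc}), multiplying this lower bound by $\alpha(X)$ preserves the inequality, and chaining everything together yields
\[
\alpha(X)\log(\lambda_{10})\;\leq\;\alpha(X)\,\h(f_\C)\;\leq\;\h(f_\R),
\]
which is exactly the desired estimate.

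I do not expect any genuine obstacle here: the corollary is a formal consequence of two previously established facts. The analytic content --- Yomdin's bound on volume growth and Katok's horseshoe approximation --- is entirely absorbed in Theorem~\ref{entropie}, and the arithmetic content is Remark~\ref{lehmer}. The only point requiring a moment's care is the initial reduction, i.e.\ verifying that $\h(f_\R)>0$ forces $f$ to be loxodromic so that Theorem~\ref{entropie} is legitimately available; this follows at once from $\h(f_\R)\leq\h(f_\C)$.
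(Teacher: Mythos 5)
Your proposal is correct and takes essentially the same route as the paper: Theorem~\ref{entropie} gives $\alpha(X)\leq\h(f_\R)/\h(f_\C)$ and McMullen's theorem (Remark~\ref{lehmer}) gives $\h(f_\C)\geq\log(\lambda_{10})$, and chaining these yields the estimate. The only addition is your explicit check that $\h(f_\R)>0$ forces $f$ to be loxodromic via $\h(f_\R)\leq\h(f_\C)$ and Gromov--Yomdin, which the paper leaves implicit; this is a welcome piece of care rather than a divergence.
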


This universal lower bound is interesting when $\alpha(X)>0$, and enables us (cf. Chapter \ref{chap-non-dense}) to show nondensity results for $\Aut(X_\R)$ in the group of diffeomorphisms of $X(\R)$ (or in those diffeomorphisms which preserve the area when $\kod(X)=0$).

\begin{rema}
If there exists a real parabolic type automorphism ${f:X\to X}$, we know that, for each ample divisor $D$, $\vol_\C(f_*^nD)$ grows quadratically (cf. Proposition \ref{car-D} in Cahpter \ref{chap-auto}), and we can reasonably think that $\mvol_\R(f_*^nD)$ grows at most linearly (see what happens for a torus for instance). So we conjecture that 
$
\alpha(X)\leq\frac{1}{2}
$
in this case. 
\end{rema}


\section{A lower bound for real volumes}

\begin{defi}\label{veryample}
Let $M$ be a differentiable surface. A family $\Gamma$ of curves on~ $M$ is said to be \emph{very ample} if for all $P\in M$ and for all directions $\mathcal{D}\subset {\rm T\!}_xM$, there is a curve $\gamma\in\Gamma$ on which $P$ is a regular point and whose tangent direction at $P$ is $\mathcal{D}$.
\end{defi}

\begin{exem}\label{ex-tres-ample}
Let $X$ be a real algebraic surface and $D$ be a very ample real divisor on $X$. Then the family $\mathcal{V}(D)$, as a family of curves on $X(\R)$, is a very ample family in the sense of Definition \ref{veryample}.
\end{exem}

\begin{theo}\label{horseshoe}
Let $M$ be a compact Riemannian surface,
\begin{equation}
g:M\rightarrow M
\end{equation}
be a diffeomorphism of class $\mathcal{C}^{1+\epsilon}$ (with $\epsilon>0$) with positive entropy, and $\Gamma$ be a very ample family of curves on $M$. Then for all $\lambda<\exp(\h(g))$, there exist a curve~${\gamma\in\Gamma}$ and a constant $C>0$ such that
\begin{equation}\label{long-exp}
\length(g^n(\gamma))\geq C\lambda^n \quad \forall n\in\N.
\end{equation}
\end{theo}

In other words, we have the following inequality:
\begin{equation}\label{type-newhouse}
\sup_{\gamma\in\Gamma}\left\{\liminf_{n\rightarrow+\infty}\frac{1}{n}\log\left(\length(g^n(\gamma))\right)\right\}\geq\h(g).
\end{equation}
This has to be compared with a similar result due to Newhouse \cite{newhouse}, who considers manifolds of arbitrary dimension and noninvertible maps, but obtains the inequality (\ref{type-newhouse}) with a limit superior instead of a limit inferior (assumptions on the family $\Gamma$ are also lightly different). On the other hand, the lower bound (\ref{type-newhouse}) is optimal when $M$ and $g$ are $\mathcal{C}^\infty$, by Yomdin's Theorem~\ref{yomdin}.

\begin{coro}\label{minorer-mvolr}
Let $f$ be a real automorphism of a real algebraic surface~$X$.
For all $\lambda<\exp(\h(f_\R))$ and all very ample real divisors $D$ on $X$, there exists $C>0$ such that
\begin{equation}
\mvol_\R(f_*^nD)\geq C\lambda^n \quad \forall n\in\N.
\end{equation}
\end{coro}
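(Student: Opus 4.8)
The plan is to deduce Corollary~\ref{minorer-mvolr} directly from Theorem~\ref{horseshoe}, using the dictionary between real divisors and curves on the differentiable surface $X(\R)$. First I would observe that by hypothesis $\h(f_\R)>0$ (otherwise the statement is vacuous since $\lambda<\exp(\h(f_\R))=1$ forces $\lambda<1$ and one can take $C$ arbitrarily small — though to be safe one should note that the interesting content is precisely when $\h(f_\R)>0$). Since $f$ is a real automorphism of the real algebraic surface $X$, the induced map $f_\R:X(\R)\to X(\R)$ is a real-analytic diffeomorphism of the compact real-analytic surface $X(\R)$; in particular it is of class $\mathcal{C}^\infty$, hence $\mathcal{C}^{1+\epsilon}$, and it has positive topological entropy $\h(f_\R)$. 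We also fix on $X(\R)$ the Riemannian metric induced by the chosen K\"ahler metric on $X(\C)$, so that $X(\R)$ is a compact Riemannian surface.

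Next I would set up the very ample family. Let $D$ be a very ample real divisor on $X$. By Example~\ref{ex-tres-ample}, the family $\mathcal{V}(D)$ of effective real divisors linearly (hence algebraically) equivalent to $D$, viewed as a family of curves on $X(\R)$, is a very ample family in the sense of Definition~\ref{veryample}. Applying Theorem~\ref{horseshoe} with $M=X(\R)$, $g=f_\R$, and $\Gamma=\mathcal{V}(D)$: for every $\lambda<\exp(\h(f_\R))$ there exist a curve $\gamma\in\mathcal{V}(D)$ and a constant $C>0$ such that
\begin{equation*}
\length\bigl(f_\R^{\,n}(\gamma)\bigr)\geq C\lambda^n\qquad\forall n\in\N.
\end{equation*}

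The final step is bookkeeping with the definition of $\mvol_\R$. The curve $\gamma$ is (the real locus of) a divisor $D'\in\mathcal{V}(D)$, i.e. $[D']=[D]$ in $\NS(X_\R)$; consequently $f_*^n D'$ is an effective real divisor with $[f_*^n D']=f_*^n[D]=[f_*^n D]$, so $f_*^n D'\in\mathcal{V}(f_*^n D)$. Since $f_\R^{\,n}(\gamma)$ is contained in $(f_*^n D')(\R)$ (the image under the diffeomorphism $f_\R^{\,n}$ of the real locus of $D'$ is the real locus of its pushforward), we get
\begin{equation*}
\mvol_\R(f_*^n D)\;\geq\;\vol_\R(f_*^n D')\;\geq\;\length\bigl(f_\R^{\,n}(\gamma)\bigr)\;\geq\;C\lambda^n,
\end{equation*}
which is the desired inequality. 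I do not expect a genuine obstacle here: the corollary is essentially a translation of Theorem~\ref{horseshoe} into the language of divisors, and the only points requiring a line of care are that $f_\R$ is smooth with positive entropy and that pushing forward by $f$ respects the algebraic equivalence class (so that the orbit of one good representative stays inside the relevant families $\mathcal{V}(f_*^n D)$). The real work is all in Theorem~\ref{horseshoe} itself, via Katok's horseshoe approximation of entropy.
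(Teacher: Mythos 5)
Your proof matches the paper's intended argument exactly: the paper places Corollary~\ref{minorer-mvolr} immediately after Example~\ref{ex-tres-ample} and Theorem~\ref{horseshoe} with no written proof, because the corollary is precisely the translation of Theorem~\ref{horseshoe} applied to the very ample family $\Gamma=\mathcal{V}(D)$ back into the language of $\mvol_\R$, and your bookkeeping step (identifying $f_\R^n(\gamma)=(f_*^nD')(\R)$ with $f_*^nD'\in\mathcal V(f_*^nD)$, so $\length(f_\R^n(\gamma))\leq\mvol_\R(f_*^nD)$) is carried out correctly.

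One small inaccuracy in your parenthetical: when $\h(f_\R)=0$ the statement is not ``vacuous,'' and it is not true that ``one can take $C$ arbitrarily small'' to discharge it. One still needs a single $C>0$ with $\mvol_\R(f_*^nD)\geq C\lambda^n$ for \emph{all} $n$, and since $\lambda^n\to 0$ this requires $\mvol_\R(f_*^nD)\,\lambda^{-n}$ to stay bounded away from $0$, which does not follow from Theorem~\ref{horseshoe} (whose hypothesis $\h(g)>0$ fails) nor from the naive estimate $\mvol_\R(f_*^nD)\geq \vol_\R(D')/\ltrivert{\rm d}f_\R^{-n}\rtrivert_\infty$, since the right side can decay at rate $\exp(-\liap(f_\R^{-1}))$, which may be faster than $\lambda^n$ even when $\h(f_\R)=0$. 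This is harmless for the paper (the corollary is only invoked after reducing to $\h(f_\R)>0$, cf.\ the first line of the proof of Lemma~\ref{une orbite}), but you should say ``the case $\h(f_\R)=0$ is not needed and is not what is being claimed by this argument,'' rather than call it vacuous.
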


The proof of Theorem \ref{horseshoe} relies on a result due to Katok \cite[Theorem S.5.9 p. 698]{katok-hasselblatt}, which asserts that the entropy of surface diffeomorphisms is well approximated by horseshoes. For definition and properties of horseshoes, we refer to \cite[\textsection 6.5]{katok-hasselblatt}.

\begin{theo}[Katok]\label{katok}
Let $M$ be a compact surface, and
\begin{equation}
g:M\rightarrow M
\end{equation}
be a diffeomorphism of class $\mathcal{C}^{1+\epsilon}$ (with $\epsilon>0$) with positive entropy. For any~$\eta>0$, there exists a horseshoe $\Lambda$ for some positive iterate $g^k$ of $g$ such that
\begin{equation}\label{h-approx}
\h(g)\leq\frac{1}{k}\h(g^k_{|\Lambda})+\eta.
\end{equation}
\end{theo}

\begin{proof}[Proof of Theorem \ref{horseshoe}]
Fix real numbers $\lambda$ and $\eta$ such that
\begin{equation}
1<\lambda<\exp(\h(g))
\quad \text{and} \quad
0<\eta\leq\h(g)-\log(\lambda).
\end{equation}
Let $\Lambda$ be a horseshoe for $G=g^k$ satisfying~(\ref{h-approx}), and let~$\Delta\supset\Lambda$ be a ``rectangle'' corresponding to this horseshoe, in such a way that
\begin{equation}
{\Lambda=\bigcap_{j\in\Z}G^j(\Delta)}.
\end{equation}
The set $G(\Delta)\cap\Delta$ has $q$ connected components $\Delta_1,\cdots,\Delta_q$, which are ``subrectangles'' crossing entirely $\Delta$ downward (see Figure \ref{ex-horseshoe}). The restriction~$G_{|\Lambda}$ is topologically conjugate to the full-shift on $q$ symbols, by the conjugacy map
\begin{equation*}
\begin{split}
\{1,\cdots,q\}^\Z
&\longrightarrow
\Lambda\\
(\omega_j)_{j\in\Z}
&\longmapsto
\bigcap_{j\in\Z}G^j(\Delta_{\omega_j}).
\end{split}
\end{equation*}
In particular $\h(G_{|\Lambda})=\log(q)$. We denote by $L$ the distance between the upper and lower side of $\Delta$.

\begin{figure}[h]
\begin{center}
\scalebox{0.5}{\input{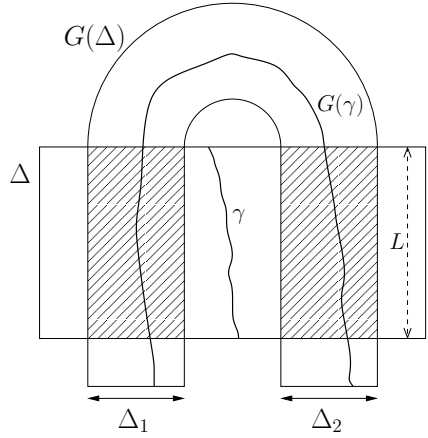}}
\caption{An example of horseshoe, here with $q=2$.}
\label{ex-horseshoe}
\end{center}
\end{figure}

\begin{lemm}\label{q^n}
Let $\gamma\subset\Delta$ be an arc crossing the rectangle $\Delta$ downward. Then
\begin{equation}
\length(G^n(\gamma))\geq q^nL \quad \forall n\in\N.
\end{equation}
\end{lemm}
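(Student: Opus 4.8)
The plan is to exploit the Markov structure of the horseshoe and argue by induction on $n$. Recall that $\Delta$ is a ``rectangle'' whose boundary consists of two ``horizontal'' sides --- the upper and lower sides, which are at distance $L$ apart --- together with two ``vertical'' sides, and that an arc \emph{crosses $\Delta$ downward} when it joins the upper side to the lower side while remaining inside $\Delta$. In particular any such arc has length at least $L$, which settles the case $n=0$. By construction of the horseshoe, the subrectangles $\Delta_1,\dots,\Delta_q$ are full-height ``vertical strips'': each $\Delta_i$ itself crosses $\Delta$ downward, its top and bottom being subsegments of the top and bottom sides of $\Delta$.

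The inductive step will rest on the following geometric claim: if $\delta\subset\Delta$ is an arc crossing $\Delta$ downward, then $G(\delta)$ contains, for each $i\in\{1,\dots,q\}$, a subarc lying in $\Delta_i$ that crosses $\Delta_i$ (hence $\Delta$) downward. To prove this I would introduce the full-width ``horizontal strips'' $H_i=G^{-1}(\Delta_i)\cap\Delta$, which are stacked between the upper and lower sides of $\Delta$, each separating $\Delta$ since its boundary runs from one vertical side of $\Delta$ to the other; the defining property of the horseshoe is that $G$ maps $H_i$ onto $\Delta_i$, carrying the horizontal boundary of $H_i$ to that of $\Delta_i$ (expanding the vertical direction, contracting the horizontal one). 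Since $\delta$ runs from the top of $\Delta$ to its bottom, it must cross each $H_i$, and the connected component of $\delta\cap H_i$ that does so joins the upper boundary of $H_i$ to its lower boundary; call it $\delta_i$. Then $G(\delta_i)\subset\Delta_i$ joins the top and bottom of $\Delta_i$, i.e.\ crosses $\Delta_i$ downward, and since $\Delta_i$ is full-height it crosses $\Delta$ downward too. As the $\Delta_i$ are pairwise disjoint, so are the arcs $G(\delta_i)\subset G(\delta)$.

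Granting the claim, the induction runs as follows: assuming $G^n(\gamma)$ contains $q^n$ pairwise disjoint arcs each crossing $\Delta$ downward, I apply the claim to each of them to obtain $q^n\cdot q=q^{n+1}$ pairwise disjoint arcs in $G^{n+1}(\gamma)$ each crossing $\Delta$ downward. Each such arc has length at least $L$, and since they are disjoint their lengths add, so $\length(G^{n+1}(\gamma))\geq q^{n+1}L$. I expect the only genuinely delicate point to be a careful statement of the geometric claim, which is really nothing but the Markov property built into the definition of a horseshoe (see \cite[\textsection 6.5]{katok-hasselblatt}); once that is in place, the remainder is a routine induction together with the elementary observation that a curve joining the two horizontal sides of $\Delta$ has length $\geq L$.
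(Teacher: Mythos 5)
Your argument is correct and is exactly the induction the paper sketches in one line ("the arc $G^n(\gamma)$ contains $q^n$ subarcs crossing $\Delta$ downward, by induction on $n$"); you have simply written out the Markov-property step and the disjointness bookkeeping that the paper leaves to the reader.
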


\begin{proof}
It is enough to remark that the arc $G^n(\gamma)$ contains $q^n$ subarcs crossing $\Delta$ downward (see Figure \ref{ex-horseshoe} for $n=1$). This can be seen by induction on~$n$.
\end{proof}

Now fix a point $P\in\Lambda$, which we can write
\begin{equation}
P=\bigcap_{j\in\Z}G^j(\Delta_{\omega_j}).
\end{equation}
Let $\gamma\in\Gamma$ be a curve that goes through $P$ transversally to the stable variety~$W^s(P)$ (the horizontal one). For any sequence $(\epsilon_j)_{j\in\N}\in\{1,\cdots,q\}^\N$, we set (see Figure \ref{ex-horseshoe2})
\begin{equation}
R_{\epsilon_1,\cdots,\epsilon_n}=\bigcap_{j=0}^{n}G^{-j}(\Delta_{\epsilon_{j}}).
\end{equation}

\begin{figure}[h]
\begin{center}
\scalebox{0.7}{\input{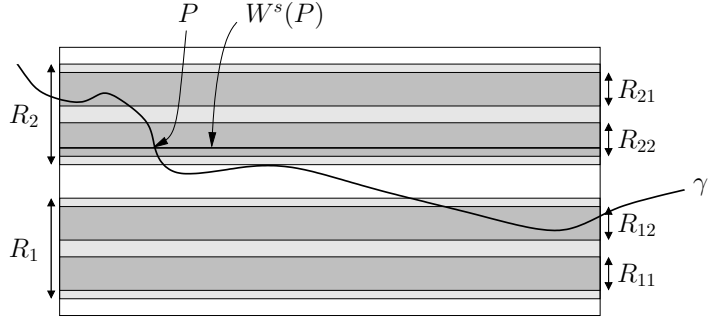}}
\caption{The rectangles $R_{\epsilon_1}$ and $R_{\epsilon_1,\epsilon_2}$ for the horseshoe of Figure \ref{ex-horseshoe}.}
\label{ex-horseshoe2}
\end{center}
\end{figure}

The sequence $(R_{\epsilon_1,\cdots,\epsilon_n})_{n\in\N}$ is a decreasing sequence of nested rectangles that converge to the curve
\begin{equation}
\bigcap_{j\in\N}G^{-j}(\Delta_{\epsilon_{j}}).
\end{equation}
If $(\epsilon_n)_{n\in\N}=(\omega_{-n})_{n\in\N}$ this curve is the stable variety $W^s(P)$ (intersected with~$\Delta$). Since $\gamma$ is transverse to it, there exist an integer $n_0$ and a subarc~${\gamma'\subset\gamma}$ such that $\gamma'$ crosses the rectangle $R_{\omega_0,\cdots,\omega_{-n_0}}$ downward. (On Figure \ref{ex-horseshoe2}, we may choose $\gamma'\subset R_{22}$.) Hence the arc ${G^{n_0}(\gamma')\subset G^{n_0}(\gamma)}$ satisfies the assumptions of Lemma \ref{q^n}, and thus
\begin{equation}
\length(G^{n_0+n}(\gamma))\geq q^nL \quad \forall n\in\N.
\end{equation}
So if we set
\begin{equation}
C'=\min\left\{\frac{L}{q^{n_0}},\left(\frac{\length(G^n(\gamma))}{q^n}\right)_{0\leq n\leq n_0-1}\right\},
\end{equation}
then
\begin{equation}
\begin{split}
\length(g^{nk}(\gamma))
&\geq C'q^n\\
&= C'\exp(n\h(g^k_{|\Lambda}))\\
&\geq C'\exp(nk(\h(g)-\eta))\\
&\geq C'\lambda^{nk}.
\end{split}
\end{equation}
Since
\begin{equation}
\length(g^n(\gamma))\leq\ltrivert{\rm d}g^{-1}\rtrivert_\infty\length(g^{n+1}(\gamma)),
\end{equation}
we get Inequality (\ref{long-exp}) by Euclidean division by $k$, where we have set
\begin{equation}
{C=C'(\lambda\ltrivert{\rm d}g^{-1}\rtrivert_\infty)^{-k}>0}.
\end{equation}
\end{proof}


\section{Concordance for surfaces with Picard number $2$}

\begin{theo}\label{formule-exacte}
Let $X$ be a real algebraic surface with $\rho(X_\R)=2$. Assume that there exists a real loxodromic type automorphism $f$ on $X$. Then the inequality of Theorem \ref{entropie} is an equality :
\begin{equation}\label{exacte}
\alpha(X)=\frac{\h(f_\R)}{\h(f_\C)}.
\end{equation}
\end{theo}

\begin{rema}
The assumptions of the theorem imply that the surface $X$ is either a torus, a K3 surface, or an Enriques surface. Indeed, as seen in Chapter \ref{chap-auto}, its minimal model is either one of these three types of surfaces, or a rational surface. But if $X$ is not minimal or if $X$ is rational, then the class of the canonical divisor~$K_X$ would be nontrivial in $\NS(X_\R;\R)$. Since this class is preserved by $f_*$, this map would have $1$ as an eigenvalue. This is impossible, because $\NS(X_\R;\R)$ has dimension $2$ and the spectral radius of $f_*$ must be greater than $1$.
\end{rema}

\begin{proof}[Proof of Theorem \ref{formule-exacte}]
By Theorem \ref{entropie}, it is enough to prove that any nonnegative exponent
\begin{equation}
\alpha<\frac{\h(f_\R)}{\h(f_\C)}
\end{equation}
belongs to $\mathcal{A}(X)$. This is obvious when $\h(f_\R)=0$, so we suppose that $f_\R$ has positive entropy, and we fix such an exponent $\alpha$.

\begin{lemm}\label{une orbite}
Let $D$ be a very ample real divisor on $X$. There exists a constant $C > 0$ such that
\begin{equation}\label{eq-une-orbite}
\mvol_\R(f_*^nD) \geq C \vol_\C(f_*^nD)^\alpha \quad \forall n\in\Z.
\end{equation}
\end{lemm}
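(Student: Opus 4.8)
The plan is to bound the complex volume along the orbit $(f_*^n D)_{n\in\Z}$ from above by $C_3\,\lambda(f)^{|n|}$, to bound the maximal real volume from below by $c\,\lambda^{|n|}$ for a suitable $\lambda$ which — thanks to the strict inequality $\alpha<\h(f_\R)/\h(f_\C)$ — can be taken strictly larger than $\lambda(f)^{\alpha}$, and then to combine the two estimates directly. All the real content is already available: the upper bound is Theorem~\ref{iteres-mvolc} (more precisely $(\ref{volc-borne})$, itself a consequence of Proposition~\ref{loxo-kahler}), and the lower bound is Corollary~\ref{minorer-mvolr}, i.e. ultimately Katok's horseshoe approximation of entropy (Theorem~\ref{katok}). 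So there is no genuine obstacle beyond organizing these inputs symmetrically at the two ends $n\to+\infty$ and $n\to-\infty$, the latter obtained by passing to $f^{-1}$.

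First I would establish the upper bound for complex volumes. Since $[D]$ is ample and $f$ is loxodromic, Proposition~\ref{loxo-kahler} gives $\lambda(f)^{-n}f_*^n[D]\to\alpha^-\theta^-_f$ with $\alpha^->0$, so $\vol_\C(f_*^n D)=f_*^n[D]\cdot[\kappa]$ is comparable to $\lambda(f)^n$ as $n\to+\infty$. Running the same argument for $f^{-1}$ — again loxodromic, with $\lambda(f^{-1})=\lambda(f)$ and $\vol_\C(f_*^{-n}D)=\vol_\C((f^{-1})_*^{n}D)$ — covers $n\to-\infty$, and hence there is a constant $C_3>0$ with
\begin{equation*}
\vol_\C(f_*^n D)\leq C_3\,\lambda(f)^{|n|}\qquad\text{for all }n\in\Z .
\end{equation*}

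Next I would produce the lower bound for real volumes. We are in the case $\h(f_\R)>0$ (the case $\h(f_\R)=0$ is settled separately in the proof of the theorem) and $\alpha<\h(f_\R)/\h(f_\C)=\h(f_\R)/\log\lambda(f)$, so $\lambda(f)^{\alpha}=\exp(\alpha\log\lambda(f))<\exp(\h(f_\R))$; I fix a real number $\lambda$ with $\lambda(f)^{\alpha}<\lambda<\exp(\h(f_\R))$. As $D$ is very ample, Corollary~\ref{minorer-mvolr} applied to $f$ gives $C_1>0$ with $\mvol_\R(f_*^n D)\geq C_1\lambda^n$ for all $n\geq 0$, and applied to $f^{-1}$ (legitimate since $\h((f^{-1})_\R)=\h(f_\R)$, so the same $\lambda$ is admissible) it gives $C_2>0$ with $\mvol_\R(f_*^{-n}D)=\mvol_\R((f^{-1})_*^{n}D)\geq C_2\lambda^n$ for all $n\geq 0$; hence $\mvol_\R(f_*^n D)\geq\min(C_1,C_2)\,\lambda^{|n|}$ for every $n\in\Z$.

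Finally I would combine the two bounds: raising the complex estimate to the power $\alpha\geq 0$ and dividing,
\begin{equation*}
\frac{\mvol_\R(f_*^n D)}{\vol_\C(f_*^n D)^{\alpha}}\geq\frac{\min(C_1,C_2)}{C_3^{\alpha}}\left(\frac{\lambda}{\lambda(f)^{\alpha}}\right)^{|n|}\geq\frac{\min(C_1,C_2)}{C_3^{\alpha}}=:C>0
\end{equation*}
for all $n\in\Z$, since $\lambda/\lambda(f)^{\alpha}>1$; this is exactly $(\ref{eq-une-orbite})$. The only delicate points — and they are minor — are that the constant furnished by Corollary~\ref{minorer-mvolr} must be uniform over an entire half-orbit (which it is, by the construction of the horseshoe), and that one must remember to invoke the corollary on $f^{-1}$ as well in order to reach the negative iterates.
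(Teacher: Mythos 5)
Your proof is correct and follows essentially the same route as the paper: the complex upper bound from $(\ref{volc-borne})$ (i.e.\ Proposition~\ref{loxo-kahler}), the real lower bound from Corollary~\ref{minorer-mvolr} (ultimately Katok's horseshoe approximation), and a symmetric application to $f^{-1}$ for the negative half-orbit. The only cosmetic difference is that the paper takes $\lambda=\lambda(f)^\alpha$ directly in Corollary~\ref{minorer-mvolr}, whereas you pick $\lambda$ strictly between $\lambda(f)^\alpha$ and $\exp(\h(f_\R))$, which gives a slightly stronger (but unneeded) conclusion that the ratio actually grows.
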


\begin{proof}
Since
\begin{equation}
\lambda(f)^\alpha=\exp(\alpha\h(f_\C))<\exp(\h(f_\R)),
\end{equation}
there exists, by Corollary~\ref{minorer-mvolr}, a positive number $C_\R$ such that
\begin{equation}
\mvol_\R(f_*^nD)\geq C_\R\lambda(f)^{n\alpha} \quad \forall n\in\N.
\end{equation}
On the other hand there is a positive number $C_\C$ such that (cf. (\ref{volc-borne}))
\begin{equation}
\vol_\C(f_*^nD)\leq C_\C\lambda(f)^n \quad \forall n\in\N.
\end{equation}
It follows that 
\begin{equation}
\mvol_\R(f_*^nD) \geq C^+ \vol_\C(f_*^nD)^\alpha \quad\forall n\in\N,
\end{equation}
where we have set $C^+=C_\R/C_\C^\alpha$.

Applying the same argument to $f^{-1}$, there exists a positive number $C^-$ such that 
\begin{equation}
{\mvol_\R(f_*^{-n}D) \geq C^- \vol_\C(f_*^{-n}D)^\alpha} \quad \forall n\in\N.
\end{equation}
Hence we obtain (\ref{eq-une-orbite}), with~${C=\min(C^+,C^-)}$.
\end{proof}

\begin{lemm}\label{nb fini d'orbites}
There are finitely many real ample divisors $D_1,\cdots,D_r$ on~$X$ such that any real ample divisor $D$ on $X$ is algebraically equivalent to one of the form
\begin{equation}
\sum_{k=1}^{s} f_*^nD_{j_k},
\end{equation}
with $n\in\Z$ and $j_k\in\{1,\cdots,r\}$.
\end{lemm}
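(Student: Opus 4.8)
The plan is to work entirely inside the plane $\NS(X_\R;\R)$, on which the intersection form $q$ has signature $(1,1)$ and $f_*$ acts as an isometry of loxodromic type. First I would record the shape of the relevant cones. The classes $\theta^+_f$ and $\theta^-_f$ are isotropic and linearly independent (their eigenvalues $\lambda(f)$ and $\lambda(f)^{-1}$ are distinct), and by the proposition preceding this chapter they span extremal rays of $\Nef(X_\R)$; combining this with the convexity of $\Nef(X_\R)$, the inclusion $\Nef(X_\R)\subset\{q\geq 0\}$, and $\theta^\pm_f\in\Nef(X_\R)$, one gets that $\Nef(X_\R)$ is exactly the closed angular sector bounded by $D^+_f$ and $D^-_f$, and hence that $\Amp(X_\R)$ is the open sector $\{s\theta^+_f+t\theta^-_f\mid s,t>0\}$.

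Next I would pick a single integral ample class $\theta_0\in\Amp(X_\R)\cap\NS(X_\R;\Z)$ — this exists because $\Amp(X_\R)$ is a nonempty open cone and $\NS(X_\R;\Z)$ a full lattice — write $\theta_0=s_0\theta^+_f+t_0\theta^-_f$ with $s_0,t_0>0$, and consider the cone $\mathcal{C}=\R_+\theta_0+\R_+f_*\theta_0$. Since $f_*$ preserves $\NS(X_\R;\Z)$, the class $f_*\theta_0$ is again an integral class, so $\mathcal{C}$ is a rational simplicial $2$-dimensional cone; and since $f_*\theta^\pm_f=\lambda(f)^{\mp 1}\theta^\pm_f$, we have $f_*\theta_0=s_0\lambda(f)^{-1}\theta^+_f+t_0\lambda(f)\theta^-_f$. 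From this I would extract two facts. First, both generators of $\mathcal{C}$ have positive coordinate along $\theta^+_f$, so every nonzero element of $\mathcal{C}$ has positive coordinates along both $\theta^+_f$ and $\theta^-_f$, i.e. $\mathcal{C}\setminus\{0\}\subset\Amp(X_\R)$; in particular every nonzero integral class of $\mathcal{C}$ is ample. Second, parametrizing the rays of $\Amp(X_\R)$ by the ratio $t/s\in\,]0,+\infty[$, the cone $\mathcal{C}$ corresponds to the interval of ratios $[t_0/s_0,\,\lambda(f)^2 t_0/s_0]$, while $f_*$ acts by multiplying this ratio by $\lambda(f)^2$; hence $\bigcup_{n\in\Z}f_*^n(\mathcal{C})=\Amp(X_\R)$, so $\mathcal{C}$ is a fundamental domain for the $\langle f_*\rangle$-action on the ample cone.

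Then I would invoke Gordan's lemma: the semigroup $\mathcal{C}\cap\NS(X_\R;\Z)$ is finitely generated, say by nonzero classes $D_1,\dots,D_r$, each of which is ample by the first fact above, hence the class of a real ample divisor on $X$. To conclude, let $D$ be an arbitrary real ample divisor; then $[D]\in\Amp(X_\R)\cap\NS(X_\R;\Z)$, so by the second fact there is $n\in\Z$ with $f_*^{-n}[D]\in\mathcal{C}\cap\NS(X_\R;\Z)$ (it is integral because $f_*^{-1}$ preserves the lattice, and it lies in $\mathcal{C}$ because $\mathcal{C}$ is a cone), and by Gordan $f_*^{-n}[D]=\sum_{k=1}^s[D_{j_k}]$ for some indices $j_k\in\{1,\dots,r\}$. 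Applying $f_*^n$ gives $[D]=\sum_{k=1}^s f_*^n[D_{j_k}]$, which is exactly the asserted algebraic equivalence.

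I do not expect a serious obstacle: the argument is short and essentially linear-algebraic, using only the structure of a rank-$2$ lattice together with a loxodromic isometry. The points that require a little care are (a) verifying that $\Nef(X_\R)$ is the full sector between $D^+_f$ and $D^-_f$ (so that $\Amp(X_\R)$ is an open $2$-dimensional cone whose rays are completely described by the ratio $t/s$), and (b) arranging that the two generators of $\mathcal{C}$ are lattice points and that $\mathcal{C}\setminus\{0\}$ stays inside $\Amp(X_\R)$, so that Gordan's lemma applies and produces ample classes $D_j$ — both handled by choosing $\theta_0$ integral and ample and taking $\mathcal{C}$ spanned by $\theta_0$ and $f_*\theta_0$.
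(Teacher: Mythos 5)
Your proof is correct and takes essentially the same approach as the paper: pick an integral ample class $\theta_0$ (the paper's $\theta_1$), take the rational simplicial cone spanned by $\theta_0$ and $f_*\theta_0$ as a fundamental domain for the $\langle f_*\rangle$-action on $\Amp(X_\R)$, and finitely generate its lattice points. The only cosmetic difference is that you invoke Gordan's lemma abstractly to get a finite generating set for $\mathcal{C}\cap\NS(X_\R;\Z)$, where the paper writes down the generators explicitly as $\theta_1$, $\theta_2=f_*\theta_1$, and the lattice points of the fundamental parallelogram on $\theta_1,\theta_2$ — which is precisely the rank-two proof of Gordan's lemma.
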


\begin{proof}
Snce $\NS(X_\R;\R)$ has dimension $2$ and the isotropic eigenvectors $\theta^+_f$ and $\theta^-_f$ belong to $\b{\Amp(X_\R)}$, then we have the equality
\begin{equation}
\Amp(X_\R) = \NS^+(X_\R),
\end{equation}
and this open convex cone is just the open quadrant delimited by the half-lines~$\R^+\theta^+_f$ and $\R^+\theta^-_f$. 
The integer points in this cone correspond to classes of real ample divisors. Let $\theta_1$ be such a point that we choose to be primitive, and let $\theta_2 = f_* \theta_1$ (observe that $\theta_2$ is also primitive). Denote by $\mathcal{D}$ the closed convex cone of $\NS(X_\R;\R)$ bordered by half-lines $\R^+\theta_1$ and $\R^+\theta_2$. By construction $\mathcal{D} \backslash \{0\}$ is a fundamental domain for the action of $f_*$ on $\Amp(X_\R)$ (see Figure~\ref{dom-fond}).

\begin{figure}[h]
\begin{center}
\scalebox{0.4}{\input{fig3.pdftex_t}}
\caption{The fundamental domain $\mathcal{D}\backslash\{0\}$.}\label{dom-fond}
\end{center}
\end{figure}

Denote by $\theta_3,\theta_4,\cdots,\theta_r$ the entire points inside the parallelogram whose vertices are $0$, $\theta_1$, $\theta_1+\theta_2$ and $\theta_2$. Any point in $\mathcal{D}$ can be expressed uniquely as 
\begin{equation}
k_1\theta_1 + k_2\theta_2 + \theta_j \quad \text{or} \quad k_1\theta_1+k_2\theta_2,
\end{equation}
with $(k_1,k_2)\in\N^2$ and $j \in \{3,\cdots,r\}$. For all real ample divisors~$D$, there is~$n\in\Z$ such that $f_*^{-n}[D] \in \mathcal{D}$, so we are done by setting $D_1,\cdots,D_r$ real ample divisors whose classes are $\theta_1,\cdots,\theta_r$.
\end{proof}

We go back to the proof of Theorem \ref{formule-exacte}. Let $q$ be a positive integer such that the divisors $D'_1=qD_1,\cdots,D'_r=qD_r$ are all very ample. By Lemma~\ref{une orbite}, there exists a positive number $C$ such that
\begin{equation}
\mvol_\R(f_*^nD'_j) \geq C \vol_\C(f_*^nD'_j)^\alpha \quad \forall j \in \{1,\cdots,r\},~\forall n \in \Z.
\end{equation}
Let $D$ be a real ample divisor whose Chern class is $q$-divisible. There are $n \in \Z$ and $j_1,\cdots,j_s \in \{1,\cdots,r\}$ such that
\begin{equation}
{[D] = \sum_{k=1}^sf_*^n[D'_{j_k}]}.
\end{equation}
Then
\begin{align}
\mvol_\R(D) & \geq \sum_k\mvol_\R(f_*^nD'_{j_k})\\
& \geq C \sum_k\vol_\C(f_*^nD'_{j_k})^\alpha\label{ligne1}\\
& \geq C \left(\sum_k\vol_\C(f_*^nD'_{j_k})\right)^\alpha\label{ligne2}\\
& = C \vol_\C(D)^\alpha.
\end{align}
From (\ref{ligne1}) to (\ref{ligne2}), we have used the following special case of Minkowski inequality:
\begin{equation}\label{minkowski}
\left(\sum_{k=1}^s|x_k|\right)^\alpha\leq\sum_{k=1}^s|x_k|^\alpha \quad \forall\alpha\in(0,1].
\end{equation}

Hence we see that $\alpha$ belongs to $\mathcal{A}(X)$, and Theorem \ref{formule-exacte} is proved.
\end{proof}

\begin{rema}
We do not know if concordance is achieved in Theorem~\ref{formule-exacte}.
\end{rema}


\chapter{Concordance for Abelian Surfaces}\label{chap-tores}

The aim of this chapter is to prove the following theorem, which describes exhaustively the concordance for real abelian surfaces.

\begin{theo}\label{thm-surf-ab}
Let $X$ be a real abelian surface. We have the following alternative:
\begin{enumerate}
\item $\rho(X_\R)=1$ and $\alpha(X)=1$.
\item $\rho(X_\R)=2$ and
\begin{enumerate}
\item if the intersection form represents $0$ on $\NS(X_\R;\Z)$, then ${\alpha(X)=1}$;
\item otherwise, $\alpha(X)=1/2$.
\end{enumerate}
\item $\rho(X_\R)=3$ and $\alpha(X)=1/2$.
\end{enumerate}
The concordance is achieved in all cases. It equals $1/2$ if and only if $X$ admits real loxodromic type automorphisms.
\end{theo}

Of course the case $\rho(X_\R)=1$ is trivial (cf. Proposition \ref{rho=1}), so we just have to prove the last two cases.


\section{Generalities on abelian surfaces}

A {\it real abelian variety} $X$ is a real algebraic variety whose underlying complex manifold $X(\C)$ is a complex torus $\C^g/\Lambda$. We say \emph{real elliptic curve} when~${g=1}$, and \emph{real abelian surface} when $g=2$. As~$X(\R)\neq\emptyset$, we can assume that the antiholomorphic involution~$\sigma_X$ comes from the complex conjugation on $\C^g$, and the lattice $\Lambda$ has the form
\begin{equation}\label{reseau}
\Lambda=\Z^g\oplus\tau\Z^g,
\end{equation}
where $\tau\in\M_g(\C)$ is such that 
\begin{equation}
\Im(\tau)\in\GL_g(\R) \quad \text{and} \quad 2\Re(\tau)=\begin{pmatrix}I_r&0\\0&0\end{pmatrix},
\end{equation}
the integer~$r$ being characterized by the fact that $X(\R)$ has $2^{g-r}$ connected components (cf. \cite[\textsection IV]{silhol}).

\begin{nota}[Elliptic curves]
A real elliptic curve has the form
\begin{equation}
E=\C/(\Z\oplus\tau\Z),
\end{equation}
where $\tau=iy$ or $\tau=1/2+iy$, with $y>0$. We denote by $E_y$ and $E'_y$ these elliptic curves, and by $\Gamma_y$ and $\Gamma'_y$ the corresponding lattices in $\C$. Namely :
\begin{equation}
\left\{\begin{split}
\Gamma_y &=\Z\oplus iy\Z \\
E_y &= \C/\Gamma_y
\end{split}\right.
\quad\text{and}\quad
\left\{\begin{split}
\Gamma'_y &= \Z\oplus (1/2+iy)\Z \\
E'_y &= \C/\Gamma'_y
\end{split}\right.
\end{equation}
Note that $E_y(\R)$ has two connected components, whereas $E'_y(\R)$ has only one connected component.
\end{nota}

A (real) {\it homomorphism} between two real abelian varieties is a holomorphic map
\begin{equation}
f:\C^g/\Lambda\longrightarrow X'=\C^{g'}/\Lambda'
\end{equation} which is compatible with the real structures (that is, $\sigma_{X'}\circ f=f\circ\sigma_X$) and which respects the abelian group structures (this is equivalent to $f(0)=0$). Such a map lifts to a unique $\C$-linear map~${F:\C^g\to\C^{g'}}$ such that $F(\Lambda)\subset\Lambda'$, whose matrix has integer coefficients (for ${F(\Z^g)\subset\Lambda'\cap\R^{g'}=\Z^{g'}}$). We also talk about \emph{endomorphisms}, \emph{isomorphisms} and \emph{automorphisms} of real abelian varieties. Observe that in this context, \textbf{automorphisms are asked to preserve the origin}.


\subsection{Isogenies and Picard numbers}

A real {\it isogeny} between two real abelian varieties of same dimension is a homomorphism of real abelian varieties that is surjective, which means that its matrix has maximal rank. Two real abelian varieties are said to be {\it isogenous} when there exists an isogeny from one to the other (this is an equivalence relation; cf. \cite[1.2.6]{birkenhake-lange}).

\begin{exem}\label{ey-e'y}
The multiplication by $2$ in $\C$ sends the lattice $\Gamma_y$ into $\Gamma'_y$, and conversely. Thus we have the following isogenies of real elliptic curves:
\begin{equation}
E_y \overset{\times 2}\longrightarrow E'_y
\quad\text{and}\quad
E'_y \overset{\times 2}\longrightarrow E_y.
\end{equation}
Thus the two types of real elliptic curves $E_y$ and $E'_y$ are in the same class of isogeny.
\end{exem}

\begin{rema}
The real Picard number does not change by isogeny. Indeed, any isogeny $f:X\rightarrow X'$ gives rise to a homomorphism 
\begin{equation}
{f^*:\NS(X'_\R;\Z)\rightarrow\NS(X_\R;\Z)}
\end{equation}
which is injective, hence $\rho(X'_\R)\leq\rho(X_\R)$; the other inequality follows by the symmetry of the isogeny relation.
\end{rema}

\begin{prop}\label{isog}
Any real abelian surface $X$ is isogenous to $\C^2/\Lambda$, with
\begin{equation}\label{reseau2}
\Lambda=\Z^2\oplus iS\Z^2,
\end{equation}
where $S=\left(S_{ij}\right)\in\M_2(\R)$ is a symmetric positive definite matrix.

When $X$ is on this form, the Néron--Severi group $\NS(X_\C)$ identifies itself with $\R$-bilinear forms $\omega$ whose matrix in the $\R$-basis $(e_1,e_2,iSe_1,iSe_2)$ of $\C^2$ writes down
\begin{equation}
\Omega=\begin{pmatrix}A&B\\-{}^t\!B&C\end{pmatrix},
\end{equation}
with $A,B,C\in\M_4(\Z)$, $A$ and $C$ antisymmetric, and satisfying both conditions
\begin{equation}
C=\det(S)A\quad\text{and}\quad SB={}^t\!BS.
\end{equation}
Elements of $\NS(X_\R)$ are those which satisfy the supplementary condition
\begin{equation}
A=C=0.
\end{equation}
Their respective ranks are given by
\begin{align}
\rho(X_\C)&=6-\rk_\Q(S_{11},S_{12},S_{22})-\rk_\Q(1,\det(S))&&\in\{1,2,3,4\},\\
\rho(X_\R)&=4-\rk_\Q(S_{11},S_{12},S_{22})&&\in\{1,2,3\}.\label{nbpicard-ab}
\end{align}
\end{prop}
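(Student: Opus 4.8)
\emph{Sketch of the argument.} The plan is to prove the three assertions in turn: (i) the reduction, up to isogeny, to a torus with lattice $\Z^2\oplus iS\Z^2$, $S$ symmetric positive definite; (ii) the description of $\NS(X_\C)$ and $\NS(X_\R)$ for such a torus; (iii) the two rank formulas. Since the real and complex Picard numbers are isogeny invariants --- an isogeny $f\colon X\to X'$ induces an injection $f^*\colon\NS(X'_\C;\Z)\hookrightarrow\NS(X_\C;\Z)$, and likewise over $\R$, cf. the Remark preceding the statement --- it suffices to establish (ii) and (iii) once $X$ is in the normal form.

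For (i), I would start from the normal form \eqref{reseau} of the lattice of a real abelian surface (\cite[\textsection IV]{silhol}) and first reduce, up to isogeny, to the case where $X_\C$ is principally polarized, so that its period matrix can be taken to be $(I_2,\tau)$ with $\tau$ symmetric and $\Im(\tau)>0$. The real structure forces $2\Re(\tau)$ to be an integral symmetric matrix, which is then cleared by the evident two-dimensional analogue of the isogenies of Example~\ref{ey-e'y} (multiplication by $2$ in the appropriate directions); one is left with $\tau=iS$, $S=\Im(\tau)$ symmetric positive definite, i.e. $\Lambda=\Z^2\oplus iS\Z^2$. This step is essentially contained in \cite{silhol} (see also \cite{birkenhake-lange}), and it is the one that requires the most care.

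For (ii): for a complex torus, $H^2(X;\Z)$ is the group of alternating $\Z$-bilinear forms on $\Lambda$, and by the Lefschetz theorem on $(1,1)$-classes \eqref{eq-ns-cplx} the subgroup $\NS(X_\C)$ consists of those forms whose $\R$-bilinear extension is invariant under the complex structure $J$ (multiplication by $i$). Writing vectors in the $\R$-basis $(e_1,e_2,iSe_1,iSe_2)$ and using $S={}^t\!S$, one computes
\[
J=\begin{pmatrix}0&-S\\ S^{-1}&0\end{pmatrix},
\]
and, for $\omega$ with matrix $\Omega=\left(\begin{smallmatrix}A&B\\-{}^t\!B&C\end{smallmatrix}\right)$ with $A,C$ antisymmetric, the invariance ${}^t\!J\,\Omega\,J=\Omega$ is equivalent to the two conditions $C=SAS$ and $SB={}^t\!BS$. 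I would then invoke the elementary identity $SAS=\det(S)\,A$, valid for every antisymmetric $2\times2$ matrix $A$ (such an $A$ is a scalar multiple of $\left(\begin{smallmatrix}0&1\\-1&0\end{smallmatrix}\right)$, on which $S\mapsto SAS$ acts by $\det(S)$), which rewrites the first condition as $C=\det(S)\,A$; this is the claimed description of $\NS(X_\C)$. For $\NS(X_\R)$: complex conjugation $\sigma$ acts on $\Lambda$ by $\mathrm{diag}(I,-I)$ in this basis (it fixes $\R^2$ and negates $i\R^2$), so $\sigma^*$ sends $\Omega$ to $\left(\begin{smallmatrix}A&-B\\ {}^t\!B&C\end{smallmatrix}\right)$; by \eqref{pb-sign} and \eqref{eq-ns-reel}, the real classes are exactly those with $\sigma^*\Omega=-\Omega$, i.e. $A=C=0$.

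Finally, for (iii) I would count ranks block by block. The pair $(A,C)$, with $A=\left(\begin{smallmatrix}0&a\\-a&0\end{smallmatrix}\right)$, $C=\left(\begin{smallmatrix}0&c\\-c&0\end{smallmatrix}\right)$ and $c=\det(S)a$, spans a subgroup of rank $1$ if $\det(S)\in\Q$ and of rank $0$ otherwise, that is of rank $2-\rk_\Q(1,\det(S))$; it contributes nothing to $\NS(X_\R)$. The block $B=(b_{ij})$ is cut out inside $\M_2(\Z)$ by the single relation $S_{11}b_{12}+S_{12}(b_{22}-b_{11})-S_{22}b_{21}=0$ coming from $SB={}^t\!BS$; here $b_{11}$ and $b_{22}$ occur only through $b_{22}-b_{11}$, so one of them is free, while the three parameters $b_{12}$, $b_{22}-b_{11}$, $b_{21}$ obey one $\R$-linear equation whose rational solution space has dimension $3-\rk_\Q(S_{11},S_{12},S_{22})$, giving this block rank $4-\rk_\Q(S_{11},S_{12},S_{22})$. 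Adding the contributions yields $\rho(X_\C)=6-\rk_\Q(S_{11},S_{12},S_{22})-\rk_\Q(1,\det(S))$ and $\rho(X_\R)=4-\rk_\Q(S_{11},S_{12},S_{22})$, and the announced ranges follow from $\rk_\Q(S_{11},S_{12},S_{22})\in\{1,2,3\}$ (it is $\geq1$ since $S\neq0$) and $\rk_\Q(1,\det(S))\in\{1,2\}$. The only non-routine ingredient is the isogeny normalization of~(i); everything else is linear algebra, the mild surprise being the identity $SAS=\det(S)\,A$.
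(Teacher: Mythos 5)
Your proof is correct and follows essentially the same route as the paper's: Silhol's theory of real polarizations produces the isogeny normalization, then $\NS(X_\C)$ is characterized by $J$-invariance and $\NS(X_\R)$ by the $\sigma^*$-eigenvalue condition, and the ranks are counted block by block. Your derivations in (ii) and (iii) --- the matrix $J=\left(\begin{smallmatrix}0&-S\\S^{-1}&0\end{smallmatrix}\right)$, the reduction $SAS=\det(S)A$ (a special case of $MJ_2\,{}^t\!M=\det(M)J_2$), and the rank count using the single scalar relation $S_{11}b_{12}+S_{12}(b_{22}-b_{11})-S_{22}b_{21}=0$ --- check out and usefully fill in what the paper delegates to a citation to \cite{birkenhake-lange} and an ``easy calculation''. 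The one genuine divergence is in step (i): you take a detour through a \emph{principal} polarization before multiplying by $2$, whereas the paper uses a real polarization of arbitrary type, giving $\Lambda=D\Z^2\oplus\tau\Z^2$ with $D$ diagonal integral, $\tau$ symmetric, $2\Re(\tau)$ integral; multiplication by $2$ in $\C^2$ already clears both $D$ and $2\Re(\tau)$, so the extra normalization to a principal polarization (which over $\R$ would itself need a small argument) is unnecessary, though harmless.
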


\begin{proof}
The existence of a real polarization on $X=\C^g/\Lambda$ (see \cite[\textsection IV.3]{silhol}) implies that the lattice $\Lambda$ can be set on the form
\begin{equation}
D\Z^2\oplus\tau\Z^2,
\end{equation}
the matrix~$D$ being diagonal with integer coefficients, and the matrix $\tau$ being symmetric, with $S=\Im(\tau)$ positive definite and $2\Re(\tau)$ an integer matrix. Hence the dilation by $2$ in $\C^2$ gives rise to a real isogeny
\begin{equation}
\C^2 / \Lambda \overset{\times 2}\longrightarrow \C^2 / (\Z^2 \oplus iS\Z^2).
\end{equation}

The description of $\NS(X_\C)$ comes from \cite[\textsection 1, 3.4]{birkenhake-lange}. Elements $\omega$ of~$\NS(X_\R)$ must moreover satisfy $\sigma^*\omega=-\omega$ (cf. \cite[\textsection IV (3.4)]{silhol}), where~$\sigma$ denotes the complex conjugation on $\C^2$. The matrix of $\sigma^*$ in the prescripted $\R$-basis is
\begin{equation}
\Sigma=\begin{pmatrix}I_2&0\\0&-I_2\end{pmatrix},
\end{equation}
so this condition becomes ${}^t\Sigma\Omega\Sigma=-\Sigma$, hence
\begin{equation}
\begin{pmatrix}A&-B\\{}^t\!B&C\end{pmatrix} = \begin{pmatrix}-A&-B\\{}^t\!B&-C\end{pmatrix},
\end{equation}
which gives $A=C=0$. The assertion on the Picard numbers follows from an easy calculation.
\end{proof}

\begin{rema}
The difference $\rho(X_\C)-\rho(X_\R)$ between complex and real Picard number is $1$ or $0$, depending on whether $\det(S)\in\Q$ or not. The exceptional value $\rho(X_\C)=4$ is achieved exactly when $X$ is isogenous to the square of an elliptic curve with complex multiplication (cf. \cite[\textsection 2 7.1]{birkenhake-lange}).
\end{rema}

The interest of looking at real abelian surfaces up to isogeny resides in the following proposition.

\begin{prop}\label{inv-iso}
Let $X$ and $X'$ be two isogenous real abelian varieties. Then ${\mathcal{A}(X)=\mathcal{A}(X')}$, and consequently $\alpha(X)=\alpha(X')$.
\end{prop}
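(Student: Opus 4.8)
Fix a real isogeny $f\colon X\to X'$ and set $d=\deg(f)$. Since the isogeny relation is symmetric, it suffices to prove the inclusion $\mathcal{A}(X)\subseteq\mathcal{A}(X')$: applying it also to a real isogeny $X'\to X$ gives the reverse inclusion, hence $\mathcal{A}(X)=\mathcal{A}(X')$, and then $\alpha(X)=\sup\mathcal{A}(X)=\sup\mathcal{A}(X')=\alpha(X')$. Because the sets $\mathcal{A}(X)$ and $\mathcal{A}(X')$ do not depend on the chosen metrics, I will fix a K\"ahler metric $\kappa_{X'}$ on $X'(\C)$ and use on $X(\C)$ the pulled-back metric $\kappa_X=f^*\kappa_{X'}$, which is again K\"ahler since $f$ is \'etale. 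With these choices one has $\vol_\C(f^*D')=d\,\vol_\C(D')$ for every divisor $D'$ (because $f^*$ acts on $H^{2g}$ by multiplication by $d$, $g$ being the dimension), and $f_\R\colon X(\R)\to X'(\R)$ is a local isometry whose fibres have at most $d$ points.

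Now take $\alpha\in\mathcal{A}(X)$, with associated $q\in\N^*$ and $C>0$ as in Definition~\ref{defi-concordance}. I claim $\alpha\in\mathcal{A}(X')$ with $q'=dq$ and $C'=C/d$. Let $D'$ be a real ample divisor on $X'$ whose class is $(dq)$-divisible in $\NS(X'_\R;\Z)$, say $[D']=dq\,\eta$ with $\eta\in\NS(X'_\R;\Z)$. Then $\theta_0:=q\,\eta$ is $q$-divisible, lies in $\Amp(X'_\R)$ (this cone is convex, so closed under positive scaling), and is integral; by \eqref{eq-ns-reel} it is the Chern class of a real divisor $D_0$, which is ample since ampleness depends only on the numerical class. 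As $\mvol_\R$ depends only on the class, $\mvol_\R(D')=\mvol_\R(dD_0)$. The pullback $f^*D_0$ is a real ample divisor on $X$ (ampleness is preserved by the finite morphism $f$, and $f$ is $\sigma$-equivariant), and $[f^*D_0]=q\,f^*\eta$ with $f^*\eta\in\NS(X_\R;\Z)$, so $[f^*D_0]$ is $q$-divisible. Hence the defining inequality on $X$ applies:
\begin{equation*}
\mvol_\R(f^*D_0)\ \geq\ C\,\vol_\C(f^*D_0)^\alpha\ =\ C\,d^\alpha\,\vol_\C(D_0)^\alpha .
\end{equation*}

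The key step is to bound $\mvol_\R(f^*D_0)$ from above in terms of $\mvol_\R(dD_0)$. Let $E\in\mathcal{V}(f^*D_0)$ be an arbitrary effective real divisor on $X$ with class $f^*[D_0]$. Its pushforward $f_*E$ is again an effective real divisor on $X'$ (the image of a prime divisor under the finite map $f$ is a prime divisor, and $f$ is $\sigma$-equivariant), and by the projection formula $[f_*E]=f_*f^*[D_0]=d\,[D_0]$, so $f_*E\in\mathcal{V}(dD_0)$. Set-theoretically $f_\R(E(\R))\subseteq(f_*E)(\R)$; since $f_\R$ is a local isometry with at most $d$ sheets, the change-of-variables formula for $f_\R$ restricted to the $(g-1)$-rectifiable set $E(\R)$ gives $\vol_\R(E)\leq d\,\vol_\R\!\big(f_\R(E(\R))\big)\leq d\,\vol_\R(f_*E)\leq d\,\mvol_\R(dD_0)$. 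Taking the supremum over $E$ yields $\mvol_\R(f^*D_0)\leq d\,\mvol_\R(dD_0)$. Combining with the previous display and with $\vol_\C(D_0)=\tfrac1d\vol_\C(dD_0)=\tfrac1d\vol_\C(D')$,
\begin{equation*}
\mvol_\R(D')\ =\ \mvol_\R(dD_0)\ \geq\ \frac{1}{d}\,\mvol_\R(f^*D_0)\ \geq\ \frac{C}{d}\,d^{\alpha}\,\vol_\C(D_0)^{\alpha}\ =\ \frac{C}{d}\,\vol_\C(D')^{\alpha},
\end{equation*}
so that $\alpha\in\mathcal{A}(X')$, as claimed; by symmetry $\mathcal{A}(X)=\mathcal{A}(X')$ and $\alpha(X)=\alpha(X')$.

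The main obstacle is precisely the real-volume comparison of the third paragraph: a real divisor $E$ with class $f^*[D_0]$ need not be of the form $f^*(\,\cdot\,)$, so one cannot pull back and must instead push forward, and one has to check that $f_*E$ stays effective and real, that its class is $d[D_0]$, and that $f_\R(E(\R))\subseteq(f_*E)(\R)$ up to a set of lower dimension. After that, the factor $d$ comes only from the fact that $f_\R$ is a local isometry — which relies both on $f$ being \'etale along the real locus and on the special choice $\kappa_X=f^*\kappa_{X'}$ — together with the sheet count $\#f_\R^{-1}(y)\le d$. Everything else is bookkeeping with $q$-divisibility and with the fact that $\mvol_\R$ and $\vol_\C$ depend only on numerical classes; one small point to spell out is that every integral class of $\Amp(X'_\R)$ is the Chern class of a real ample divisor, which follows from \eqref{eq-ns-reel} and the numerical characterization of ampleness.
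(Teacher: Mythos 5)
Your proof is correct, but you take a genuinely different route from the paper, stemming from the direction in which you orient the isogeny. The paper fixes an isogeny $f:X'\to X$, so that the pull-back $f^*$ sends real effective divisors on $X$ directly to real effective divisors on $X'$; since $f^*$ preserves algebraic equivalence, sends ample to ample, and — with the pulled-back K\"ahler metric — scales $\vol_\R$ and $\vol_\C$ by exactly $\deg(f_\R)$ and $\deg(f_\C)$ respectively, the inequality transfers in one line from $D$ to $f^*D$, with $q'$ equal to $q$ times the index $[\,\NS(X'_\R;\Z):f^*\NS(X_\R;\Z)\,]$. You instead fix $f:X\to X'$, so pull-back goes the \emph{wrong} way: you have to split $[D']=d[D_0]$, pull $D_0$ back to $X$, apply the inequality there, and then return to $X'$ by \emph{pushing forward} — which is where your argument has to do real work, namely the estimate $\vol_\R(E)\leq d\,\vol_\R(f_*E)$ for $E\in\mathcal{V}(f^*D_0)$ (the extra factor $d$ then cancels against $\vol_\C(f^*D_0)=d\,\vol_\C(D_0)$). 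Both approaches are sound. The paper's is shorter because pull-back acts on real volumes by \emph{equality}, while push-forward only gives a one-sided bound and forces you to keep track of multiplicities. One small gain on your side: you never need the assertion (made in the paper) that $f^*:\mathcal{V}(D)\to\mathcal{V}(D')$ is a \emph{bijection}, which is an overstatement — a real effective divisor with class $f^*[D]$ need not itself be a pull-back; only the injectivity (hence $\mvol_\R(D')\geq\deg(f_\R)\,\mvol_\R(D)$) is actually used, and your push-forward version sidesteps this issue entirely. In short: you could have saved yourself the push-forward step by choosing the dual isogeny and arguing as the paper does, but what you wrote is a valid alternative.
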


\begin{proof}
Since the isogeny relation is symmetric, it is enough to show the inclusion~$\mathcal{A}(X)\subset\mathcal{A}(X')$. Let $f: X' \rightarrow X$ be an isogeny. For $\K=\R$ or $\C$, denote by~$f_\K$ the induced map from $X'(\K)$ to $X(\K)$. We take an arbitrary K\"ahler metric on $X$, and then we take its pullback on $X'$, so that $f$ is locally an isometry for the respective metrics.

Fix any $\alpha\in\mathcal{A}(X)$. There exist $C>0$ and $q\in\N^*$ such that any real ample divisor $D$ on $X$, whose Chern class is $q$-divisible, satisfies 
\begin{equation}
\mvol_\R(D)\geq C\vol_\C(D)^\alpha.
\end{equation}
Since $f^*:\NS(X_\R;\Z)\rightarrow\NS(X'_\R;\Z)$ is an injective homomorphism, its image has finite index $n$. Let $D'$ be a real ample divisor on $X'$ whose class is $nq$-divisible. Then there is a real ample divisor $D$ on $X$ with $[D']=[f^*D]$, and furthermore $[D]$ is $q$-divisible.

Any point on the curve $D(\R)$ has exactly $\deg(f_\R)$ preimages, hence 
\begin{equation}
\vol_\R(f^*D)=\deg(f_\R)\vol_\R(D)
\end{equation}
by the choice of the metrics. Since $f^*$ realizes a bijective map between $\mathcal{V}(D)$ and $\mathcal{V}(D')$, we deduce, taking the upper bound on $\mathcal{V}(D)$, that 
\begin{equation}
\mvol_\R(D')=\deg(f_\R)\mvol_\R(D).
\end{equation}

By the same argument, we also have 
\begin{equation}
\vol_\C(D')=\deg(f_\C)\vol_\C(D)
\end{equation}
So if we set~$C'=C\deg(f_\R)/{\deg(f_\C)}^\alpha$, we obtain 
\begin{equation}
\mvol_\R(D') \geq C'\vol_\C(D')^\alpha.
\end{equation}
This shows that the exponent $\alpha$ is contained in $\mathcal{A}(X')$.
\end{proof}


\subsection{Entropy of automorphisms}

The following fact is very specific to tori, for which automorphisms come from linear maps.

\begin{prop}\label{alpha-leq-1/2}
Let $f$ be an automorphism of a real abelian surface $X$. Then
\begin{equation}
\h(f_\C)=2\h(f_\R).
\end{equation}
Accordingly, $\alpha(X)\leq 1/2$ as soon as $X$ admits real loxodromic type automorphisms.
\end{prop}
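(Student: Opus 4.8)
The plan is to reduce everything to a linear-algebra computation on $\C^2$, using the fact (recalled in the chapter) that an automorphism $f$ of a real abelian surface $X=\C^2/\Lambda$ lifts to a $\C$-linear map $F:\C^2\to\C^2$ with $F(\Lambda)=\Lambda$, and that the real structure comes from complex conjugation, so $F$ has \emph{real} matrix entries; in fact $F\in\GL_2(\Z)$ after the usual identification. Since $f_\R$ and $f_\C$ are induced by the same linear map (acting on $\R^2/(\text{lattice})$ and on $\C^2/\Lambda\cong\R^4/(\text{lattice})$ respectively), their topological entropies are those of the corresponding linear toral automorphisms, and by the classical formula for entropy of a linear automorphism of a torus, $\h(f_\R)=\sum_{|\mu_i|>1}\log|\mu_i|$ where the $\mu_i$ are the eigenvalues of $F$ on $\R^2$, while $\h(f_\C)$ is the same sum taken over the eigenvalues of $F$ acting $\R$-linearly on $\C^2$. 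But the eigenvalues of $F$ as an $\R$-linear map of $\C^2=\R^4$ are exactly $\mu_1,\mu_2,\bar\mu_1,\bar\mu_2$, i.e. each eigenvalue of the $2\times2$ real matrix $F$ counted \emph{twice} (since $F$ is $\C$-linear, its $\C$-eigenvalues appear, and $\R$-linearly each contributes with its complex conjugate as well — and here $\mu_i$ are real, so they simply appear with multiplicity $2$). Hence $\h(f_\C)=2\h(f_\R)$.

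Concretely, the key steps are: (1) write $f$ as $z\mapsto Fz+c$ with $F\in M_2(\Z)$ invertible, and note $\h(f)=\h(F$-action$)$ since the translation part does not affect entropy (it conjugates the dynamics appropriately, or one cites that entropy of an affine toral map equals that of its linear part). (2) Recall the formula $\h(g)=\sum_{j:\,|\nu_j|>1}\log|\nu_j|$ for a linear automorphism $g$ of a real torus $\R^m/\mathbb{Z}^m$, where $\nu_j$ are the eigenvalues of $g$; apply it with $g=f_\R$ on $\R^2/\Gamma$ and with $g=f_\C$ on $\C^2/\Lambda\cong\R^4/\Lambda'$. (3) Observe that the matrix of $F$ acting $\R$-linearly on $\C^2$ (in an $\R$-basis adapted to $\C^2=\R^2\oplus i\R^2$, e.g. the basis used in Proposition~\ref{isog}) is block-diagonal $\begin{pmatrix}F&0\\0&F\end{pmatrix}$ because $F$ has real entries; therefore its eigenvalues are those of $F$, each with doubled multiplicity, and the log-sum over eigenvalues of modulus $>1$ doubles. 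This yields $\h(f_\C)=2\h(f_\R)$. (4) For the last sentence: if $X$ admits a real loxodromic automorphism $f$, then $\h(f_\C)>0$, and combining $\h(f_\C)=2\h(f_\R)$ with Theorem~\ref{entropie} gives $\alpha(X)\le \h(f_\R)/\h(f_\C)=1/2$.

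I expect the only real subtlety to be the clean justification of step (2)–(3), namely that the entropy of $f_\C$ really is computed from $F$ viewed as an $\R$-linear endomorphism of $\R^4$ and that no eigenvalue of modulus $1$ or $<1$ interferes: one must be careful that even when $F$ has a real eigenvalue $\mu$ with $|\mu|>1$ and another with $|\mu^{-1}|<1$ (the loxodromic case) the doubling still holds termwise, and when $F$ is elliptic or parabolic both entropies are $0$ so the identity is trivially $0=2\cdot 0$. This is routine once the block-diagonal form is written down, so the ``hard part'' is essentially bookkeeping rather than a genuine obstacle; the heart of the matter is simply that a $\C$-linear map with real matrix, regarded over $\R$, has its spectrum duplicated by complex conjugation. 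Finally, I would remark that this reproves, in the abelian case, the general inequality $\h(f_\R)\le\h(f_\C)$ with the sharp constant $1/2$, and flags the analogy with the conjectural bound $\alpha(X)\le 1/2$ in the parabolic case mentioned earlier.
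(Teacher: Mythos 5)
Your proof is correct and follows essentially the same route as the paper: lift $f$ to a real $2\times 2$ integer matrix $F$, observe that $F$ acting $\R$-linearly on $\C^2\cong\R^4$ is block-diagonal $\bigl(\begin{smallmatrix}F&0\\0&F\end{smallmatrix}\bigr)$ so each eigenvalue is doubled, apply the Bowen formula for linear toral automorphisms, and conclude $\alpha(X)\le 1/2$ via Theorem~\ref{entropie}. The only cosmetic difference is that you explicitly justify discarding the translation part and cite $\GL_2(\Z)$ rather than passing to $f^2$ to land in $\SL_2(\Z)$; both handle the same minor point.
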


\begin{proof}
We lift the automorphism $f$ to a $\C$-linear map $F:\C^2\to\C^2$ whose matrix is in $\SL_2(\Z)$ (replacing $f$ by $f^2$ if necessary). If $F$ has spectral radius~$1$, then it is obvious that $\h(f_\R)=\h(f_\C)=0$. Otherwise, $F$ has two distinct eigenvalues $\lambda$ and~$\lambda^{-1}$, with $|\lambda|>1$. As a $\R$-linear map of $\C^2$, $F$ has eigenvalues~${(\lambda,\lambda,\lambda^{-1},\lambda^{-1})}$ (with multiplicities), thus $\h(f_\C)=2\log|\lambda|$ (see, for instance, \cite[2.6.4]{brin-stuck}). Restricted to $\R^2$, $F$ has eigenvalues $(\lambda,\lambda^{-1})$, hence $\h(f_\R)=\log(|\lambda|)$.

The last part is a consequence of Theorem \ref{entropie}.
\end{proof}

\begin{rema}
\begin{enumerate}
\item
If $f$ is a real loxodromic automorphism of a real abelian surface, then $\lambda(f)$ is an algebraic integer of degree at most $\rho(X_\R)\leq 3$. Since its minimal polynomial has $\lambda(f)$ and $\lambda(f)^{-1}$ as roots, and all the other roots (if they exist) have modulus one, then this polynomial has degree $2$. The minimal quadratic integer being $\lambda_2=\frac{3+\sqrt{5}}{2}$, we have
\begin{equation}
\h(f_\C)\geq \log(\lambda_2).
\end{equation}
This lower bound is reached : for instance, take $X=\C^2/\Z[j]^2$, and $f$ given by the matrix $\begin{pmatrix}2&1\\1&1\end{pmatrix}$.
\item
Now if $f$ is a complex loxodromic automorphism of an abelian surface, the degree of $\lambda(f)$ is at most $\rho(X_\C)\leq 4$. Thus $\lambda(f)$ is either a quadratic integer or a degree four Salem number, and the minimal possible value is the largest root  $\lambda_4$ of the polynomial $x^4-x^3-x^2-x+1$, so we get
\begin{equation}
\h(f_\C)\geq \log(\lambda_4).
\end{equation}
Again this lower bound is reached, on the same surface $X=\C^2/\Z[j]^2$ : for instance, take $f$ given by the matrix $
\begin{pmatrix}j&-1\\j&j^2\end{pmatrix}$ (see also \cite{mcmullen-k3-ent}). This gives a minimum which is strictly less than the one obtained for real automorphisms.
\end{enumerate}
\end{rema}


\section{Abelian surfaces with Picard number $2$}


\subsection{Hyperbolic rank $2$ lattices}\label{section-reseaux}

By definition, a \emph{lattice} is a free abelian group $L$ of finite rank, equipped with a nondegenerate symmetric bilinear form $\varphi$ taking integral values. We say the lattice is \emph{hyperbolic} when the signature of the induced quadratic form on $L\otimes\R$ is $(1,{\rm rank}(L)-1)$. The determinant of the matrix of $\varphi$ in a base of $L$ is the same for all bases. Its absolute value is a positive integer, called the \emph{discriminant} of the lattice.

Let $(L,\varphi)$ be a rank 2 hyperbolic lattice. There are exactly two isotropic lines in $L\otimes\R$. The discriminant $\delta$ is a perfect square if and only if the quadratic form associated to $\varphi$ represents $0$, which means that there exists a nonzero isotropic point in $L$, or to say it otherwise both isotropic lines in~$L\otimes\R$ are rational.

Suppose that $\delta$ is no perfect square. Then the study of Pell--Fermat equation implies the existence of a \emph{hyperbolic isometry} of $L$, that is, an isometry whose spectral radius is greater than $1$. Such an isometry spans a finite index subgroup of the isometries of $L$. To be more precise, the group $\SO(L,\varphi)$ of direct isometries of $L$ (those with determinant $+1$) is an abelian group isomorphic to $\Z\times\Z/2\Z$, and any infinite order element in $\SO(L,\varphi)$ is hyperbolic.

Conversely if $\delta$ is a perfect square, there is no hyperbolic isometry, and the isometry group is finite. More precisely,  $\SO(L,\varphi)=\{\id,-\id\}\simeq\Z/2\Z$.

\begin{exem}
Let $X$ be a real algebraic surface with $\rho(X_\R)=2$. Then the group~$\NS(X_\R;\Z)$, equipped with the intersection form, is a rank 2 hyperbolic lattice.
\end{exem}


\subsection{When $X$ has a real elliptic fibration}

Let $X$ and $B$ be two complex algebraic varieties. An \emph{elliptic fibration} on $X$ is a holomorphic map~${\pi:X\to B}$ that is proper and surjective, and such that the generic fiber is an elliptic curve. When the varieties $X$, $B$ and the morphism $\pi$ are defined over $\R$, we say the elliptic fibration is \emph{real}.

\begin{prop}
Let $X$ be a real abelian surface with $\rho(X_\R)=2$. The following are equivalent:
\begin{enumerate}
\item
the intersection form on $\NS(X_\R;\Z)$ represents $0$;
\item
there exists a real elliptic fibration on $X$;
\item
$X$ is isogenous to the product of two elliptic curves $E_{y_1}$ and $E_{y_2}$.
\end{enumerate}
In this case, the concordance of $X$ equals $1$, and it is achieved.
\end{prop}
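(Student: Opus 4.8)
The strategy is to establish the three equivalences $(1)\Leftrightarrow(2)\Leftrightarrow(3)$ and then to use the structure they reveal together with Proposition~\ref{inv-iso} and Proposition~\ref{conenef} to compute the concordance. For the equivalence $(1)\Leftrightarrow(3)$, I would start from the explicit description of $\NS(X_\R)$ given in Proposition~\ref{isog}: after replacing $X$ by an isogenous surface $\C^2/(\Z^2\oplus iS\Z^2)$ with $S$ symmetric positive definite, the real N\'eron--Severi group is identified with integral matrices $B$ satisfying $SB={}^t\!BS$, and the condition $\rho(X_\R)=2$ becomes $\rk_\Q(S_{11},S_{12},S_{22})=2$ by~(\ref{nbpicard-ab}). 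I would compute the intersection form on this rank $2$ lattice in terms of the entries of $S$; one finds (by a routine determinant computation, which I will not grind through here) that the discriminant is a perfect square exactly when the isotropic lines in $\NS(X_\R;\R)$ are rational, which in turn forces $S$ to be (rationally) diagonalizable by a matrix of $\GL_2(\Q)$, i.e.\ $X$ is isogenous to $E_{y_1}\times E_{y_2}$ for suitable $y_1,y_2>0$. Conversely, on $E_{y_1}\times E_{y_2}$ the two fiber classes of the projections span isotropic rays, so the form represents $0$.

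For $(1)\Leftrightarrow(2)$, the implication $(3)\Rightarrow(2)$ is immediate since a product of elliptic curves carries the two projection fibrations, and these descend through an isogeny (one may need to take the Stein factorization of the composite, exactly as in the proof sketch for parabolic automorphisms in Chapter~\ref{chap-auto}). For $(2)\Rightarrow(1)$: a real elliptic fibration $\pi:X\to B$ has generic fiber a curve $F$ of genus $1$ with $F^2=0$, hence $[F]\in\NS(X_\R;\Z)$ is a nonzero isotropic class, so the intersection form represents $0$. This uses only that the class of a fiber is real (it is preserved by $\sigma$ since $\pi$ is defined over $\R$) and isotropic (two distinct fibers are disjoint).

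It remains to compute the concordance in case $(1)$. Here I would invoke Proposition~\ref{inv-iso}, which says the concordance is an isogeny invariant, to reduce to $X=E_{y_1}\times E_{y_2}$. On this surface $\NS(X_\R;\Z)$ has rank $2$, and the nef cone is the closed quadrant spanned by the two fiber classes $[F_1]$ and $[F_2]$ of the projections; it is therefore rational polyhedral. Each extremal ray is spanned by $[F_i]$, and $F_i$ is the fiber of a real elliptic fibration, so the linear system $|F_i|$ contains a real pencil and Proposition~\ref{pinceau} gives $\mvol_\R(F_i)>0$. Thus the hypotheses of Proposition~\ref{conenef} are satisfied, and we conclude $\alpha(X)=1$ with the concordance achieved.

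\textbf{Anticipated difficulty.} The main obstacle is the equivalence $(1)\Leftrightarrow(3)$: translating ``the discriminant of the intersection lattice is a perfect square'' into ``$S$ is $\GL_2(\Q)$-equivalent to a diagonal matrix'' (equivalently, $X$ splits up to isogeny) requires a careful bookkeeping of the bilinear form on the matrices $B$ with $SB={}^t\!BS$ and of how isogenies act on it; one must also be attentive to the difference between real and complex Picard numbers (the $\det(S)\in\Q$ versus $\notin\Q$ dichotomy noted after Proposition~\ref{isog}) so as not to conflate splitting over $\C$ with splitting compatibly with the real structure. The fibration side $(2)$ is comparatively soft, and the concordance computation in case $(1)$ is then a direct citation of Propositions~\ref{inv-iso}, \ref{pinceau} and~\ref{conenef}.
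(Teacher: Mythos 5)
Your logical structure — $(1)\Leftrightarrow(3)$ directly, then $(3)\Rightarrow(2)$ and $(2)\Rightarrow(1)$ — is sound and the easy implications are correct, but the weight-bearing step $(1)\Rightarrow(3)$ is left as a ``routine determinant computation.'' It is not routine: deducing that $S$ is $\GL_2(\Q)$-diagonalizable from the perfect-square discriminant of the rank~$2$ lattice $\{B\in\M_2(\Z)\mid SB={}^t\!BS\}$ (and translating rational diagonalizability of $S$ into an honest real isogeny) amounts to reproving, in explicit lattice coordinates, a version of the Poincar\'e reducibility theorem. The paper takes the opposite route and avoids all of this: for $(1)\Rightarrow(2)$ it picks a primitive isotropic class $\theta$, uses $\Nef(X_\R)=\overline{\NS^+(X_\R)}$ to realize $\pm\theta$ as the class of an effective irreducible divisor $D$, applies the genus formula to get $g(D)=1$ and the fact that abelian surfaces have no rational curves to conclude $D$ is elliptic, then translates $D$ to a subtorus through $0$ and takes the quotient fibration $X\to X/D$; for $(2)\Rightarrow(3)$ it simply invokes Poincar\'e reducibility. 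That geometric chain is both shorter and insulated from the real-versus-complex bookkeeping you flag as your main difficulty. Your $(3)\Rightarrow(1)$ (pull back a fiber class, note its self-intersection vanishes) and $(3)\Rightarrow(2)$ are exactly as simple as you say, and your concordance computation — reduce to $E_{y_1}\times E_{y_2}$ by Proposition~\ref{inv-iso}, observe the nef cone is the rational quadrant on the two fiber classes, and feed Propositions~\ref{pinceau} and~\ref{conenef} — matches the paper. To make the whole argument complete, either carry out the lattice computation in detail (including the reality constraint) or replace $(1)\Rightarrow(3)$ with the paper's $(1)\Rightarrow(2)\Rightarrow(3)$.
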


\begin{rema}
The elliptic curves $E_{y_1}$ and $E_{y_2}$ cannot be isogenous, for otherwise the Picard number would be $3$.
\end{rema}

\begin{proof}
$(1)\Rightarrow (2)$: Let $\theta$ a nonzero primitive point in $\NS(X_\R;\Z)$ such that~${\theta^2=0}$. After changing $\theta$ into $-\theta$ if necessary, there exists a real effective and irreducible divisor $D$ whose class is $\theta$ (here we use the fact that $\Nef(X_\R)$ is the closure of~$\NS^+(X_\R)$; cf. \cite[1.5.17]{lazarsfeld}). By the genus formula, the arithmetic genus of~$D$ is $1$. Since an abelian surface does not have any rational curve, $D$ must be a real elliptic curve. We may suppose that~$D$ goes through~$0$ (if not, we translate it and obtain an algebraically equivalent divisor), and thus it is a real subtorus. Now the canonical projection
\begin{equation}
{\pi:X\to X/D}
\end{equation}
is a real elliptic fibration.

$(2)\Rightarrow (3)$ follows from the Poincar\'e reducibility theorem (see \cite[\textsection VI 8.1]{debarre}).

$(3)\Rightarrow (1)$: Let $f:X\to E_{y_1}\times E_{y_2}$ be an isogeny. The effective divisor~$D$ given by~$f^*(E_{y_1}\times\{0\})$ has self-intersection $0$, so the intersection form represents $0$.

As the nef cone of $E_{y_1}\times E_{y_2}$ is spanned by $[E_{y_1}\times\{0\}]$ and $[\{0\}\times E_{y_2}]$, Proposition \ref{conenef} implies that the interval $\mathcal{A}(E_{y_1}\times E_{y_2})$ is equal to $[0,1]$. By invariance under isogeny, we also have $\mathcal{A}(X)=[0,1]$.
\end{proof}


\subsection{When $X$ has no real elliptic fibration}

\begin{theo}\label{delta}
Let $X$ be a real abelian surface with $\rho(X_\R)=2$. Assume that the intersection form on $\NS(X_\R;\Z)$ does not represent $0$. Then
\begin{enumerate}
\item
there exists a real loxodromic type automorphism on $X$;
\item
the concordance of $X$ equals $1/2$ and it is achieved.
\end{enumerate}
\end{theo}

We use the following result (see, for instance, \cite{X}):

\begin{theo}[Torelli theorem for tori]\label{torelli-tori}
Let $X$ be a real abelian surface and let $\phi$ be a Hodge isometry\footnote{By definition, a Hodge isometry of $H^2(X_\C;\Z)$ is a bijective linear map which preserves the intersection form and the Hodge decomposition.} of $H^2(X_\C;\Z)$ that preserves the ample cone and has determinant $+1$. Then there exists an automorphism $f$ of $X_\C$, unique up to a sign, such that $f_*=\phi$. If moreover $\phi$ commutes with the involution $\sigma_X^*$, then~$f$ or $f^2$ is a \emph{real} automorphism.
\end{theo}

\begin{rema}
To prove the last part, it is enough to remark that 
\begin{equation}
\sigma_X\circ f\circ\sigma_X=\pm f^{-1}
\end{equation}
by the uniqueness part.
\end{rema}

\begin{lemm}\label{lemme-alg-com}
Let $L$ be an free abelian group of finite rank, $L'$ be a finite index subgroup of $L$ and $\phi'$ be an automorphism of $L'$. Then some positive iterate $\phi'^k$ extends to an automorphism $\phi$ on $L$.
\end{lemm}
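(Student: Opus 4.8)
The plan is to pass to rational coefficients. First I would set $V=L\otimes\Q$, which is canonically identified with $L'\otimes\Q$ because $L'$ has finite index in $L$. The automorphism $\phi'$ of $L'$ then extends uniquely to a $\Q$-linear automorphism $\phi_\Q$ of $V$ (it is invertible since $\det\phi'=\pm 1$), and by construction $\phi_\Q$ restricts to $\phi'$ on $L'$. In particular $\phi_\Q(L')=L'$, and hence $\phi_\Q^j(L')=L'$ for every $j\in\Z$.

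The key step is then to confine $L$ and all its iterates $\phi_\Q^j(L)$ to a fixed finite set of lattices. Put $m=[L:L']$; by Lagrange's theorem $mL\subseteq L'$, so one has the double inclusion $L'\subseteq L\subseteq\frac1m L'$. Since $\phi_\Q$ fixes $L'$, and is $\Q$-linear, it also fixes $\frac1m L'$; applying its powers to the double inclusion gives $L'\subseteq\phi_\Q^j(L)\subseteq\frac1m L'$ for all $j\ge 0$. But the subgroups of $V$ lying between $L'$ and $\frac1m L'$ correspond, via the lattice isomorphism theorem, to the subgroups of the finite abelian group $\frac1m L'/L'\cong(\Z/m\Z)^{\rk(L)}$, so there are only finitely many of them.

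By the pigeonhole principle there are integers $0\le i<j$ with $\phi_\Q^i(L)=\phi_\Q^j(L)$; applying the invertible map $\phi_\Q^{-i}$ yields $\phi_\Q^k(L)=L$ with $k=j-i>0$. Consequently $\phi:=\phi_\Q^k|_L$ is an automorphism of $L$, and since $\phi_\Q^k$ restricts to $\phi'^k$ on $L'$, the map $\phi$ extends $\phi'^k$, which is exactly the assertion of the lemma.

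I do not expect a genuine obstacle here: the statement reduces entirely to the finiteness of the set of subgroups of a finite abelian group. The only point requiring a little care is the bookkeeping that makes the forward orbit $\{\phi_\Q^j(L)\}_{j\ge 0}$ finite, i.e.\ verifying the inclusions $L'\subseteq\phi_\Q^j(L)\subseteq\frac1m L'$; everything else is formal.
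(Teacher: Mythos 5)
Your proof is correct, but it takes a genuinely different route from the paper's. The paper sets $q$ equal to the exponent of $L/L'$ (so $qL\subset L'$), observes that $\phi'$ induces an automorphism of the finite group $L'/qL'$ of some finite order $k$, deduces that $\phi'^k$ preserves the sublattice $qL$, and then conjugates $\phi'^k|_{qL}$ by the multiplication-by-$q$ isomorphism $\mu_q:L\to qL$, $\theta\mapsto q\theta$, to obtain the extension $\phi=\mu_q^{-1}\circ\phi'^k|_{qL}\circ\mu_q$. You instead tensor with $\Q$, note that $\phi_\Q$ fixes $L'$ and hence $\frac1m L'$, and pigeonhole the orbit $\{\phi_\Q^j(L)\}_{j\ge0}$ among the finitely many lattices squeezed between $L'$ and $\frac1m L'$; the desired extension is then simply the restriction of a suitable power of $\phi_\Q$ back to $L$. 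Both proofs draw on the same source of finiteness, namely $[L:L']<\infty$, but yours makes the mechanism (finiteness of a bounded orbit of lattices) more transparent, while the paper's stays integral throughout, avoids tensoring with $\Q$, and pins down $k$ concretely as the order of $\phi'$ acting on $L'/qL'$.
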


\begin{proof}
Denote by $q$ the exponent of the group $L/L'$, so that  $qL\subset L'$. As~$\phi'$ projects to an automorphism of $L'/qL'$ that has finite order $k$, it follows that~${\phi'^k(qL)\subset qL}$. Let $\mu_q:L\to qL$ be the isomorphism defined by $\theta\mapsto q\theta$. Then the automorphism~${\phi=\mu_q^{-1}\circ{\phi'^k}_{|qL}\circ\mu_q}$ satisfies the desired property.
\end{proof}

\begin{proof}[Proof of Theorem \ref{delta}]
Since the intersection form does not represent $0$, there exists a hyperbolic isometry $\phi_1$ of $L_1=\NS(X_\R;\Z)$ (cf. \textsection \ref{section-reseaux}). Replacing $\phi_1$ by~$\phi_1^2$ if necessary, we may suppose that $\phi_1$ preserves the cone $\Amp(X_\R)$ and that $\det(\phi_1)=1$. Denote by $L_2$ the orthogonal of $L_1$ in $L=H^2(X_\C;\Z)$, and by~$L'$ the direct sum $L_1\oplus L_2$. The subgroup $L'$ has finite index in $L$, and so, by Lemma~\ref{lemme-alg-com},~${\phi_1^k\oplus \id_{L_2}}$ extends to an automorphism $\phi$ on $H^2(X_\C;\Z)$ for some~$k\in\N^*$. It is clear by construction that $\phi$ satisfies the assumptions of Theorem \ref{torelli-tori}. Thus there exists a real automorphism $f$ on $X$ such that~$f_*=\phi^2$. Its dynamical degree is greater than $1$, as a power of the spectral radius of $\phi_1$.

The equality $\alpha(X)=1/2$ follows from Theorem \ref{formule-exacte} and Proposition \ref{alpha-leq-1/2}. In order to show that the concordance is achieved, we replace $\alpha$ by
\begin{equation}
\frac{1}{2}=\frac{\h(f_\R)}{\h(f_\C)}
\end{equation}
inside the proof of Theorem \ref{formule-exacte} by using the following lemma, which improves the inequality of Corollary \ref{minorer-mvolr}.
\end{proof}

\begin{lemm}
Let $f$ be a real automorphism of a real abelian surface $X$. Assume that $\lambda=\exp(\h(f_\R))>1$. Then for all very ample real divisors $D$ on $X$, there exists $C>0$ such that 
\begin{equation}
\mvol_\R(f_*^nD) \geq C \lambda^n \quad \forall n\in\N.
\end{equation}
\end{lemm}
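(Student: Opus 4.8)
The plan is to exploit the linear structure of automorphisms of a torus. Since (for abelian varieties) $f$ preserves the origin, it lifts to a $\C$-linear map $F\colon\C^2\to\C^2$ preserving $\Lambda$; as $F$ commutes with complex conjugation one has $F(\R^2)=\R^2$ and $M:=F|_{\R^2}\in\GL_2(\Z)$. Hence the diffeomorphism $f_\R$ (which may permute the connected components of $X(\R)$) has \emph{constant} differential, equal to $M$, at every point, so $\mathrm{d}f_\R^{\,n}=M^n$ everywhere. Because $\lambda=\exp(\h(f_\R))>1$ is the spectral radius of $M$ and $\det M=\pm1$, the matrix $M$ is hyperbolic with distinct real eigenvalues $\mu,\mu'$, $|\mu|=\lambda$, $|\mu'|=\lambda^{-1}$, and eigenlines $E^u,E^s\subset\R^2$. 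I fix an auxiliary inner product $g_0$ on $\R^2$ making $E^u\perp E^s$ (legitimate since the assertion is insensitive, up to the constant $C$, to the choice of metric, all Riemannian metrics on the compact $X(\R)$ being comparable), and write $\pi^u$ for the $g_0$-orthogonal projection onto $E^u$, so that $\pi^s:=\id-\pi^u$ is the projection onto $E^s$. Then for every tangent vector $v$ and every $n\ge 0$,
\[
\|M^n v\|_{g_0}^2=|\mu|^{2n}\|\pi^uv\|_{g_0}^2+|\mu'|^{2n}\|\pi^sv\|_{g_0}^2\ \ge\ \lambda^{2n}\|\pi^uv\|_{g_0}^2 .
\]

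Next I would produce a single good arc. Since $D$ is very ample, Corollary \ref{very-ample} gives $\mvol_\R(D)>0$, hence there is a real effective divisor $D_0\in\mathcal V(D)$ with $\vol_\R(D_0)>0$; its real locus therefore has positive length, so the smooth part $D_0^{\mathrm{sm}}(\R)$ is a nonempty real-analytic $1$-manifold and contains a nonconstant embedded real-analytic arc $\gamma$, which I parametrize by $g_0$-arclength on an interval. The key geometric point is that $\gamma$ cannot be tangent to the stable direction $E^s$ along a whole subinterval: $E^s$, being an eigenline of the integer matrix $M$ for an irrational eigenvalue, has irrational slope, so the associated linear foliation of $X(\R)$ has dense leaves, while $\gamma$ lies on an algebraic curve whose real locus has real dimension $\le 1$; an arc tangent everywhere to $E^s$ would be a segment of a leaf, which by analytic continuation would force a whole component of $D_0(\R)$ to contain a dense line — impossible. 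Hence $\{t:\gamma'(t)\in E^s\}$ is a finite (real-analytic) subset, and after shrinking to a closed subinterval $I$ around a point not in it, I obtain $c_1>0$ with $\|\pi^u\gamma'(t)\|_{g_0}\ge c_1$ for all $t\in I$.

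Finally I combine the two ingredients: using $\mathrm{d}f_\R^{\,n}=M^n$ and the eigenvalue inequality,
\[
\length\bigl(f_\R^{\,n}(\gamma(I))\bigr)=\int_I\|M^n\gamma'(t)\|_{g_0}\,\mathrm{d}t\ \ge\ \lambda^n\int_I\|\pi^u\gamma'(t)\|_{g_0}\,\mathrm{d}t\ \ge\ c_1\,|I|\,\lambda^n .
\]
Since $f_\R^{\,n}(\gamma(I))$ is an embedded arc contained in $(f_*^nD_0)(\R)$ and $f_*^nD_0\in\mathcal V(f_*^nD)$, this gives $\mvol_\R(f_*^nD)\ge\vol_\R(f_*^nD_0)\ge c_1|I|\lambda^n$; replacing $g_0$ by the K\"ahler metric only changes the constant, and the lemma follows with $C$ a suitable positive multiple of $c_1|I|$.

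The only delicate step is the transversality claim — that an algebraic curve cannot contain an arc of a stable leaf — and this is exactly where the irrationality of the eigendirections of $M$ enters; it is the dynamical feature distinguishing this endpoint case from the open-interval statement of Corollary \ref{minorer-mvolr}. Everything else is elementary linear algebra together with the definition of $\mvol_\R$ and the bijectivity of $f_*$ on $\mathcal V(D)$.
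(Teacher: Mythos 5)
Your proof is correct and follows essentially the same route as the paper's: lift $f$ to a hyperbolic integer matrix on $\R^2$, fix a metric in which the eigenlines are orthogonal, produce a smooth arc in the real locus of a divisor in $\mathcal V(D)$, and use the irrationality of the stable eigendirection together with the algebraicity of the divisor to rule out the arc being tangent to $E^s$, so that its unstable component expands like $\lambda^n$. The only difference is cosmetic: the paper translates $D$ to pass through the origin and works directly with the projection length $\length(p(\tilde\gamma))>0$, whereas you extract a subinterval on which the unstable tangent component is bounded below pointwise; both versions hinge on the same observation that a real algebraic curve cannot contain an arc of an irrational line in the torus.
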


\begin{proof}
The automorphism $f$ lifts to a linear self-map $F$ of $\R^2$. Replacing~$f$ with~$f^2$ if necessary, $F$ has eigenvalues $\lambda$ and $\lambda^{-1}$. We choose a scalar product on $\R^2$ such that the eigenlines $\mathcal{D}^+$ and $\mathcal{D}^-$, respectively, associated to $\lambda$ and~$\lambda^{-1}$, are orthogonal. Then we take on $X(\R)$ the Riemannian metric induced by this scalar product.

If necessary, we change $D$ (by translation) into an algebraically equivalent divisor containing the origin as a smooth point. Then the curve $D(\R)$ contains a smooth simple arc $\gamma$ through $0$. Let $\tilde\gamma$ be the lift of $\gamma$ to $\R^2$ containing the origin, and let $p:\R^2\to\R^2$ be the projection on $\mathcal{D}^+$ with direction $\mathcal{D}^-$. Then 
\begin{equation}
\begin{split}
\mvol(f_*^nD)
&\geq
\length(f^n(\gamma))\\
&=
\length(F^n(\tilde\gamma))\\
&\geq
\length(p\circ F^n(\tilde\gamma))\\
&=
\lambda^n\length(p(\tilde\gamma)).
\end{split}
\end{equation}
Observe that $\length(p(\tilde\gamma))>0$. Indeed, if it were zero, $\tilde\gamma$ would be contained in~$\mathcal{D}^-$, hence the real analytic curve $D(\R)$ would contain the projection of~$\mathcal{D}^-$ on the torus $\R^2/\Z^2$. But then $D(\R)$ would be Zariski-dense, since the line $\mathcal{D}^-$ is irrational : this is impossible. Thus we get the result with $C=\length(p(\tilde\gamma))$.
\end{proof}


\section{Abelian surfaces with Picard number $3$}

\begin{lemm}
Let $X$ be a real abelian surface with $\rho(X_\R)=3$. There exists a real elliptic curve $E$ such that $X$ is isogenous to $E\times E$.
\end{lemm}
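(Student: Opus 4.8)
The plan is to bring $X$ into the normal form of Proposition \ref{isog} and then exhibit an isogeny onto $E\times E$ explicitly, by a linear map with rational coefficients.

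First I would use that being isogenous is an equivalence relation, so by Proposition \ref{isog} it suffices to treat the case $X=\C^2/\Lambda$ with $\Lambda=\Z^2\oplus iS\Z^2$, where $S\in\M_2(\R)$ is symmetric positive definite; passing to this model changes neither the isogeny class nor the real Picard number. The hypothesis $\rho(X_\R)=3$, together with the formula $\rho(X_\R)=4-\rk_\Q(S_{11},S_{12},S_{22})$ of Proposition \ref{isog}, forces $\rk_\Q(S_{11},S_{12},S_{22})=1$. Since $S_{11}>0$ lies in the one-dimensional $\Q$-span of $S_{11},S_{12},S_{22}$, we may take $s=S_{11}>0$ and write $S=sM$ with $M:=\tfrac1s S\in\M_2(\Q)$ symmetric; as $M$ is positive definite, $\det M>0$, hence $M\in\GL_2(\Q)$.

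Next I would produce the isogeny. Let $d\in\N^*$ be a common denominator of the entries of $M^{-1}$ and set $N=dM^{-1}\in\M_2(\Z)$, which is invertible over $\Q$. Regarded as a $\C$-linear endomorphism of $\C^2$, $N$ has real entries, so it commutes with complex conjugation and is therefore compatible with the real structures. Moreover $N(\Z^2)\subset\Z^2$, and since $NS=dM^{-1}(sM)=(ds)I$ we get $N(iS\Z^2)=i(NS)\Z^2=(ds)\,i\Z^2\subset is\Z^2$; thus $N$ sends $\Lambda$ into $\Z^2\oplus is\Z^2$ with image of finite index. Hence $N$ induces a real isogeny from $X$ onto $\C^2/(\Z^2\oplus is\Z^2)$, and this last surface is exactly $E\times E$ with $E=E_s=\C/(\Z\oplus is\Z)$, a real elliptic curve. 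This proves that $X$ is isogenous to $E\times E$.

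There is no serious obstacle: the whole point is contained in the ``real'' rank formula of Proposition \ref{isog}, which converts $\rho(X_\R)=3$ into the rationality statement $S\in\R^{>0}\cdot\M_2(\Q)$, after which $N=dM^{-1}$ does the job. The only steps needing a little care are bookkeeping ones — recalling that homomorphisms of real abelian varieties must fix the origin and lift to $\C$-linear maps with rational (here integral) matrices, and checking the inclusion $N\Lambda\subset\Z^2\oplus is\Z^2$ — but these are routine.
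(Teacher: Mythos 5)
Your proof is correct, and the underlying idea is the same as the paper's: use the rank formula from Proposition \ref{isog} to convert $\rho(X_\R)=3$ into the rationality statement $S\in\R^{>0}\cdot\M_2(\Q)$, and then clear denominators to produce an explicit isogeny onto a square of elliptic curves. The paper takes the slightly lighter route of multiplying by the scalar $m$ chosen so that $m\,S/S_{11}$ has integer entries (a plain dilation of $\C^2$ then sends $\Lambda$ into $\Z^2\oplus iS_{11}\Z^2$), whereas you multiply by the integer matrix $N=dM^{-1}$ chosen so that $NS=ds\,I$; both are instances of the same denominator-clearing trick, and your matrix version costs you a short extra check that $N$ is invertible, has integer entries, and preserves the real structure, which you carry out correctly.
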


\begin{proof}
Changing $X$ by isogeny if necessary (cf. Proposition \ref{isog}), $X$ has the form $\C^2 / (\Z^2\oplus iS\Z^2)$, with $S=\left(S_{ij}\right)\in\M_2(\R)$ symmetric and positive definite. As $\rho(X_\R)=3$, it follows that $\rk_\Q(S_{11},S_{12},S_{22})=1$, so there exists~$m\in\N^*$ such that $mS_{12}$ and $mS_{22}$ are in $\Z S_{11}$ ($S_{11}\neq 0$, since ${\det(S)=S_{11}S_{22}-S_{12}^2>0}$). Then the dilation by $m$ in $\C^2$ gives rise to an isogeny from $X$ to $E_{S_{11}}\times E_{S_{11}}$.
\end{proof}

As a consequence, it is enough to restrict ourselves to the case 
\begin{equation}
X=E\times E,
\end{equation}
where~$E$ is a real elliptic curve. We can also suppose (see example \ref{ey-e'y}) that
\begin{equation}
E=E'_y
\end{equation}
with~${y>0}$. By making this choice, $X(\R)=E'_y(\R)\times E'_y(\R)$ has only one connected component, are the real curves will be more easy to study.

Observe that the group $\GL_2(\Z)$ acts on $X$ and gives many examples of real loxodromic type automorphisms. A consequence from this fact and Proposition \ref{alpha-leq-1/2} is that we already have the inequality
\begin{equation}\label{leq1/2}
\alpha(X)\leq 1/2.
\end{equation}

In order to compute volumes, we choose the standard Euclidean metric on the torus $X=\C^2/(\Gamma'_y)^2$, whose K\"ahler form is given by 
\begin{equation}
\kappa= \frac{i}{2} \left( dz_1 \wedge d\b{z_1} + dz_2 \wedge d\b{z_2} \right).
\end{equation}
For this metric, we have $\vol_\R(E)=1$ and $\vol_\C(E)=y$. In the remaining part of this section, we follow \cite{christol}.

\begin{defi}
A \emph{rational line} on $X$ is the projection of a line of $\C^2$ given by an equation $az_1=bz_2$ with $(a,b)\in\Z^2$. The number $a/b\in\Q\cup\{\infty\}$ is the \emph{slope} of this rational line.
\end{defi}

\begin{exem}
The curves $H$, $V$, and $\Delta$, which are respectively the horizontal, the vertical, and the diagonal of $E\times E$, are rational lines with slopes by $0$, $\infty$, and~$1$, respectively. Proposition \ref{isog} implies that their classes form a base of $\NS(X_\R;\Z)$ (this follows from an easy computation).
\end{exem}

\begin{lemm}\label{vol-rat}
Let $D$ be a rational line on $X$. Then 
\begin{equation}
\vol_\R(D)=C\vol_\C(D)^{1/2},
\end{equation}
with $C=y^{-1/2}=\vol_\R(E)/\vol_\C(E)^{1/2}$.
\end{lemm}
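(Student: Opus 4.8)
The plan is to reduce to a primitive defining equation and then compute $\vol_\R(D)$ and $\vol_\C(D)$ directly via an explicit uniformization of $D$. First I would observe that, with $d=\gcd(a,b)$, the equations $az_1=bz_2$ and $(a/d)z_1=(b/d)z_2$ cut out the same line in $\C^2$, so there is no loss in assuming $(a,b)$ primitive; the degenerate slopes $0$ and $\infty$ (one of $a,b$ vanishing) are included and cause no trouble.

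Then I would parametrize the line $L=\{az_1=bz_2\}\subset\C^2$ by $\phi(t)=(bt,at)$, $t\in\C$, and check that $\phi$ descends to a holomorphic map $\bar\phi:\C/\Gamma'_y\to X$ whose image is $D$. The one point worth writing out is that the period lattice of $\phi$ is \emph{exactly} $\Gamma'_y$: if $at,bt\in\Gamma'_y$ and $u,v\in\Z$ satisfy $ua+vb=1$, then $t=u(at)+v(bt)\in\Gamma'_y$, and the reverse inclusion is automatic since $\Gamma'_y$ is stable under integer multiplication. Hence $\bar\phi$ is injective and $D\cong E'_y$. Pulling back the K\"ahler form gives $\phi^*\kappa=\frac{i}{2}(a^2+b^2)\,dt\wedge d\bar t$, so
\begin{equation*}
\vol_\C(D)=(a^2+b^2)\,\vol_\C(E'_y)=(a^2+b^2)\,y.
\end{equation*}

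For the real volume I would use that $L$ has real coefficients, so $\bar\phi$ intertwines the two complex conjugations and $D(\R)$ is the image of $L(\R)=\{ax_1=bx_2\}\subset\R^2$ in $X(\R)=\R^2/\Z^2$; the same B\'ezout argument, now with $\Gamma'_y\cap\R=\Z$, shows that $s\mapsto(bs,as)$ descends to an embedded circle $\R/\Z\hookrightarrow X(\R)$ that is all of $D(\R)$ (using that $D$ is irreducible). Its Euclidean length over one period is $\int_0^1\|(b,a)\|\,ds=\sqrt{a^2+b^2}$, hence $\vol_\R(D)=\sqrt{a^2+b^2}$. Putting the two computations together,
\begin{equation*}
\vol_\R(D)=\sqrt{a^2+b^2}=y^{-1/2}\sqrt{(a^2+b^2)\,y}=y^{-1/2}\,\vol_\C(D)^{1/2},
\end{equation*}
and $y^{-1/2}=\vol_\R(E)/\vol_\C(E)^{1/2}$ since $\vol_\R(E)=1$ and $\vol_\C(E)=y$.

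I do not expect a genuine obstacle here — the argument is a direct computation. The only care needed is in the period-lattice bookkeeping: one must be sure the uniformizing maps have period lattice exactly $\Gamma'_y$ (resp.\ $\Z$), so that $D$ and $D(\R)$ are a single copy of $E'_y$ (resp.\ a single circle) rather than a multiple cover; this is precisely where primitivity of $(a,b)$ and the B\'ezout relation enter. A quick separate check confirms that the degenerate slopes $0$ and $\infty$ satisfy the formula as well.
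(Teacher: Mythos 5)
Your proof is correct, and it takes a genuinely different route from the paper's for the complex-volume half of the computation. The paper avoids any explicit uniformization of $D$: it observes that, by the specific form of $\kappa$, the Wirtinger integral decomposes as $\vol_\C(D)=\vol_\C(E)\,(D\cdot H+D\cdot V)$, and then evaluates the intersection numbers $D\cdot H=a^2$, $D\cdot V=b^2$ (these are the degrees of the two coordinate projections restricted to $D$). You instead exhibit the biholomorphism $\bar\phi:\C/\Gamma'_y\to D$ and pull $\kappa$ back along it, getting $\phi^*\kappa=\tfrac{i}{2}(a^2+b^2)\,dt\wedge d\bar t$ directly. Both give $\vol_\C(D)=(a^2+b^2)y$; the paper's way is shorter once one trusts the area--intersection dictionary, while yours is more self-contained and, as a side benefit, makes the key point that the period lattice is exactly $\Gamma'_y$ explicit (the B\'ezout step) so that $D$ really is a single copy of $E'_y$ and $D(\R)$ a single circle rather than multiply covered. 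The real-volume computations are the same in both proofs: a one-period length of an embedded line of slope $a/b$ in $\R^2/\Z^2$, namely $\sqrt{a^2+b^2}$.
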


\begin{proof}
Let $a/b$ be the slope of $D$, with coprime integers $a$ and $b$. We compute the length of $D(\R)$ by the Pythagorean theorem:
\begin{equation}
\vol_\R(D)=\sqrt{a^2+b^2}.
\end{equation}
On the other hand, it is clear, from the form of $\kappa$, that
\begin{equation}
\vol_\C(D)=\vol_\C(E)(D\cdot H+D\cdot V).
\end{equation}
We easily check that $D\cdot H=a^2$ and $D\cdot V=b^2$, hence
\begin{equation}
\vol_\C(D)=y(a^2+b^2).
\end{equation}
\end{proof}

The group $\SL_2(\Z)$ acts by automorphisms on $X$, thus by isometries on~$\NS(X_\R;\R)$. This action preserves the ample cone $\Amp(X_\R)$, which here is the same as $\NS^+(X_\R)$ (see \cite[\textsection 1.5.B]{lazarsfeld}).

If we identify the disk $\D=\P(\Amp(X_\R))$ with the Poincar\'e half-plane $\H$, by the unique isometry matching the class of a rational line in $\partial\D$ with the inverse of its slope in $\partial\H=\R\cup\{\infty\}$, then the induced action of $\PSL_2(\Z)$ on~$\D$ corresponds to the standard action by homographies on $\H$. Accordingly we see that the triangle ${\T \subset \D}$, whose vertices are $\P[H]$, $\P[V]$, and $\P[\Delta]$, contains some fundamental domain for the action of $\PSL_2(\Z)$ on $\D$ (see Figure~\ref{triangle}).\\

\begin{figure}[h]
\begin{center}
\scalebox{0.8}{\input{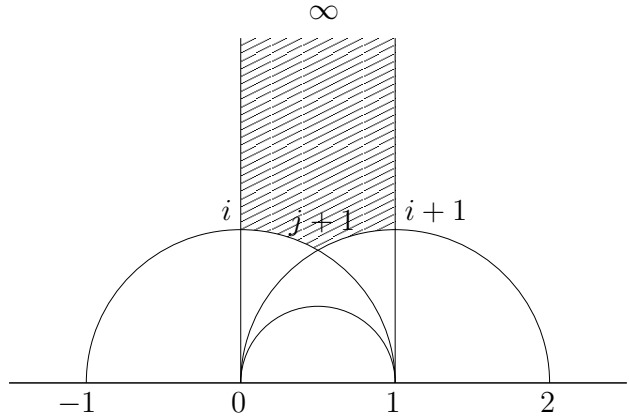}}
\caption{A fundamental domain for the action of $\PSL_2(\Z)$ on~$\H$. This domain is contained in the triangle corresponding to $\T$, whose vertices are $\infty$, $0$ and $1$.}\label{triangle}
\end{center}
\end{figure}

Let $D$ be a real ample divisor on $X$. There exists some $f\in\SL_2(\Z)$ such that~$\P(f_*^{-1} [D])\in\T$. So there are nonnegative numbers $k_1$, $k_2$ and $k_3$ such that
\begin{equation}
f_*^{-1} [D] = k_1 [H] + k_2 [V] + k_3 [\Delta].
\end{equation}
The numbers $k_1$, $k_2$, and $k_3$ are actually integers, for $([H],[V],[\Delta])$ is a base of~$\NS(X_\R;\Z)$. Hence the divisor $D$ is algebraically equivalent to
\begin{equation}
k_1D_1+k_2D_2+k_3D_3,
\end{equation}
where $D_1=f(H)$, $D_2=f(V)$, and $D_3=f(\Delta)$ are rational lines. Then
\begin{align}
\mvol_\R(D) & \geq \sum_j k_j\vol_\R(D_j)\\
& = C \sum_j k_j\vol_\C(D_j)^{1/2}
&&\text{by Lemma \ref{vol-rat}}\\
& \geq C \left(\sum_j {k_j}^2\vol_\C(D_j)\right)^{1/2}
&&\text{by Minkowski inequality $(\ref{minkowski})$}\\
& \geq C \left(\sum_j {k_j}\vol_\C(D_j)\right)^{1/2}\\
& = C \vol_\C(D)^{1/2}.
\end{align}

We deduce that $1/2\in\mathcal{A}(X)$. Thus the concordance is $1/2$ and it is achieved. This ends the proof of Theorem \ref{thm-surf-ab}.


\section{The concordance depends on the real structure}\label{dep-str}

The aim of this paragraph is to show that there is a complex abelian surface $X_\C$ which admits two different real structures $\sigma_1$ and $\sigma_2$, and thus two distinct real abelian surfaces $X_1$ and $X_2$, such that $\alpha(X_1)\neq\alpha(X_2)$.\\

Fix a positive irrational number $\pi$ such that $\pi^2\notin\Q$. We consider the complex abelian surface
\begin{equation}
X_\C=E_\pi\times E_\pi,
\end{equation}
with the two following real structures which define two real abelian surfaces~$X_1$ and $X_2$:
\begin{enumerate}
\item $\sigma_1$ is the standard complex conjugation: $\sigma_1(z_1,z_2)=(\b{z_1},\b{z_2})$;
\item $\sigma_2$ is given by $\sigma_2(z_1,z_2)=(\b{z_1},-\b{z_2})$.
\end{enumerate}

Since $\pi$ is irrational, $E_\pi$ has no complex multiplication, and the Picard number of $X_\C$ is $3$.

We have seen in the last section that
\begin{equation}
\rho(X_1)=3
\quad\text{and}\quad
\alpha(X_1)=1/2.
\end{equation}

Now consider the complex isomorphism
\begin{equation}
\begin{split}
X_\C=E_\pi\times E_\pi &\overset{\psi}\longrightarrow X'_\C=E_\pi\times 
E_{\pi^{-1}}\\
(z_1,z_2) &\longmapsto \left(z_1,i\pi^{-1}z_2\right).
\end{split}
\end{equation}
Then $\psi\circ\sigma_2\circ\psi^{-1}$ is the standard complex conjugation on $X'_\C$, thus $X_2$ is isomorphic to the real abelian surface $X'_\R=E_\pi\times 
E_{\pi^{-1}}$ (with the standard real structure). Since we have chosen $\pi^2$ irrational, the real elliptic curves~$E_\pi$ and~$E_{\pi^{-1}}$ are not isogenous over $\R$ (although they are isomorphic over $\C$, via the complex multiplication by $i\pi^{-1}$), so $X'_\R$ has Picard number $2$ and concordance $1$. It follows that
\begin{equation}
\rho(X_2)=2
\quad\text{and}\quad
\alpha(X_2)=1.
\end{equation}


\chapter{Concordance for K3 Surfaces}\label{chap-k3}


\section{Generalities on K3 surfaces}

We recall that a K3 surface is a compact complex surface $X$ such that 
\begin{equation}
{H^1(X_\C;\Z)=0}
\end{equation}
and the canonical divisor $K_X$ is trivial.  Such a surface is necessarily Kähler. A real K3 surface is a K3 surface equipped with a real structure. See \cite[\textsection 8.4]{dik} for a review of the possible topological types for the $X(\R)$.

\textbf{In this chapter only, all K3 surfaces are supposed to be algebraic.}


\subsection{Exceptional curves}

On a real K3 surface, a complex irreducible curve $C$ with negative self-intersection must have self-intersection $-2$, by the genus formula. By contrast, when $C$ is a real curve that is irreducible over~$\R$ and has negative self-intersection, we can also have $C^2=-4$. Indeed the curve~$C$ can also have the form~${E+E^\sigma}$, where $E$ is a complex $(-2)$-curve with $E\cdot E^\sigma=0$.

By extension we call a real effective divisor \emph{exceptional} if it has self-intersection $-2$ or it has the form $E+E^\sigma$, where $E$ is a complex curve with $E^2=-2$ and~${E\cdot E^\sigma=0}$ (this implies $(E+E^\sigma)^2=-4$). We denote by~${\Delta\subset\NS(X_\R;\Z)}$ the set of classes of exceptional curves. By description of the K\"ahler cone (cf. \cite[\textsection VIII (3.9)]{bpvdv}),
\begin{equation}
\Amp(X_\R)=\{\theta\in\NS^+(X_\R)\,|\,\theta\cdot d>0\quad\forall d\in\Delta\}.
\end{equation}
This cone coincides with one of the chambers of $\NS^+(X_\R)\backslash\bigcup_{d\in\Delta}d^\perp$. As a special case, we see that the lack of exceptional curves is equivalent to the equality~$\Amp(X_\R)=\NS^+(X_\R)$.


\subsection{Torelli theorem} Let us recall the following result, which describes automorphisms of K3 surfaces (see {\cite[\textsection VIII (11.1) \& (11.4)]{bpvdv}}, {\cite[\textsection VIII (1.7)]{silhol}}).

\begin{theo}[real Torelli theorem]\label{torelli}
Let $X$ be a real $K3$ surface and let $\phi$ be a Hodge isometry of $H^2(X_\C;\Z)$ that preserves the ample cone. Then there exists a unique automorphism $f$ of $X_\C$ such that $f_*=\phi$. If moreover~$\phi$ commutes with the involution $\sigma_X^*$, then $f$ is a \emph{real} automorphism.
\end{theo}

\begin{rema}\label{kernel}
As a consequence, we see that the kernel of the representation 
\begin{equation}
\Aut(X_\R)\to \Isom(\NS(X_\R;\Z))
\end{equation}
is finite, where $\Isom(\NS(X_\R;\Z))$ denotes the group of isometries of $\NS(X_\R;\Z)$. Indeed, the space $H^2(X_\C;\R)$ decomposes into the orthogonal direct sum~${V_1\oplus V_2 \oplus V_3}$, where~$V_1=\NS(X_\R;\R)$, $V_2$ stands for the orthogonal of $V_1$ in $H^{1,1}(X_\C;\R)$, and~$V_3=(H^{0,2}\oplus H^{2,0})(X_\C;\R)$. The intersection form is negative definite on $V_2$ and positive definite on $V_3$. If $f\in\Aut(X_\R)$ is in the kernel of the representation, then the induced map $f_*$ on $H^2(X_\C;\R)$ preserves the intersection form, so it is contained in the compact subgroup~${\{\id_{V_1}\}\oplus\Isom(V_2)\oplus\Isom(V_3)}$. Since the matrix of $f_*$ must also have integer coefficients in a base of $H^2(X_\C;\Z)$, there are finitely many possibilities for $f_*$, and thus for $f$ by uniqueness in the Torelli theorem. See also \cite[D.1.4]{dik} for a proof independant of the Torelli theorem.
\end{rema}


\section{Picard number $2$}

When $\rho(X_\R)=2$, the nef cone $\Nef(X_\R)$ has exactly two extremal rays. We say this cone is \emph{rational} if both rays are rational, that is, if they contain an element of $\NS(X_\R;\Z)\backslash\{0\}$.

\begin{theo}\label{k3-rho=2}
Let $X$ be a real K3 surface with $\rho(X_\R)=2$.
\begin{enumerate}
\item
If the intersection form on $\NS(X_\R;\Z)$ represents $0$, or if there are exceptional curves on $X$, then the group $\Aut(X_\R)$ is finite, the cone $\Nef(X_\R)$ is rational and  $\alpha(X)=1$, the concordance being achieved.
\item
Otherwise $X$ admits a real loxodromic type automorphism $f$. Such an automorphism spans a finite index subgroup of $\Aut(X_\R)$, and 
\begin{equation}
\alpha(X)=\frac{\h(f_\R)}{\h(f_\C)}.
\end{equation}
\end{enumerate}
\end{theo}

\begin{rema}
The intersection form on $\NS(X_\R;\Z)$ represents $0$ if and only if there exists some real elliptic fibration (see \cite[Corollary 3 in \textsection 3]{safarevic-torelli}).
\end{rema}

\begin{proof}
\emph{First case: the intersection form represents $0$.} Then the group $\Isom(\NS(X_\R;\Z))$ is finite (cf. \textsection \ref{section-reseaux}), and so is the kernel of the representation $\Aut(X_\R)\to\Isom(\NS(X_\R;\Z))$, hence $\Aut(X_\R)$ must be finite. The extremal rays of $\Nef(X_\R)$ are either isotropic half-lines, or orthogonal to the class of an exceptional curve: they are rational in both cases.

\emph{Second case: the intersection form does not represent $0$ and there are exceptional curves.} We show that $X$ has many exceptional curves. To be more precise, let~$d\in\Delta$ be the class of such a curve, and let $\phi'$ be a hyperbolic isometry of $\NS(X_\R;\Z)$. Replacing $\phi'$ by a positive iterate if necessary, the isometry~${\phi'\oplus\id_{\NS(X_\R;\Z)^\perp}}$ extends to an isometry $\phi$ on $\NS(X_\C;\Z)$ (cf. Lemma \ref{lemme-alg-com}). Note that we cannot apply the Torelli theorem here, for $\phi$ does not preserve the ample cone (even if we suppose that it preserves $\NS^+(X_\R)$). Nevertheless, we show the following lemma.

\begin{lemm}
For all $n\in\Z$, $\pm\phi^n(d)$ is the class of an exceptional curve.
\end{lemm}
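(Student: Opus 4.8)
The strategy is to reduce the statement to a standard fact about exceptional curves on K3 surfaces, namely that the set $\Delta$ of classes of exceptional curves is characterized \emph{numerically} inside $\NS(X_\R;\Z)$: a primitive class $\theta\in\NS(X_\R;\Z)$ lying in $\NS^+(X_\R)$ (or its closure) with $\theta^2=-2$ is, up to sign, the class of an effective $(-2)$-curve, and similarly a class with $\theta^2=-4$ of the appropriate shape is $\pm(E+E^\sigma)$. This comes from Riemann--Roch and Serre duality on K3 surfaces (Corollary \ref{rr+serre} with $K_X=0$): if $\theta^2=-2$ then $h^0(\O_X(\theta))+h^0(\O_X(-\theta))\geq \chi(\O_X)+\tfrac12\theta^2=2-1=1$, so $\theta$ or $-\theta$ is effective; the genus formula then forces the irreducible components to be $(-2)$-curves, and positivity against an ample class (which we use $\phi$ preserves, since $\phi$ preserves $\NS^+(X_\R)$) pins down the sign. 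First I would record this characterization as the content of the lemma, observing that $\phi$ is an isometry of $\NS(X_\C;\Z)$ preserving $\NS^+(X_\R)$ and hence $\NS^+(X_\C)$.

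\textbf{Key steps.} First, since $\phi$ is an isometry, $(\phi^n(d))^2=d^2\in\{-2,-4\}$ for every $n\in\Z$, and $\phi^n(d)\in\NS(X_\R;\Z)$ because $\phi$ restricts on $\NS(X_\R;\Z)$ to a power of $\phi'$. Second, $\phi$ preserves $\NS^+(X_\R)$ by hypothesis, and since $d\in\Delta\subset\b{\NS^+(X_\R)}$ (an exceptional curve is effective, hence meets an ample class positively, hence lies in the closed positive cone), we get $\pm\phi^n(d)\in\b{\NS^+(X_\R)}$ for a definite choice of sign depending on which nappe it lands in. Third, apply the numerical characterization above: $\pm\phi^n(d)$ is effective; in the case $d^2=-2$ its support is a connected configuration of $(-2)$-curves whose class is primitive of square $-2$, so it is a single irreducible $(-2)$-curve (a connected chain of $(-2)$-curves summing to a $(-2)$-class has to be irreducible), hence exceptional; in the case $d^2=-4$ one checks using $d\cdot d^\sigma$ and the fact that $\sigma^*\phi^n(d)=\phi^n(d^\sigma)$ (as $\phi$ commutes with $\sigma^*$, both being built from $\phi'\oplus\id$ on $\NS(X_\R;\Z)^\perp$, which is $\sigma^*$-stable) that $\pm\phi^n(d)$ splits as $E+E^\sigma$ with $E^2=-2$, $E\cdot E^\sigma=0$, again exceptional.

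\textbf{Main obstacle.} The delicate point is the sign bookkeeping together with the claim that a primitive effective class of self-intersection $-2$ in the nef-positive region is actually \emph{irreducible}: a priori it could be a sum of several $(-2)$-curves. Here one uses that such a class $\theta$ is effective and $\theta\cdot\theta=-2<0$, so in its decomposition into irreducibles at least one component $C_0$ has $\theta\cdot C_0<0$, forcing $C_0$ to be a common component; iterating and using primitivity of $\theta$ shows $\theta=[C_0]$ with $C_0$ a single $(-2)$-curve. The $d^2=-4$ case needs the extra input that $\NS(X_\R;\Z)^\perp$ inside $\NS(X_\C;\Z)$ carries the conjugate $(-2)$-classes, which is where $\sigma^*$-equivariance of $\phi$ and the structure of $\Delta$ in terms of complex $(-2)$-curves enter; this is the part where I expect to spend the most care, but it is a finite check once the numerical characterization is in hand.
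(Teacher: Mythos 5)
Your core strategy is the same as the paper's: Riemann--Roch plus Serre duality to show that one of $\pm\phi^n(d)$ (resp.\ $\pm\phi^n(e)$) is effective, and $\sigma^*$-equivariance of $\phi$ to handle the $d^2=-4$ case by writing $d$ in the form $e-\sigma^*e$. However, there is a genuine error in your ``second'' key step. You assert that $\Delta\subset\b{\NS^+(X_\R)}$, justifying this by ``effective, hence meets an ample class positively, hence lies in the closed positive cone.'' The second implication is false: $\b{\NS^+(X_\R)}$ is contained in $\{q\geq 0\}$, whereas every class in $\Delta$ has self-intersection $-2$ or $-4$, hence lies strictly outside the closure of the positive cone. (A $(-2)$-curve on a K3 surface pairs positively with any ample class but certainly does not lie in $\b{\NS^+}$.) The same confusion appears in your opening ``numerical characterization,'' which as stated has a vacuous hypothesis (there is no class in $\b{\NS^+}$ with $\theta^2=-2$). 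This step does not wreck the proof because it is redundant --- Riemann--Roch already tells you which of $\phi^n(d)$, $-\phi^n(d)$ is effective, and that is all the sign bookkeeping the lemma requires --- but as written the reasoning is wrong.

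You also spend your ``main obstacle'' on showing that a primitive effective class of square $-2$ must be a single irreducible $(-2)$-curve. This is unnecessary here: the paper's definition of \emph{exceptional}, given just above the lemma, asks only for a real \emph{effective divisor} of self-intersection $-2$, or of the form $E+E^\sigma$ with $E^2=-2$, $E\cdot E^\sigma=0$ --- no irreducibility is imposed. Once Riemann--Roch gives effectivity and the isometry property gives $(\pm\phi^n(d))^2=d^2$, the conclusion follows immediately; the paper's proof stops right there. Your treatment of the $-4$ case is on the right track but vague (``one checks using $d\cdot d^\sigma$'' -- for a real class $d$ this is just $d^2$, so you really need to introduce the complex class $e$ with $d=e-\sigma^*e$ and argue about $\phi^n(e)\cdot\sigma^*\phi^n(e)=e\cdot\sigma^*e=0$, as the paper does). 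In short: correct germ of an argument, same route as the paper, but with one false assertion about the positive cone and a superfluous (and incomplete) irreducibility digression.
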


\begin{proof}
If $d^2=-2$, then $\phi^n(d)^2=-2$. The Riemann--Roch formula shows that ${h^0(X,\O_X(\phi^n(d)))+h^0(X,\O_X(-\phi^n(d)))\geq 2}$, hence $\phi^n(d)$ or $-\phi^n(d)$ is the class of an effective divisor.

Otherwise, $d=e-\sigma^*e$ (the minus sign comes from (\ref{pb-sign})) with $e^2=-2$ and ${e\cdot\sigma^*e=0}$, where $e$ is the class of a complex effective divisor. From the same argument, it comes that $\phi^n(e)$ or $-\phi^n(e)$ is also the class of a complex effective divisor. Thus $\pm\phi^n(d)=\pm(\phi^n(e)-\sigma^*\phi^n(e))$ is the class of a real effective divisor.
\end{proof}

Observe that when $n$ goes to $\pm\infty$, the lines $\R\phi^n(d)$ converge to the two isotropic lines. Consequently, there are exceptional curves whose classes are arbitrarily close to both isotropic directions. The cone $\Amp(X_\R)$ is one of the chambers of $\NS^+(X_\R)\backslash\bigcup_{d\in\Delta}d^\perp$, so both its extremal rays must be orthogonal to classes of exceptional curves, hence they are rational. Since the subgroup of $\Isom(\NS(X_\R;\Z))$ whose elements fix or exchange these extremal rays is finite, the group $\Aut(X_\R)$ is also finite by Remark \ref{kernel}.

In the first two cases, let $\R^+[D]$ be an extremal ray of the cone $\Nef(X_\R)$. By Riemann--Roch, $h^0(X,\O_X(D))\geq 2$, thus $\mvol_\R(D)>0$ by Proposition~\ref{pinceau}. The assertion about $\alpha(X)$ follows, using Proposition \ref{conenef}.

\emph{Third case: the intersection form does not represent $0$ and there is no exceptional curve.} Let $\phi'$ be a hyperbolic isometry of $\NS(X_\R;\Z)$ that preserves the cone~$\Amp(X_\R)=\NS^+(X_\R)$. By the same argument as in the proof of Theorem~\ref{delta}, some iterate $\phi'^k$ extends to a Hodge isometry $\phi$ of~$H^2(X_\C;\Z)$ that commutes with~$\sigma^*$. Then by the Torelli theorem, there exists on $X$ a real automorphism $f$ such that $f_*=\phi$. The dynamical degree $\lambda(f)$ is greater than $1$ as a power of the spectral radius of $\phi'$. Now the representation $\Aut(X_\R)\to\Isom(\NS(X_\R;\Z))$ has finite kernel, and the subgroup generated by $f_*$ has finite index in $\Isom(\NS(X_\R;\Z))$~(cf.~\textsection\ref{section-reseaux}). It follows that $f$ spans a finite index subgroup of $\Aut(X_\R)$. The concordance formula is a consequence of Theorem \ref{formule-exacte}.
\end{proof}

\begin{exem}[\cite{wehler}]\label{wehler}
Let $Y$ be the $3$-dimensional flag variety
\begin{equation}
Y=\left\{(P,L)\in\P^2_\C\times{(\P^2_\C)}^*\,|\,P\in L\right\}.
\end{equation}
Let $X$ be a smooth hypersurface of $Y$ such that the projections 
\begin{equation}
\pi_1:X_\C\to\P^2_\C \quad\text{and}\quad \pi_2:X_\C\to{(\P^2_\C)}^*
\end{equation}
are ramified $2$-coverings. Then $X$ is a K3 surface, called~a Wehler surface. Furthermore, generic Wehler surfaces have a rank $2$ N\'eron--Severi group, spanned by generic fibers of the two coverings. The automorphism group is then isomorphic to a free product $\Z/2\Z * \Z/2\Z$, the generators being the involutions~$s_1$ and $s_2$ of the coverings $\pi_1$ and $\pi_2$, respectively. Furthermore, the automorphism~$f=s_1\circ s_2$ has loxodromic type, his dynamical degree being easily computed from the action on the $2$-dimensional subspace of $\NS(X;\R)$:
\begin{equation}
\lambda(f)=7+4\sqrt{3}.
\end{equation}

If moreover the surface $X$ is defined over $\R$, then ${\NS(X_\R;\Z)=\NS(X_\C;\Z)}$, and the automorphism $f$ is real. So, by Theorem \ref{formule-exacte},
\begin{equation}
\alpha(X)=\frac{\h(f_\R)}{\h(f_\C)}=\frac{\h(f_\R)}{\log(7+4\sqrt{3})}.
\end{equation}
\end{exem}


\section{Deformation of K3 surfaces in $\P^1\times\P^1\times\P^1$}\label{deformation}

The following example was first described by McMullen \cite{mcmullen-k3-siegel}. Fix a nonzero real number $t$. Let $X^t$ be the hypersurface of $\P^1(\C)^3$ defined in its affine chart $\C^3$ by
\begin{equation}
(z_1^2+1)(z_2^2+1)(z_3^2+1)+tz_1z_2z_3=2.
\end{equation}
It is a smooth surface of tridegree $(2,2,2)$, hence a K3 surface \cite{mazur}, here defined over~$\R$. We have three double (ramified) coverings ${\pi_j^t:X^t\to\P^1\times\P^1}$ (with ${j\in\{1,2,3\}}$) that consist in forgetting the $j$-th coordinate. These three coverings give rise to three involutions $s_j^t$ on $X^t$ that span a subgroup of~$\Aut(X_\R)$, which is a free product $\Z/2\Z*\Z/2\Z*\Z/2\Z$ \cite{wang}. Let $f^t$ be the automorphism of $X^t$ obtained by composing these three involutions. We have seen in chapter \ref{chap-auto} (cf. Example \ref{ex-loxo}) that $f^t$ is loxodromic and
\begin{equation}
\h(f^t_\C)=\log(9+4\sqrt{5}).
\end{equation}

For parameter $t=0$, the complex surface $X^0(\C)$ is not smooth, for there are~$12$ singular points $(\infty,\pm i,\pm i)$, $(\pm i,\infty,\pm i)$, and $(\pm i,\pm i,\infty)$. However, these points are not real, so the real surface $X^t(\R)$ remains smooth at $t=0$. Restricted to~$X^0(\R)$, the birational map $f^0$  is an order $2$ diffeomorphism given by the formula $f^0(x_1,x_2,x_3)=(-x_1,-x_2,-x_3)$. Consequently 
\begin{equation}
\h(f^0_\R)=0.
\end{equation}

Let $\mathcal{X}$ be the submanifold of $\P^1(\R)^3\times\R$ defined by 
\begin{equation}
\mathcal{X}=\{(x,t)\,|\,x\in X^t(\R)\}.
\end{equation}
The projection $p:\mathcal{X}\to\R$ is a locally trivial bundle whose fibers are the real surfaces~$X^t(\R)$. Thus there is an open neighborhood $I_\epsilon=(-\epsilon,\epsilon)$ around $0$, and an injective local diffeomorphism 
\begin{equation}
\psi:X^0(\R)\times I_\epsilon\to \mathcal{X}
\end{equation}
such that $p\circ\psi$ is the natural projection on the second coordinate. For all~${t\in I_\epsilon}$ the map $\psi$ induces a diffeomorphism from $X^0(\R)$ to $X^t(\R)$, which enables us to conjugate ${f^t_\R:X^t(\R)\to X^t(\R)}$ to a diffeomorphism~${g^t:X^0(\R)\to X^0(\R)}$. This family of diffeomorphisms on $X^0(\R)$ is a continuous family for the $\mathcal{C}^\infty$-topology. As the map $g^0=f^0_\R$ has entropy $0$, it follows that 
\begin{equation}
{\lim_{t\to 0}\h(g^t)=0},
\end{equation}
by continuity of the topological entropy on~${\rm Diff}^\infty(X^0(\R))$ (see \cite{yomdin} or \cite{newhouse-continuity} for the upper semicontinuity, and \cite[Corollary S.5.13]{katok-hasselblatt} for the lower semicontinuity). Since the entropy does not change by conjugacy, we also have
\begin{equation}
{\lim_{t \to 0}\h(f^t_\R)=0}.
\end{equation}

On the other hand, Theorem~\ref{entropie} gives the inequality
\begin{equation}
{\alpha(X^t)\leq \h(f^t_\R)/\h(f^t_\C)=\h(f^t_\R)/\log(9+4\sqrt{5})}
\end{equation}
for $t\neq 0$, and so we get
\begin{equation}
\lim_{\substack{t\to 0\\ t\neq 0}} \alpha(X^t)=0.
\end{equation}

To sum up, we have found a family $(X^t,f^t)_{t\in \R^*}$ of real loxodromic type automorphisms~$f^t$ on real K3 surfaces $X^t\subset(\P^1)^3$, such that
\begin{enumerate}
\item
$\h(f^t_\C)$ is a positive constant;
\item
as $t$ goes to $0$, $X^t(\R)$ degenerates in a smooth surface, and $f^t_\R$ in a zero-entropy diffeomorphism;
\item
$\lim_{t\to 0}\alpha(X^t)=0$.
\end{enumerate}

So we have just proved the following theorem.

\begin{theo}\label{alpha-petit}
For any $\eta>0$ there exists a real K3 surface in $(\P^1)^3$ such that $\alpha(X)<\eta$.
\end{theo}


\chapter{Automorphisms of $X_\R$ and Diffeomorphisms of $X(\R)$}\label{chap-non-dense}

Let $X$ be a real algebraic surface. For any $r\in\N\cup\{\infty\}$ we denote by~$\Diff^r(X(\R))$ the group of $\mathcal{C}^r$-diffeomorphisms of the surface $X(\R)$, together with its $\mathcal{C}^r$-topology (when $r=0$, $\Diff^0(X(\R))=\Homeo(X(\R))$ stands for homeomorphisms). The group $\Aut(X_\R)$ identifies with a subgroup of~$\Diff^r(X(\R))$. We would like to know how this subgroup sits into the whole group of diffeomorphisms.


\section{Nondensity of Automorphisms in $\Diff(X(\R))$}

When $\Aut(X_\R)$ does not have any positive entropy element, it obviously cannot be dense in $\Diff^\infty(X(\R))$. Indeed, there always exist positive entropy diffeomorphisms on $X(\R)$, and these diffeomorphisms cannot be approached by any automorphism, by the continuity of the entropy on $\Diff^\infty(X(\R))$ (cf. \cite{katok-hasselblatt}).

Now suppose that $\alpha(X)>0$. We have seen in Corollary \ref{coro-lehmer} that an automorphism $f$ whose entropy on $X(\R)$ is not zero satisfies
\begin{equation}
\h(f_\R)\geq\alpha(X)\log(\lambda_{10})>0.
\end{equation}
Thus any diffeomorphism $g$ of $X(\R)$ whose entropy satisfies
\begin{equation}
0<\h(g)<\alpha(X)\log(\lambda_{10})
\end{equation}
cannot be approached (in the $\mathcal{C}^\infty$-topology) by any automorphism. Since there always exists such diffeomorphisms, this gives the following proposition.

\begin{prop}\label{non-dense}
Let $X$ be a real algebraic surface such that $\alpha(X)>0$. Then the image of the group $\Aut(X_\R)$ in $\Diff^\infty(X(\R))$ is not dense.
\end{prop}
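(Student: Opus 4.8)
The plan is to exhibit a nonempty open subset of $\Diff^\infty(X(\R))$ that contains no automorphism of $X_\R$, using topological entropy as the distinguishing invariant. The key input is Corollary \ref{coro-lehmer}: every real automorphism $f$ of $X$ with $\h(f_\R)>0$ satisfies $\h(f_\R)\geq\alpha(X)\log(\lambda_{10})$, which under the hypothesis $\alpha(X)>0$ is a strictly positive number. Hence the ``entropy window''
\begin{equation*}
I=\bigl(0,\ \alpha(X)\log(\lambda_{10})\bigr)
\end{equation*}
is disjoint from $\{\h(f_\R):f\in\Aut(X_\R)\}$: an automorphism has either zero real entropy, or real entropy at least $\alpha(X)\log(\lambda_{10})$. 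So it suffices to produce a $\mathcal{C}^\infty$ diffeomorphism of $X(\R)$ whose entropy lies in $I$, together with a whole neighbourhood of it whose elements still have entropy in $I$.

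For the first point, recall that $X(\R)$ is a nonempty compact smooth surface. I would fix an embedded disk $D\subset X(\R)$ and a $\mathcal{C}^\infty$ diffeomorphism $g_1$ of $X(\R)$ which is the identity outside $D$ and realizes a Smale horseshoe inside $D$; then $\h(g_1)\geq\log 2>0$, and since $g_1$ is supported in a disk it is $\mathcal{C}^\infty$-isotopic (rel the exterior of $D$) to the identity, by contractibility of $\Diff(D^2,\partial D^2)$. Along such an isotopy $(g_t)_{t\in[0,1]}$ in $\Diff^\infty(X(\R))$, with $g_0=\id$ and $g_1$ as above, the function $t\mapsto\h(g_t)$ is continuous, because topological entropy is upper semicontinuous on $\Diff^\infty$ of a compact surface (Yomdin) and lower semicontinuous (Katok), both as recalled in \textsection\ref{deformation}. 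Since $\h(g_0)=0$ and $\h(g_1)>0$, and since $I\neq\emptyset$ as $\alpha(X)>0$, the intermediate value theorem yields some $g^*\in\Diff^\infty(X(\R))$ with $\h(g^*)\in I$.

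Finally, the same two semicontinuity statements show that
\begin{equation*}
U=\bigl\{g\in\Diff^\infty(X(\R))\ :\ 0<\h(g)<\alpha(X)\log(\lambda_{10})\bigr\}
\end{equation*}
is open in $\Diff^\infty(X(\R))$ (upper semicontinuity makes $\{\h<\alpha(X)\log(\lambda_{10})\}$ open, lower semicontinuity makes $\{\h>0\}$ open); it is nonempty because it contains $g^*$, and by Corollary \ref{coro-lehmer} its intersection with $\Aut(X_\R)$ is empty. Therefore the image of $\Aut(X_\R)$ in $\Diff^\infty(X(\R))$ is not dense. The only genuinely delicate ingredients are the two semicontinuity properties of topological entropy on $\Diff^\infty$ of a surface and the realization of arbitrarily small positive entropies by $\mathcal{C}^\infty$ surface diffeomorphisms; both are available in the literature already invoked in this text, so the proof amounts to assembling them around Corollary \ref{coro-lehmer}.
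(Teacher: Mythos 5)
Your proposal is correct and follows essentially the same route as the paper: invoke Corollary \ref{coro-lehmer} to see that real automorphisms have $\h(f_\R)=0$ or $\h(f_\R)\geq\alpha(X)\log(\lambda_{10})$, then use the continuity of topological entropy on $\Diff^\infty$ of a compact surface (Yomdin for upper semicontinuity, Katok for lower) to conclude that the ``entropy window'' $\bigl(0,\alpha(X)\log(\lambda_{10})\bigr)$ defines a nonempty open set of diffeomorphisms that avoids $\Aut(X_\R)$. The only difference is that the paper asserts without detail that diffeomorphisms with entropy in this window exist, while you supply the verification explicitly via a horseshoe supported in a disk, the contractibility of $\Diff(D^2,\partial D^2)$, and the intermediate value theorem applied to $t\mapsto\h(g_t)$; that is a welcome filling-in of a step the paper takes for granted, not a different method.
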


When the Kodaira dimension is $0$, $X(\R)$ is naturally equipped with a canonical area form $\mu_X$, which comes from an everywhere nonzero holomorphic $2$-form on some finite covering space of the minimal model of $X$ (cf. Remark~\ref{forme-volume}). The image of $\Aut(X_\R)$ is contained in the subgroup $\Diff_{\mu_X}^\infty(X(\R))$ of $\Diff^\infty(X(\R))$ whose elements preserve this area form. So in this case, the non-density occurs for trivial reasons, as pointed out in \cite{kollar-mangolte}. Nevertheless, the same proof gives this more precise result.

\begin{enonce*}{Proposition \ref*{non-dense} bis}
Let $X$ be a real algebraic surface of Kodaira dimension $0$ such that $\alpha(X)>0$. Then the image of the group $\Aut(X_\R)$ in~$\Diff^\infty_{\mu_X}(X(\R))$ is not dense.
\end{enonce*}

\begin{rema}
In \cite{kollar-mangolte}, Koll\'ar and Mangolte established the nondensity of the image of $\Aut(X_\R)$ in $\Homeo(X(\R))$ as soon as $X(\R)$ has the topology of a connected orientable surface with genus $\geq 2$. By contrast, they proved, for surfaces birational to $\P^2_\R$, the density in $\Diff^\infty(X(\R))$ of the group of \emph{birational} transformations with imaginary indeterminacy points.
\end{rema}


\section{Discreteness of Automorphisms in $\Diff(X(\R))$}

We can be more precise when the Lie group $\Aut(X_{\C})$ is discrete, that is, when the connected component $\Aut(X_{\C})^0$ of the identity is reduced to a single point. For instance, this is the case for K3 and Enriques surfaces, but not for tori (for which $\Aut(X_{\C})^0$ consists in all translations).

\begin{prop}
Let $X$ be a real algebraic surface. Assume that ${\alpha(X)>0}$ and ${\Aut(X_{\C})^0=\{\id_X\}}$. Then the image of the group $\Aut(X_{\R})$ in~$\Diff^1(X(\R))$ is a discrete subgroup.
\end{prop}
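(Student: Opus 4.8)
The statement to prove is: if $X$ is a real algebraic surface with $\alpha(X)>0$ and $\Aut(X_\C)^0=\{\id_X\}$, then the image of $\Aut(X_\R)$ in $\Diff^1(X(\R))$ is discrete. The idea is to argue by contradiction: suppose the image is not discrete, so there is a sequence of automorphisms $f_k\in\Aut(X_\R)$, with $f_k\neq\id$ (and, passing to $f_k\circ g^{-1}$ for a fixed limit $g$, we may arrange) such that $(f_k)_\R\to\id_{X(\R)}$ in the $\mathcal{C}^1$-topology. I would first reduce to showing that such a sequence forces infinitely many of the $f_k$ to lie in $\Aut(X_\C)^0$, which is trivial by hypothesis, giving the contradiction. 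So the crux is: a sequence of real automorphisms whose real restrictions $\mathcal{C}^1$-converge to the identity must eventually be trivial, or at least eventually lie in the identity component.

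\textbf{Key steps.} First, I would use the Bochner--Montgomery theorem (quoted in the excerpt): $\Aut(X_\C)$ is a complex Lie group, hence locally compact, so its discrete subgroups are closed and any sequence $f_k\to\id$ in $\Aut(X_\C)$ is eventually constant equal to $\id$ once $\Aut(X_\C)^0=\{\id\}$. Therefore it suffices to show that $\mathcal{C}^1$-convergence of $(f_k)_\R$ to $\id_{X(\R)}$ implies convergence of $f_k$ to $\id_X$ in $\Aut(X_\C)$. The natural route: the action of $\Aut(X_\C)$ on $H^{1,1}(X_\C;\R)$ (equivalently on $\NS(X_\C;\Z)$) has kernel of bounded index over the identity component in good cases, but more robustly, I would bound the \emph{dynamical degree}. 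By Corollary~\ref{coro-lehmer}, if $(f_k)_\R$ has positive entropy then $\h((f_k)_\R)\geq\alpha(X)\log(\lambda_{10})>0$; but $\mathcal{C}^1$ (indeed even $\mathcal{C}^0$) convergence of $(f_k)_\R$ to the identity forces $\h((f_k)_\R)\to 0$ by upper semicontinuity of topological entropy on $\Diff^1$ — wait, entropy is not $\mathcal{C}^1$-continuous in general; instead I would use that $\ltrivert \mathrm{d}(f_k)_\R^n\rtrivert_\infty$ is controlled: $\mathcal{C}^1$-closeness to $\id$ gives $\ltrivert\mathrm{d}(f_k)_\R\rtrivert_\infty\to 1$, hence for each fixed $n$ the $n$-th iterate stays close to $\id$, and a careful application of the volume-growth lower bound (Corollary~\ref{minorer-mvolr}) together with Theorem~\ref{iteres-mvolc} pins down $\lambda(f_k)$. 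Concretely: for $\lambda<\exp(\h((f_k)_\R))$ there is a curve with $\length((f_k)_\R^n(\gamma))\geq C\lambda^n$, while $\mathcal{C}^1$-closeness bounds $\length((f_k)_\R^n(\gamma))\leq \ltrivert\mathrm{d}(f_k)_\R\rtrivert_\infty^n\,\length(\gamma)$; combining forces $\h((f_k)_\R)\leq\log\ltrivert\mathrm{d}(f_k)_\R\rtrivert_\infty\to 0$. With $\alpha(X)>0$, Corollary~\ref{coro-lehmer} then forces $\h((f_k)_\R)=0$, hence $\h((f_k)_\C)=\log\lambda(f_k)=0$ (Gromov--Yomdin, Theorem~\ref{theo-gyc}), so $f_k$ is of elliptic type, meaning some iterate is isotopic to the identity, i.e.\ $f_k$ lies in $\Aut(X_\C)^0=\{\id\}$ up to finite order — refine this to conclude $f_k=\id$ eventually.

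\textbf{The main obstacle.} The delicate point is passing from ``$f_k$ elliptic'' to ``$f_k=\id$''. Elliptic type only gives that some power of $f_k$ is in $\Aut(X_\C)^0$, and a priori the orders could be unbounded, or $f_k$ could be a non-trivial finite-order automorphism of $X_\C$ acting trivially near $X(\R)$. To handle this I would use that a real automorphism fixing $X(\R)$ pointwise (or $\mathcal{C}^1$-close to doing so, hence by rigidity eventually fixing it) must be the identity on $X_\C$: an automorphism of $X_\C$ that is the identity on the totally real submanifold $X(\R)$ — which is maximal-dimensional and not contained in any proper complex subvariety — agrees with $\id_X$ on an open set by analytic continuation (its graph and the diagonal are complex subvarieties of $X\times X$ meeting along the real-analytic set $\{(x,x):x\in X(\R)\}$ of real dimension $2=\dim_\C(X\times X)/1$... more carefully, the fixed locus of $f_k$ is a complex subvariety containing $X(\R)$, forcing it to be all of $X(\C)$). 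So once $k$ is large enough that $(f_k)_\R$ is $\mathcal{C}^1$-close enough to $\id$ and $f_k$ has bounded order (which follows from $\lambda(f_k)=1$ plus finiteness of torsion in the relevant arithmetic group, via the compactness argument of Remark after Theorem on elliptic type), the finitely many possibilities for $f_k$ near $\id$ in the Lie group $\Aut(X_\C)$, combined with $\Aut(X_\C)^0=\{\id\}$, force $f_k=\id$. This completes the contradiction and proves discreteness in $\Diff^1(X(\R))$.
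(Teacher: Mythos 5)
Your route is genuinely different from the paper's, which avoids entropy entirely, and it has a gap. From the bound $\h((f_k)_\R)\leq\log\ltrivert\mathrm{d}(f_k)_\R\rtrivert_\infty$ together with $\alpha(X)>0$ (the relevant inequality is really Theorem~\ref{entropie}, $\alpha(X)\leq\h(f_\R)/\h(f_\C)$ for loxodromic $f$, rather than Corollary~\ref{coro-lehmer}) you correctly deduce that eventually $\lambda(f_k)=1$. But $\lambda(f_k)=1$ only says $f_k$ is elliptic \emph{or parabolic}, and you treat only the elliptic case. For a parabolic $f_k$, the operator $f_k^*$ on $H^{1,1}(X;\R)$ is not semi-simple, does not lie in any compact subgroup, and has no finite-order iterate, so the compactness-of-torsion argument you use to bound the order of $f_k$ and reduce to finitely many candidates simply does not apply. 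Nothing in your argument excludes a sequence of distinct parabolic $f_k$ with $(f_k)_\R\to\id$ in $\mathcal{C}^1$.

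The paper's proof sidesteps this by never passing through entropy or through the type classification. For a very ample $D_0$ with $q$-divisible class, the one-iterate inequalities
\begin{equation*}
C\vol_\C(f_*D_0)^\alpha \leq \mvol_\R(f_*D_0) \leq \ltrivert\mathrm{d}f_\R\rtrivert_\infty\,\mvol_\R(D_0)
\end{equation*}
(the first from concordance, the second because lengths of curves in $X(\R)$ are multiplied by at most $\ltrivert\mathrm{d}f_\R\rtrivert_\infty$) show that if $f_\R$ is $\mathcal{C}^1$-close to $\id$ then $\vol_\C(f_*D_0)<M$ for a fixed $M$, so $f_*[D_0]$ ranges over a \emph{finite} subset of $\NS(X_\R;\Z)$; then the Lieberman--Fujiki theorem (the stabilizer in $\Aut(X_\R)$ of an ample class has finitely many components), combined with $\Aut(X_\C)^0=\{\id\}$, yields that $f$ belongs to a fixed finite set $\Gamma$, irrespective of type. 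That single volume bound is exactly what rules out parabolics and replaces your entire entropy detour. Your final step also needs care: $(f_k)_\R\to\id$ in $\mathcal{C}^1(X(\R))$ does not by itself imply $f_k\to\id$ in $\Aut(X_\C)$ (control of a holomorphic map on a totally real slice does not control it on a complex neighborhood), so ``the finitely many possibilities for $f_k$ near $\id$ in $\Aut(X_\C)$'' is not available to you directly. What actually closes the argument is the injectivity of $\Aut(X_\R)\hookrightarrow\Diff^1(X(\R))$ together with the finiteness of $\Gamma$, which lets one choose the $\mathcal{C}^1$-neighborhood of $\id$ small enough to meet the image of $\Gamma$ only in $\id$.
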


\begin{proof}
Fix $\alpha>0$ such that $\alpha\in\mathcal{A}(X)$. There are positive numbers $q$ and~$C$ such that any real ample divisor $D$ whose class is $q$-divisible satisfies 
\begin{equation}\label{truc}
\mvol_\R(D)\geq C\vol_\C(D)^\alpha.
\end{equation}
Let $D_0$ be such a divisor and let $M>1$ be such that  $CM^\alpha>\mvol_\R(D_0)$ (in particular, $\vol_\C(D_0)<M$).

\begin{lemm}\label{lemm-discret}
The set $\Gamma=\{f\in\Aut(X_\R)\,|\,\vol_\C(f_*D_0)\leq M\}$ is finite.
\end{lemm}

\begin{proof}[Proof of Lemma \ref{lemm-discret}]
Denote by $\Theta\subset\NS(X_\R;\Z)$ the set of classes of ample divisors~$D$ that satisfy $\vol_\C(D)\leq M$. This set is finite because such classes are in the compact set 
\begin{equation}
\{\theta\in\Nef(X_\R)\,|\,\theta\cdot[\kappa]\leq M\},
\end{equation}
where $\kappa$ denotes the K\"ahler form on $X$.

By \cite[2.2]{lieberman} or \cite[4.8]{fujiki}, the subgroup
\begin{equation}
\{f\in\Aut(X_\R)\,|\,f_*[D_0]=[D_0]\}
\end{equation}
has finitely many connected components, so in our case it is finite. It follows that the set
\begin{equation}
\Gamma=\{f\in\Aut(X_\R)\,|\,f_*[D_0]\in\Theta\}
\end{equation}
is finite.
\end{proof}

As $\Gamma$ is finite and
\begin{equation}
\frac{CM^\alpha}{\mvol_\R(D_0)}>1=\ltrivert{\rm d}(\id_{X(\R)})\rtrivert_\infty,
\end{equation}
we can find a neighborhood $U$ of $\id_{X(\R)}$ in $\Diff^1(X(\R))$ such that
\begin{enumerate}
\item $U\cap\Gamma=\{\id_{X(\R)}\}$;
\item for all $g\in U$, $\displaystyle \ltrivert{\rm d}g\rtrivert_\infty<\frac{CM^\alpha}{\mvol_\R(D_0)}$.
\end{enumerate}
Let $f$ be a real automorphism of $X$ such that the restricted map 
\begin{equation}
f_\R:X(\R)\to X(\R)
\end{equation}
is in $U$. Since the length of a curve is at most multiplied by $\ltrivert{\rm d}f_\R\rtrivert_\infty$ when we take its image by $f$, we obtain
\begin{equation}
\mvol_\R(f_*D_0) \leq \ltrivert{\rm d}f_\R\rtrivert_\infty \mvol_\R(D_0) < CM^\alpha.
\end{equation}
On the other hand, 
\begin{equation}
\mvol_\R(f_*D_0)\geq C\vol_\C(f_*D_0)^\alpha,
\end{equation}
so we get
\begin{equation}
\vol_\C(f_*D_0)<M,
\end{equation}
hence $f\in\Gamma$. Then by hypothesis on $U$, we get $f_\R=\id_{X(\R)}$. This implies that~$\Aut(X_\R)$ is a discrete subgroup of $\Diff^1(X(\R))$.
\end{proof}

\selectlanguage{frenchb}


\part{Ensemble de Fatou des automorphismes}\label{part3}


\chapter{Hyperbolicité et ensemble de Fatou}\label{chap-hyp}

Dans ce chapitre, on montre que l'ensemble de Fatou d'un automorphisme~$f$ de type loxodromique, sur une surface kählérienne compacte, est hyperbolique au sens de Kobayashi, modulo d'éventuelles courbes périodiques (voir le théorème \ref{fat-hyp} pour un énoncé précis). Ce résultat s'appuie sur un théorème de Dinh et Sibony (théorème \ref{thm-ds}) concernant les courants positifs fermés à potentiels continus, que l'on applique aux courants $T^+_f$ et $T^-_f$ définis au chapitre \ref{chap-dilat}.

Je commence par faire quelques rappels sur la pseudo-distance de Kobayashi et l'hyperbolicité (\textsection\ref{rap-hyp}). Je donne ensuite dans la section \ref{sec-ds} une démonstration du théorème de Dinh et Sibony sur les courants. Enfin dans le paragraphe \ref{sec-fat-hyp}, j'énonce et démontre le théorème principal sur l'hyperbolicité de l'ensemble de Fatou. Le cas où $f$ possède des courbes périodiques nécessite une adaptation de la preuve du théorème \ref{thm-ds} qui utilise le fait, démontré au~\textsection\ref{section-dilat+per}, que les potentiels des courants $T^+_f$ et $T^-_f$ induisent des fonctions continues sur la surface $X_0$ où l'on a contracté les courbes périodiques.


\section{Généralités sur les espaces hyperboliques}\label{rap-hyp}

Dans ce chapitre, on a besoin de considérer des \og variétés\fg~complexes qui peuvent admettre des singularités (c'est la cas de la surface $X_0$ donnée par le théorème \ref{theo-contr}). On parle dans ce cas d'espace analytique complexe plutôt que de variété complexe. Par définition, un atlas d'un tel espace est formé d'ouverts isomorphes au lieu d'annulation d'un nombre fini de fonctions holomorphes sur un ouvert~$\Omega$ de~$\C^N$.


\subsection{Pseudo-distance de Kobayashi}

Dans \cite{kobayashi}, l'auteur définit une pseudo-distance sur un espace analytique complexe $X$ de la manière suivante. Soient $x$ et $y$ deux points de $X$. Une \emph{chaîne de disques} reliant ces points est, par définition, une famille $\Psi=\left(\psi_k:\D\to X\right)_{k=1,\dotsc,n}$ de disques holomorphes avec des points marqués $a_k,b_k\in\D$, tels que
\begin{itemize}
\item $\psi_k(b_k)=\psi_{k+1}(a_{k+1})$ ;
\item $\psi_1(a_1)=x$ et $\psi_n(b_n)=y$.
\end{itemize}
La longueur d'une telle chaîne est alors définie par
\begin{equation}
\longueur(\Psi)=\sum_{k=1}^n \dist_\D(a_k,b_k),
\end{equation}
où $\dist_\D$ désigne la distance de Poincaré sur le disque unité. On définit ensuite la \emph{pseudo-distance de Kobayashi} $\kob_X$ par
\begin{equation}
\kob_X(x,y)=\inf\left\{\longueur(\Psi)\,\big|\,\text{$\Psi$ chaîne de disques reliant $x$ à $y$}\right\}.
\end{equation}
La fonction $\kob_X:X\to\R^+\cup\{+\infty\}$ vérifie :
\begin{itemize}
\item $\kob_X(x,x)=0$ ;
\item symétrie : $\kob_X(x,y)=\kob_X(y,x)$ ;
\item inégalité triangulaire : $\kob_X(x,z)\leq\kob_X(x,y)+\kob_X(y,z)$ ;
\item $\kob_X(x,y)=+\infty$ exactement lorsque $x$ et $y$ ne sont pas dans la même composante connexe de $X$.
\end{itemize}
Autrement dit, en restriction à chaque composante connexe, $\kob_X$ vérifie tous les axiomes d'une distance, sauf éventuellement la séparation des points (d'où le nom de pseudo-distance).

\begin{defi}
On dit que $X$ est \emph{hyperbolique} lorsque $\kob_X$ est une distance en restriction à chaque composante connexe de $X$, \emph{i.e.} lorsque
\begin{equation}
\forall x,y\in X,\quad x\neq y \Rightarrow \kob_X(x,y)>0.
\end{equation}
\end{defi}

\begin{exem}
Lorsque $X$ est le disque unité $\D$, $\kob_\D$ coïncide avec la distance de Poincaré $\dist_\D$, et $\D$ est ainsi une variété hyperbolique. En revanche, sur $X=\C$, la pseudo-distance $\kob_\C$ est identiquement nulle, donc $\C$ n'est pas hyperbolique.
\end{exem}

La proposition suivante est immédiate d'après la définition :

\begin{prop}[Décroissance de la pseudo-distance de Kobayashi]
Soit $f:X\to Y$ une application holomorphe entre deux espaces analytiques complexes. Pour tout couple $(x,y)\in X^2$, on a
\begin{equation}
\kob_Y(f(x),f(y)) \leq \kob_X (x,y).
\end{equation}
\end{prop}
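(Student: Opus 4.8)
The statement to prove is the \emph{distance-decreasing property} of the Kobayashi pseudo-distance: if $f:X\to Y$ is holomorphic between complex analytic spaces, then $\kob_Y(f(x),f(y))\le\kob_X(x,y)$ for all $x,y\in X$. This is a purely formal consequence of the definition via chains of disks, so the plan is essentially to push chains forward along $f$ and observe that lengths do not increase.

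First I would fix $x,y\in X$ and take an arbitrary chain of holomorphic disks $\Psi=(\psi_k:\D\to X)_{k=1,\dots,n}$ with marked points $a_k,b_k\in\D$ realizing $\psi_k(b_k)=\psi_{k+1}(a_{k+1})$, $\psi_1(a_1)=x$, $\psi_n(b_n)=y$. The key move is to set $\psi_k'=f\circ\psi_k:\D\to Y$; since a composition of holomorphic maps is holomorphic, each $\psi_k'$ is a holomorphic disk in $Y$, and the family $\Psi'=(\psi_k')_{k=1,\dots,n}$ with the \emph{same} marked points $a_k,b_k$ is a chain of disks in $Y$. Indeed the matching conditions are preserved: $\psi_k'(b_k)=f(\psi_k(b_k))=f(\psi_{k+1}(a_{k+1}))=\psi_{k+1}'(a_{k+1})$, while $\psi_1'(a_1)=f(x)$ and $\psi_n'(b_n)=f(y)$, so $\Psi'$ connects $f(x)$ to $f(y)$.

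Next I would observe that because the marked points are literally unchanged, the Poincar\'e lengths agree term by term:
\begin{equation*}
\longueur(\Psi')=\sum_{k=1}^n\dist_\D(a_k,b_k)=\longueur(\Psi).
\end{equation*}
Hence $\kob_Y(f(x),f(y))\le\longueur(\Psi')=\longueur(\Psi)$. Taking the infimum over all chains $\Psi$ in $X$ connecting $x$ to $y$ gives $\kob_Y(f(x),f(y))\le\kob_X(x,y)$, which is the claim. (If $x$ and $y$ lie in different connected components of $X$ there is no chain, $\kob_X(x,y)=+\infty$, and the inequality is trivial.)

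There is no real obstacle here: the only point worth a word of care is that chains of disks are allowed to have length $n=0$ only in the degenerate sense $x=y$, and that the infimum defining $\kob_Y$ is over \emph{all} chains in $Y$, a set containing all the pushed-forward chains $\Psi'$, so the infimum over this smaller family dominates $\kob_Y(f(x),f(y))$ from above. One could also note the two standard corollaries, although they are not needed for the statement: biholomorphisms are isometries for the Kobayashi pseudo-distance, and $\kob$ restricted to an analytic subspace dominates the one of the ambient space — both used implicitly in the sequel when passing to the singular contracted model $X_0$.
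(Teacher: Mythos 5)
Your proof is correct and is exactly what the paper means by declaring the proposition \og imm\'ediate d'apr\`es la d\'efinition\fg: composing each disk of a chain in $X$ with $f$ yields a chain in $Y$ joining $f(x)$ to $f(y)$ with the same marked points, hence the same length, and passing to the infimum gives the inequality. No discrepancy with the paper's (unstated) argument.
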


\begin{coro}
Si $X$ est une variété hyperbolique, elle ne contient aucune courbe entière (non constante) $\varphi:\C\to X$.
\end{coro}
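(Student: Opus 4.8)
Le plan est de raisonner par contraposée, en se ramenant directement à la propriété de décroissance de la pseudo-distance de Kobayashi énoncée ci-dessus. Je supposerai donc qu'il existe une courbe entière non constante $\varphi:\C\to X$, et je montrerai que $X$ n'est pas hyperbolique ; comme $\varphi$ n'est pas constante, je pourrai choisir $a,b\in\C$ avec $\varphi(a)\neq\varphi(b)$.

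La première étape consiste à vérifier que $\kob_\C\equiv 0$ (fait déjà signalé dans l'exemple précédant l'énoncé). Pour cela, je fixerai $a,b\in\C$ et, pour tout $R>0$, je considérerai le disque holomorphe $\psi_R:\D\to\C$ donné par $\psi_R(z)=a+R(b-a)z$, muni des points marqués $0$ et $1/R$ : c'est une chaîne de disques (réduite à un seul disque) reliant $a$ à $b$, dont la longueur vaut $\dist_\D(0,1/R)$, laquelle tend vers $0$ lorsque $R\to+\infty$. On en déduit $\kob_\C(a,b)=0$ pour tous $a,b\in\C$.

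La seconde étape est une simple application de la décroissance de $\kob$ à l'application holomorphe $\varphi:\C\to X$ :
\begin{equation*}
\kob_X(\varphi(a),\varphi(b))\leq\kob_\C(a,b)=0.
\end{equation*}
Ainsi $\kob_X$ s'annule sur le couple $(\varphi(a),\varphi(b))$ alors que $\varphi(a)\neq\varphi(b)$, ce qui montre que $X$ n'est pas hyperbolique. Par contraposée, une variété (ou un espace analytique) hyperbolique ne contient aucune courbe entière non constante. Il n'y a pas de véritable obstacle dans cette preuve : l'unique point à justifier est la nullité de $\kob_\C$, qui est élémentaire.
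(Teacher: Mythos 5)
Votre preuve est correcte et suit exactement le chemin que le texte laisse implicite : le corollaire est énoncé sans démonstration, juste après la proposition de décroissance de la pseudo-distance, car l'argument est précisément celui que vous donnez ($\kob_\C\equiv 0$ via des chaînes de disques de longueur arbitrairement petite, puis décroissance de $\kob$ le long de $\varphi$). Rien à redire.
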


\begin{coro}
Tout ouvert d'une variété hyperbolique est hyperbolique.
\end{coro}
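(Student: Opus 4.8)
Le plan est d'exploiter directement la propri�t� de d�croissance de la pseudo-distance de Kobayashi �nonc�e juste au-dessus, appliqu�e � l'application d'inclusion. Soit $X$ une vari�t� hyperbolique et soit $U\subset X$ un ouvert. On observe d'abord que $U$, muni de l'atlas obtenu par restriction de celui de $X$, est encore un espace analytique complexe, de sorte que la pseudo-distance $\kob_U$ est bien d�finie. L'inclusion $\iota:U\hookrightarrow X$ est alors une application holomorphe, et la proposition de d�croissance donne, pour tout couple $(x,y)\in U^2$,
\begin{equation*}
\kob_X(x,y)=\kob_X(\iota(x),\iota(y))\leq\kob_U(x,y).
\end{equation*}

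Il reste � conclure : si $x\neq y$ dans $U$, l'hyperbolicit� de $X$ fournit $\kob_X(x,y)>0$, et l'in�galit� pr�c�dente entra�ne $\kob_U(x,y)>0$. Ainsi $\kob_U$ s�pare les points sur chaque composante connexe de $U$, ce qui est exactement la d�finition de l'hyperbolicit�. Aucune �tape ne pr�sente de difficult� s�rieuse : le seul point � ne pas oublier est la v�rification �l�mentaire que $\kob_U$ est au moins aussi grande que la restriction de $\kob_X$, qui est pr�cis�ment le contenu de la propri�t� de d�croissance (une cha�ne de disques dans $U$ est a fortiori une cha�ne de disques dans $X$, donc l'infimum d�finissant $\kob_U$ porte sur un ensemble plus petit de cha�nes).
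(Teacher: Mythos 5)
Votre preuve est correcte et suit exactement le chemin attendu : le corollaire est d�duit imm�diatement de la proposition de d�croissance de la pseudo-distance de Kobayashi appliqu�e � l'inclusion $\iota:U\hookrightarrow X$, ce qui donne $\kob_X\leq\kob_U$ sur $U$ et donc la s�paration des points d�s que $X$ est hyperbolique. Le papier ne d�taille pas cette preuve, mais c'est pr�cis�ment l'argument sous-entendu.
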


\begin{coro}
Si $f:X\to X$ est un automorphisme d'une variété complexe, alors $f$ est une isométrie pour la pseudo-distance $\kob_X$.
\end{coro}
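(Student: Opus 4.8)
Le plan est d'appliquer la propri�t� de d�croissance de la pseudo-distance de Kobayashi (la proposition qui pr�c�de imm�diatement l'�nonc�) � la fois � $f$ et � son inverse $f^{-1}$, ce dernier �tant lui aussi une application holomorphe puisque $f$ est un automorphisme. D'abord, appliquant la d�croissance � $f:X\to X$, on obtient, pour tout couple $(x,y)\in X^2$, l'in�galit�
\begin{equation*}
\kob_X(f(x),f(y))\leq\kob_X(x,y).
\end{equation*}
Ensuite, appliquant la m�me propri�t� � $f^{-1}:X\to X$, �valu�e aux points $f(x)$ et $f(y)$, on obtient
\begin{equation*}
\kob_X(x,y)=\kob_X\bigl(f^{-1}(f(x)),f^{-1}(f(y))\bigr)\leq\kob_X(f(x),f(y)).
\end{equation*}
En combinant ces deux in�galit�s, on conclut que $\kob_X(f(x),f(y))=\kob_X(x,y)$ pour tout couple de points, c'est-�-dire que $f$ est une isom�trie pour la pseudo-distance $\kob_X$.

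Il n'y a ici aucune difficult� s�rieuse : le r�sultat est une cons�quence formelle de la fonctorialit� de la construction de Kobayashi, et la seule \og observation\fg~� faire est que l'inverse d'un automorphisme est encore holomorphe. On pourrait d'ailleurs �galement l'�tablir directement au niveau des cha�nes de disques : si $\Psi=(\psi_k)_{k=1,\dotsc,n}$ est une cha�ne de disques (avec points marqu�s $a_k,b_k$) reliant $x$ � $y$, alors $(f\circ\psi_k)_{k=1,\dotsc,n}$, munie des m�mes points marqu�s, est une cha�ne reliant $f(x)$ � $f(y)$ de m�me longueur, et r�ciproquement en composant par $f^{-1}$ ; les ensembles de longueurs de cha�nes reliant $x$ � $y$ d'une part, $f(x)$ � $f(y)$ d'autre part, co�ncident donc, et il en va de m�me de leurs bornes inf�rieures, qui sont par d�finition $\kob_X(x,y)$ et $\kob_X(f(x),f(y))$.
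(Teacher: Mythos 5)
Votre preuve est correcte et correspond exactement � l'argument standard que le texte sous-entend en pr�sentant ce r�sultat comme un corollaire imm�diat de la proposition sur la d�croissance de la pseudo-distance de Kobayashi (le manuscrit ne d�taille pas la d�monstration). L'application de la d�croissance � $f$ puis � $f^{-1}$ est le seul ingr�dient n�cessaire, et la variante directe au niveau des cha�nes de disques que vous esquissez est �galement valable et revient au m�me.
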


En revanche, le résultat suivant est non trivial (voir \cite{lang-hyperbolic}) :

\begin{theo}[Barth]
Soit $X$ un espace analytique complexe connexe. Si $X$ est hyperbolique, alors la distance $\kob_X$ induit la topologie usuelle sur $X$.
\end{theo}
 

\subsection{Reparamétrisation de Brody}

Fixons une métrique hermitienne $h$ sur $X$.

\begin{defi}
Une \emph{courbe de Brody} est une courbe entière (non constante) $\varphi:\C\to X$ \og à vitesse majorée \fg, c'est-à-dire telle que la fonction ${\|\varphi'\|}_h$ est majorée sur $\C$.
\end{defi}

Notons que si $X$ est compact, cette définition ne dépend pas du choix de la métrique $h$. On introduit aussi la définition suivante (voir \cite[\textsection II]{lang-hyperbolic}) :

\begin{defi}
Soit $X$ un espace analytique complexe muni d'une métrique hermitienne $h$, et soit $Y$ un sous-ensemble de $X$. On dit que $Y$ est \emph{hyperboliquement plongé} dans $X$ s'il existe une fonction continue $\ell:X\to\R^{+*}$ telle que pour toute application holomorphe $\psi:\D\to Y$\footnote{C'est-à-dire une application holomorphe $\psi:\D\to X$ telle que $\psi(\D)\subset Y$.}, on ait
\begin{equation}
{\|\psi'(0)\|}_h\leq\ell(\psi(0)).
\end{equation}
\end{defi}

Cette définition ne dépend pas du choix de $h$. Lorsque $Y$ est relativement compact, on peut remplacer la fonction $\ell$ par une constante strictement positive. Un espace $X$ est hyperbolique si et seulement s'il est hyperboliquement plongé dans lui-même, et on a l'implication suivante :
\begin{equation}
Y\text{ hyperboliquement plongé dans }X \implies Y\text{ hyperbolique.}
\end{equation}

\begin{theo}[Brody]\label{theo-brody}
Soit $Y$ un sous-ensemble relativement compact d'un espace analytique complexe $X$. On suppose que $\b Y$ ne contient aucune courbe de Brody. Alors $Y$ est hyperboliquement plongé dans $X$ (donc $Y$ est hyperbolique).
\end{theo}

\begin{coro}
Un espace analytique complexe compact $X$ est hyperbolique si et seulement s'il ne contient aucune courbe de Brody.
\end{coro}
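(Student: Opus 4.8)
Le plan est de d�duire ce corollaire directement du th�or�me de Brody \ref{theo-brody}, en traitant s�par�ment les deux implications.

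Pour le sens direct, je supposerais $X$ hyperbolique et je montrerais qu'il ne contient aucune courbe enti�re non constante (ce qui interdit en particulier toute courbe de Brody). En effet, si $\varphi:\C\to X$ �tait une courbe enti�re non constante, on pourrait choisir $a\neq b$ dans $\C$ tels que $\varphi(a)\neq\varphi(b)$ ; la d�croissance de la pseudo-distance de Kobayashi donnerait alors $\kob_X(\varphi(a),\varphi(b))\leq\kob_\C(a,b)=0$, ce qui contredit l'hyperbolicit� de $X$. Ce sens ne pr�sente donc aucune difficult�.

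Pour la r�ciproque, je supposerais que $X$ ne contient aucune courbe de Brody, et j'appliquerais le th�or�me de Brody \ref{theo-brody} avec $Y=X$. Puisque $X$ est compact, il est en particulier relativement compact dans lui-m�me, et l'adh�rence $\b Y=\b X=X$ ne contient aucune courbe de Brody par hypoth�se. Le th�or�me de Brody assure alors que $Y=X$ est hyperboliquement plong� dans $X$. Or, comme rappel� plus haut, un espace analytique complexe est hyperbolique si et seulement s'il est hyperboliquement plong� dans lui-m�me ; on conclut donc que $X$ est hyperbolique.

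Le c\oe ur de la d�monstration n'est donc pas dans cet �nonc�, mais dans le th�or�me de Brody \ref{theo-brody} lui-m�me, dont le corollaire n'est qu'une sp�cialisation : la seule v�rification � faire est que la compacit� de $X$ permet bien de prendre $Y=X$ comme sous-ensemble relativement compact, avec $\b Y=X$. Aucune obstruction s�rieuse n'est donc attendue.
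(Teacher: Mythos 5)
Ta d�monstration est correcte et suit exactement la voie attendue : le sens direct d�coule du corollaire d�j� �nonc� (un espace hyperbolique ne contient aucune courbe enti�re non constante, en particulier aucune courbe de Brody), et la r�ciproque est l'application du th�or�me de Brody avec $Y=X$, la compacit� garantissant que $X$ est relativement compact dans lui-m�me avec $\b Y = X$. Le texte ne donne pas de d�monstration explicite de ce corollaire, le consid�rant comme imm�diat ; ton argument est bien celui que l'auteur sous-entend.
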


\begin{proof}[Démonstration du théorème de Brody](voir aussi \cite[\textsection III]{lang-hyperbolic})
Supposons que $Y$ ne soit pas hyperboliquement plongé dans $X$. Il existe alors une suite $\psi_n:\D\to  Y$ d'applications holomorphes telle que ${\|\psi_n'(0)\|}_h\geq n$. La reparamétrisation de Brody consiste à restreindre les applications $\psi_n$ à des disques plus petits, puis de les reparamétrer afin d'obtenir des applications $\varphi_n$ définies sur des disques de plus en plus grands avec contrôle des dérivées, de telle sorte que l'on puisse prendre une limite $\varphi:\C\to \b Y$ qui soit une courbe de Brody.

Pour $\psi:\D\to X$ et $z\in\D$, on note ${\ltrivert{\rm d}\psi(z)\rtrivert}_{\D,h}$ la norme d'opérateur de la différentielle vis-à-vis de la métrique de Poincaré ${\|\partial/\partial z\|}_{{\rm T}_z\D}=1/{(1-|z|^2)}$ sur ${\rm T}_z\D$ et de la métrique $h$ sur ${\rm T}_{\psi(z)}X$. On a donc :
\begin{equation}
\ltrivert{\rm d}\psi(z)\rtrivert_{\D,h} = {\left\|{\rm d}\psi(z)\cdot(1-|z|^2)\frac{\partial}{\partial z}\right\|}_h = (1-|z|^2){\lVert\psi'(z)\rVert}_h.
\end{equation}

Fixons un réel $0<r<1$. On note
\begin{equation}\begin{split}
\psi_{n,r}:\D&\to X\\
z&\mapsto \psi_n(rz).
\end{split}
\end{equation}
Pour tout $z\in\D$, on a :
\begin{equation}
\ltrivert{\rm d}\psi_{n,r}(z)\rtrivert_{\D,h}=\ltrivert{\rm d}\psi_n(rz)\rtrivert_{\D,h} \frac{{\left\|r\frac{\partial}{\partial z}\right\|}_{{\rm T}_{rz}\D}}{{\left\|\frac{\partial}{\partial z}\right\|}_{{\rm T}_z\D}}=\frac{r(1-|z|^2)}{1-|rz|^2}\ltrivert{\rm d}\psi_n(rz)\rtrivert_{\D,h}.
\end{equation}
En particulier, $\lim_{|z|\to 1}\ltrivert{\rm d}\psi_{n,r}(z)\rtrivert_{\D,h}=0$, donc la norme de la différentielle de $\psi_{n,r}$ atteint son maximum, noté $R_n$, en un point $z_n\in\D$. Notons que
\begin{equation}
R_n=\ltrivert{\rm d}\psi_{n,r}(z_n)\rtrivert_{\D,h}\geq\ltrivert{\rm d}\psi_{n,r}(0)\rtrivert_{\D,h}=r{\|\psi_n'(0)\|}_h\geq nr\underset{n\to+\infty}{\longrightarrow} +\infty.
\end{equation}

Pour $R>0$, on note $\D_R$ le disque de rayon $R$ (centré en $0$) dans $\C$, que l'on munit de la métrique ${\|\partial/\partial z\|}_{{\rm T}_z\D_R}=1/R(1-|z/R|^2)$. Soit $g_n:\D_{R_n}\to\D$ un isomorphisme tel que $g_n(0)=z_n$. On pose alors 
\begin{equation}
\varphi_n=\psi_{n,r}\circ g_n : \D_{R_n}\to Y.
\end{equation}
Comme $g_n$ est une isométrie entre $\D_{R_n}$ et $\D$, on a, pour tout $z\in\D_{R_n}$, 
\begin{align}
{\left\|\varphi_n'(z)\right\|}_h
&=
\frac{\ltrivert{\rm d}\varphi_n(z)\rtrivert_{\D_{R_n},h}}{R_n\left(1-\frac{|z|^2}{{R_n}^2}\right)}=\frac{\ltrivert{\rm d}\psi_{n,r}(g_n(z))\rtrivert_{\D,h}}{R_n\left(1-\frac{|z|^2}{{R_n}^2}\right)}\leq \frac{1}{1-\frac{|z|^2}{{R_n}^2}}\label{maj-der}\\
\text{et}\quad{\|\varphi_n'(0)\|}_h
&=
\frac{{\left\|{\rm d}\psi_{n,r}(z_n)\right\|}_\D}{R_n}=1.
\end{align}

Pour tout compact $K\subset\C$, les applications $\varphi_n$ sont définies sur $K$ pour $n$ assez grand, et la majoration (\ref{maj-der}) implique que les normes des dérivées $\varphi_n'$ sont uniformément majorées sur $K$. Par compacité de $X$, il existe alors une sous-suite qui converge uniformément sur $K$ vers une application à valeurs dans $\b Y$. Par un procédé d'extraction diagonale, on peut donc trouver une sous-suite $\left(\varphi_{n_k}\right)_{k\in\N}$ qui converge uniformément sur tout compact vers une fonction holomorphe $\varphi:\C\to\b Y$. De plus, on a pour tout $z\in\C$ :
\begin{equation}
{\|\varphi'(z)\|}_h=\lim_{k\to+\infty}{\|\varphi'_{n_k}(z)\|}_h\leq\lim_{k\to+\infty}\frac{1}{1-\frac{|z|^2}{{R_{n_k}}^2}}=1.
\end{equation}
Enfin, $\varphi$ est non constante car
\begin{equation}
\|\varphi'(0)\|=\lim_{k\to+\infty}\|\varphi'_{n_k}(0)\|=1.
\end{equation}
On a ainsi construit une courbe de Brody $\varphi:\C\to X$ à valeurs dans $\b Y$, ce qui contredit l'hypothèse.
\end{proof}


\section{Un théorème de Dinh et Sibony}\label{sec-ds}

Cette section a pour but de donner la démonstration d'un théorème non publié de Dinh et Sibony (voir la première version de \cite{ds-green} sur \url{http://arxiv.org/abs/math/0311322v1}), que l'on utilise dans la section suivante pour montrer l'hyperbolicité de l'ensemble de Fatou.

\begin{theo}[Dinh -- Sibony]\label{thm-ds}
Soit $(X,\kappa)$ une variété kählérienne compacte de dimension $d$, et soit $T$ un $(1,1)$-courant positif fermé à potentiels continus. On suppose qu'il existe une courbe de Brody~${\varphi:\C\to X}$ telle que~${\varphi^*T=0}$. Alors tout courant d'Ahlfors $S$ associé à $\varphi$ vérifie~${T\wedge S=0}$.
\end{theo}

\begin{proof}


\emph{\textbf{Premier cas : la fonction ${\sf a}_\varphi$ est bornée.}}

Dans ce cas, la proposition \ref{courbe-entiere-aire-finie} montre que l'image de $\varphi$ est contenue dans une courbe rationnelle $C=\b{\varphi(\C)}$. Si $S$ est un courant d'Ahlfors associé à $\varphi$, on a alors
\begin{equation}
S=\frac{1}{\aire_\kappa(C)}\{C\}.
\end{equation}
Pour montrer que $T\wedge S=0$, on prend $\theta$ une fonction $\mathcal{C}^\infty$ à support compact  dans un ouvert $U$ sur lequel $T=\ddc u$, avec $u$ continue. Le fait que $\varphi^*T=0$ se traduit par $\ddc(u\circ\varphi)=0$, et alors
\begin{align}
\langle T\wedge S,\theta \rangle
&= \langle S,u\ddc\theta\rangle\\
&= \frac{1}{\aire_\kappa(C)}\int_{\C} (u\circ\varphi) \,\ddc (\theta\circ\varphi)\\
&= \frac{1}{\aire_\kappa(C)}\int_{\C} (\theta\circ\varphi)\,\ddc (u\circ\varphi)\\
&= 0.
\end{align}
Comme $X$ est recouvert par de tels ouverts $U$, on a bien $T\wedge S=0$.\\


\emph{\textbf{Deuxième cas : la fonction ${\sf a}_\varphi$ est non bornée.}}

Fixons $\left(U^\alpha,V^\alpha,\psi^\alpha\right)_\alpha$ un recouvrement relativement compact de $X$ (cf. définition \ref{rec-compact}) tel que sur $V^\alpha$, on ait $T=\ddc u^\alpha$ avec $u^\alpha$ continue. Pour~$r>0$, on introduit la notation suivante :
\begin{align}
\D_r^\alpha &= \D_r\cap\varphi^{-1}(\b U^\alpha)=\left\{z\in\C\,\big|\,|z|<r\text{ et }\varphi(z)\in\b U^\alpha\right\}.
\end{align}

Soit $S=\lim S_{\varphi,r_n}$ un courant d'Ahlfors associé à $\varphi$. Nous allons montrer que $T\wedge S=0$, ce qu'il suffit de montrer en restriction à chaque ouvert~$U^\alpha$. Pour cela, soit $\theta$ une fonction $\mathcal{C}^\infty$ à support compact dans $U^\alpha$. On a :
\begin{equation}
\langle T\wedge S,\theta\rangle=\langle S,u^\alpha\ddc \theta\rangle=\lim_{n\to+\infty}I_n,
\end{equation}
où
\begin{equation}\label{223}
I_n=\langle S_{\varphi,n},u^\alpha\ddc\theta\rangle=\frac{1}{\ma_\varphi(r_n)}\int_0^{r_n}\frac{{\rm d}t}{t}\int_{\D_t^\alpha}(u^\alpha\circ\varphi) \ddc(\theta\circ\varphi).
\end{equation}
Pour alléger les notations, on note $\tilde u=u^\alpha\circ\varphi$ et $\tilde\theta=\theta\circ\varphi$. Puisque $\varphi^*T=0$, on a
\begin{equation}
\ddc \tilde u=0.
\end{equation}
En particulier, la fonction $\tilde u$ est harmonique, donc $\mathcal{C}^\infty$. On a l'égalité
\begin{align}
{\rm d}(\tilde\theta\,{\rm d^c} \tilde u)+{\rm d^c}(\tilde u\,{\rm d}\tilde\theta)
&= {\rm d}\tilde\theta\wedge{\rm d^c}\tilde u
+ \tilde\theta\ddc\!\tilde u
+ {\rm d}^c\tilde u\wedge{\rm d}\tilde\theta
+\tilde u{\rm d^cd}\tilde\theta\\
&= -\tilde u\ddc\!\tilde\theta,
\end{align}
qui donne, en remplaçant dans $(\ref{223})$ :
\begin{align}
I_n
&=
\frac{1}{\ma_\varphi(r_n)}\int_0^{r_n}\frac{{\rm d}t}{t}\int_{\D^\alpha_{t}}\tilde u\ddc\!\tilde\theta\\
&=
\frac{-1}{\ma_\varphi(r_n)}\int_0^{r_n}\frac{{\rm d}t}{t}\int_{\D^\alpha_{t}}\left({\rm d}(\tilde\theta\,{\rm d^c}\tilde u)+{\rm d^c}(\tilde u\,{\rm d}\tilde\theta)\right)\\
&=
\underbrace{\frac{-1}{\ma_\varphi(r_n)}\int_0^{r_n}\frac{{\rm d}t}{t}\int_{\partial\D^\alpha_{t}}\tilde\theta\,{\rm d^c}\tilde u}_{I_n^1}
+
\underbrace{\frac{-1}{\ma_\varphi(r_n)}\int_0^{r_n}\frac{{\rm d}t}{t}\int_{\D^\alpha_{t}}{\rm d^c}(\tilde u\,{\rm d}\tilde\theta)}_{I_n^2}.
\end{align}

Il suffit donc de majorer les deux intégrales $I_n^1$ et $I_n^2$. Commençons par la deuxième. Si $\tilde u\,{\rm d}\tilde\theta$ s'écrit en coordonnées $v_1\,{\rm d}x+v_2\,{\rm d}y$, on a
\begin{equation}
{\rm d^c}(\tilde u\,{\rm d}\tilde\theta)=\frac{-1}{2\pi}\left(
\frac{\partial v_1}{\partial x}+\frac{\partial v_2}{\partial y}
\right)
{\rm d}x\wedge{\rm d}y,
\end{equation}
puis par le théorème de Stokes
\begin{equation}
\int_{\D^\alpha_{r_n}}{\rm d^c}(\tilde u\,{\rm d}\tilde\theta)
=
\frac{-1}{2\pi}\int_{\partial\D^\alpha_{r_n}}(v_1\,{\rm d}y-v_2\,{\rm d}x).
\end{equation}
On obtient donc, en notant $M$ le maximum de la fonction $u^\alpha$ sur $\b U^\alpha$,
\begin{align}
|I_n^2|
&\leq
\frac{1}{2\pi\ma_\varphi(r_n)}\int_0^{r_n}\frac{{\rm d}t}{t}\int_{\partial\D_{t}^\alpha}\left\|\tilde u\,{\rm d}\tilde\theta\right\|\,{\rm d}\sigma_{t}\\
&\leq
\frac{M}{2\pi\ma_\varphi(r_n)}\int_0^{r_n}\frac{{\rm d}t}{t}\int_{\partial\D_{t}^\alpha}\ltrivert{\rm d}\theta(\varphi(z))\rtrivert_\kappa\|\varphi'(z)\|_\kappa\,{\rm d}\sigma_{t}\\
&=
\frac{M\ltrivert{\rm d}\theta\rtrivert_{\kappa,\infty}}{2\pi}\,\frac{\ml_\varphi(r_n)}{\ma_\varphi(r_n)}
\underset{n\to+\infty}{\longrightarrow}0.
\end{align}

D'autre part, comme
\begin{equation}
{\rm d^c}\tilde u=\frac{1}{2\pi}\left(\frac{\partial\tilde u}{\partial y}{\rm d}x-\frac{\partial\tilde u}{\partial x}{\rm d}y\right),
\end{equation}
on a la majoration suivante pour $I_n^1$ :
\begin{align}
\left|I_n^1\right|
&\leq
\frac{\ltrivert\theta\rtrivert_{\kappa,\infty}}{2\pi\ma_\varphi(r_n)}\int_{0}^{r_n}\frac{{\rm d}t}{t}\int_{\partial\D^\alpha_{t}}\|\nabla\tilde u\|\,{\rm d}\sigma_{t}.
\end{align}

\begin{lemm}\label{lemm-ds}
Il existe $C>0$ telle que pour tout $r\geq 1$, on ait
\begin{equation}\label{eq-lemm-ds}
\int_{\D_r^\alpha}\|\nabla\tilde u\|\,{\rm dvol}
\leq C\,r\,{\sf a}_\varphi(2r)^{1/2}.
\end{equation}
\end{lemm}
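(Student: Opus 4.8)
The plan is to combine the harmonicity of $\tilde u=u^\alpha\circ\varphi$ (which is precisely what $\varphi^*T=0$ provides: the continuous potential $u^\alpha$ pulls back to a \emph{harmonic}, hence smooth, function on a domain of $\C$), a Schwarz-type inequality controlling the oscillation of a holomorphic map over a half-disk by its Dirichlet integral over the full disk, and the Brody hypothesis (bounded speed, which in particular forces ${\sf a}_\varphi(r)\le Cr^2$). First I would reduce to a fixed chart and fix scales. Enlarging slightly, one finds a relatively compact $W^\alpha$ with $\b U^\alpha\subset W^\alpha\Subset V^\alpha$ whose image $\psi^\alpha(W^\alpha)$ is a coordinate ball; there $u:=u^\alpha$ is bounded, say $|u|\le M$, and uniformly continuous with a modulus of continuity $\omega$. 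After normalising $\|\varphi'\|_\kappa\le 1$ one fixes $\rho_0\in(0,1]$ such that $\varphi(z)\in\b U^\alpha$ implies $\varphi(D(z,\rho_0))\subset W^\alpha$; on such disks $\tilde u$ is harmonic and $|\tilde u|\le M$.

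Next, a covering argument. Pick a maximal $(\rho_0/8)$-separated subset $\{z_j\}$ of the compact set $K:=\overline{\D_r}\cap\varphi^{-1}(\b U^\alpha)\supset\D_r^\alpha$. Then the disks $D(z_j,\rho_0/4)$ cover $K$, the disks $D(z_j,\rho_0)$ have overlap bounded by a universal constant and lie in $\D_{2r}\cap\varphi^{-1}(W^\alpha)$, and their number is $\le Cr^2/\rho_0^2$. On each $D(z_j,\rho_0/2)$ the interior gradient estimate for the harmonic function $\tilde u$ gives $\|\nabla\tilde u\|\le (C/\rho_0)\,\omega_j$ on $D(z_j,\rho_0/4)$, where $\omega_j:=\mathrm{osc}_{D(z_j,\rho_0/2)}\tilde u$, so that
\[
\int_{D(z_j,\rho_0/4)}\|\nabla\tilde u\|\,{\rm dvol}\ \le\ C\rho_0\,\omega_j\ \le\ C\rho_0\,\omega\!\left(\diam_\kappa\varphi(D(z_j,\rho_0/2))\right).
\]
The key sub-estimate is $\diam_\kappa\varphi(D(z_j,\rho_0/2))^2\le C A_j$ with $A_j:=\int_{D(z_j,\rho_0)}\varphi^*\kappa$. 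In the chart the components $p_\ell=z_\ell\circ\varphi$ are holomorphic, and for a holomorphic $p$ on $D(0,1)$ a power-series computation gives $(\mathrm{osc}_{D(0,1/2)}p)^2\le C\int_{D(0,1)}|p'|^2\,{\rm dvol}$ with universal $C$; both sides being scale invariant, this rescales to each $D(z_j,\rho_0)$. Summing over $\ell$ and comparing $\varphi^*\kappa$ with $\sum_\ell|p_\ell'|^2$ on $\b W^\alpha$ yields the claim, hence $\omega_j\le\omega(CA_j^{1/2})$.

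It then remains to sum. By the covering properties,
\[
\int_{\D_r^\alpha}\|\nabla\tilde u\|\,{\rm dvol}\ \le\ \sum_j\int_{D(z_j,\rho_0/4)}\|\nabla\tilde u\|\,{\rm dvol}\ \le\ C\rho_0\sum_j\omega(CA_j^{1/2}),
\]
where $\sum_jA_j\le C\,{\sf a}_\varphi(2r)$ (bounded overlap, all disks inside $\D_{2r}$), the number of terms is $\le Cr^2$, each $A_j\le C$, and ${\sf a}_\varphi(2r)\le Cr^2$ since $\varphi$ is a Brody curve. Splitting the indices according to whether $A_j$ exceeds a threshold $\tau\asymp\rho_0\,{\sf a}_\varphi(2r)^{1/2}/r$ and using that $\omega$ is increasing with $\omega(0^+)=0$, one optimises $\tau$ to obtain $\sum_j\omega(CA_j^{1/2})\le C\,r\,{\sf a}_\varphi(2r)^{1/2}/\rho_0$, which is $(\ref{eq-lemm-ds})$.

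The main obstacle is exactly this last summation. The crude bounds (number of disks times $\omega$ of a fixed quantity, or $\sum_j\omega(CA_j^{1/2})\le\sum_j\mathrm{const}=Cr^2$) only give the trivial estimate $\int_{\D_r^\alpha}\|\nabla\tilde u\|\le Cr^2$, which is insufficient when ${\sf a}_\varphi(r)=o(r^2)$: one must genuinely exploit that disks over which $\varphi$ moves slowly carry a negligible amount of gradient — this is the point of the Schwarz-type sub-estimate of the previous paragraph — and balance it against the number of disks, using the Brody bound ${\sf a}_\varphi(2r)\le Cr^2$ together with the uniform continuity of $u^\alpha$. This trade-off, and in particular choosing the threshold $\tau$ so that it lands precisely on the exponent $1/2$ and the factor $r$, is where the argument is technically delicate, and is also where the regularity of the potential (continuity, a fortiori H\"older continuity in the application to $T^+_f$ and $T^-_f$) is used.
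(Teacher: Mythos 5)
Your strategy is genuinely different from the paper's, and a good part of it is correct: the harmonicity of $\tilde u$, the interior gradient-by-oscillation estimate on concentric disks, and the Schwarz-type sub-estimate $\diam_\kappa\varphi(D(z_j,\rho_0/2))^2\lesssim A_j$ are all sound. But the summation step you single out as the main obstacle is not merely delicate: it fails, and no choice of threshold $\tau$ can rescue it under the paper's hypotheses. After your covering you have $N\lesssim r^2/\rho_0^2$ indices with $\sum_j A_j\lesssim A:={\sf a}_\varphi(2r)$ and $A_j\lesssim 1$, and you need $\sum_j\omega(CA_j^{1/2})\lesssim rA^{1/2}/\rho_0$. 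Take the equidistributed test case $A_j\equiv A/N\asymp \rho_0^2 A/r^2$: then
\begin{equation}
\sum_j\omega\bigl(CA_j^{1/2}\bigr)\ \asymp\ \frac{r^2}{\rho_0^2}\,\omega\!\Bigl(C\,\frac{\rho_0 A^{1/2}}{r}\Bigr),
\end{equation}
and comparing with the target $rA^{1/2}/\rho_0$ forces $\omega(Cs)\lesssim s$ with $s=\rho_0 A^{1/2}/r$, i.e.\ a \emph{Lipschitz} modulus. (And indeed, if $\omega$ is Lipschitz the bound follows directly from Cauchy--Schwarz: $\sum_j A_j^{1/2}\le N^{1/2}(\sum_j A_j)^{1/2}\lesssim rA^{1/2}/\rho_0$; the threshold device is not needed and is in fact weaker.) But Theorem \ref{thm-ds} assumes only \emph{continuous} potentials, and in the application to $T^+_f$, $T^-_f$ one has H\"older exponents $<1$ (Theorem \ref{theo-cds}). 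With $\omega(t)\lesssim t^\gamma$, $\gamma<1$, the best you get is $\sum_j A_j^{\gamma/2}\le N^{1-\gamma/2}(\sum_j A_j)^{\gamma/2}\lesssim r^{2-\gamma}A^{\gamma/2}$, which beats $rA^{1/2}$ only when $A\gtrsim r^2$, i.e.\ precisely when the trivial bound $\int\|\nabla\tilde u\|\lesssim r^2$ already suffices. So the estimate you need genuinely requires regularity beyond what is available.

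The paper's proof circumvents this by never invoking a modulus of continuity of $u^\alpha$: it uses only \emph{boundedness}. The gradient is controlled pointwise by the Harnack inequality, $\|\nabla\tilde u(z)\|\le K/\dist(z,\partial\Omega)$ with $K$ a fixed constant, and the integral is then handled by Fubini over the sublevel sets $\D_r^\alpha(\delta)=\{z\in\D_r^\alpha\mid\dist(z,\partial\Omega)<\delta\}$. The geometric input (the Affirmation) is $\aire(\D_r^\alpha(\delta))\lesssim\delta^2\,{\sf a}_\varphi(2r)$ for every $\delta>0$, proved by a covering of $\D_r$ by disks of radius $\delta$ --- not a fixed $\rho_0$: a disk on which $\varphi$ spreads area below a fixed threshold $\eta$ cannot touch $\D_r^\alpha(\delta)$ (by the area--diameter theorem \ref{aire-diam}), and the number of disks with area $\ge\eta$ is $\lesssim{\sf a}_\varphi(2r)/\eta$ by bounded overlap. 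Your single-scale covering cannot mimic this: the large values of $\|\nabla\tilde u\|$ are concentrated near $\partial\Omega$, and the set of points within distance $\delta$ shrinks as $\delta^2\,{\sf a}_\varphi(2r)$, which is exactly what cancels the $1/\delta$ from Harnack once $\Delta$ is optimised. To repair your argument you would have to trade the oscillation estimate for the Harnack bound and adopt the $\delta$-dependent covering, essentially reproducing the paper's proof, or else restrict the statement to Lipschitz potentials, which is not what Theorem \ref{fat-hyp} requires.
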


\begin{proof}[Démonstration du lemme]
On note $\Omega=\varphi^{-1}(V^\alpha)$ le domaine de définition de $\tilde u$. Comme $\tilde u$ est harmonique sur $\Omega$, l'inégalité de Harnack (voir par exemple \cite[p. 28]{garnett-marshall}) implique l'existence de $K>0$ tel que
\begin{equation}\label{ineg-harnack}
\forall z\in \varphi^{-1}(\b U^\alpha), \quad \|\nabla\tilde u(z)\|\leq \frac{K}{\dist(z,\partial\Omega)}.
\end{equation}
Il suffit donc de majorer l'intégrale
\begin{equation}
I(r)=\int_{\D^\alpha_r}\frac{{\rm dvol}(z)}{\dist(z,\partial\Omega)}.
\end{equation}

Pour $\delta>0$, posons
\begin{align}
&\D_r^\alpha(\delta)=\{z\in\D^\alpha_r\,|\,\dist(z,\partial\Omega)<\delta\}.
\end{align}
D'après le théorème de Fubini, on a, pour tout $\Delta>0$,
\begin{align}
\int_{0}^{\Delta}\aire(\D_r^\alpha(\delta))\,\frac{{\rm d}\delta}{\delta^2}
&=\int_{\D_r^\alpha(\Delta)}{\rm dvol}(z)\int_{\dist(z,\partial\Omega)}^\Delta\frac{{\rm d}\delta}{\delta^2}\\
&=\int_{\D_r^\alpha(\Delta)}\left(\frac{1}{\dist(z,\partial\Omega)}-\frac{1}{\Delta}\right)\,{\rm dvol}(z).
\end{align}
On en déduit que
\begin{align}
I(r)
&= \int_{\D_r^\alpha\backslash\D_r^\alpha(\Delta)}\frac{{\rm dvol}(z)}{\dist(z,\partial\Omega)}
+ \int_{\D_r^\alpha(\Delta)}\frac{{\rm dvol}(z)}{\dist(z,\partial\Omega)}\\
&\leq \int_{\D_r^\alpha\backslash\D_r^\alpha(\Delta)}\frac{{\rm dvol}}{\Delta}
+ \int_{\D_r^\alpha(\Delta)}\frac{{\rm dvol}}{\Delta}
+ \int_{0}^{\Delta}\aire(\D_r^\alpha(\delta))\,\frac{{\rm d}\delta}{\delta^2}\\
&= \frac{1}{\Delta}\,\aire(\D_r^\alpha)
+ \int_{0}^{\Delta}\aire(\D_r^\alpha(\delta))\,\frac{{\rm d}\delta}{\delta^2}\\
&\leq \frac{\pi r^2}{\Delta}
+ \int_{0}^{\Delta}\aire(\D_r^\alpha(\delta))\,\frac{{\rm d}\delta}{\delta^2}.
\label{maj-inter}
\end{align}

\begin{enonce}{Affirmation}\label{affir}
Il existe $M>0$ tel que
\begin{equation}
\frac{\aire(\D_r^\alpha(\delta))}{\delta^2}\leq M\,{\sf a}_\varphi(2r)
\quad\quad
\forall\delta>0,~\forall{r\geq 1}.
\end{equation}
\end{enonce}

\begin{proof}[Démonstration de l'affirmation]
Pour tout $r\geq 1$ et $\delta\geq r/4$, on a
\begin{equation}
\frac{\aire(\D_r^\alpha(\delta))}{\delta^2}
\leq\frac{\pi r^2}{\delta^2}
\leq 16\pi
\leq \frac{16\pi}{{\sf a}_\varphi(2)}\,{\sf a}_\varphi(2r).
\end{equation}

Dans la suite, on suppose donc $0<\delta<r/4$. Pour tout $r\geq 1$, on peut trouver un recouvrement de $\D_r$ par un nombre fini de disques $\D(z_i,\delta)$ tel que tout point soit dans au plus 36 disques $\D(z_i,3\delta)$. Comme chaque $z_i$ est dans~$\D_{r+\delta}\subset\D_{5\delta/4}$, on a donc
\begin{equation}
\D(z_i,3\delta)\subset\D_{2r}.
\end{equation}
On applique alors le théorème \ref{aire-diam} sur la comparaison aire-diamètre : il existe~${\eta>0}$ tel que
\begin{equation}
\aire_\kappa(\varphi(\D(z_i,3\delta)))<\eta \implies \diam_\kappa(\varphi(\D(z_i,2\delta)))<\dist_\kappa(\b U^\alpha, \partial V^\alpha).
\end{equation}

On pose $\{z_i\}=\{a_i\}\cup\{b_i\}$, avec
\begin{equation}
\aire_\kappa(\varphi(\D(a_i,3\delta)))\geq\eta
\quad \text{et} \quad
\aire_\kappa(\varphi(\D(b_i,3\delta)))<\eta.
\end{equation}
Montrons que les disques $\D(b_i,\delta)$ n'intersectent pas $\D_r^\alpha(\delta)$. En effet, soit~${z\in\D_r^\alpha\cap\D(b_i,\delta)}$. Comme $\varphi(z)\in \b U^\alpha\cap\varphi(\D(b_i,2\delta))$ et
\begin{equation}
{\diam_\kappa(\varphi(\D(b_i,2\delta)))<\dist_\kappa(\b U^\alpha, \partial V^\alpha)},
\end{equation}
on a alors $\varphi(\D(b_i,2\delta))\cap\partial V^\alpha=\emptyset$, et donc $\partial\D(b_i,2\delta)\cap\partial\Omega=\emptyset$. On en déduit que
\begin{equation}
\dist(z,\partial\Omega)
> \dist(z,\partial\D(b_i,2\delta))
> \delta,
\end{equation}
et donc $z\notin\D_r^\alpha(\delta)$.

Par conséquent, $\D_r^\alpha(\delta)$ est recouvert par les disques $\D(a_i,\delta)$. Soit $N$ le cardinal des $a_i$. Comme $\D(a_i,3\delta)\subset\D_{2r}$ et que chaque point de $\D_{2r}$ est dans au plus 36 de ces disques, on a
\begin{equation}
N\eta
\leq \sum_i\aire_\kappa(\varphi(\D(a_i,3\delta)))
\leq 36\,{\sf a}_\varphi(2r).
\end{equation}
On en déduit que
\begin{equation}
\aire(\D_r^\alpha(\delta))
\leq \sum_i\aire(\D(a_i,\delta))
= N\pi\delta^2
\leq \frac{36\pi}{\eta}\,{\sf a}_\varphi(2r)\,\delta^2.
\end{equation}
L'affirmation est ainsi démontrée, en prenant $M=\max(16\pi/{\sf a}_\varphi(2),36\pi/\eta)$.
\end{proof}

Revenons à la preuve du lemme \ref{lemm-ds}. On déduit de l'affirmation la majoration suivante :
\begin{align}
\int_{0}^{\Delta}\aire(\D_r^\alpha(\delta))\,\frac{{\rm d}\delta}{\delta^2}
\leq M\,{\sf a}_\varphi(2r)\,\Delta.
\end{align}
En reportant dans $(\ref{maj-inter})$, on obtient
\begin{equation}
I(r) \leq \frac{\pi r^2}{\Delta}+M\,{\sf a}_\varphi(2r)\,\Delta,
\end{equation}
qui est valable pour tout $\Delta>0$. En prenant $\Delta=r/({\sf a}_\varphi(2r))^{1/2}$, cette inégalité donne
\begin{equation}
I(r) \leq (\pi+M)\,r\,{\sf a}_\varphi(2r).
\end{equation}
Ceci achève la preuve du lemme \ref{lemm-ds}.
\end{proof}

Reprenons le cours de la démonstration du théorème \ref{thm-ds}. On voulait montrer que $\lim_{n\to+\infty}I_n^1=0$, avec
\begin{equation}\label{in1}
\left|I_n^1\right|
\leq \frac{\ltrivert\theta\rtrivert_{\kappa,\infty}}{2\pi\ma_\varphi(r_n)}\int_{0}^{r_n}\frac{{\rm d}t}{t}\int_{\partial\D^\alpha_{t}}\|\nabla\tilde u\|\,{\rm d}\sigma_{t}.
\end{equation}

Posons $K_n=\mathbb{E}(\log_2(r_n))$. Grâce au lemme \ref{lemm-ds}, on obtient la majoration suivante :
\begin{align}
\int_1^{r_n}\frac{{\rm d}t}{t}\int_{\partial\D_t^\alpha}\|\nabla\tilde u\|\,{\rm d}\sigma_t
&\leq \sum_{k=0}^{K_n}\int_{r_n/2^{k+1}}^{r_n/2^k}\frac{{\rm d}t}{r_n/2^{k+1}}\int_{\partial\D_t^\alpha}\|\nabla\tilde u\|\,{\rm d}\sigma_t\\
&\leq 2 \sum_{k=0}^{K_n}\frac{1}{r_n/2^k}\int_{\D_{r_n/2^k}^\alpha}\|\nabla\tilde u\|\,{\rm dvol}\\
&\leq 2C\sum_{k=0}^{K_n}{\sf a}_\varphi(r_n/2^{k-1})^{1/2}\\
&\leq 2C\sum_{k=0}^{+\infty}\frac{2^{k-1}}{r_n}\int_{r_n/2^{k-1}}^{r_n/2^{k-2}}{\sf a}_\varphi(t)^{1/2}\,{\rm d}t\\
&\leq 4C\int_0^{4r_n}\frac{{\sf a}_\varphi(t)^{1/2}}{t}\,{\rm d}t\\
&= 4C{\sf mra}_\varphi(4r_n),
\end{align}
où l'on a posé 
\begin{equation}
\begin{split}
{\sf mra}_\varphi : \R^+ &\to \R^+\\
r &\to \int_0^{r}\frac{{\sf a}_\varphi(t)^{1/2}}{t}\,{\rm d}t.
\end{split}
\end{equation}

Comme $\varphi$ est une courbe de Brody, sa vitesse est majorée, disons par~${V>0}$, et on a alors ${\sf a}_\varphi(r)\leq V^2\pi r^2$ pour tout $r>0$. On en déduit que~${{\sf mra}_\varphi(r)\leq V\pi^{1/2}r}$, et donc
\begin{equation}
\liminf_{r\to+\infty}\frac{{\sf mra}_\varphi(4r)}{{\sf mra}_\varphi(r)}\leq 4.
\end{equation}
Quitte à extraire une suite de $(r_n)$, on peut donc supposer que
\begin{equation}
{\sf mra}_\varphi(4r_n)\leq 5\,{\sf mra}_\varphi(r_n).
\end{equation}
On en déduit que
\begin{equation}
\int_1^{r_n}\frac{{\rm d}t}{t}\int_{\partial\D_t^\alpha}\|\nabla\tilde u\|\,{\rm d}\sigma_t
\leq 20C\,{\sf mra}_\varphi(r_n).
\end{equation}
Il existe donc une constante $C'>0$ telle que
\begin{equation}
\int_0^{r_n}\frac{{\rm d}t}{t}\int_{\partial\D_t^\alpha}\|\nabla\tilde u\|\,{\rm d}\sigma_t
\leq C'\,{\sf mra}_\varphi(r_n),
\end{equation}
et donc d'après $(\ref{in1})$, on obtient :
\begin{equation}\label{quotientintegrales}
|I_n^1| \leq \frac{C'\ltrivert\theta\rtrivert_{\kappa,\infty}}{2\pi}\frac{\int_0^{r_n}t^{-1}{\sf a}_\varphi(t)^{1/2}\,{\rm d}t}{\int_0^{r_n}t^{-1}{\sf a}_\varphi(t)\,{\rm d}t}.
\end{equation}
Comme
\begin{equation}
\frac{t^{-1}{\sf a}_\varphi(t)^{1/2}}{t^{-1}{\sf a}_\varphi(t)} \mathop{\longrightarrow}\limits_{t\to+\infty} 0
\quad\text{et}\quad
\int_0^{r}t^{-1}{\sf a}_\varphi(t)\,{\rm d}t \mathop{\longrightarrow}\limits_{r\to+\infty} +\infty,
\end{equation}
le quotient des deux intégrales dans $(\ref{quotientintegrales})$ converge vers $0$. On en déduit que
\begin{equation}
\langle T\wedge S,\theta\rangle
= \lim_{n\to+\infty} (I_n^1+I_n^2) = 0.
\end{equation}
Ceci montre que $T\wedge S=0$ et achève ainsi la preuve du théorème \ref{thm-ds}.
\end{proof}


\section[L'ensemble de Fatou est hyperbolique]{L'ensemble de Fatou est hyperbolique modulo les courbes périodiques}\label{sec-fat-hyp}

Nous sommes maintenant en mesure de démontrer le résultat principal de ce chapitre :

\begin{theo}\label{fat-hyp}
Soit $f$ un automorphisme loxodromique d'une surface kählérien\-ne compacte $X$. Alors l'ensemble de Fatou est hyperbolique modulo les courbes périodiques, {i.e.} pour tous $x$ et $y$ dans l'ensemble de Fatou :
$$
\kob_{\Fat(f)}(x,y)=0
\quad\implies\quad
\left\{\begin{array}{l}\text{$x$ et $y$ sont reliés par une}\\\text{courbe périodique connexe.}\end{array}\right.
$$
En particulier, $\Fat(f)\backslash{\sf CP}(f)$ est hyperbolique, où ${\sf CP}(f)$ désigne l'union des courbes périodiques.
\end{theo}

\begin{proof}
On note $T^+=T^+_f$, $T^-=T^-_f$ et $\lambda=\lambda(f)$. On considère le courant positif fermé 
\begin{equation}
T=T^++T^-,
\end{equation}
qui est à potentiels continus (cf. chapitre \ref{chap-dilat}). Soit $\Omega$ l'ouvert $f$-invariant défini par
\begin{equation}
\Omega=X\backslash\Supp(T).
\end{equation}

\begin{lemm}\label{incl-fatou}
On a l'inclusion $\Fat(f)\subset\Omega$.
\end{lemm}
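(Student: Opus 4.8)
The statement to prove is the Lemma: $\Fat(f)\subset\Omega = X\setminus\Supp(T)$, where $T = T^+_f + T^-_f$.

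The plan is to argue that any point of the Julia set (the complement of the Fatou set) lies in the support of $T = T^+ + T^-$; equivalently, a point outside $\Supp(T)$ has a neighborhood on which the family $(f^n)_{n\in\Z}$ is normal. First I would recall that near a point $x_0\notin\Supp(T^+)$, the potential $u^+$ of $T^+$ is pluriharmonic (by the proposition characterizing $X\setminus\Supp(T)\cap U$ as the set where the potential is locally pluriharmonic), and likewise for $u^-$ near a point outside $\Supp(T^-)$. So outside $\Supp(T)$, \emph{both} potentials are pluriharmonic. The key point is the cohomological one: since $[T^+]=\theta^+_f$ and $[T^-]=\theta^-_f$ span a plane $\Pi$ on which the intersection form is hyperbolic of signature $(1,1)$, the class $\theta^+_f+\theta^-_f$ has strictly positive self-intersection, hence $T$ is a "big and nef-like" current whose cohomology class is in the interior of the positive cone. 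The idea is then that on $\Omega$ the two potentials $u^+,u^-$ are pluriharmonic but their difference controls the dynamics: $f^*T^+=\lambda T^+$ forces, at the level of continuous local potentials, $u^+\circ f = \lambda u^+ + (\text{pluriharmonic})$, so iterating contracts towards the zero set of $u^+$, and symmetrically $f^{-1}$ contracts towards the zero set of $u^-$.

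The core of the argument I would carry out is a normal families / Brody-type argument. Suppose $\Fat(f)\not\subset\Omega$; then there is a point $x_0\in\Fat(f)\cap\Supp(T)$. Being in the Fatou set, $x_0$ has a neighborhood $U$ on which $(f^n)$ is a normal family. I would like to produce from this a nonconstant entire curve $\varphi:\C\to X$ contained in the Fatou set with $\varphi^*T^+ = \varphi^*T^- = 0$: indeed, along a convergent subsequence $f^{n_k}\to h$ on $U$, one rescales near $x_0$ in the "unstable-like" direction to get a Brody curve; because $f^*T^+ = \lambda T^+$ with $\lambda>1$, pulling $T^+$ back along iterates and using continuity of potentials shows $T^+$ must vanish when restricted to such a limit curve (the mass gets multiplied by $\lambda^{-n_k}\to 0$ if we go forward, by $\lambda^{n_k}\to\infty$ if we go backward — so one direction kills $T^+$, the other kills $T^-$). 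This is exactly the situation of the Dinh–Sibony theorem (theoreme \ref{thm-ds}): a Brody curve $\varphi$ with $\varphi^*T^+ = 0$ forces every Ahlfors current $S$ of $\varphi$ to satisfy $T^+\wedge S = 0$, and similarly $T^-\wedge S = 0$. But then $T\wedge S = 0$, i.e. $[S]\cdot[T] = [S]\cdot\theta^+_f + [S]\cdot\theta^-_f = 0$; since $[S]$ is nef (theoreme \ref{ahlfors-nef}) with $[S]^2\ge 0$, and $\theta^+_f+\theta^-_f$ lies in the interior of the positive cone, the equality $[S]\cdot(\theta^+_f+\theta^-_f)=0$ forces $[S]=0$, contradicting that $S$ is a nonzero positive closed current of mass $1$.

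I expect the main obstacle to be the construction of the Brody curve inside the Fatou set with the vanishing pullback property — that is, extracting a genuine nonconstant entire curve from the normal family hypothesis and verifying $\varphi^*T^\pm = 0$ rigorously, handling the case where the limit map $h$ might be degenerate (constant, or with image a periodic curve). The periodic-curve case is precisely why the statement is "modulo ${\sf CP}(f)$" rather than outright hyperbolicity: when the limit curve lands in a periodic curve, theoreme \ref{dilat+per} must be invoked so that the potentials of $T^+$ and $T^-$ are constant along that curve, which is what allows $\varphi^*T^\pm=0$ to still hold (or be arranged) there; otherwise one would not get the contradiction. A secondary technical point is the careful bookkeeping of "potential rescaling": controlling $u^\alpha\circ f^n$ uniformly on compacts using the H\"older/continuity estimates from theoreme \ref{theo-cds}. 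Once the Brody curve is in hand, the rest follows cleanly from the already-proven theoremes \ref{thm-ds}, \ref{ahlfors-nef} and the signature $(1,1)$ structure of $\Pi$.
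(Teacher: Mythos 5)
There is a genuine gap, and it is in the very first step of your plan: you cannot extract a Brody curve from the Fatou-set hypothesis. Being in the Fatou set means the family $(f^n)_{n\in\Z}$ is normal near $x_0$, i.e.\ the derivatives $\| {\rm d}f^n(x_0)\|$ stay \emph{bounded} along subsequences. Brody reparametrization (Theorem \ref{theo-brody}) produces an entire curve precisely out of a sequence of discs whose derivatives \emph{blow up}; from a normal family you get nothing nonconstant. So the proposed ``rescale near $x_0$ in the unstable-like direction to get a Brody curve'' cannot get off the ground, and with it the whole reduction to Theorem \ref{thm-ds} collapses. (The Brody/Dinh--Sibony machinery is indeed used in this chapter, but for the genuinely harder Theorem \ref{fat-hyp} about hyperbolicity of $\Omega$; it is not what makes the inclusion $\Fat(f)\subset\Omega$ work.) Even setting that aside, the closing cohomological step is off: from $[S]\cdot[T]=0$ with $[T]^2>0$ and $[S]$ nef, Hodge index only gives $[S]^2\le 0$ hence $[S]^2=0$, not $[S]=0$; the paper has to keep going (via Theorem \ref{ahlfors-nef}) to conclude that $S$ is integration on a compact curve, and then that this curve is periodic.

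The actual argument is far more elementary and goes in the opposite direction. Fix $x\in\Fat(f)$ and a relatively compact neighborhood $U\subset\Fat(f)$; it suffices to show $T^+_{|U}=0$ (then do the same for $T^-$ with $f^{-1}$). By normality there is a subsequence $f^{n_k}\to h$ uniformly on $\b U$; shrinking $U$ and re-extracting, one finds an open $V\supset h(U)$ with $f^{n_k}(U)\subset V$ for all $k$ and a continuous local potential $u$ of $T^+$ on $V$. For a test form $\theta$ with $\Supp\theta\subset U$, invariance $f^*T^+=\lambda T^+$ gives
\begin{equation}
\langle T^+,\theta\rangle
=\frac{1}{\lambda^{n_k}}\,\langle T^+,(f^{-n_k})^*\theta\rangle
=\frac{1}{\lambda^{n_k}}\int_U (u\circ f^{n_k})\,\ddc\theta,
\end{equation}
and since $u\circ f^{n_k}\to u\circ h$ uniformly on $\Supp\theta$ while $\lambda^{n_k}\to+\infty$, the right-hand side tends to $0$. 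Hence $\langle T^+,\theta\rangle=0$ for all such $\theta$, i.e.\ $T^+_{|U}=0$. The essential inputs are thus the eigenvalue equation $f^*T^\pm=\lambda^{\pm 1}T^\pm$, continuity (hence local boundedness) of the potentials, and normality on the Fatou set --- none of the Brody or Ahlfors-current machinery is needed here.
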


\begin{proof}[Démonstration du lemme \ref{incl-fatou}]
Soit $x\in\Fat(f)$, et soit $U\subset\Fat(f)$ un voisinage ouvert relativement compact de $x$. On veut montrer que~${T^\pm_{|U}=0}$. Faisons-le pour $T^+$ (la preuve pour $T^-$ est la même, en remplaçant $f$ par~$f^{-1}$). Comme la famille~${{(f^n)}_{n\in\N}}$ est normale sur $\Fat(f)$, il existe une sous-suite~${{(f^{n_k})}_{k\in\N}}$ qui converge uniformément sur $\b U$ vers une application~$h$. Quitte à restreindre l'ouvert $U$ et à réextraire une sous-suite, on peut supposer qu'il existe un ouvert $V\supset h(U)$ tel que :
\begin{enumerate}
\item
$f^{n_k}(U)\subset V$ pour tout $k\in\N$ ;
\item
$T^+=\ddc  u$ sur $V$, avec $u$ continue ($u$ localement bornée suffit).
\end{enumerate}
Soit $\theta$ une $(1,1)$-forme $\mathcal{C}^\infty$ à support compact  dans $U$. Pour tout $k\in\N$, on~a :
\begin{align}
\langle T^+,\theta \rangle
&=
\frac{1}{\lambda^{n_k}} \langle {f^*}^{n_k}T^+,\theta \rangle\\
&=
\frac{1}{\lambda^{n_k}} \langle T^+,{f^*}^{-n_k}\theta \rangle\\
&=
\frac{1}{\lambda^{n_k}} \langle \ddc u,{f^*}^{-n_k}\theta \rangle \quad\quad\quad \text{car $\Supp({f^*}^{-n_k}\theta)\subset f^{n_k}(U)\subset V$}\\
&=
\frac{1}{\lambda^{n_k}} \int_K (u\circ f^{n_k}) \ddc \theta.
\end{align}
Par convergence dominée, l'intégrale converge vers $\int_U (u\circ h)\ddc \theta$, et on en déduit que $\langle T^+,\theta\rangle=0$ car $\lambda^{n_k}$ tend vers $+\infty$. Ceci achève la preuve du lemme \ref{incl-fatou}.
\end{proof}

Revenons à la démonstration du théorème \ref{fat-hyp}. Nous allons maintenant montrer que l'ouvert $\Omega$ est hyperbolique modulo les courbes périodiques, ce qui impliquera le même énoncé sur $\Fat(f)$, par décroissance de la pseudo-distance de Kobayashi.\\

\emph{Premier cas : $f$ ne possède aucune courbe périodique.} Supposons que $\Omega$ ne soit pas hyperboliquement plongé. Cela signifie qu'il existe des disques holomorphes $\psi_n:\D\to\Omega$ tels que $\|\psi_n'(0)\|_\kappa\to+\infty$, où $\kappa$ est une forme de Kähler sur $X$. La reparamétrisation de Brody permet alors de construire une courbe de Brody $\varphi:\C\to\b\Omega$, limite uniforme d'applications $\varphi_n:\D_{R_n}\to\Omega$. De plus, cette courbe de Brody vérifie
\begin{equation}
\varphi^*T=0.
\end{equation}
En effet, soit $U$ un ouvert sur lequel $T=\ddc u$, avec $u$ continue, et soit $\theta$ une fonction à support compact $K\subset\C$ dans $\varphi^{-1}(U)$. Pour $n$ assez grand, $\varphi_n$ est définie sur $K$ et $\varphi_n(K)\subset U$. Le théorème de convergence dominée donne alors
\begin{equation}
\langle\varphi_n^*T,\theta\rangle
=
\int_K(u\circ\varphi_n)\ddc\theta
\mathop{\longrightarrow}\limits_{n\to+\infty}
\int_K(u\circ\varphi)\ddc\theta
=
\langle \varphi^*T,\theta\rangle.
\end{equation}
Le membre de gauche est nul, car $\varphi_n$ est à valeurs dans $\Omega=X\backslash\Supp(T)$, et on en déduit que $\langle \varphi^*T,\theta\rangle=0$. Comme $\C$ est recouvert par de tels ouverts $\varphi^{-1}(U)$, ceci montre que $\varphi^*T=0$.

D'après le théorème de Dinh et Sibony \ref{thm-ds}, il existe alors un courant d'Ahlfors $S$ associé à $\varphi$ tel que $T\wedge S=0$, et donc $[T]\cdot[S]=0$. Comme~${[T]^2>0}$, le théorème de l'indice de Hodge implique que $[S]^2<0$, donc $[S]$ n'est pas une classe nef. D'après le théorème de Nevanlinna \ref{ahlfors-nef}, l'adhérence de l'image de $\varphi$ est donc une courbe compacte $C$, et on a $S=\{C\}$ à une constante près. Comme $[C]\cdot(\theta^+_f +\theta^-_f)=[S]\cdot[T]=0$ et $[C]\cdot\theta^\pm_f\geq 0$, on a $[C]\cdot\theta^\pm_f=0$. On en déduit que $C$ est une courbe périodique, d'après la proposition \ref{prop-per} : une contradiction. L'ouvert $\Omega$ est donc hyperboliquement plongé.\\

\emph{Cas général.} On considère le morphisme de contraction des courbes périodiques $\pi:X\to X_0$ (cf. théorème \ref{theo-contr}). Nous allons montrer, par un argument similaire au permier cas, que $\Omega_0:=\pi(\Omega)$ est hyperboliquement plongé dans $X_0$. Notons que ceci implique le résultat, car par décroissance de la pseudo-distance de Kobayashi, on a pour tous $x$ et $y$ dans~$\Omega$ :
\begin{equation}
\kob_\Omega(x,y)=0
\implies \kob_{\Omega_0}(\pi(x),\pi(y))=0
\implies \pi(x)=\pi(y).
\end{equation}

Supposons donc que $\Omega_0$ ne soit pas hyperboliquement plongé dans~$X_0$. Comme dans le premier cas, on peut construire une courbe de Brody ${\varphi_0:\C\to X_0}$, limite uniforme d'applications $\varphi_n:\D_{R_n}\to \Omega_0$. Cette courbe de Brody se relève en une courbe entière $\varphi:\C\to X$ telle que $\pi\circ\varphi=\varphi_0$. Mais $\varphi$ n'est pas nécessairement une courbe de Brody, et on ne peut donc pas appliquer directement le théorème \ref{thm-ds}. Cependant, on peut reprendre pas à pas la démonstration de ce théorème pour montrer que tout courant d'Ahlfors $S$ associé à $\varphi$ est tel que $T\wedge S=0$.

Le cas où l'aire est bornée n'utilise pas que $\varphi$ est de Brody, et peut donc être recopié tel quel. Pour le second cas, on utilise le théorème~\ref{dilat+per} démontré au chapitre \ref{chap-dilat}. D'après ce théorème, il est possible de choisir un recouvrement relativement compact~${(U^\alpha,V^\alpha,\psi^\alpha)}$ tel que 
\begin{enumerate}
\item $T_{|V^\alpha}=\ddc u^\alpha$, où $u^\alpha$ est un fonction continue sur $V^\alpha$ ;
\item le potentiel $u^\alpha$ est constant sur ${\sf CP}(f)\cap V^\alpha$, où ${\sf CP}(f)$ désigne l'union des courbes périodiques de $f$.
\end{enumerate}
Par conséquent, le potentiel $u^\alpha$ provient d'une fonction continue $u_0^\alpha$ sur~${V_0^\alpha:=\pi(V^\alpha)}$ telle que $u^\alpha=u_0^\alpha\circ\pi$. On a donc le diagramme commutatif suivant :

\begin{equation}
\xymatrix{
& V^\alpha \ar[d]_\pi \ar[r]^{u^\alpha} & \R \\
\C \supset \varphi^{-1}(V^\alpha) \ar[r]_-{\varphi_0} \ar@/^/[ru]^\varphi & V_0^\alpha \ar@/_/[ru]_{u_0^\alpha}
}
\end{equation}

Ainsi, la fonction $\tilde u=u^\alpha\circ\varphi$, utilisée dans la démonstration du théorème~\ref{thm-ds}, peut être vue aussi comme la composition
\begin{equation}
\tilde u = u_0^\alpha\circ\varphi_0.
\end{equation}
Dans la majoration du lemme \ref{lemm-ds}, qui fait intervenir uniquement cette fonction $\tilde u$, on peut donc utiliser la fonction $\varphi_0$ au lieu de $\varphi$. En effet, la seule chose à modifier est la démonstration de l'affirmation \ref{affir}. Pour ceci, au lieu d'appliquer le théorème \ref{aire-diam} de comparaison aire--diamètre sur $X$, on l'applique sur $X_0$ muni d'une métrique hermitienne, avec $\epsilon=\dist(\b U_0^\alpha,\partial V_0^\alpha)$. Le reste de la preuve est identique, et on obtient donc la majoration
\begin{equation}
\int_{\D_r^\alpha}\|\nabla\tilde u\|\,{\rm dvol}
\leq C\,r\,{\sf a}_{\varphi_0}(2r)^{1/2}
\end{equation}
qui remplace $(\ref{eq-lemm-ds})$. Puis en utilisant que $\varphi_0$ est à vitesse majorée, on obtient, au lieu de $(\ref{quotientintegrales})$, la majoration suivante :
\begin{equation}
|I_n^1| \leq \frac{C'\ltrivert\theta\rtrivert_{\kappa,\infty}}{2\pi}\frac{\int_0^{r_n}t^{-1}{\sf a}_{\varphi_{0}}(t)^{1/2}\,{\rm d}t}{\int_0^{r_n}t^{-1}{\sf a}_\varphi(t)\,{\rm d}t}.
\end{equation}
Or la métrique hermitienne sur $X_0$ peut être choisie de manière à ce que 
\begin{equation}
{\sf a}_\varphi\geq{\sf a}_{\varphi_0}.
\end{equation}
On réobtient donc ainsi l'inégalité $(\ref{quotientintegrales})$, et la fin de la preuve est la même.

Il est donc possible de construire un courant d'Ahlfors $S$ sur $X$, associé à la courbe entière $\varphi:\C\to X$, et tel que $T\wedge S=0$. Les mêmes arguments que dans le premier cas montrent alors que $S$ est, à une constante près, le courant d'intégration sur une courbe périodique irréductible $C$, et que $\varphi$ est à valeurs dans cette courbe. Mais alors $\varphi_0=\pi\circ\varphi$ serait constante : une contradiction.
\end{proof}


\section{Une caractérisation de l'ensemble de Fatou}

\begin{prop}\label{prop-caract}
Soit $X$ une surface kählérienne compacte, et soit $f$ un automorphisme loxodromique de $X$. On a alors l'égalité suivante
\begin{equation}
\Fat(f) = X\backslash \Supp(T^+_f+T^-_f)
\end{equation}
modulo les courbes périodiques de $f$, ce qui signifie que la différence symétrique\footnote{En fait, on a toujours l'inclusion de gauche à droite (cf. lemme \ref{incl-fatou}), donc la différence symétrique est une différence.} de ces deux ensembles est contenue dans l'union des courbes périodiques.
\end{prop}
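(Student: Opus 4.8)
The plan is to combine the inclusion already established in Lemma \ref{incl-fatou} with a converse inclusion that holds modulo periodic curves, the latter being an immediate consequence of the hyperbolicity result of Theorem \ref{fat-hyp}.

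First I would recall that Lemma \ref{incl-fatou} gives $\Fat(f) \subset \Omega$, where $\Omega = X \setminus \Supp(T^+_f + T^-_f)$. So the symmetric difference is just $\Omega \setminus \Fat(f)$, and it remains to show that every point of $\Omega$ which is \emph{not} in $\Fat(f)$ lies on a periodic curve. The key point is that $\Omega$ is $f$-invariant (since $f^* T^\pm_f$ is a positive multiple of $T^\pm_f$, hence $\Supp(T^+_f+T^-_f)$ is $f$-invariant), and that, by Theorem \ref{fat-hyp} and its proof, $\Omega$ is hyperbolically embedded in $X_0$ modulo the periodic curves: more precisely, the proof of Theorem \ref{fat-hyp} shows that $\Omega_0 := \pi(\Omega)$ is hyperbolically embedded in $X_0$, where $\pi : X \to X_0$ is the contraction of the periodic curves (Theorem \ref{theo-contr}).

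The heart of the argument is then a normal-families statement: if $U \Subset \Omega$ is a relatively compact open set disjoint from all periodic curves, then $\pi$ restricted to a neighbourhood of $U$ is a biholomorphism onto its image, and $\pi(U) \Subset \Omega_0$. Since $\Omega_0$ is hyperbolically embedded in the compact space $X_0$, Brody's theorem (Theorem \ref{theo-brody}, in the form of the $\ell$-function bound) gives a uniform bound $\|\psi'(0)\|_\kappa \leq C_U$ for all holomorphic disks $\psi : \D \to \Omega_0$ meeting $\pi(U)$ suitably; pulling back along $\pi^{-1}$ gives the same kind of bound on $U$. A standard equicontinuity argument (Montel/Ascoli together with the distance-decreasing property of the Kobayashi pseudo-distance, which here is a genuine distance inducing the topology by Barth's theorem) then shows that the family $(f^n)_{n \in \Z}$ is normal on $U$: any sequence $f^{n_k}$ has uniformly bounded derivatives on $U$ (because $f^{n_k}(U) \subset \Omega$, $f$ being an isometry for $\kob_\Omega$ up to the contraction, and the images land in a region where the hyperbolic metric dominates $\kappa$), hence a locally uniformly convergent subsequence. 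Therefore $U \subset \Fat(f)$. Since every point of $\Omega$ not on a periodic curve admits such a neighbourhood $U$, we conclude $\Omega \setminus \mathrm{CP}(f) \subset \Fat(f)$, which combined with Lemma \ref{incl-fatou} gives the claimed equality modulo $\mathrm{CP}(f)$.

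The main obstacle I anticipate is the passage from ``$\Omega_0$ hyperbolically embedded'' to ``$(f^n)$ normal on $U$'': one must be careful that the uniform derivative bound provided by the hyperbolic embedding is preserved under the iterates $f^n$, which requires knowing that all the images $f^n(U)$ stay in a fixed compact part of $\Omega$ where the comparison between $\kappa$ and the (pulled-back) Kobayashi metric is uniform — this is exactly where invariance of $\Omega$ and $f$-equivariance of $\pi$ (so that $f$ descends to $f_0$ on $X_0$, which is an isometry of $\kob_{\Omega_0}$) are used. One should also check the harmless point that $\pi$ is injective near $U$ and that $\pi(U)$ avoids the images of the periodic curves, so that no hidden identification occurs; since $U$ is chosen disjoint from $\mathrm{CP}(f)$ and $\pi$ is an isomorphism off $\mathrm{CP}(f)$, this is automatic.
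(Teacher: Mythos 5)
Your proposal follows essentially the same route as the paper: the inclusion $\Fat(f)\subset\Omega:=X\setminus\Supp(T^+_f+T^-_f)$ from Lemma \ref{incl-fatou}, the hyperbolic embedding of $\Omega_0=\pi(\Omega)$ from the proof of Theorem \ref{fat-hyp}, and a ``hyperbolically embedded implies contained in the Fatou set'' step for the reverse inclusion. The paper isolates the last step as Lemma \ref{fatou-contient-hyp}, applied to the $f_0$-invariant open set $\Omega_0^*=\pi(\Omega^*)\subset(X_0)_{\mathrm{reg}}$, which yields $\Omega_0^*\subset\Fat(f_0)$, and then invokes the fact that $\Fat(f_0)$ and $\pi(\Fat(f))$ agree off the critical values of $\pi$.

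There is, however, a genuine gap in the way you justify normality of $(f^n)$ on $U$ upstairs. The hyperbolic embedding gives a uniform lower bound $\kob^{\mathrm{inf}}_{\Omega_0}(y;w)\geq c\|w\|_{h_0}$ on $X_0$, and since each $f^n$ is a $\kob_\Omega$-isometry you do obtain $\kob^{\mathrm{inf}}_\Omega\bigl(f^n(x);\mathrm{d}f^n(x)v\bigr)\leq C_K$ for $x$ in a compact $K\subset\Omega^*$. But to convert this into a bound on $\|\mathrm{d}f^n(x)v\|_h$, you would need $\kob^{\mathrm{inf}}_\Omega(y;w)\geq c\|w\|_h$ uniformly over $y\in\Omega$, i.e. $\Omega$ (or $\Omega^*$) hyperbolically embedded \emph{in $X$}, not just $\Omega_0$ in $X_0$. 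That fails precisely because $\mathrm{d}\pi_y$ degenerates along the periodic curves: for $w$ close to the contracted directions, $\|\mathrm{d}\pi_y w\|_{h_0}$ can be much smaller than $\|w\|_h$, so the lower bound inherited from $X_0$ blows up near $\mathrm{CP}(f)$. Your phrase ``the images land in a region where the hyperbolic metric dominates $\kappa$'' is exactly what is not established: nothing in the setup prevents $f^n(U)$ from accumulating on a periodic curve. The paper sidesteps this by running the normal-family argument entirely on $X_0$ (where there is no degeneration issue) and only then transporting $\Fat(f_0)$ back to $X$ via the assertion that Fatou sets correspond under $\pi$ away from its critical values --- a step you should at least name and address rather than absorb into the derivative bound.

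Aside from this, two small remarks: the inequality $\|\psi'(0)\|\leq\ell(\psi(0))$ is the \emph{definition} of a hyperbolic embedding, not a consequence of Brody's theorem (Theorem \ref{theo-brody} goes the other way: no Brody curve in $\overline{Y}$ implies $Y$ is hyperbolically embedded); and the appeal to Barth's theorem and the Kobayashi distance inducing the topology is unnecessary once one argues directly with the infinitesimal estimate, as the paper's Lemma \ref{fatou-contient-hyp} does.
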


\begin{lemm}\label{fatou-contient-hyp}
Soit $f:X\to X$ un automorphisme d'un espace analytique complexe compact, et soit $\Omega\subset X_{\textit{reg}}$ un ouvert $f$-invariant hyperboliquement plongé dans $X$. Alors
\begin{equation}
\Omega \subset \Fat(f).
\end{equation}
\end{lemm}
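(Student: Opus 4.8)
\textbf{Plan de la preuve du lemme \ref{fatou-contient-hyp}.} L'id�e est d'utiliser le th�or�me de Barth, qui assure que sur un espace analytique complexe hyperbolique, la distance de Kobayashi induit la topologie usuelle, combin� avec la d�croissance de la pseudo-distance de Kobayashi sous les applications holomorphes. Plus pr�cis�ment, je veux montrer que la famille $(f^n)_{n\in\Z}$ est normale au voisinage de chaque point $x\in\Omega$, en exhibant un voisinage ouvert relativement compact de $x$ dans $\Omega$ sur lequel toutes les suites d'it�r�s admettent une sous-suite convergente uniform�ment sur les compacts.

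\textbf{�tapes principales.} D'abord, je fixe $x\in\Omega$ et un voisinage ouvert $U$ de $x$ tel que $\b U$ soit un compact contenu dans $\Omega$ (c'est possible car $\Omega$ est ouvert dans l'espace compact $X$, mais il faut prendre garde au fait que $\Omega$ n'est pas n�cessairement relativement compact dans un ouvert de carte ; cependant, comme $\Omega\subset X_{\textit{reg}}$, le point $x$ est r�gulier, et on peut travailler dans une carte). Ensuite, comme $\Omega$ est hyperboliquement plong� dans $X$, il existe une constante $c>0$ (puisque $\b U$ est compact, on peut remplacer la fonction continue $\ell$ de la d�finition par une constante sur un voisinage de $\b U$) telle que tout disque holomorphe $\psi:\D\to\Omega$ avec $\psi(0)\in\b U$ v�rifie $\|\psi'(0)\|_h\leq c$. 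Par l'in�galit� de Cauchy sur les boules de Kobayashi, ceci donne une minoration : la distance de Kobayashi $\kob_\Omega$ est localement comparable � la distance riemannienne pr�s de $\b U$, et en particulier la boule de Kobayashi $B_{\kob_\Omega}(x,\rho)$ est contenue dans un voisinage relativement compact $W$ de $x$ dans $\Omega$ pour $\rho>0$ assez petit. Maintenant, comme $\Omega$ est $f$-invariant, $f$ induit une isom�trie de $\kob_\Omega$, donc pour tout $n\in\Z$ on a $f^n(B_{\kob_\Omega}(x,\rho/2))\subset B_{\kob_\Omega}(f^n(x),\rho/2)$, qui � son tour est contenu dans une boule de Kobayashi de rayon $\rho$ autour d'un point de $\Omega$, donc dans un compact de $\Omega$ (ici j'utilise que $\Omega$ est hyperboliquement plong� pour contr�ler uniform�ment la taille euclidienne d'une boule de Kobayashi par sa taille riemannienne). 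Ainsi les $f^n$ restreints � $B_{\kob_\Omega}(x,\rho/2)$ sont � valeurs dans un compact fixe $K\subset\Omega\subset X$, et comme ce sont des applications holomorphes uniform�ment born�es vers la vari�t� compacte $X$, le th�or�me de Montel (version espaces analytiques complexes, via plongement de $K$ dans un espace projectif) donne la normalit�.

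\textbf{Principal obstacle.} La difficult� essentielle est le contr�le uniforme : il faut s'assurer que l'image $f^n(V)$ d'un petit voisinage $V$ de $x$ reste dans un \emph{m�me} compact de $\Omega$ ind�pendamment de $n$, malgr� le fait que les points $f^n(x)$ peuvent se balader dans tout $\Omega$ (qui n'est pas compact). C'est pr�cis�ment la propri�t� d'�tre hyperboliquement plong� \emph{dans $X$} (et non seulement hyperbolique) qui r�sout ce point : elle fournit un contr�le uniforme, valable en tout point de $\Omega$, de la taille euclidienne (dans une carte de $X$) d'une boule de Kobayashi de rayon fix�, via la fonction continue $\ell$ qui s'�tend � $X$ tout entier et y est born�e par compacit�. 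Un point technique secondaire � soigner : comme $\Omega\subset X_{\textit{reg}}$, tous les points concern�s sont lisses, ce qui �vite les complications li�es aux singularit�s de $X$ dans l'application du th�or�me de Montel ; il faut n�anmoins v�rifier que les compacts $K$ obtenus peuvent �tre recouverts par un nombre fini de cartes holomorphes de $X$ pour appliquer Montel composante par composante. Enfin, je conclus que tout point de $\Omega$ poss�de un voisinage sur lequel $(f^n)_{n\in\Z}$ est normale, d'o� $\Omega\subset\Fat(f)$ par d�finition m�me de l'ensemble de Fatou.
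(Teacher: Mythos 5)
Ton approche, directe via la distance de Kobayashi, est diff�rente de celle du texte (qui raisonne par l'absurde : si $x\in\Omega\backslash\Fat(f)$, la non-�quicontinuit� fournit un vecteur tangent $v$ en $x$ avec $\sup_n\|{\rm d}f^n(x)\cdot v\|_h=+\infty$ ; on compose alors $f^n$ avec un disque holomorphe $\varphi:\D\to\Omega$ d'origine $x$ et de d�riv�e $v$, et les $\varphi_n=f^n\circ\varphi$ ont des d�riv�es en $0$ non born�es, ce qui contredit directement le plongement hyperbolique puisque $\ell$ est born�e sur $X$ compact). L'id�e de fond --- $f$ est une isom�trie de $\kob_\Omega$, et le plongement hyperbolique fournit une minoration uniforme de $\kob_\Omega$ par la m�trique hermitienne --- est la bonne, mais deux pas de ta r�daction ne tiennent pas.

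\textbf{Premier probl�me.} Tu affirmes que $f^n(B_{\kob_\Omega}(x,\rho/2))$ est contenu dans un \emph{compact fixe} $K\subset\Omega$. C'est faux en g�n�ral : l'orbite $f^n(x)$ peut s'accumuler sur $\partial\Omega$, et une boule de Kobayashi de rayon fixe centr�e pr�s du bord n'a aucune raison d'�tre relativement compacte dans $\Omega$ ; le plongement hyperbolique donne bien $\kob_\Omega\geq c\,\dist_h$ uniform�ment, donc $B_{\kob_\Omega}(y,\rho)\subset B_h(y,\rho/c)$, mais cette boule euclidienne peut d�border de $\Omega$, et sa trace sur $\Omega$ n'est pas compactement incluse.

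\textbf{Deuxi�me probl�me.} M�me si l'on avait un tel compact, l'appel � \og Montel \fg\ tel que tu le formules n'est pas valide : une famille d'applications holomorphes d'un ouvert vers une vari�t� complexe \emph{compacte} n'est pas automatiquement normale (penser � $z\mapsto nz$ � valeurs dans $\P^1(\C)$). Il faut de l'�quicontinuit�, pas seulement un contr�le de l'image.

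\textbf{Comment corriger.} Le sch�ma que tu esquisses s'ach�ve proprement via Ascoli, sans parler de compact de $\Omega$. Le plongement hyperbolique donne une constante $M>0$ (car $\ell$ est born�e sur $X$ compact) telle que $\dist_h\leq M\,\kob_\Omega$ \emph{globalement} sur $\Omega$. Localement autour de $x$, une petite boule $U\Subset\Omega$ donne $\kob_\Omega\leq\kob_U\leq C\,\dist_h$ sur un voisinage $U'\Subset U$ de $x$. Comme $f^n$ est une isom�trie de $\kob_\Omega$, pour tous $y,y'\in U'$ :
\begin{equation}
\dist_h(f^n(y),f^n(y'))\leq M\,\kob_\Omega(f^n(y),f^n(y'))=M\,\kob_\Omega(y,y')\leq MC\,\dist_h(y,y'),
\end{equation}
de sorte que $(f^n)_{n\in\Z}$ est uniform�ment lipschitzienne sur $U'$, donc �quicontinue, donc normale par Ascoli (le but $X$ �tant compact). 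Alternativement, tu peux invoquer directement la caract�risation de Kiernan du plongement hyperbolique : $\Omega$ est hyperboliquement plong� dans $X$ si et seulement si, pour toute vari�t� complexe $M$, l'espace des applications holomorphes de $M$ dans $\Omega$ est relativement compact dans celui des applications holomorphes de $M$ dans $X$ ; appliqu�e avec $M$ un petit voisinage de $x$, elle donne imm�diatement la normalit�. L'une ou l'autre de ces corrections remplace le recours erron� � Montel et rend ta preuve compl�te.
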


\begin{proof}[Démonstration du lemme]
Fixons une métrique hermitienne $h$ sur $X$. Supposons qu'il existe $x\in\Omega\backslash\Fat(f)$. Comme $x$ n'est pas dans l'ensemble de Fatou, la famille $(f^n)_{n\in\Z}$ n'est pas équicontinue au voisinage de $x$, d'après le théorème d'Ascoli. Ceci donne l'existence d'un vecteur $v$ tangent à $x$ tel que
\begin{equation}
\sup_{n\in\Z} \left\|{\rm d}f^n(x)\cdot v\right\|_h = +\infty.
\end{equation}

Soit $\varphi:\D\to\Omega$ une application holomorphe telle que $\varphi(0)=x$ et $\varphi'(0)=v$. Pour $n\in\Z$, on pose 
\begin{equation}
\varphi_n=f^n\circ\varphi:\D\to\Omega.
\end{equation}
On a alors $\varphi_n'(0)={\rm d}f^n(x)\cdot v$, donc la suite $\left(\left\|\varphi_n'(0)\right\|_h\right)_{n\in\Z}$ n'est pas bornée. Ceci contredit le fait que $\Omega$ soit hyperboliquement plongé.
\end{proof}

\begin{proof}[Démonstration de la proposition \ref{prop-caract}]
Comme au paragraphe précédent, on note $\Omega = X \backslash \Supp(T^+_f+T^-_f)$, ainsi que $\Omega^* = \Omega \backslash {\sf CP}(f)$, où ${\sf CP}(f)$ désigne l'union des courbes périodiques de $f$. Nous allons montre la double inclusion
\begin{equation}
\Omega^* \subset \Fat(f) \subset \Omega.
\end{equation}
La seconde a déjà été démontrée au paragraphe précédent (cf. lemme \ref{incl-fatou}).

Pour montrer l'inclusion de gauche, on considère le morphisme ${\pi:X\to X_0}$ de contraction des courbes périodiques, et on note $\Omega_0=\pi(\Omega)$, $\Omega_0^*=\pi(\Omega^*)$. On a vu lors de la preuve du théorème \ref{fat-hyp} que $\Omega_0$ est hyperboliquement plongé, ce qui implique que $\Omega_0^*$ l'est aussi. On en déduit, d'après le lemme \ref{fatou-contient-hyp}, que
\begin{equation}
\Omega_0^*\subset\Fat(f_0),
\end{equation}
où $f_0$ est l'automorphisme induit sur $X_0$. En prenant l'image réciproque par $\pi$, on obtient $\Omega^* \subset \Fat(f)$, car $\Fat(f_0)$ et $\pi(\Fat(f))$ coïncident en dehors des valeurs critiques de $\pi$. Ceci achève la démonstration.
\end{proof}

\begin{rema}
On ne sait pas si ${\sf CP}(f)\cap\Omega$ est contenu dans l'ensemble de Fatou. Il faudrait pour cela mener une étude plus fine sur les courbes périodiques qui intersectent $\Omega$.
\end{rema}


\chapter[Composantes récurrentes de l'ensemble de Fatou]{\'Etude des composantes récurrentes de l'ensemble de Fatou}\label{chap-rec}

Dans ce chapitre, on montre que les composantes récurrentes de l'ensemble de Fatou sont des domaines de rotation, grâce aux propriétés d'hyperbolicité établies au chapitre précédent. Ce chapitre s'inspire très fortement de \cite{bs2,ueda,fs-recurrent,loeb-vigue,bedford-kim-rotation}.


\section{Généralités sur les domaines de rotation}

Dans ce paragraphe, $X$ désigne une variété complexe compacte de dimension quelconque, et $f$ un automorphisme de $X$.

\begin{defi}[\cite{ bs2, fs-higher1}]
Soit $\Omega$ un ouvert connexe de $X$ tel que ${\Omega\subset\Fat(f)}$ et $f(\Omega)=\Omega$. On dit que $\Omega$ est un \emph{domaine de rotation}\footnote{On dit aussi \emph{domaine de Siegel}.} lorsqu'il existe une suite $m_k\to\pm\infty$ telle que
\begin{equation}
f^{m_k}\mathop{\longrightarrow}\limits_{k\to+\infty}\id_\Omega
\end{equation}
uniformément sur les compacts de $\Omega$.
\end{defi}

On a la caractérisation suivante (voir aussi \cite{bedford-kim-rotation}) :

\begin{prop}\label{carac-rot}
Soit $\Omega\subset\Fat(f)$ un ouvert connexe fixe par $f$. On note $\mathcal{G}(\Omega)$ l'adhérence du sous-groupe engendré par $f$ dans $\Aut(\Omega)$, pour la topologie de la convergence uniforme sur les compacts. Les énoncés suivants sont équivalents :
\begin{enumerate}
\item $\Omega$ est un domaine de rotation pour $f$ ;
\item le groupe $\mathcal{G}(\Omega)$ est compact.
\end{enumerate}
Si ces conditions sont vérifiées, $\mathcal{G}(\Omega)$ est alors un groupe de Lie abélien compact, dont la composante connexe de l'identité~$\mathcal{G}(\Omega)^0$ est un tore réel $\T^d$. L'entier $d$ est appelé \emph{rang} du domaine de rotation.
\end{prop}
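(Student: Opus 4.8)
Le but est de démontrer la Proposition \ref{carac-rot}, qui caractérise les domaines de rotation comme ceux pour lesquels $\mathcal{G}(\Omega)$ est compact, et d'en décrire la structure. Le plan est de traiter les deux implications séparément, puis d'analyser la structure du groupe $\mathcal{G}(\Omega)$.

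\textbf{Implication $(2)\Rightarrow(1)$.} Je suppose $\mathcal{G}(\Omega)$ compact. Alors la suite $(f^n)_{n\in\N}$ possède une valeur d'adhérence $g\in\mathcal{G}(\Omega)$, limite d'une sous-suite $f^{n_k}$ avec $n_k\to+\infty$. Puisque $\mathcal{G}(\Omega)$ est un groupe topologique compact, $g^{-1}$ est aussi dans $\mathcal{G}(\Omega)$, et quitte à extraire encore, $f^{m_j}\to g^{-1}$ pour une autre suite $m_j\to+\infty$ (en utilisant que l'ensemble des exposants réalisant une valeur d'adhérence donnée est infini, ce qui découle de la continuité de la composition et du fait que $f^{n_{k}-n_{k'}}$ est proche de l'identité pour $k,k'$ grands). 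En composant, $f^{n_k+m_j}\to\id_\Omega$ ; par extraction diagonale on obtient une suite $p_\ell\to+\infty$ avec $f^{p_\ell}\to\id_\Omega$ uniformément sur les compacts, donc $\Omega$ est un domaine de rotation.

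\textbf{Implication $(1)\Rightarrow(2)$.} C'est l'étape qui demande le plus de soin. Je suppose $f^{m_k}\to\id_\Omega$ uniformément sur les compacts, avec $m_k\to\pm\infty$. Le point clé est que $\Fat(f)\backslash{\sf CP}(f)$ est hyperbolique au sens de Kobayashi (théorème \ref{fat-hyp}), et plus précisément, en se plaçant sur la surface $X_0$ où l'on a contracté les courbes périodiques (th\'eor\`eme \ref{theo-contr}), $\Omega$ se plonge dans $X_0$ via un ouvert $\Omega_0$ qui est hyperboliquement plongé dans $X_0$ (démonstration du théorème \ref{fat-hyp}) ; en particulier $\Omega_0$ est relativement compact dans un espace analytique compact et hyperboliquement plongé, donc la famille $\Aut(\Omega_0)$, vue comme famille d'applications holomorphes $\Omega_0\to X_0$, est normale (conséquence du théorème de Brody \ref{theo-brody} : toute suite d'applications holomorphes à valeurs dans un ouvert hyperboliquement plongé relativement compact admet une sous-suite convergente uniformément sur les compacts vers une application à valeurs dans l'adhérence). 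Il faut alors relever cette compacité de $X_0$ à $X$ : toute limite d'une sous-suite de $(f^n)$ dans $X_0$ ne peut envoyer $\Omega_0$ dans le lieu singulier de $X_0$ (car ce lieu est une union finie de points, donc une telle limite serait constante, contredisant $f^{m_k}\to\id$), donc reste dans $\Omega_0$ et se relève en un automorphisme de $\Omega$. On en déduit que $\mathcal{G}(\Omega)\subset\Aut(\Omega)$ est relativement compact pour la convergence uniforme sur les compacts ; comme c'est aussi un sous-groupe fermé, il est compact. Il reste à vérifier que les limites sont bien des \emph{automorphismes} (et non de simples applications holomorphes) : c'est le point standard dû à H.~Cartan, que je détaillerais en utilisant que $f^{m_k}\to\id_\Omega$ fournit un inverse approché, donc toute valeur d'adhérence $g$ vérifie qu'il existe $h$ avec $g\circ h=h\circ g=\id_\Omega$.

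\textbf{Structure du groupe.} Supposant ces conditions vérifiées, $\mathcal{G}(\Omega)$ est un groupe topologique compact, abélien car $f$ commute avec lui-même (l'abélianité passe à l'adhérence). Un théorème de Bochner--Montgomery (cité dans l'excerpt pour $\Aut(X_\C)$) assure qu'un groupe de transformations holomorphes localement compact agissant sur une variété complexe est un groupe de Lie ; ici, la normalité et la compacité entraînent que $\mathcal{G}(\Omega)$ est un groupe de Lie compact. Étant compact abélien, sa composante neutre $\mathcal{G}(\Omega)^0$ est un tore réel $\T^d$ pour un certain entier $d\geq0$, ce qui définit le rang. Le principal obstacle dans tout cet argument est le passage de l'hyperbolicité de $X_0$ (avec courbes périodiques contractées) à la compacité effective de $\mathcal{G}(\Omega)$ dans $\Aut(\Omega)$, c'est-à-dire s'assurer que les limites des $f^{n}$ restent à valeurs dans $\Omega$ lui-même et sont des biholomorphismes : il faut combiner soigneusement le théorème de Brody, l'hypothèse $f^{m_k}\to\id_\Omega$, et le fait que les fibres de $\pi:X\to X_0$ au-dessus de $\Omega_0$ sont triviales en dehors d'un nombre fini de points.
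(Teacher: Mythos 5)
Votre preuve de $(2)\Rightarrow(1)$ est correcte et correspond à ce que le texte considère comme évident. Pour $(1)\Rightarrow(2)$ en revanche, vous prenez une route radicalement différente et inutilement lourde : le texte applique directement le théorème~\ref{hcartan}, dont les hypothèses sont vérifiées sans aucun travail --- la normalité de $(f^n)_{n\in\Z}$ comme famille d'applications holomorphes $\Omega\to X$ est gratuite puisque $\Omega\subset\Fat(f)$, c'est la définition même de l'ensemble de Fatou. Ce théorème fournit la structure de groupe de Lie abélien pour $\mathcal{G}(\Omega)$, donc $\mathcal{G}(\Omega)\simeq F\times\T^d\times\R^k$, et l'hypothèse de domaine de rotation force $k=0$, d'où la compacité. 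L'hyperbolicité de l'ensemble de Fatou (théorème~\ref{fat-hyp}) n'intervient nullement dans cette proposition : vous substituez à une normalité gratuite un théorème profond du chapitre précédent.

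De plus, votre tentative de compacité directe via Brody comporte une lacune que vous signalez vous-même sans la combler. Vous écartez le cas où une limite d'une sous-suite de $(f^n)$ enverrait $\Omega_0$ dans le lieu singulier de $X_0$ (la limite serait constante), mais le lieu singulier n'est qu'une partie finie du bord : rien n'empêche a priori cette limite d'envoyer $\Omega_0$ dans $\partial\Omega_0$ en dehors du lieu singulier. Montrer que la limite d'une suite d'automorphismes de $\Omega$ est encore un automorphisme de $\Omega$ est précisément ce que règle l'adaptation du théorème de Cartan invoquée dans la preuve du théorème~\ref{hcartan}, où l'on utilise conjointement qu'un point de $\Omega$ a son image dans un compact de $\Omega$ et la convergence des inverses. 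Enfin, votre phrase \og la normalité et la compacité entraînent que $\mathcal{G}(\Omega)$ est un groupe de Lie compact \fg~renverse l'ordre logique du texte : le théorème~\ref{hcartan} donne la structure de groupe de Lie \emph{avant} de savoir si $\mathcal{G}(\Omega)$ est compact, et c'est la classification alliée à l'hypothèse de rotation qui donne ensuite la compacité. La partie structurelle finale ($\mathcal{G}(\Omega)$ abélien compact donc $\mathcal{G}(\Omega)^0\simeq\T^d$) est en revanche correcte.
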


La démonstration de cette proposition utilise le théorème suivant.

\begin{theo}[Cartan ; Bochner -- Montgomery]\label{hcartan}
Soit $\Omega$ un ouvert connexe d'une variété complexe $X$. Le groupe $\Aut(\Omega)$ est muni de la topologie de la convergence uniforme sur les compacts, qui en fait un groupe topologique. Soit $\mathcal{G}$ un sous-groupe fermé de $\Aut(\Omega)$, dont les éléments forment une famille normale en tant qu'applications holomorphes de~$\Omega$ dans~$X$. Alors $\mathcal{G}$ est un groupe de Lie.
\end{theo}

\begin{proof}
On montre dans un premier temps que $\mathcal{G}$ est localement compact. Pour ceci, soit $K\subset\Omega$ un compact d'intérieur non vide, et soit~$x$ un point intérieur de $K$. Alors l'ensemble $\mathcal{G}_{x,K}:=\{g\in\mathcal{G}\,|\,g(x)\in K\}$ est un voisinage de l'identité dans $\mathcal{G}$. Montrons qu'il est compact. Soit $(g_n)$ une suite d'éléments de $\mathcal{G}_{x,K}$, dont on extrait une suite $\left(g_{n_k}\right)$ qui converge uniformément sur les compacts vers une application holomorphe $g:\Omega\to X$. Par compacité de $K$, on peut supposer qu'il existe $y\in K$ tel que $g_{n_k}(x)\to y$. On a alors~${g(x)=y\in\Omega}$. Une adaptation d'un théorème de H. Cartan (voir par exemple \cite[chapitre 5, théorème 4]{narasimhan}) montre alors que $g\in\Aut(\Omega)$, et donc $g\in\mathcal{G}_{x,K}$ car $\mathcal{G}$ est fermé.

Le groupe topologique $\mathcal{G}$ est donc localement compact. De plus, il agit par difféomorphismes de classe $\mathcal{C}^2$ sur $\Omega$, et tout élément $g\in\mathcal{G}$ fixant un ouvert est l'identité (par prolongement analytique). D'après un théorème de Bochner et Montgomery \cite{bm1}, on en déduit que $\mathcal{G}$ est un groupe de Lie réel.
\end{proof}

\begin{proof}[Démonstration de la proposition \ref{carac-rot}]
Seul $(1)\implies(2)$ est non trivial. D'après le théorème \ref{hcartan}, le groupe $\mathcal{G}(\Omega)$ est un groupe de Lie abélien. Il est donc isomorphe à $F\times\T^d\times\R^k$, où $F$ est un groupe abélien fini. Si $\Omega$ est un domaine de rotation, alors nécessairement $k=0$.
\end{proof}

En dimension $1$, les domaines de rotation sont les disques de Siegel et les anneaux de Herman, et le groupe $\mathcal{G}(\Omega)^0$ est un cercle (cf. \cite{milnor}).

En dimension $2$, on peut montrer que le rang du domaine de rotation est $1$ ou~$2$ pour les automorphismes loxodromiques (voir \cite[théorème 1.6]{bedford-kim-rotation}). Dans \cite{mcmullen-k3-siegel}, McMullen donne des exemples d'automorphismes loxodromiques sur des surfaces K3 non algébriques qui admettent un domaine de rotation de rang $2$ (voir aussi \cite{oguiso}). Sur les surfaces rationnelles, il existe des exemples de domaines de rotation de rang $1$ et $2$ (voir \cite{bedford-kim-maxent} et \cite{mcmullen-rat}).


\section[Les composantes récurrentes sont des domaines de rotation]{Les composantes de Fatou récurrentes sont des domaines de rotation}

À partir de maintenant, $X$ désigne une surface kählérienne compacte, et $f$ un automorphisme loxodromique de $X$. On appelle \emph{composante de Fatou} une composante connexe de $\Fat(f)$.

Lorsque $X$ est une surface de dimension de Kodaira nulle, $X$ possède une forme volume canonique (cf. remarque \ref{forme-volume}) qui est préservée par $f$, ainsi que par les limites normales de $f^{n_k}_{|\Omega}$, qui sont donc des automorphismes de~$\Omega$. Toute composante de Fatou est donc un domaine de rotation, d'après la caractérisation \ref{carac-rot} (voir aussi \cite[Proposition 1.1]{bedford-kim-rotation}).

En revanche, sur les surfaces rationnelles, il existe des exemples où l'ensemble de Fatou est un bassin d'attraction pour $f$ (voir \cite{mcmullen-rat}) ; mais dans ce cas $\Fat(f)$ n'est pas récurrent, au sens de la définition suivante.

\begin{defi}
On dit qu'un ouvert $\Omega\subset X$ est \emph{récurrent} s'il existe des points $x_0,y_0\in\Omega$ et une suite $n_k\to\pm\infty$ tels que
\begin{equation}\label{cond-recur}
f^{n_k}(x_0)\mathop{\longrightarrow}\limits_{k\to+\infty}y_0.
\end{equation}
\end{defi}

Il est clair d'après la définition que les composantes de Fatou récurrentes sont périodiques (mais la réciproque n'est pas nécessairement vraie). Quitte à prendre un itéré de $f$, on supposera qu'elles sont fixes.

Bien évidemment, les domaines de rotations sont des composantes de Fatou récurrentes. Grâce à l'hyperbolicité de $\Fat(f)$, on montre que l'hypothèse de récurrence est suffisante pour garantir qu'une composante de Fatou est un domaine de rotation.

\begin{theo}\label{rec=rot}
Soit $f$ un automorphisme loxodromique d'une surface compacte kählérienne $X$, et soit $\Omega$ une composante de Fatou récurrente, telle que $f(\Omega)=\Omega$. Alors $\Omega$ est un domaine de rotation.
\end{theo}

\begin{proof}
On note $\Omega^*=\Omega\backslash{\sf CP(f)}$, où ${\sf CP}(f)$ désigne l'union des courbes périodiques de $f$. L'ensemble $\Omega^*$ est alors un autre ouvert fixé par~$f$, qui est dense dans $\Omega$. Montrons qu'il est récurrent également. Pour cela, soit~${x_0\in\Omega}$ vérifiant la condition $(\ref{cond-recur})$. Quitte à extraire une sous-suite, on suppose que $f^{n_k}$ converge uniformément sur tout compact de $\Omega$ vers une fonction $h:\Omega\to\b\Omega$, qui est continue. L'ouvert $h^{-1}(\Omega)$ est non vide (il contient~$x_0$), donc il intersecte $\Omega^*$ en un point $x$. Montrons que $y=h(x)\in\Omega^*$. Dans le cas contraire, $y$ est dans ${\sf CP}(f)$, qui est un fermé invariant. Quitte à réextraire une sous-suite, $f^{-n_k}$ converge uniformément sur les compacts de $\Omega$, et~${f^{-n_k}(y)\to z\in{\sf CP(f)}}$. Or comme $y\in\Omega$, on a 
\begin{equation}
\lim f^{-n_k}\circ f^{n_k}(x)=\lim f^{-n_k}(y)=z,
\end{equation}
d'où $x=z\in{\sf CP}(f)$, ce qui est une contradiction. On a donc
\begin{equation}
f^{n_k}(x) \mathop{\longrightarrow}\limits_{k\to+\infty} y\in\Omega^*
\end{equation}
avec $x$ et $y$ dans $\Omega^*$, ce qui montre que $\Omega^*$ est récurrent.
 
De plus, $\Omega^*$ est hyperbolique, d'après le théorème $\ref{fat-hyp}$. On montre alors, en suivant \cite[Theorem 3.1]{ueda}, que $\Omega$ est un domaine de rotation. Pour cela, on pose
\begin{align}
V &= \{x'\in\Omega^*\,|\,\kob_{\Omega^*}(x,x')<\epsilon\},\\
W &= \{y'\in\Omega^*\,|\,\kob_{\Omega^*}(y,y')<\epsilon/2\},
\end{align}
où $\epsilon>0$ est choisi de telle sorte que $V$ et $W$ soient relativement compacts dans~$\Omega^*$, et tel que $V\subset h^{-1}(\Omega^*)$. Quitte à extraire une sous-suite, on suppose que~${f^{n_k}(x)\in W}$ pour tout $k$. On a alors $W\subset f^{n_k}(V)$. En effet, pour tout~${y'\in W}$, on a, en utilisant que $f$ est une isométrie pour $\kob_{\Omega^*}$,
\begin{align}
\kob_{\Omega^*}(x,f^{-n_k}(y'))
&= \kob_{\Omega^*}(f^{n_k}(x),y')\\
&\leq \kob_{\Omega^*}(f^{n_k}(x),y)+\kob_{\Omega^*}(y,y')\\
&\leq \epsilon/2+\epsilon/2 = \epsilon.
\end{align}
Quitte à réextraire, on suppose que $m_k:=n_{k+1}-n_k\to\pm\infty$ et que $f^{m_k}\to g$ uniformément sur les compacts de $\Omega$. On a alors, à $k$ fixé,
\begin{align}
\sup_{y'\in W}\kob_{\Omega^*}(f^{m_k}(y'),y') 
&\leq \sup_{x'\in V}\kob_{\Omega^*}(f^{m_k}\circ f^{n_k}(x'),f^{n_k}(x'))\\
&= \sup_{x'\in V}\kob_{\Omega^*}(f^{n_{k+1}}(x'),f^{n_k}(x')).
\end{align}
Comme la suite $f^{n_k}$ converge uniformément sur $V$ vers $h:V\to\Omega^*$, cette dernière expression converge vers $0$ lorsque $k\to+\infty$. On en déduit que~${f^{m_k}\to\id_W}$ uniformément sur $W$, et par prolongement analytique $g=\id_\Omega$. Ceci montre que $\Omega$ est un domaine de rotation.
\end{proof}


\chapter{Ensemble de Fatou et lieu réel}\label{chap-fatou-reel}

Dans ce chapitre, on applique les résultats des chapitres précédents au cas où $X$ possède une structure réelle qui est préservée par $f$. On obtient alors des restrictions topologiques sur les composantes connexes de $X(\R)$ qui sont contenues dans l'ensemble de Fatou :

\begin{theo}\label{car-neg}
Soit $X$ une surface kählérienne réelle, et soit $f$ un automorphisme loxodromique réel de $X$. Soit $S$ une composante connexe de~$X(\R)$ telle que $S\subset\Fat(f)$. Alors $f$ possède au plus $\chi(S)$ points fixes sur~$S$, où $\chi(S)$ désigne la caractéristique d'Euler topologique de $S$. En particulier,~$\chi(S)\geq 0$, et $S$ est l'une des surfaces topologiques suivantes :
\begin{itemize}
\item une sphère,
\item un tore,
\item un plan projectif,
\item une bouteille de Klein.
\end{itemize}
\end{theo}

De plus, nous verrons au paragraphe \ref{precisions} que $f$ est conjugué à une \og rotation\fg~sur $S$.

\begin{coro}
Soit $f$ un automorphisme loxodromique d'une surface kählérienne réelle $X$, et soit $S$ une composante connexe de $X(\R)$. On suppose que $f$ possède au moins trois points périodiques sur $S$. Alors $S$ intersecte l'ensemble de Julia.
\end{coro}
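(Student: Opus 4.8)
The plan is to deduce this corollary directly from Theorem \ref{car-neg} by a short argument comparing periodic points with fixed points. Let $f$ be a loxodromic automorphism of the real K\"ahler surface $X_\R$, and let $S$ be a connected component of $X(\R)$ with at least three periodic points. Suppose, for contradiction, that $S$ does not meet the Julia set, i.e. $S\subset\Fat(f)$. Then Theorem \ref{car-neg} applies to $f$ and $S$; but I want to apply it to an iterate $f^k$, so the first step is to check that the hypotheses of Theorem \ref{car-neg} pass to iterates: $f^k$ is again a real loxodromic automorphism (its dynamical degree is $\lambda(f)^k>1$, cf. Remark \ref{croissance}), and $S\subset\Fat(f)=\Fat(f^k)$ since the Fatou set only depends on the group generated by $f$. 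So Theorem \ref{car-neg} gives, for every $k\geq 1$, that $f^k$ has at most $\chi(S)$ fixed points on $S$.

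Next I would use the hypothesis to produce a contradiction. Pick three distinct periodic points $p_1,p_2,p_3\in S$, with periods $k_1,k_2,k_3$ respectively, and set $k=\mathrm{lcm}(k_1,k_2,k_3)$. Then $p_1,p_2,p_3$ are all fixed by $f^k$, so $f^k$ has at least three fixed points on $S$. Combining with the bound from Theorem \ref{car-neg}, we get $\chi(S)\geq 3$. But the same theorem asserts $\chi(S)\geq 0$ forces $S$ to be one of a sphere ($\chi=2$), a torus ($\chi=0$), a projective plane ($\chi=1$), or a Klein bottle ($\chi=0$); in all four cases $\chi(S)\leq 2<3$. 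This contradiction shows that $S$ cannot be contained in $\Fat(f)$, hence $S$ intersects the Julia set, which is what we wanted.

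I do not expect any genuine obstacle here: the only points requiring a word of justification are that the Fatou set is invariant under passing to iterates (immediate from the definition via the group $\{f^n\mid n\in\Z\}$) and that periodic points of $f$ become fixed points of a suitable common iterate (trivial). The substance is entirely carried by Theorem \ref{car-neg}. One could phrase the proof slightly more sharply by noting that three fixed points already contradict $\chi(S)\le 2$, so there is no need to enumerate the four topological types; but invoking the explicit list makes the argument transparent. A brief remark worth adding is that the statement is sharp in the sense that "three" cannot be lowered to "two": on a torus ($\chi=0$) a linear Anosov map conjugate to a rotation-type situation is excluded, but the theorem's bound $\#\mathrm{Fix}(f^k)\le\chi(S)$ still allows surfaces with two periodic points only when $\chi(S)\geq 2$, i.e. for the sphere, consistent with the rotation description of \textsection\ref{precisions}.

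\begin{proof}
Suppose, for a contradiction, that $S\subset\Fat(f)$. Let $p_1,p_2,p_3$ be three distinct periodic points of $f$ lying on $S$, with respective periods $k_1,k_2,k_3$, and set $k=\mathrm{lcm}(k_1,k_2,k_3)$. The iterate $g=f^k$ is again a real loxodromic automorphism of $X$, since $\lambda(g)=\lambda(f)^k>1$. Moreover $\Fat(g)=\Fat(f)$, because both are the domain of normality of the family $\{f^n\mid n\in\Z\}=\{g^n\mid n\in\Z\}$ (using $\Fat(f)=\Fat^+(f)\cap\Fat^-(f)$ and the analogous decomposition for $g$). Hence $S$ is a connected component of $X(\R)$ contained in $\Fat(g)$, and Theorem \ref{car-neg} applies to $g$ and $S$: the map $g$ has at most $\chi(S)$ fixed points on $S$. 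But $g(p_i)=f^k(p_i)=p_i$ for $i=1,2,3$, so $g$ has at least three fixed points on $S$. Therefore $\chi(S)\geq 3$, which contradicts the fact, also given by Theorem \ref{car-neg}, that $S$ is a sphere, a torus, a projective plane or a Klein bottle, all of which have Euler characteristic at most $2$. This contradiction proves that $S\not\subset\Fat(f)$, i.e. $S$ intersects the Julia set of $f$.
\end{proof}
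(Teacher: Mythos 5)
Your proof is correct and follows the natural route that the paper intends: pass to a common iterate $g=f^k$ so that the three given periodic points become fixed points, invoke Theorem~\ref{car-neg} for $g$ to get both $\#\mathrm{Fix}(g|_S)\leq\chi(S)$ and $\chi(S)\leq 2$ (since $S$ must be a sphere, torus, projective plane, or Klein bottle), and obtain the contradiction $3\leq\chi(S)\leq 2$.

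One sentence in your proof should be fixed, however. You justify $\Fat(g)=\Fat(f)$ by asserting that $\{f^n\mid n\in\Z\}=\{g^n\mid n\in\Z\}$, but this equality of families is false when $k>1$: the family $\{g^n\}=\{f^{nk}\}$ is a proper subset of $\{f^n\}$. The inclusion $\Fat(f)\subset\Fat(g)$ is indeed immediate from this containment of families, but the reverse inclusion needs a short argument: one first checks that $\Fat(g)$ is $f$-invariant (if $\{g^n\}$ is normal near $x$, then writing $g^n|_{f(U)}=f\circ (g^n|_U)\circ f^{-1}$ shows it is normal near $f(x)$), and then on $\Fat(g)$ one writes $f^n=f^j\circ g^m$ with $0\leq j<k$; the family $\{f^j\circ g^m\}_m$ is normal for each of the finitely many $j$, hence so is the full family $\{f^n\}_{n\in\Z}$. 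This yields $\Fat(g)\subset\Fat(f)$. The conclusion you use is standard and true, so this is a slip in the stated reason rather than a gap in the argument, but as written the justification would not survive scrutiny.
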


\begin{rema}
Après avoir terminé de rédigé ce manuscrit, je me suis rendu compte qu'il n'y avait en fait pas besoin de l'hyperbolicité de l'ensemble de Fatou pour démontrer les résultats contenus dans ce chapitre. Voir pour cela la remarque \ref{dom-rot}. Avec cette démonstration alternative, ces résultats se généralisent ainsi au cas où :
\begin{enumerate}
\item $f$ est d'ordre infini (il n'y pas besoin de supposer qu'il est de type loxodromique) ;
\item $f$ est un difféomorphisme birationnel (voir le chapitre suivant) ;
\item $X$ est une surface complexe compacte munie d'une structure réelle (le caractère kählérien n'intervient pas).
\end{enumerate}
\end{rema}


\section{Formule de Lefschetz}

Avant de démontrer le théorème \ref{car-neg}, faisons quelques rappels sur la formule des points fixes de Lefschetz \cite{lefschetz} (voir aussi \cite{griffiths-harris} pour une présentation plus moderne). Soit $g:M\to M$ un difféomorphisme d'une variété lisse. On suppose que les points fixes de $g$ sont isolés (il sont donc en nombre fini), et que ceux-ci sont non dégénérés, \emph{i.e.} pour tout point fixe $x$, l'endomorphisme~${({\rm d}g(x)-\id)}$ sur l'espace tangent en $x$ est inversible. On définit alors, pour tout point fixe $x$, l'indice de $g$ en $x$ :
\begin{equation}
i(g;x)={\sf signe}\left(\det({\rm d}g(x)-\id)\right).
\end{equation}
On définit aussi le nombre de Lefschetz :
\begin{equation}
{\sf Lef}(g) = \sum_{k=0}^{\dim M}(-1)^k\mathrm{tr}\left(g^*_{|H^k(M;\R)}\right),
\end{equation}
En exprimant de deux manières différentes le nombre d'intersection, dans~${M\times M}$, de la diagonale avec le graphe de $g$, on obtient la formule de Lefschetz :
\begin{equation}
{\sf Lef}(g)=\sum_{g(x)=x}i(g;x).
\end{equation}
Un cas particulier est celui où le difféomorphisme $g$ est isotope à l'identité. Dans ce cas, $g$ agit trivialement sur la cohomologie, et le nombre de Lefschetz est égal à la caractéristique d'Euler topologique $\chi(M)$, ce qui donne
\begin{equation}\label{lef-isot}
\chi(M)=\sum_{g(x)=x}i(g;x).
\end{equation}


\section{Linéarisation au voisinage d'un point fixe}

Soit $f$ un automorphisme d'une surface complexe $X$. Supposons qu'il existe un point fixe $x$ dans l'ensemble de Fatou de $f$, et notons~$\Omega$ la composante de Fatou qui contient le point $x$ (celle-ci est récurrente et stable par $f$). On considère l'adhérence $\mathcal{G}_x$ des germes $\{f^n_x\,|\,n\in\Z\}$ dans le groupe des germes d'automorphismes au voisinage de $x$. Le fait que $x$ soit dans l'ensemble de Fatou implique :

\begin{lemm}
$\mathcal{G}_x$ est un groupe abélien compact.
\end{lemm}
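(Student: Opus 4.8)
L'id�e est de montrer d'abord que $\mathcal{G}_x$ est une famille normale de germes (ce qui permet d'appliquer le th�or�me de Cartan--Bochner--Montgomery \ref{hcartan} pour conclure que c'est un groupe de Lie), puis d'exclure la composante non compacte. Le point cl� est que $x\in\Fat(f)$ : on peut choisir un voisinage ouvert $U$ de $x$, relativement compact dans $\Omega$, tel que toute suite extraite de $(f^n_{|U})_{n\in\Z}$ admette une sous-suite qui converge uniform�ment sur $U$ vers une application holomorphe � valeurs dans $\b\Omega$. Comme $\Omega$ est une composante de Fatou r�currente et stable par $f$ (elle contient le point fixe $x$), le th�or�me \ref{rec=rot} montre que $\Omega$ est un domaine de rotation : il existe donc une suite $n_k\to\pm\infty$ telle que $f^{n_k}\to\id_\Omega$ uniform�ment sur les compacts de $\Omega$. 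En particulier, les limites normales des $f^n$ sur $\Omega$ sont toutes des automorphismes de $\Omega$ (par la proposition \ref{carac-rot}, le groupe $\mathcal{G}(\Omega)$ est un groupe de Lie ab�lien compact).

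Premi�rement, je d�finirais l'application de restriction $\mathcal{G}(\Omega)\to\mathcal{G}_x$ qui � un automorphisme $g$ de $\Omega$ associe son germe $g_x$ en $x$ (en remarquant que tout �l�ment de $\mathcal{G}(\Omega)$ fixe $x$, car c'est une limite uniforme de puissances de $f$ qui fixent toutes $x$). Par prolongement analytique, cette application est injective. Deuxi�mement, je montrerais qu'elle est surjective sur $\mathcal{G}_x$ : tout germe dans $\mathcal{G}_x$ est limite de germes $(f^{n_j})_x$ ; quitte � extraire, la suite $f^{n_j}$ converge uniform�ment sur les compacts de $\Omega$ vers un �l�ment $g\in\mathcal{G}(\Omega)$ (compacit� de $\mathcal{G}(\Omega)$), et le germe de $g$ en $x$ co�ncide alors avec le germe limite. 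Troisi�mement, cette bijection $\mathcal{G}(\Omega)\to\mathcal{G}_x$ est un hom�omorphisme : la convergence uniforme sur les compacts de $\Omega$ entra�ne la convergence des germes, et r�ciproquement, la convergence des germes sur un petit polydisque autour de $x$ entra�ne, par le principe du maximum appliqu� aux diff�rences et la compacit� de $\mathcal{G}(\Omega)$, la convergence uniforme sur les compacts de $\Omega$. On transporte ainsi la structure de groupe de Lie ab�lien compact de $\mathcal{G}(\Omega)$ sur $\mathcal{G}_x$.

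L'obstacle principal sera la v�rification soigneuse que la topologie des germes et la topologie de la convergence uniforme sur les compacts de $\Omega$ co�ncident via cette bijection, autrement dit que le passage au germe est un plongement topologique propre : c'est l� qu'intervient de mani�re essentielle la compacit� de $\mathcal{G}(\Omega)$ (sans elle, une suite de germes convergents pourrait provenir de puissances de $f$ \og s'�chappant \fg\ du domaine). Une fois ce point acquis, la commutativit� de $\mathcal{G}_x$ est imm�diate (limite de puissances de $f$, qui commutent entre elles), et la compacit� se lit directement sur l'isomorphisme avec $\mathcal{G}(\Omega)$. Ce lemme servira ensuite, dans la suite du chapitre, � lin�ariser $f$ au voisinage de $x$ : un groupe ab�lien compact de germes d'automorphismes fixant $x$ est simultan�ment conjugable � un groupe de transformations lin�aires (moyennisation de Bochner sur $\mathcal{G}_x$ d'un germe de carte), ce qui permettra ensuite de contr�ler les indices de points fixes via la formule de Lefschetz \eqref{lef-isot} et d'en d�duire le th�or�me \ref{car-neg}.
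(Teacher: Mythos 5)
Your proof is correct, but it takes a substantially different and heavier route than the paper's, and at a real cost in generality. The paper's argument is entirely local and elementary: it extracts a single subsequence $n_k\to\pm\infty$ along which \emph{both} $f^{n_k}\to g$ and $f^{-n_k}\to h$ converge uniformly near $x$, then observes directly that $h\circ g=\id$ and $g\circ h=\id$ on a neighborhood of $x$ (using continuity of $g$ to force $g(U)\subset\Omega$, then composing limits), so every germ limit of $\{f^n_x\}$ is an invertible germ with inverse also in $\mathcal{G}_x$. Compactness then falls out of Montel (normality on a fixed neighborhood of $x$ plus closedness), and commutativity from the fact that $\mathcal{G}_x$ is the closure of the cyclic group generated by $f_x$. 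No global object is used.

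Your proof instead routes through the global group $\mathcal{G}(\Omega)$: you invoke Th\'eor\`eme~\ref{rec=rot} to make $\Omega$ a rotation domain, get compactness of $\mathcal{G}(\Omega)$ from Proposition~\ref{carac-rot}, and transport it to $\mathcal{G}_x$ via the germ-restriction map. The argument works (the restriction is injective by analytic continuation, surjective by precompactness of $\{f^n\}$ in $\mathcal{G}(\Omega)$, and a continuous bijection from a compact space is a homeomorphism, so you don't even need the back-and-forth topological check you worried about: continuity of $\mathcal{G}(\Omega)\to\mathcal{G}_x$ alone gives compactness of the image). But note what it costs you. The lemma in the paper is stated for \emph{any} automorphism $f$ of \emph{any} complex surface $X$ with a fixed point in $\Fat(f)$, and the paper explicitly remarks at the start of the chapter that the results extend beyond the loxodromic/K\"ahler setting; your route needs $f$ loxodromic and $X$ compact K\"ahler for Th\'eor\`eme~\ref{rec=rot} (or at least the weaker Remarque~\ref{dom-rot} for the rotation-domain property), so it proves a narrower statement. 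It also imports significantly more machinery for what is, at bottom, a two-line local observation. Finally, your opening ``plan'' announces a direct appeal to Cartan--Bochner--Montgomery on $\mathcal{G}_x$ and then abandons it without explanation for the $\mathcal{G}(\Omega)$-isomorphism route; this inconsistency should be cleaned up, since the Lie-group structure on $\mathcal{G}_x$ is not actually used anywhere in the lemma (only compactness and commutativity are).
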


\begin{proof}
Soit $n_k\to\pm\infty$ telle que $f^{n_k}\to g$ et $f^{-n_k}\to h$ uniformément sur les compacts de $\Omega$ (on peut toujours s'y ramener par extraction d'une sous-suite). Comme $g$ est continue, il existe un voisinage ouvert $U$ de $x$ tel que $g(U)\subset\Omega$. En prenant $y\in U$, on obtient
\begin{equation}
y = \lim_{k\to+\infty}f^{-n_k}\circ f^{n_k}(y) = \lim_{k\to+\infty}f^{-n_k} g(y) = h\circ g(y).
\end{equation}
De manière similaire, on obtient $g\circ h=\id$ sur un voisinage de $x$, ce qui montre que $g$ et $h$ définissent des germes d'automorphismes en $x$, inverses l'un de l'autre.
\end{proof}

On peut alors utiliser un argument de linéarisation dû à Henri Cartan :

\begin{prop}
Il existe un germe de difféomorphisme
\begin{equation}
\psi:(\C^2,0)\to (\Omega,x)
\end{equation}
tel que $\psi^{-1}\mathcal{G}_x\psi$ soit un sous-groupe abélien compact de $\GL_2(\C)$.
\end{prop}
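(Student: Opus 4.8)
The plan is to apply the classical averaging (linearization) trick of H.~Cartan to the compact group $\mathcal{G}_x$. First I would fix a holomorphic chart $\varphi$ sending a neighbourhood of $x$ in $\Omega$ onto a neighbourhood of $0$ in $\C^2$, with $\varphi(x)=0$; via $\varphi$ every germ $g\in\mathcal{G}_x$ becomes a germ of biholomorphism of $(\C^2,0)$ fixing $0$, and I write $\tilde g:=\varphi\circ g\circ\varphi^{-1}$ and $L_g:=D\tilde g(0)\in\GL_2(\C)$ for its linear part. By the chain rule $g\mapsto L_g$ is a group homomorphism, and by the Cauchy estimates it is continuous for the topology of uniform convergence on compact subsets; hence its image $H:=\{L_g\mid g\in\mathcal{G}_x\}$ is a \emph{compact abelian} subgroup of $\GL_2(\C)$ (continuous image of the compact abelian group $\mathcal{G}_x$).

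Next, since $\mathcal{G}_x$ is compact for the compact-open topology (equivalently, $\mathcal{G}_x\subset\Aut(\Omega)$ is a normal family because $\Omega\subset\Fat(f)$), there is a polydisc $P$ centred at $0$ on which every $\tilde g$, $g\in\mathcal{G}_x$, is defined and such that $\bigcup_{g}\tilde g(P)$ is relatively compact in the image of the chart; this provides the uniform bounds needed below. Using the normalized Haar measure $d\mu$ on $\mathcal{G}_x$ I would then set
\begin{equation*}
\Phi(z)=\int_{\mathcal{G}_x} L_g^{-1}\bigl(\tilde g(z)\bigr)\,d\mu(g),\qquad z\in P.
\end{equation*}
Differentiation under the integral sign (legitimate by the uniform estimates on $P$) shows that $\Phi$ is holomorphic, that $\Phi(0)=0$, and that $D\Phi(0)=\int_{\mathcal{G}_x}L_g^{-1}L_g\,d\mu(g)=\id$, so $\Phi$ defines a germ of biholomorphism of $(\C^2,0)$.

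The key point is the equivariance $\Phi\circ\tilde h=L_h\circ\Phi$ for every $h\in\mathcal{G}_x$. Since $\tilde g\circ\tilde h=\widetilde{g\circ h}$, the change of variable $g=g'h^{-1}$ together with $L_{g'h^{-1}}^{-1}=L_h L_{g'}^{-1}$ and the bi-invariance of the Haar measure on the compact group $\mathcal{G}_x$ give
\begin{equation*}
\Phi(\tilde h(z))=\int_{\mathcal{G}_x}L_g^{-1}\,\widetilde{g\circ h}(z)\,d\mu(g)=\int_{\mathcal{G}_x}L_h\,L_{g'}^{-1}\,\widetilde{g'}(z)\,d\mu(g')=L_h\,\Phi(z).
\end{equation*}
Setting $\psi:=\varphi^{-1}\circ\Phi^{-1}$ one gets a germ $\psi:(\C^2,0)\to(\Omega,x)$, and for every $g\in\mathcal{G}_x$
\begin{equation*}
\psi^{-1}\circ g\circ\psi=\Phi\circ\tilde g\circ\Phi^{-1}=L_g,
\end{equation*}
so $\psi^{-1}\mathcal{G}_x\psi=H$, a compact abelian subgroup of $\GL_2(\C)$. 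The only slightly delicate part of the argument is the preliminary construction of the common domain $P$ and of the uniform bounds allowing one to integrate and to differentiate under the integral sign; this rests entirely on the compactness of $\mathcal{G}_x$ established in the previous lemma, and the rest is the formal averaging computation.
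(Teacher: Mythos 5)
Your proof is correct and is essentially the same as the paper's: both are H.~Cartan's linearization-by-averaging trick, integrating over the Haar measure of the compact group $\mathcal{G}_x$ and checking equivariance plus $D(\cdot)(0)=\id$. The paper averages $g^{-1}\circ\mathrm{d}g(x)$ (so its $\psi$ is directly the conjugator), whereas you average $L_g^{-1}\circ\tilde g$ and then invert; this is a purely cosmetic difference, and your extra remarks on the common domain and differentiation under the integral sign just spell out what the paper leaves implicit.
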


\begin{proof}
Soit $\mu$ la mesure de Haar sur $\mathcal{G}_x$. On se place dans des coordonnées locales au voisinage de $x$, et on pose
\begin{equation}
\psi = \int_{\mathcal{G}_x}g^{-1}\circ{\rm d}g(x)\,{\rm d}\mu(g).
\end{equation}
Pour tout $h\in\mathcal{G}_x$, on a alors
\begin{align}
h\circ\psi &= \int_{\mathcal{G}_x}hg^{-1}\circ{\rm d}g(x)\,{\rm d}\mu(g)\\
&= \int_{\mathcal{G}_x}(gh^{-1})^{-1}\circ{\rm d}(gh^{-1})(x)\circ{\rm d}h(x)\,{\rm d}\mu(g)\\
&= \psi\circ{\rm d}h(x).
\end{align}
De plus, $\psi$ est un germe de difféomorphisme, car
\begin{align}
{\rm d}\psi &= \int_{\mathcal{G}_x}{\rm d}(g^{-1}\circ{\rm d}g)\,{\rm d}\mu(g)\\
&= \int_{\mathcal{G}_x}{\rm d}g^{-1}\circ{\rm d}g\,{\rm d}\mu(g)\\
&= \id.
\end{align}
On a donc $\psi^{-1}\mathcal{G}_x\psi = \{{\rm d}h(x)\,|\,h\in\mathcal{G}_x\}$, qui est un sous-groupe abélien compact de $\GL_2(\C)$.
\end{proof}

\begin{rema}
La proposition précédente reste vraie en dimension quelconque, avec la même démonstration.
\end{rema}

Supposons maintenant que $f$ préserve une structure réelle, et que $x$ est un point fixe réel dans l'ensemble de Fatou. Les différentielles ${\rm d}h(x)$ sont alors dans un sous-groupe compact de $\GL_2(\R)$, donc quitte à composer $\psi$ à droite par une matrice de $\GL_2(\R)$, on peut supposer que $\psi^{-1}\mathcal{G}_x\psi \subset {\sf O}(2)$. Comme de plus, ce groupe est abélien et infini, on a en fait
\begin{equation}
\psi^{-1}\mathcal{G}_x\psi \subset {\sf SO}(2).
\end{equation}
En particulier, on obtient le corollaire suivant :

\begin{coro}\label{conj-rot}
Soit $f$ un automorphisme d'ordre infini d'une surface complexe connexe $X$, qui préserve une structure réelle sur $X$. Alors~$f_\R$ est conjugué à une rotation irrationnelle au voisinage de chaque point fixe réel $x$ dans l'ensemble de Fatou. En particulier, les points fixes de $f$ sur~${X(\R)\cap\Fat(f)}$ sont isolés, non dégénérés, et leur indice est égal à $+1$.
\end{coro}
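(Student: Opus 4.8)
L'essentiel du travail ayant d�j� �t� fait dans les propositions qui pr�c�dent, la d�monstration de ce corollaire sera courte. Le point de d�part est le germe de diff�omorphisme $\psi:(\C^2,0)\to(\Omega,x)$, o� $\Omega$ d�signe la composante de Fatou contenant $x$, construit par moyennisation (via la mesure de Haar sur le groupe compact $\mathcal{G}_x$) et normalis� de sorte que $\psi^{-1}\mathcal{G}_x\psi\subset{\sf SO}(2)$. La strat�gie consistera � : (i) v�rifier que $\psi$ peut �tre choisi compatible avec la structure r�elle $\sigma$, donc qu'il conjugue $f_\R$ au voisinage de $x$ � la rotation $\rho:=\psi^{-1}\circ f_x\circ\psi$ de $\R^2$ ; (ii) montrer que cette rotation est irrationnelle ; (iii) en d�duire les propri�t�s des points fixes par un calcul d'indice �l�mentaire.

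Pour (i), je me placerai dans des coordonn�es locales r�elles-analytiques centr�es en $x$ dans lesquelles $\sigma$ s'identifie � la conjugaison complexe $z\mapsto\b z$ (c'est possible car $x\in X(\R)$ et $\sigma$ est une involution anti-holomorphe). Puisque $f$ commute avec $\sigma$, tout �l�ment $g\in\mathcal{G}_x$ commute aussi avec $\sigma$ (la condition passe aux it�r�s puis aux limites uniformes), donc chaque ${\rm d}g(x)$ commute avec la conjugaison, c'est-�-dire provient d'une matrice r�elle ; il en va de m�me de la moyenne $\psi$ et de la matrice de $\GL_2(\R)$ servant � la normalisation dans ${\sf O}(2)$. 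Ainsi $\psi$ est $\sigma$-�quivariant : il envoie le germe $(\R^2,0)$ dans $(X(\R),x)$ et conjugue $f_\R$ (au voisinage de $x$) � la restriction de $\rho$ � $\R^2$.

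Pour (ii), j'observerai que si $\rho$ �tait une rotation d'angle $\theta$ avec $\theta/2\pi\in\Q$, un it�r� $\rho^n$ serait l'identit�, donc $f^n_x=\id$ en tant que germe ; par prolongement analytique sur la surface connexe $X$, on obtiendrait $f^n=\id$, en contradiction avec l'hypoth�se que $f$ est d'ordre infini. Donc $\theta/2\pi$ est irrationnel, ce qui au passage redonne $\mathcal{G}_x\simeq{\sf SO}(2)$, l'adh�rence de $\langle\rho\rangle$ �tant alors le cercle tout entier. Pour (iii), enfin : la rotation irrationnelle $R_\theta$ de $\R^2$ n'a que l'origine pour point fixe, et ${\det({\rm d}R_\theta(0)-\id)=2(1-\cos\theta)>0}$ puisque $\theta\notin 2\pi\Z$ ; donc $x$ est un point fixe isol�, non d�g�n�r�, d'indice ${\sf signe}(2(1-\cos\theta))=+1$. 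Comme ce raisonnement vaut en chaque point fixe r�el de $f$ situ� dans $\Fat(f)$, le corollaire s'ensuivra. L'unique �tape demandant un peu de vigilance est (i), � savoir la v�rification que la moyennisation de Haar et la normalisation dans ${\sf O}(2)$ peuvent �tre conduites de fa�on $\sigma$-�quivariante.
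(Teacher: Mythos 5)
Votre d�monstration est correcte et suit essentiellement la m�me d�marche que celle du texte : conjugaison locale $\psi$ envoyant $\mathcal{G}_x$ dans ${\sf SO}(2)$ (via la moyennisation de Haar et la normalisation dans $\GL_2(\R)$), irrationalit� de la rotation par prolongement analytique et hypoth�se d'ordre infini, puis calcul de d�terminant pour obtenir l'indice $+1$. Vous avez de plus le m�rite de v�rifier explicitement la $\sigma$-\'equivariance de $\psi$, c'est-\`a-dire que $\psi$ envoie bien $(\R^2,0)$ sur $(X(\R),x)$ et conjugue donc $f_\R$ elle-m\^eme � la restriction r�elle de la rotation, point que le texte laisse implicite en affirmant directement que $f_\R$ est conjugu�e � une rotation au voisinage de $x$.
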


\begin{proof}
On a déjà vu que $f_\R$ est conjugué à une rotation dans un voisinage de $x$. Si celle-ci était d'ordre fini $k$, alors $f^k$ serait l'identité sur un voisinage de $x$, donc sur $X$ tout entier par prolongmement analytique, contredisant ainsi l'hypothèse sur l'ordre de $f$.

Comme une rotation irrationnelle n'a pas de point fixe autre que l'origine, ceci implique que les points fixes sur $X(\R)\cap\Fat(f)$ sont isolés. Au voisinage d'un tel point $x$, $({\rm d}f(x)-\id)$ est conjugué à une matrice de la forme
\begin{equation}
\begin{pmatrix}
\cos(\theta)-1 & -\sin(\theta)\\
\sin(\theta) & \cos(\theta)-1
\end{pmatrix}
\end{equation}
avec $\theta\in\R\backslash 2\pi\Q$, qui a pour déterminant $2-2\cos(\theta)>0$. Ceci montre que $x$ est un point fixe non dégénéré et que $i(f;x)=+1$.
\end{proof}


\section{Démonstration du théorème \ref{car-neg}}

Quitte à prendre un itéré, on suppose que $f(S)=S$. On note $\Omega$ la composante de Fatou contenant $S$. Comme $f$ préserve le fermé $S$, $\Omega$ est récurrent, et le théorème \ref{rec=rot} montre qu'il s'agit d'un domaine de rotation. Le groupe $\mathcal{G}(\Omega)$ est ainsi un groupe de Lie compact, qui n'a qu'un nombre fini de composantes connexes. Quitte à reprendre un itéré de $f$, on peut donc supposer qu'il est connexe. Or les éléments de ce groupe préservent la structure réelle (par passage à la limite de la relation $f^{n_k}\circ\sigma=\sigma\circ f^{n_k}$), et définissent par restriction des difféomorphismes $\R$-analytiques de $S$. La connexité montre alors que $f_{|S}$ est isotope à l'identité.

D'après le corollaire \ref{conj-rot}, on peut appliquer la formule de Lefschetz $(\ref{lef-isot})$ à $f_{|S}$, qui s'écrit
\begin{equation}
\chi(S) = {\sf card}\{x\in S\,|\,f(x)=x\}.
\end{equation}
En particulier, on obtient $\chi(S)\geq 0$.
\qed

\begin{rema}\label{dom-rot}
On peut montrer que $\Omega$ est un domaine de rotation de la manière suivante, qui n'utilise pas l'hyperbolicité démontrée au chapitre 11.

Comme $\Omega$ est contenu dans l'ensemble de Fatou, il existe une suite $n_k\to+\infty$ telle que $f^{n_k}\to g$ uniformément sur les compacts de $\Omega$, avec $g:\Omega\to X(\C)$ holomorphe. Quitte à passer à une sous-suite, on peut également supposer que $f^{-n_k}\to h$, $f^{m_k}\to i$ avec $m_k:=n_{k+1}-n_k\to+\infty$. En restriction à $S$, la convergence est uniforme et les fonctions $g$, $h$ et $i$ sont à valeurs dans $S$. On peut donc composer les limites dans les expressions $\id_S=f^{n_k}\circ f^{-n_k}$ et $f^{m_k}=f^{n_{k+1}}\circ f^{-n_k}$, ce qui donne $\id_S=g_{|S}\circ h_{|S}$ et $i_{|S}=g_{|S}\circ h_{|S}$. En particulier, $i_{|S}=\id_S$, et par prolongement analytique on en déduit que $i=\id_\Omega$. Ainsi $\Omega$ est un domaine de rotation.
\end{rema}


\section[Dynamique réelle dans l'ensemble de Fatou]{Précisions sur la dynamique réelle dans l'ensemble de Fatou}\label{precisions}

Soit $S$ une composante connexe de $X(\R)$ qui satisfait les hypothèses du théorème \ref{car-neg}. On sait que la composante de Fatou $\Omega$ contenant $S$ est un domaine de rotation (voir ci-dessus), et (quitte à prendre un itéré de $f$) le groupe $\mathcal{G}(\Omega)$ est donc soit un cercle $\S^1=\left\{z\in\C\,\big|\,|z|=1\right\}$, soit un tore de dimension deux $\T^2=\S^1\times\S^1$. Ce groupe agit fidèlement par difféomorphismes sur $S$, ce qui permet de décrire de manière simple la dynamique de $f$ sur $S$.

\begin{theo}\label{conj-rot-global}
Soit $f$ un automorphisme loxodromique réel d'une surface kählérienne compacte réelle $X$, et soit $S$ une composante connexe de~$X(\R)$ qui est contenue dans l'ensemble de Fatou de $f$. Alors, quitte à prendre un itéré\footnote{On doit prendre un itéré pour avoir $f(S)=S$ et $f_{|S}$ isotope à l'identité via une isotopie dans le groupe $\mathcal{G}(\Omega)$.}, $f_{|S}$ est conjugué à une rotation, c'est-à-dire :
\begin{enumerate}
\item
Si $S$ est une sphère, il existe un difféomorphisme 
\begin{equation}
\psi:\S^2=\left\{(x,y,z)\in\R^3\,\big|\,x^2+y^2+z^2=1\right\} \longrightarrow S
\end{equation}
tel que $\psi^{-1}\circ f_{|S}\circ\psi$ soit donné par une matrice de rotation
\begin{equation}
\begin{pmatrix}\cos(\theta)&-\sin(\theta)&0\\\sin(\theta)&\cos(\theta)&0\\0&0&1\end{pmatrix},
\end{equation}
pour un certain angle $\theta\in\R\backslash 2\pi\Q$.
\item
Si $S$ est un tore, il existe un difféomorphisme 
\begin{equation}
\psi: \T^2=\left\{(z_1,z_2)\in\C^2\,\big|\,|z_1|=|z_2|=1\right\} \longrightarrow S
\end{equation}
tel que $\psi^{-1}\circ f_{|S}\circ\psi$ soit donné par le produit de deux rotations
\begin{equation}
(z_1,z_2)\longmapsto({\rm e}^{i\theta_1}z_1,{\rm e}^{i\theta_2}z_2),
\end{equation}
pour certains angles $(\theta_1,\theta_2)\in\R^2\backslash(2\pi\Q)^2$.
\item
Si $S$ est un plan projectif réel, il existe un difféomorphisme 
\begin{equation}
\psi: \P^2(\R) \longrightarrow S
\end{equation}
tel que $\psi^{-1}\circ f_{|S}\circ\psi$ soit donné dans la carte affine $\R^2\subset\P^2(\R)$ par une matrice de rotation
\begin{equation}
\begin{pmatrix}\cos(\theta)&-\sin(\theta)\\\sin(\theta)&\cos(\theta)\end{pmatrix},
\end{equation}
pour un certain angle $\theta\in\R\backslash 2\pi\Q$.
\item
Si $S$ est une bouteille de Klein, il existe un difféomorphisme 
\begin{equation}
\psi: \mathbb{K}^2=\frac{\T^2=\left\{(z_1,z_2)\in\C^2\,\big|\,|z_1|=|z_2|=1\right\}}{(z_1,z_2)\sim(z_1^{-1},{\rm e}^{i\pi}z_2)} \longrightarrow S
\end{equation}
tel que $\psi^{-1}\circ f_{|S}\circ\psi$ soit donné par une rotation
\begin{equation}
(z_1,z_2)\longmapsto(z_1,{\rm e}^{i\theta}z_2),
\end{equation}
pour un certain angle $\theta\in\R\backslash 2\pi\Q$.
\end{enumerate}
\end{theo}

\begin{rema}
Le rang du domaine de rotation $\Omega\supset S$ ne peut  être égal à $2$ que lorsque $S$ est un tore et que les angles $\theta_1$ et $\theta_2$ (avec les notations ci-dessus) sont $\Q$-linéairement indépendants.
\end{rema}

\begin{lemm}\label{lemm-action}
Soit une action de classe $\mathcal{C}^1$, fidèle et sans point fixe, du groupe $\S^1$ sur le cylindre $\mathcal{C}=[0,1]\times\S^1$ ou sur le ruban de Möbius ${\mathcal{M}=\mathcal{C}/s}$, où $s$ est l'involution $(t,z)\mapsto(1-t,{\e}^{i\pi}z)$. Alors cette action est conjuguée à l'action standard de $\S^1$ par rotation :
\begin{equation}
{\rm e}^{i\theta}\cdot(t,z)=(t,{\rm e}^{i\theta}z).
\end{equation}
\end{lemm}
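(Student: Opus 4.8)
The plan is to identify the orbit space $\mathcal{C}/\S^1$ (resp. $\mathcal{M}/\S^1$) with a segment and then to exhibit $\mathcal{C}$ (resp. $\mathcal{M}$) as an equivariant bundle over it, which is precisely what produces the conjugacy. First I would record the elementary facts: the stabilizer of a point is a closed subgroup of $\S^1$, and it is a \emph{proper} subgroup since the action has no fixed point, hence it is finite; therefore every orbit is a circle. Moreover the \emph{faithful} hypothesis is exactly what guarantees that the model we are comparing to is the genuine standard rotation (and it will be used again after passing to a double cover). So it suffices to prove that $M\to M/\S^1$ is an equivariant $\S^1$-fibration over $[0,1]$.

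For the cylinder $\mathcal{C}$, I would apply the slice theorem for actions of compact Lie groups (after smoothing the action if one insists on working only with the given $\mathcal{C}^1$ regularity): each orbit has an invariant tubular neighbourhood $\S^1\times_{G_x}(-\epsilon,\epsilon)$, so $B=\mathcal{C}/\S^1$ is a compact connected $1$-manifold with boundary, hence $[0,1]$ or $\S^1$; the latter is impossible because $\mathcal{C}$ has non-empty boundary whereas a principal $\S^1$-bundle over $\S^1$ is a closed surface. Next, since $\mathcal{C}$ is orientable there is \emph{no exceptional orbit}: an orbit with $\Z/2$-stabilizer acting nontrivially on the slice would have a tube isomorphic to $\S^1\times_{\Z/2}(-\epsilon,\epsilon)$, i.e. a Möbius band, which cannot be an open subsurface of an orientable surface. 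Thus $\mathcal{C}\to[0,1]$ is a principal $\S^1$-bundle; the base being contractible, the bundle is trivial, and an equivariant trivialization $[0,1]\times\S^1\xrightarrow{\sim}\mathcal{C}$ is a conjugacy to the standard rotation. (Concretely one can build it by hand: average a metric $g$, let $V$ be the nowhere-zero Killing generator, $W=V^\perp/|V^\perp|_g$ the unit horizontal field; since $\pi$ is a Riemannian submersion the projection of any $W$-trajectory to $[0,1]$ has unit speed, hence is monotone, so the trajectory $\Gamma$ issued from $\partial_0\mathcal{C}$ meets each orbit once, and $(s,e^{i\theta})\mapsto e^{i\theta}\cdot\Gamma(s)$ is the required diffeomorphism.)

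For the Möbius band, I would pass to the orientation double cover $p\colon\widetilde{\mathcal{M}}\to\mathcal{M}$, with $\widetilde{\mathcal{M}}$ a cylinder and orientation-reversing deck involution $\tau$. The $\S^1$-action lifts to $\widetilde{\mathcal{M}}$; the lift is again faithful and fixed-point-free, and it is \emph{free}: a point with nontrivial (necessarily $\Z/2$) stabilizer would either force a non-orientable tube in the orientable surface $\widetilde{\mathcal{M}}$, or have $\Z/2$ acting trivially on a slice, in which case the set of such points is open and closed, contradicting faithfulness. By the cylinder case there is an $\S^1$-equivariant diffeomorphism $\widetilde F\colon\mathcal{C}\to\widetilde{\mathcal{M}}$ (standard cylinder to $\widetilde{\mathcal{M}}$); transporting $\tau$ gives a free, orientation-reversing, rotation-equivariant involution $\tau'$ of the standard cylinder. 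Commutation with rotations forces $\tau'(t,z)=(a(t),c(t)z)$ with $a$ an involution of $[0,1]$ and $c\colon[0,1]\to\S^1$; orientation-reversal and freeness rule out $a=\id$ (which gives the \emph{twisted cylinder} $\tau'(t,z)=(t,-z)$, whose quotient is again a cylinder, not $\mathcal{M}$), so $a$ is a decreasing involution, and after an equivariant change of coordinates $a(t)=1-t$, with $a$-fixed point forcing $c(1/2)^2=1$ and freeness forcing $c(1/2)=-1$. A final coordinate change $(t,z)\mapsto(t,d(t)z)$ conjugates $\tau'$ to the standard $s(t,z)=(1-t,-z)$, where $d$ is obtained by solving the functional equation $d(1-t)\,c(t)\,\overline{d(t)}=-1$ (choose $d$ freely on $[0,1/2]$ and extend, compatibility at $t=1/2$ being automatic). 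Passing to quotients gives the equivariant diffeomorphism $\mathcal{M}\cong\mathcal{C}/s$.

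The main obstacle I expect is the bookkeeping in the Möbius case — verifying that the lifted action is free and, above all, that every occurring deck involution is equivariantly conjugate to the model $s$, in particular discarding the twisted-cylinder involution — together with the low-regularity point: with a merely $\mathcal{C}^1$ action one must either invoke the smoothability of actions of compact Lie groups before averaging or applying the slice theorem, or else carry out the whole transversality-and-flow argument directly in the $\mathcal{C}^1$ category.
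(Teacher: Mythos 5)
Your proof is correct, and the two halves of your argument compare differently with the paper's. For the cylinder, you reach the same endpoint by a slightly different packaging: the paper works directly with the generating vector field $\overrightarrow V$ and its rotation $\overrightarrow W$, using Poincar\'e--Bendixson twice (once to see that orbits are circles parallel to the boundary, once to rule out a $\overrightarrow W$-trajectory spiraling forever in the interior), whereas you first invoke the slice theorem and orientability to see that the action is free and the orbit map is a trivial principal bundle over $[0,1]$, and your parenthetical ``concrete'' construction (invariant metric, unit perpendicular field, monotone projection to the base) is essentially the paper's transversal argument, except that the Riemannian-submersion monotonicity observation replaces --- and is cleaner than --- the paper's second Poincar\'e--Bendixson step. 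For the M\"obius band your route is genuinely different. The paper also passes to the double cover and notes that the induced involution on the orbit interval $[0,1]$ swaps the endpoints, hence has an $s$-fixed orbit $\gamma$; it then \emph{cuts} $\mathcal C$ along $\gamma$ into two half-cylinders, applies the cylinder case to one of them, and reassembles $\mathcal M$ by identifying the two halves of $\gamma$. You instead conjugate the lifted action to the standard rotation on $\mathcal C$, transport the deck involution $\tau$, and classify the resulting rotation-equivariant free orientation-reversing involution $\tau'(t,z)=(a(t),c(t)z)$: discarding $a=\mathrm{id}$ (the twisted cylinder, whose quotient is a cylinder and whose involution is in any case orientation-preserving), normalizing $a(t)=1-t$, and solving $\overline{d(1-t)}\,c(t)\,d(t)=-1$ --- whose compatibility at $t=1/2$ is indeed automatic from $c(t)c(1-t)=1$ and $c(1/2)=-1$ --- to conjugate $\tau'$ to the model involution $s$. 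Your approach buys an explicit algebraic description of the possible deck involutions (and in passing rediagnoses the twisted cylinder as the only other free rotation-equivariant involution), while the paper's cut-and-paste is shorter but leaves the regluing of the equivariant trivializations less explicit. Two small things you gloss over that the paper addresses (also briefly): the existence of the lift of the $\S^1$-action to the orientation double cover, which the paper justifies by noting that every orbit is homotopic to the boundary circle hence lies in $\pi_*\pi_1(\mathcal C)$; and, in the cylinder half, you only rule out a $\Z/2$-stabilizer acting \emph{nontrivially} on the slice, whereas the trivial-slice case (which you do treat in the M\"obius half via the open-closed argument against faithfulness) and stabilizers $\Z/n$ with $n>2$ (which force a kernel on the $1$-dimensional slice and reduce to the same open-closed argument) should also be dispatched. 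The $\mathcal C^1$-vs-$\mathcal C^\infty$ regularity point you flag is real but is equally present and undiscussed in the paper's proof; Bochner--Montgomery lets one smooth the action first.
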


\begin{proof}
\begin{enumerate}
\item
On traite d'abord le cas où $\S^1$ agit sur le cylindre. Cette action provient d'un champ de vecteurs $\overrightarrow V$ de classe $\mathcal{C}^1$ sur $\mathcal{C}$. Comme l'action est sans point fixe, $\overrightarrow V$ ne s'annule pas. En particulier, les orbites sous l'action de~$\S^1$, qui sont les trajectoires du champ de vecteurs, sont des cercles homotopes aux bords du cylindre (c'est une conséquence facile du théorème de Poincaré--Bendixson).

En faisant tourner ce champ de vecteurs d'un angle positif de $\pi/2$, on obtient un champ de vecteurs orthogonal~$\overrightarrow W$. Chaque trajectoire $\gamma$ de $\overrightarrow V$ découpe $\mathcal C$ en deux cylindres $\mathcal{C}_1$ et $\mathcal{C}_2$ (éventuellement un des deux est un cercle si $\gamma$ est un des deux bords), avec le champ de vecteurs $\overrightarrow W$ qui est rentrant le long de~$\gamma$ sur~$\mathcal{C}_1$, et sortant sur $\mathcal{C}_2$. Ainsi, toute trajectoire de $\overrightarrow W$ sur $\mathcal{C}_1$ (resp. $\mathcal{C}_2$) partant d'un point de $\gamma$ (resp. terminant sur un point de $\gamma$) ne peut pas revenir sur le bord $\gamma$ une seconde fois, à cause de l'orientation de $\overrightarrow W$. On en déduit le fait suivant :
\begin{center}
Les trajectoires des champs $\overrightarrow V$ et $\overrightarrow W$ se coupent en au plus un point.
\end{center}

Considérons une trajectoire $\gamma$ pour $\overrightarrow W$, que l'on fait partir de l'un des deux bords du cylindre (celui pour lequel le champ de vecteurs $\overrightarrow W$ est rentrant). Comme cette trajectoire ne peut pas revenir sur le premier bord, a priori deux cas sont possibles :
\begin{itemize}
\item
La trajectoire $\gamma$ atteint le deuxième bord du cylindre en un temps fini~$t_1>0$.
\item
La courbe $\gamma$ est définie pour tout $t\geq 0$, et reste dans l'intérieur du cylindre. En vertu du théorème de Poincaré--Bendixson, cette trajectoire admet un point limite ou un cycle limite $\Gamma$. Comme le champ $\overrightarrow W$ ne s'annule pas, il ne peut s'agir que d'un cycle limite $\Gamma$, qui ne peut être homotopiquement trivial, donc qui est homotope au bord du cylindre. Or l'orbite sous $\S^1$ d'un point de $\Gamma$ est également un cercle homotope à $\Gamma$, qui coupe $\Gamma$ transversalement en au plus un point, d'après le fait énoncé plus haut : ceci est impossible.
\end{itemize}

Quitte à renormaliser et à inverser la trajectoire, on a ainsi construit une courbe $\gamma:[0,1]\to \mathcal{C}$ telle que
\begin{enumerate}
\item $\gamma(0)\in\{0\}\times\S^1$ et $\gamma(1)\in\{1\}\times\S^1$ ;
\item toute orbite coupe $\gamma$ exactement une fois, et de manière transverse.
\end{enumerate}
Le difféomorphisme
\begin{equation}
\begin{split}
\psi : [0,1]\times\S^1 &\longrightarrow \mathcal{C}\\
(t,{\rm e}^{i\theta}) & \longmapsto {\rm e}^{i\theta}\cdot \gamma(t),
\end{split}
\end{equation}
conjugue alors l'action standard de $\S^1$ sur $[0,1]\times\S^1$ à notre action sur $\mathcal{C}$.

\item Considérons maintenant une action fidèle et sans point fixe de $\S^1$ sur le ruban de Möbius $\mathcal{M}$. On note $\pi:\mathcal{C}\to\mathcal{M}$ le revêtement double correspondant à l'involution $s:(t,z)\mapsto(1-t,{\e}^{i\pi}z)$ sur $\mathcal{C}$.

Pour des raisons topologiques, le bord $\partial\mathcal{M}$ est nécessairement une orbite sous l'action de $\S^1$. Toute orbite est homotope à ce bord, donc dans $\pi_*\left(\pi_1(\mathcal{C})\right)$. On en déduit que l'action de $\S^1$ sur $\mathcal{M}$ se relève en une action de $\S^1$ sur $\mathcal{C}$, nécessairement fidèle et sans point fixe (l'action est fidèle, car elle l'est en restriction à chacun des bords). D'après le premier point, l'espace des orbites pour cette dernière action s'identifie avec le segment $[0,1]$, et l'involution $s$ induit une involution (non triviale) sur les orbites : par continuité, il existe donc une orbite $\gamma$ qui est fixée par $s$.

En découpant $\mathcal{C}$ le long de cette orbite $\gamma$, on obtient deux cylindres identiques $\mathcal{C}_1$ et $\mathcal{C}_2$, et le ruban de Möbius $\mathcal{M}$ est réobtenu à partir d'un tel cylindre en recollant les deux moitiés de $\gamma$ (voir figure \ref{fig-mobius}). Il suffit maintenant d'appliquer le premier cas à l'un de ces cylindres $\mathcal{C}_i$, puis de recoller de la manière que nous venons d'indiquer pour obtenir la conjugaison voulue.
\end{enumerate}
\begin{figure}[h]
\begin{center}
\scalebox{0.5}{\input{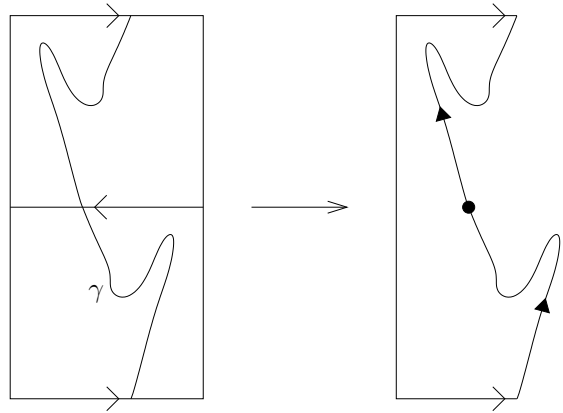}}
\end{center}
\caption{Le découpage et recollage du ruban de Möbius.}\label{fig-mobius}
\end{figure}
\end{proof}

\begin{proof}[Démonstration du théorème \ref{conj-rot-global}]
Soit $\Omega$ un domaine de rotation contenant $S$. Quitte à prendre un itéré de $f$, le groupe $\mathcal{G}(\Omega)$ est un tore de dimension $1$ ou $2$, qui agit par difféomorphismes $\R$-analytiques sur $S$. Cette action est fidèle, car si $g\in\mathcal{G}(\Omega)$ est l'identité sur $S$, alors $g$ est l'identité sur $\Omega$ par prolongement analytique.

Si $\mathcal{G}(\Omega)\simeq\T^2$, alors l'orbite d'un point générique $x_0\in S$ est un tore de dimension $2$, et le difféomorphisme
\begin{equation}
\begin{split}
\psi : \mathcal{G}(\Omega)\simeq\T^2 &\longrightarrow S\\
g & \longmapsto g(x_0),
\end{split}
\end{equation}
conjugue l'action par translations de $\T^2$ sur lui-même à l'action de $\mathcal{G}(\Omega)$ sur~$S$. En particulier, $f_{|S}$ est conjugué à une rotation.

À partir de maintenant, on suppose que $\mathcal{G}(\Omega)=\S^1$. Si $S$ est orientable, la formule de Lefschetz montre que $f$ possède $2$ points fixes sur $S$ si $S$ est une sphère, ou $0$ point fixe si $S$ est un tore. Dans le premier cas, $f$ est conjugué à une rotation dans des petits disques au voisinage de chaque point fixe ; en retirant ces disques, on obtient un cylindre $S'\subset S$ sur lequel $\S^1$ agit fidèlement et sans point fixe. Dans le second cas, en découpant le tore $S$ suivant une orbite de $\S^1$ (celle-ci est une courbe fermée simple non contractible\footnote{Sinon il y aurait un point fixe dans le disque bordé par cette courbe.}), on obtient aussi un cylindre $S'$ sur lequel $\S^1$ agit fidèlement et sans point fixe. On sait d'après le lemme \ref{lemm-action} que l'action de $\S^1$ sur $S'$ est conjuguée à une rotation, et en recollant, on obtient une action par rotation sur $S$.

Il reste à traiter le cas où $S$ est non orientable. Si $S$ est un plan projectif,~le groupe $\S^1$ possède $\chi(S)=1$ point fixe sur $S$, autour duquel l'action est linéarisable et conjuguée à une rotation. En enlevant un petit disque autour de ce point fixe, on obtient un ruban de Möbius, sur lequel $f$ agit par rotations d'après le lemme \ref{lemm-action}. Enfin, si $S$ est une bouteille de Klein, on obtient de même un ruban de Möbius en découpant le long d'une orbite, et $f$ agit par rotations sur ce ruban. Le résultat s'en déduit par recollement.
\end{proof}


\chapter{Un exemple de difféomorphisme birationnel du tore}\label{chap-bir}

Au chapitre précédent, nous avons démontré que les seules composantes réelles qui pouvaient être incluses dans l'ensemble de Fatou sont des sphères, des tores, des plans projectifs ou des bouteilles de Klein. Il est maintenant naturel de chercher des exemples où l'ensemble de Fatou contient de telles composantes. Pour cela, on élargit l'étude aux applications birationnelles qui sont des difféomorphismes sur $X(\R)$ : on montre l'existence d'un tel difféomorphisme pour lequel le lieu réel est un tore inclus dans un domaine de rotation de rang~$2$.


\section{Applications birationnelles et difféomorphismes birationnels}

Soit $X$ une surface complexe projective (ou kählérienne compacte), et soit~$f\in\Bir(X)$ une application birationnelle (ou biméromorphe) de $X$. On note~$\Ind(f)$ l'ensemble des points d'indétermination de $f$.

Dans ce cadre, il n'existe pas de définition universelle pour l'ensemble de Fatou. On décide d'adopter la définition suivante, qui est assez restrictive, ce qui n'est pas un problème compte tenu de ce que l'on veut montrer.

\begin{defi}
Un point $x\in X$ est dans l'ensemble de Fatou s'il existe un voisinage $U$ de $x$ tel que
\begin{itemize}
\item $U\cap\Ind(f^n)=\emptyset$ pour tout $n\in\Z$ ;
\item la famille ${(f^n_{|U})}_{n\in\Z}$ est une famille normale.
\end{itemize}
\end{defi}

En particulier, $f$ est un difféomorphisme analytique en restriction à l'ensemble de Fatou.

Par analogie avec le cas des automorphismes, on définit aussi le (premier) \emph{degré dynamique} grâce à l'action induite par les itérés de $f$ sur $H^{1,1}(X;\R)$ :
\begin{equation}
\lambda(f)=\lim_{n\to+\infty}\left\|{(f^n)}_*\right\|^{1/n}.
\end{equation}
Dans le cas des automorphismes, on a ${(f^n)}_*={(f_*)}^n$, donc $\lambda(f)$ est bien égal au rayon spectral de $f_*$, prolongeant ainsi la définition déjà donnée dans ce cadre. Les applications birationnelles dont le degré dynamique est strictement supérieur à $1$ apparaissent ainsi comme l'analogue des automorphismes de type loxodromique : ce sont celles qui ont une dynamique riche (voir \cite{diller-favre}). Pour cette raison, on les appelle \emph{applications birationnelles de type loxodromique}.

Lorsque $X$ possède une structure réelle, on note $\BD(X)$ l'ensemble des applications birationnelles qui préservent cette structure réelle, et telles que
\begin{equation}
\Ind(f)\cap X(\R)=\Ind(f^{-1})\cap X(\R)=\emptyset.
\end{equation}
Une telle application définit un difféomorphisme réel-analytique de $X(\R)$ :
\begin{align}
1\to\BD(X)&\longrightarrow\Diff^\omega(X(\R))\\
f&\longmapsto f_\R:=f_{|X(\R)}.
\end{align}
Pour cette raison, on dit que $f$ est un difféomorphisme birationnel\footnote{Cette terminologie n'est pas standard.}. Rappelons le résultat démontré par Koll\'ar et Mangolte :

\begin{theo}[\cite{kollar-mangolte,lukackii}]\label{theo-kml}
Soit $X$ une surface algébrique réelle qui est birationnelle à $\P^2_\R$. Alors le groupe $\BD(X)$ est dense dans~$\Diff(X(\R))$.
\end{theo}

Ceci s'applique par exemple lorsque $X$ est la surface réelle $\P^1_\R\times\P^1_\R$. On se propose de démontrer le théorème suivant :

\begin{theo}\label{exbir}
Soit $X$ la surface rationnelle $\P^1\times\P^1$, munie de sa structure réelle standard. Il existe $f\in\BD(X)$ tel que
\begin{enumerate}
\item $f$ est de type loxodromique, \emph{i.e.} $\lambda(f)>1$ ;
\item l'ensemble de Fatou de $f$ contient le lieu réel $X(\R)\simeq\T^2$.
\end{enumerate}
\end{theo}

Plus précisément, on montre que $X(\R)$ est contenu dans un domaine de rotation de rang~$2$.


\section{Conjugaison à une rotation}

On désigne par $\S^1$ l'ensemble des nombres complexes de module $1$, et par~$\T^2$ le tore $\S^1\times\S^1$. Pour $\theta=(\theta_1,\theta_2)\in\R^2$, on note $\Rot_\theta$ la rotation d'angle $\theta$ sur le tore $\T^2$, donnée par la formule
\begin{equation}
\Rot_\theta(x,y)=(\e^{i\theta_1}x,\e^{i\theta_2}y).
\end{equation}

Le théorème suivant est démontré par Herman dans \cite{herman-tore}, en reprenant des idées de Arnol$'$d \cite{arnold} et Moser \cite{moser} (voir aussi {\cite[A.2.2]{herman}}) :

\begin{theo}[Arnold, Moser, Herman]\label{theo-herman}
Soit $\alpha=(\alpha_1,\alpha_2)\in\R^2$ qui vérifie une \emph{condition diophantienne}, c'est-à-dire qu'il existe $C>0$ et~${\beta>0}$ tels que pour tout $k=(k_1,k_2,k_3)\in\Z^3\backslash\{0\}$, on ait
\begin{equation}\label{diop}
\lvert k_1\alpha_1 + k_2\alpha_2 + 2\pi k_3 \rvert \geq \frac{C}{\left(\lVert k\rVert_\infty\right)^\beta}.
\end{equation}
Pour tout $\epsilon>0$, il existe alors un voisinage $\mathcal{U}_{\alpha,\epsilon}$ de $\Rot_\alpha$ dans le groupe $\Diff^\omega(\T^2)$ tel que
\begin{equation}\label{conj-herman}
\forall g\in\mathcal{U}_{\alpha,\epsilon},\,
\exists\theta\in\left]-\epsilon,\epsilon\right[^2,\,
\exists \psi\in\Diff^\omega(\T^2),\,
g = \Rot_\theta\circ\psi\circ \Rot_\alpha\circ\psi^{-1}.
\end{equation}
\end{theo}

À partir de maintenant, on se place sur la surface rationnelle ${X=\P^1\times\P^1}$, avec la structure réelle standard. Le lieu réel de cette surface 
\begin{equation}
{X(\R)=\P^1(\R)\times\P^1(\R)}
\end{equation}
s'identifie au tore $\T^2$ via la transformation de Cayley
\begin{align}
\Psi : X(\R) & \rightarrow \T^2 \\
(x,y) & \mapsto \left( \frac{x-i}{x+i} , \frac{y-i}{y+i} \right).
\end{align}
Via cette transformation, la rotation $\Rot_\theta$ correspond à un automorphisme réel de $X$ que l'on note $R_\theta$, et qui est donné par la formule
\begin{equation}
R_\theta(x,y) = 
\left(
\frac{x+\tan(\theta_1/2)}{-x\tan(\theta_1/2)+1}
,
\frac{y+\tan(\theta_2/2)}{-y\tan(\theta_2/2)+1}
\right).
\end{equation}

Fixons un nombre $\alpha\in\left[0,2\pi\right]^2$ qui vérifie une condition diophantienne du type (\ref{diop}), et soit $\mathcal{U}_{\alpha,\epsilon}$ le voisinage de $\Rot_\alpha$ dans $\Diff^\omega(\T^2)$ donné par le théorème de Herman-Arnold-Moser \ref{theo-herman}, où
\begin{equation}\label{condepsilon}
\epsilon = \max(|\pi-\alpha_1|,|\pi-\alpha_2|)>0.
\end{equation}
On note $\mathcal{V}_{\alpha}=\Psi^{-1}\mathcal{U}_{\alpha,\epsilon}\Psi$ le voisinage correspondant de $R_\alpha$ dans $\Diff^\omega(X(\R))$.

\begin{prop}\label{fatoucontientreel}
Soit $f\in\BD(X)$ tel que $f_\R\in\mathcal{V}_{\alpha}$. Il existe alors ${\theta\in\left]-\epsilon,\epsilon\right[^2}$ tel que la transformation $R_\theta f$ soit analytiquement conjuguée, sur un voisinage $\Omega$ de $X(\R)$, à la rotation d'angle $\alpha$ sur une bicouronne de $\C^2$. Autrement dit, il existe un difféomorphisme analytique complexe~${\psi:\mathcal{C}_1\times\mathcal{C}_2\to\Omega}$, où $\mathcal{C}_1$ et $\mathcal{C}_2$ sont des couronnes dans $\C$, tel que
\begin{equation}
\psi^{-1}\circ(R_\theta f)\circ\psi(z_1,z_2)=({\rm e}^{i\alpha_1}z_1,{\rm e}^{i\alpha_2}z_2).
\end{equation}
En particulier $\Omega$ est un domaine de rotation de rang $2$ pour $R_\theta f$, et
\begin{equation}
X(\R)\subset\Omega\subset\Fat(R_\theta f).
\end{equation}
\end{prop}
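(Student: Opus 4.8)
Je commencerais par dérouler le théorème de Herman--Arnold--Moser. Comme $f_\R\in\mathcal{V}_\alpha=\Psi^{-1}\mathcal{U}_{\alpha,\epsilon}\Psi$, l'application $\Psi\circ f_\R\circ\Psi^{-1}$ appartient à $\mathcal{U}_{\alpha,\epsilon}$, donc le théorème \ref{theo-herman} fournit un angle $\theta_0\in\left]-\epsilon,\epsilon\right[^2$ et un difféomorphisme $\phi\in\Diff^\omega(\T^2)$ tels que $\Psi\circ f_\R\circ\Psi^{-1}=\Rot_{\theta_0}\circ\phi\circ\Rot_\alpha\circ\phi^{-1}$. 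En conjuguant cette identité par $\Psi^{-1}$ et en utilisant que $\Psi^{-1}\circ\Rot_\beta\circ\Psi=R_\beta$, elle se réécrit
\begin{equation*}
f_\R=R_{\theta_0}\circ\tilde\phi\circ R_\alpha\circ\tilde\phi^{-1}\quad\text{sur }X(\R),
\end{equation*}
où $\tilde\phi=\Psi^{-1}\circ\phi\circ\Psi\in\Diff^\omega(X(\R))$. Comme $\left]-\epsilon,\epsilon\right[^2$ est symétrique, on peut alors poser $\theta=-\theta_0\in\left]-\epsilon,\epsilon\right[^2$ et $g=R_\theta\circ f$, ce qui donne $g_{|X(\R)}=\tilde\phi\circ R_\alpha\circ\tilde\phi^{-1}$ en restriction à $X(\R)$.

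Ensuite je complexifierais cette égalité. Puisque $f\in\BD(X)$, les ensembles $\Ind(f)$ et $\Ind(f^{-1})$ évitent $X(\R)$, de sorte que $g=R_\theta\circ f$ induit un biholomorphisme d'un voisinage complexe de $X(\R)$ dans $X(\C)$ sur un autre. Le difféomorphisme $\tilde\phi$ étant réel-analytique sur la variété \emph{compacte} $X(\R)$, les inégalités de Cauchy le prolongent en un biholomorphisme $\tilde\phi_\C\colon U\to U'$ entre deux voisinages complexes de $X(\R)$ ; quant à l'automorphisme algébrique $R_\alpha$, il agit sur $X(\C)$ tout entier et préserve $X(\R)$. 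Après rétrécissement, $\tilde\phi_\C\circ R_\alpha\circ\tilde\phi_\C^{-1}$ est une application holomorphe définie sur un voisinage connexe $U_0$ de $X(\R)$, qui coïncide avec $g$ le long de $X(\R)$. Or $X(\R)$ est une sous-variété totalement réelle de dimension réelle $2$ dans la surface complexe $X(\C)$, donc un germe d'application holomorphe le long de $X(\R)$ est déterminé par sa restriction à $X(\R)$ ; on en déduit $g=\tilde\phi_\C\circ R_\alpha\circ\tilde\phi_\C^{-1}$ sur $U_0$.

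Il restera à linéariser $R_\alpha$ au moyen de la transformation de Cayley. L'homographie $x\mapsto(x-i)/(x+i)$ prolonge $\Psi$ en un biholomorphisme $\Psi_\C$ de $\P^1\times\P^1$ qui envoie $X(\R)$ sur $\S^1\times\S^1$ et conjugue chaque $R_\beta$ à la rotation $(z_1,z_2)\mapsto(\e^{i\beta_1}z_1,\e^{i\beta_2}z_2)$ (ce dernier point est un calcul direct d'homographies). On choisit des couronnes $\mathcal{C}_j=\{r_j<|z_j|<r_j^{-1}\}$, avec $r_j<1$ assez proche de $1$ pour que $\Psi_\C^{-1}(\mathcal{C}_1\times\mathcal{C}_2)$ soit contenu dans le voisinage de $X(\R)$ où les applications précédentes sont définies ; la rotation d'angle $\alpha$ préserve la bicouronne $\mathcal{C}_1\times\mathcal{C}_2$. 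En posant $\psi=\tilde\phi_\C\circ\Psi_\C^{-1}$ et $\Omega=\psi(\mathcal{C}_1\times\mathcal{C}_2)\supset X(\R)$, on obtient
\begin{equation*}
\psi^{-1}\circ g\circ\psi=\Psi_\C\circ\big(\tilde\phi_\C^{-1}\circ g\circ\tilde\phi_\C\big)\circ\Psi_\C^{-1}=\Psi_\C\circ R_\alpha\circ\Psi_\C^{-1}\colon(z_1,z_2)\longmapsto(\e^{i\alpha_1}z_1,\e^{i\alpha_2}z_2).
\end{equation*}
La condition diophantienne $(\ref{diop})$, valable pour tout $k\in\Z^3\backslash\{0\}$, entraîne que $\alpha_1$, $\alpha_2$ et $2\pi$ sont $\Q$-linéairement indépendants ; l'adhérence du groupe engendré par la rotation ci-dessus est donc le tore $\T^2$ agissant sur $\mathcal{C}_1\times\mathcal{C}_2$, d'où, via $\psi$ et la proposition \ref{carac-rot}, que $\Omega$ est un domaine de rotation de rang $2$ pour $g=R_\theta f$. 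En particulier $g^n_{|\Omega}=\psi\circ\Rot_{n\alpha}\circ\psi^{-1}$ est un biholomorphisme de $\Omega$ pour tout $n\in\Z$, donc aucun itéré de $R_\theta f$ n'a de point d'indétermination dans $\Omega$ et la famille $\{g^n_{|\Omega}\}_{n\in\Z}$ est normale ; ainsi $X(\R)\subset\Omega\subset\Fat(R_\theta f)$.

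Le point délicat sera le passage de la conjugaison réelle-analytique sur $\T^2$ à une conjugaison \emph{holomorphe} sur un voisinage complexe : il faut choisir de manière compatible les diverses complexifications ($\tilde\phi_\C$, les voisinages $U$, $U'$, $U_0$, les couronnes $\mathcal{C}_j$) pour que la composée $\tilde\phi_\C\circ R_\alpha\circ\tilde\phi_\C^{-1}$ ait un sens sur un ouvert connexe rencontrant $X(\R)$, puis d'invoquer la totale réalité de $X(\R)$ dans $X(\C)$ pour le principe d'identité. Le reste de la démonstration consistera en des vérifications de routine sur les homographies et sur la définition de l'ensemble de Fatou d'une application birationnelle.
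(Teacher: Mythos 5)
Votre argument est correct et suit pour l'essentiel la m\^eme d\'emarche que la d\'emonstration du texte : le th\'eor\`eme de Herman--Arnold--Moser fournit la conjugaison r\'eelle-analytique sur $X(\R)$, la complexification et le principe d'identit\'e (via la totale r\'ealit\'e de $X(\R)$ dans $X(\C)$) la promeuvent en une conjugaison holomorphe \`a une rotation sur une bicouronne voisinage de $X(\R)$, et la condition diophantienne garantit que c'est un domaine de rotation de rang $2$ inclus dans l'ensemble de Fatou. La seule diff\'erence, purement cosm\'etique, est que vous factorisez la conjugaison \`a travers $\tilde\phi_\C$ et la transformation de Cayley complexifi\'ee $\Psi_\C$, alors que le texte prolonge directement la compos\'ee $\psi\colon\T^2\to X(\R)$ en $\widetilde\psi\colon\mathcal{C}_\eta^2\to\Omega$ ; votre invocation explicite de l'argument de totale r\'ealit\'e est en fait plus soign\'ee que le laconique \og par prolongement analytique \fg\ du texte.
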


\begin{proof}
Par construction du voisinage $\mathcal{V}_{\alpha}$, il existe~$\theta\in\left]-\epsilon,\epsilon\right[^2$ et un difféomorphisme réel-analytique $\psi:\T^2\to X(\R)$ tels que 
\begin{equation}
f_{|X(\R)}=R_{-\theta}\circ\psi\circ \Rot_\alpha\circ\psi^{-1}.
\end{equation}
Comme $\psi$ est analytique, il se prolonge en un difféomorphisme analytique~${\widetilde\psi : \mathcal{C}_\eta^2\to\Omega}$ pour un certain $\eta>0$, où $\mathcal{C}_\eta$ désigne la couronne
\begin{equation}
\mathcal{C}_\eta = \left\{z\in\C\,\big|\,-\eta<\log|z|<\eta\right\}.
\end{equation}
On peut supposer, quitte à réduire $\eta$, que le voisinage $\Omega$ de $X(\R)$ ne contient pas de point d'indétermination pour $f$. On a alors, par prolongement analytique :
\begin{equation}
\left(R_\theta f\right)_{|\Omega} = \widetilde\psi\circ \Rot_\alpha\circ\widetilde\psi^{-1}.
\end{equation}
Comme $\alpha$ est diophantien, la rotation $\Rot_\alpha$ est irrationnelle (au sens où ses orbites sont denses dans $\T^2$), donc $\Omega$ est un domaine de rotation de rang $2$ pour $R_\theta f$.
\end{proof}

Comme les difféomorphismes birationnels sont denses dans $\Diff^\infty(X(\R))$ (cf. théorème \ref{theo-kml}), on peut s'attendre à ce qu'il existe de telles transformations birationnelles $R_\theta f$ qui soient de type loxodromique. C'est ce que nous allons montrer maintenant.

\begin{rema}
Cette construction est analogue à celle des anneaux de Herman à l'aide de produits de Blaschke pour des endomorphismes de $\P^1(\C)$, telle que présentée par exemple dans \cite[\textsection 15]{milnor}.
\end{rema}


\section[Construction explicite d'un difféomorphisme birationnel]{Construction explicite d'un difféomorphisme birationnel de grand degré dynamique}

Pour s'assurer que le degré dynamique est strictement supérieur à $1$, on va utiliser le lemme suivant, que l'on ne redémontre pas ici :

\begin{lemm}[{\cite[Theorem 3.1]{xie}}]\label{theo-xie}
Soit $X$ une surface complexe rationnelle, et soit $f:X\dashrightarrow X$ une application birationnelle de $X$. On fixe~${L\in\NS(X;\R)}$ la classe d'un $\R$-diviseur ample sur $X$, et on suppose que
\begin{equation}
q=\frac{\deg_L(f^2)}{\deg_L(f)} \geq 3^{18}\sqrt{2},
\end{equation}
où $\deg_L(g)$ désigne le nombre d'intersection $g_*L\cdot L$. Alors 
\begin{equation}
\lambda(f) > \frac{2\times 3^{36}(4\times 3^{36}-1) q^2 + 1}{2^{5/2}\times 3^{54}\,q} \geq 1.
\end{equation}
\end{lemm}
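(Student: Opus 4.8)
The plan is to recast the degree sequence $\delta_n:=\deg_L(f^n)=f^n_\ast L\cdot L$ in terms of a single isometry of a hyperbolic space. First I would pass to the Picard--Manin (bubble) space $\mathcal{Z}(X)$: the direct limit of $\mathrm{NS}(X';\mathbf{R})$ over all blow-ups $X'\to X$, equipped with the intersection form, which has signature $(1,\infty)$ and carries an isometric action of $\mathrm{Bir}(X)$ for which $(f^{m+n})_\bullet=(f_\bullet)^m(f_\bullet)^n$. Write $g=f_\bullet$, and (after a normalisation of $L$ that I must later undo carefully) work with the hyperboloid $\mathbf{H}=\{v\cdot v=1,\ v\ \text{positive}\}$ with its hyperbolic metric; then $\delta_n$ is, up to the normalisation factor, $\cosh\big(\dist_{\mathbf H}(L,g^n L)\big)$, and $\lambda(f)=\exp\big(\ell(g)\big)$ where $\ell(g)=\lim_n\frac1n\dist(L,g^nL)$ is the translation length of $g$. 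All of this reduces, for the computations below, to the two- or three-dimensional hyperbolic subspace spanned by $L$, $g L$, $g^2 L$, so the infinite dimension is harmless.

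Second, I would invoke the trichotomy for $g$: elliptic (fixed point in $\mathbf H$, $\delta_n$ bounded), parabolic ($\ell(g)=0$, polynomial growth), or loxodromic ($\ell(g)>0$). The point where the explicit constant enters is the claim that the hypothesis $q=\delta_2/\delta_1\ge 3^{18}\sqrt2$ forces the loxodromic case. Here one uses the elementary \emph{almost-submultiplicativity} of degrees on rational surfaces, $\deg_L(h_1 h_2)\le C\,\deg_L(h_1)\deg_L(h_2)$ with a \emph{universal, explicit} $C$ (built from a B\'ezout-type estimate together with a uniform bound on the number of base points that must be resolved), and the companion inequality $\delta_{2n}\ge\delta_n^2/C$; a short argument shows that when $g$ is elliptic or parabolic the ratio $\delta_2/\delta_1$ stays below the stated threshold, so the hypothesis leaves only the loxodromic alternative. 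One records $C$, a power of $3$ times $\sqrt2$, since this is exactly the constant that reappears in the final bound.

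Third, in the loxodromic case I would extract $\lambda(f)$ explicitly. Let $\gamma$ be the axis of $g$, $\ell=\ell(g)$, $\rho=\dist(L,\gamma)$, and consider the Saccheri quadrilaterals with base $\ell$ (resp. $2\ell$) on $\gamma$ and perpendicular legs of length $\rho$; the summit-length formula gives
\begin{align}
\delta_1&=\cosh^2\!\rho\,\cosh\ell-\sinh^2\!\rho,\\
\delta_2&=\cosh^2\!\rho\,\cosh 2\ell-\sinh^2\!\rho.
\end{align}
Eliminating $\cosh^2\rho$ yields $\cosh\ell+1=(\delta_2-1)/\big(2(\delta_1-1)\big)$ (in the normalisation $L\cdot L=1$; the general case is the same with $1$ replaced by $L\cdot L$), hence $\lambda(f)>\cosh\ell=\tfrac{\delta_2-2\delta_1+1}{2(\delta_1-1)}$. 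Substituting $\delta_2=q\delta_1$ makes the right-hand side a function of $\delta_1$ that is monotone (decreasing for $q>2$), so the worst case is controlled by the two-sided bound $q/C\le \delta_1\le$ (the model-dependent upper bound coming from the same comparison estimate with the true intersection numbers reinstated); feeding in these bounds and the value of $C$ produces, after routine algebra, the asserted inequality
$$
\lambda(f)>\frac{2\times 3^{36}(4\times 3^{36}-1)\,q^2+1}{2^{5/2}\times 3^{54}\,q}\ \ge\ 1 .
$$

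The main obstacle is the bookkeeping of the explicit constant: the clean hyperbolic computation by itself only yields a bound of the crude shape $\lambda(f)>q/2$, and recovering the precise powers of $3$ requires keeping track of (i) exactly which birational model the total-transform degree $\deg_L$ is measured on, (ii) the non-normalisation of $L$, and (iii) the uniform submultiplicativity constant $C$ — which is itself the delicate input, since it has to be made fully explicit and valid for \emph{all} rational surfaces, not just $\mathbf{P}^2$ — fed consistently into both the "force loxodromic" step and the elimination of $\rho$. The verification of the trichotomy and of the Saccheri-quadrilateral identities inside $\mathcal{Z}(X)$ is, by contrast, routine once one restricts to the finite-dimensional hyperbolic subspace generated by $L$, $gL$, $g^2L$.
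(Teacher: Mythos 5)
A preliminary remark: the thesis itself does not prove this lemma — it is imported from Xie's work and the surrounding text explicitly declines to reprove it — so your attempt can only be judged against the statement and against the strategy of the cited source. Your framework is the right one and is the one underlying Xie's argument: pass to the Picard--Manin space, let $g=f_\bullet$ act isometrically on $\H^\infty$ with $(f^{m+n})_\bullet=(f_\bullet)^m(f_\bullet)^n$, identify $\deg_L(f^n)/L^2$ with $\cosh\dist(L,g^nL)$, and use $\lambda(f)=\exp(\ell(g))$. The gaps are in the two steps that carry all the content.

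First, the Saccheri identities $\delta_n=\cosh^2\!\rho\,\cosh(n\ell)-\sinh^2\!\rho$ are false in general: a loxodromic isometry of $\H^\infty$ translates along its axis \emph{and} rotates in the normal directions, so the correct formula is $\cosh a_n=\cosh^2\!\rho\,\cosh(n\ell)-\sinh^2\!\rho\,\cos\theta_n$, where $\theta_n$ is the angle between the normal feet of $L$ and $g^nL$ after transport along the axis. Dropping $\cos\theta_n$ gives $\cosh a_n\geq\cosh^2\!\rho\,\cosh(n\ell)-\sinh^2\!\rho$, i.e.\ an \emph{upper} bound on $\ell$ — the wrong direction — and the elimination of $\cosh^2\rho$ between $n=1$ and $n=2$ is unavailable because $\theta_1$ and $\theta_2$ are unrelated. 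Restricting to the span of $L,gL,g^2L$ does not repair this: that subspace is not $g$-invariant and need not contain the axis, so neither $\rho$ nor $\ell$ lives there. In short, $\ell(g)$ is not determined by $\delta_1,\delta_2$; it can only be bounded below, and that requires a different tool. Second, the step ruling out the elliptic and parabolic alternatives — and producing the explicit constants — is asserted as ``a short argument'' resting on a ``universal submultiplicativity constant $C$, a power of $3$ times $\sqrt2$''. No such constant is exhibited, and in the Picard--Manin normalisation the submultiplicativity constant is simply $2$ (from $\cosh(a+b)\leq 2\cosh a\cosh b$), so the powers of $3$ cannot arise where you place them; their shape ($3^{18}=e^{18\log 3}$, etc.) points instead to Gromov-hyperbolicity estimates with the thin-triangle constant $\log 3$ of $\H^2$, used to bound $\ell(g)$ from below by $a_2-a_1$ minus explicit multiples of $\log 3$ — an ingredient entirely absent from your plan. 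Finally, you concede that your computation lands at a bound of the rough shape $\lambda(f)>q/2$ and that the passage to the stated inequality is unresolved bookkeeping; since that passage is exactly where the hypothesis $q\geq 3^{18}\sqrt2$ and the conclusion meet, the proof as proposed does not reach the statement.
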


\begin{rema}
Lorsque $X=\P^2$ et $L$ est une droite, $\deg_L$ correspond au degré usuel d'une application rationnelle, c'est-à-dire le degré des polynômes homogènes $P$, $Q$ et $R$ tels que $f=[P:Q:R]$. Dans \cite{xie}, le lemme \ref{theo-xie} est démontré uniquement dans ce cadre, mais la démonstration qui en est faite se transpose naturellement au cas plus général donné ci-dessus. Nous allons l'appliquer avec $X=\P^1\times\P^1$ et $L$ un vecteur propre pour $f_*$ sur $\NS(X;\R)$\footnote{Contrairement à ce qui se passe pour les automorphismes, $f_*$ peut avoir des vecteurs propres qui sont des classes amples.}.
\end{rema}

Fixons $d$ un entier tel que
\begin{equation}\label{condsurd}
d \geq 2^{-3/4} \times 3^9 \approx 11703,6.
\end{equation}
Pour $n\in\N^*$, soit $F_n$ la fraction rationnelle suivante :
\begin{equation}
F_n(x)=\frac{x^{2d}+\frac{2}{n}x^d+1}{x^{2d}+1}.
\end{equation}
Ses zéros et ses pôles sont simples, et sont donnés par les ensembles
\begin{gather}
Z_n = \left\{
\e^{i(\pm\arccos(\frac{1}{n})+2k\pi)/d} \,\big|\, k\in\{0,\cdots,d-1\}
\right\},\\
P_n = \left\{
\e^{i(\pm\frac{\pi}{2}+2k\pi)/d} \,\big|\, k\in\{0,\cdots,d-1\}
\right\}.
\end{gather}
En particulier, $Z_n\cap P_n=\emptyset$ et $(Z_n\cup P_n)\subset\C\backslash\R$.
Soit $g_n:X\dashrightarrow X$ l'application rationnelle définie par
\begin{equation}
g_n(x,y)=(F_n(x)y,x).
\end{equation}
Cette application est birationnelle réelle, d'inverse $(x,y)\to(y,x/F_n(y))$. Les points d'indétermination de $g_n$ sont donnés par
\begin{equation}
\Ind(g_n) = \big(Z_n\times\{\infty\}\big) \cup \big(P_n\times\{0\}\big),
\end{equation}
et ceux de $g_n^{-1}$ sont donnés par
\begin{equation}
\Ind(g_n^{-1}) = (\{0\}\times Z_n) \cup (\{\infty\}\times P_n).
\end{equation}
En particulier, ces points d'indétermination ne sont pas réels, donc ${g_n}$ est un difféomorphisme birationnel de $X$. D'autre part, $\Ind(g_n)$ et $\Ind(g_n^{-1})$ ne s'intersectent pas, donc d'après \cite{diller-favre}
\begin{equation}
(g_n^2)_*=({g_n}_*)^2.
\end{equation}

\begin{theo}\label{fntheta}
Pour $n\in\N^*$ et $\theta=(\theta_1,\theta_2)\in\R^2$, on considère l'application
\begin{equation}
f_{n,\theta}=R_\theta \circ g_n^2 \in \BD(\P^1\times\P^1).
\end{equation}
On suppose que 
$\theta_j\neq \pi \mod 2\pi$ pour $j\in\{1,2\}$. Alors :
\begin{enumerate}
\item
$\Ind(f_{n,\theta})\cap\Ind(f_{n,\theta}^{-1})=\emptyset$, et donc $(f_{n,\theta}^2)_*=({f_{n,\theta}}_*)^2$.
\item
$\lambda(f_{n,\theta})>1$.
\end{enumerate}
\end{theo}

\begin{proof}
Les points d'indétermination de $f_{n,\theta}$ sont donnés par
\begin{align}
\Ind(f_{n,\theta})&=\Ind(g_n^2)\\
&=\Ind(g_n)\cup g_n^{-1}\left(\Ind(g_n)\right)\\
&=\big(Z_n\times\{\infty\}\big)\cup\big(P_n\times\{0\}\big)\cup\big(\{\infty\}\times Z_n\big)\cup\big(\{0\}\times P_n\big),
\end{align}
et ceux de $f_n^{-1}$ par
\begin{align}
\Ind(f_{n,\theta}^{-1})&=R_\theta(\Ind(g_n^{-2}))\\
&=R_\theta(\Ind(g_n^{-1})\cup g_n(\Ind(g_n^{-1}))\\
&=R_\theta\big((\{0\}\times Z_n)\cup(\{\infty\}\times P_n)\cup(Z_n\times\{0\})\cup(P_n\times\{\infty\})\big)\\
\begin{split}
&=\big(\{t_{\theta_1}\}\times Z_{n,\theta_2}\big)\cup\big(\{-t_{\theta_1}^{-1}\}\times P_{n,\theta_2}\big)\\
&\quad\quad \cup\big(Z_{n,\theta_1}\times\{t_{\theta_2}\}\big)\cup\big(P_{n,\theta_1}\times\{-t_{\theta_2}^{-1}\}\big),
\end{split}
\end{align}
où $t_{\theta_j}=\tan(\theta_j/2)$, $Z_{n,\theta_j}=R_{\theta_j}(Z_n)$ et $P_{n,\theta_j}=R_{\theta_j}(P_n)$, avec 
\begin{equation}
R_{\theta_j}(x)=\frac{x+t_{\theta_j}}{-t_{\theta_j}x+1}.
\end{equation}

La condition sur $\theta_j$ implique $t_{\theta_j}\neq\infty$. Comme de plus $t_{\theta_j}$ est réel, il n'est pas dans $Z_n\cup P_n$. Ainsi, la seule possibilité pour que $\Ind(f_{n,\theta})$ et $\Ind(f_{n,\theta}^{-1})$ s'intersectent est d'avoir
\begin{equation}
t_{\theta_1}=0 \quad \text{et} \quad \big(Z_n\cap P_{n,\theta_2}\big)\cup\big(P_n\cap Z_{n,\theta_2}\big)\neq\emptyset,
\end{equation}
ou la même condition en inversant $\theta_1$ et $\theta_2$.

On note $\mathcal{R}$ le groupe $\{R_\theta\,|\,\theta\in\R\}$. Ce groupe agit sur $\P^1(\C)$, et l'orbite d'un point $x\in\S^1\backslash\{i,-i\}$ est un cercle transverse à $\S^1$, qui n'intersecte $\S^1$ qu'aux points $x$ et $-1/x$. Les points $i$ et $-i$ sont quant à eux fixes par $\mathcal{R}$. Comme $Z_n$ et $P_n$ sont des sous-ensembles de $\S^1$ qui ne s'intersectent pas, et comme $P_n$ est stable par $x\mapsto-1/x$, on en déduit que $Z_n\cap(\mathcal{R}\cdot P_n)=\emptyset$. Par conséquent, les intersections ci-dessus sont toujours vides, et donc $\Ind(f_{n,\theta})\cap\Ind(f_{n,\theta}^{-1})=\emptyset$.

Notons $H=[\P^1\times\{0\}]$ et $V=[\{0\}\times\P^1]$ la base canonique de $\NS(X)$. Dans cette base, la matrice de ${g_n}_*$ s'écrit
\begin{equation}\label{matriceA}
A=\begin{pmatrix}2d&1\\1&0\end{pmatrix}.
\end{equation}
En effet, il suffit de calculer les nombres d'intersection ${g_n}_*H\cdot V$, ${g_n}_*V\cdot V$, etc. qui correpondent aux degrés des fonctions coordonnées par rapport à $x$ et $y$.
Cette matrice admet $\lambda=d+\sqrt{d^2+1}$ pour plus grande valeur propre, et $L=\lambda H + V$ pour vecteur propre associé à $\lambda$. Notons que $L$ est une classe ample.

Comme $(g_n^2)_*=({g_n}_*)^2$ et $\left(R_\theta\right)_*=\id$, la matrice de ${f_{n,\theta}}_*$ est la matrice $A^2$. De plus $(f_{n,\theta}^2)_*=({f_{n,\theta}}_*)^2$, donc la matrice de $(f_{n,\theta}^2)_*$ est $A^4$. On en déduit, avec les notations du lemme \ref{theo-xie}, que $\deg_L(f_{n,\theta})=2\lambda^3$ et $\deg_L(f_{n,\theta}^2)=2\lambda^5$ (on a~${L^2=2\lambda}$). Ainsi,
\begin{equation}
\frac{\deg_L(f_{n,\theta}^2)}{\deg_L(f_{n,\theta})} = \lambda^2 \geq 4d^2 \geq 3^{18}\sqrt{2}
\end{equation}
d'après la condition $(\ref{condsurd})$. Le lemme \ref{theo-xie} implique alors $\lambda(f_{n,\theta})>1$.
\end{proof}

\begin{proof}[Démonstration du théorème \ref{exbir}]
Comme $g_n$ converge vers l'application ${(x,y)\mapsto(y,x)}$ dans $\Diff^\omega(X(\R))$, la suite $\left(f_{n,\alpha}\right)_{n\in\N^*}$ converge vers $R_\alpha$. Pour~$n$ suffisamment grand, les applications $f_{n,\alpha}$ sont donc dans le voisinage~$\mathcal{V}_{\alpha}$ de~$R_\alpha$ (défini à la section précédente). D'après la proposition \ref{fatoucontientreel}, il existe~${\theta\in\left]-\epsilon,\epsilon\right[^2}$ tel que $R_\theta f_{n,\alpha}=f_{n,\alpha+\theta}$ soit conjugué à une rotation dans un voisinage de $X(\R)$. D'après le choix de $\epsilon$ (cf. $(\ref{condepsilon})$), la condition
\begin{equation}
\alpha_j+\theta_j\neq\pi \mod 2\pi
\end{equation}
du théorème \ref{fntheta} est satisfaite. On a donc :
\begin{align}
\lambda(f_{n,\alpha+\theta})>1
\quad \text{et} \quad
\Fat(f_{n,\alpha+\theta})\supset X(\R),
\end{align}
et le théorème \ref{exbir} est démontré.
\end{proof}


\backmatter

\bibliographystyle{smfalpha}
\bibliography{biblio}

\end{document}